\theoremstyle{plain} 
\newtheorem{thm}{Theorem}[subsection] 
\newtheorem{cor}[thm]{Corollary} 
\newtheorem{lem}[thm]{Lemma} 
\newtheorem{prop}[thm]{Proposition} 
\theoremstyle{definition} 
\newtheorem{defn}[thm]{Definition} 
\newtheorem{eg}[thm]{Example} 
\newtheorem{obs}[thm]{Observation} 
\newtheorem{rem}[thm]{Remark} 
\newtheorem{constr}[thm]{Construction}
\newtheorem{con}[thm]{Conventions}
\theoremstyle{remark} 
\numberwithin{equation}{subsection}
\newcommand{\N}{\mathbb{N}} 
\newcommand{\Z}{\mathbb{Z}}     
\newcommand{\Q}{\mathbb{Q}} 
\newcommand{\C}{\mathbb{C}} 
\newcommand{\G}{\mathbb{G}}
\newcommand{\BL}{\textbf{L}}
\newcommand{\sA}{\mathcal{A}}
\newcommand{\sB}{\mathcal{B}}
\newcommand{\sD}{\mathcal{D}}
\newcommand{\sI}{\mathcal{I}}
\newcommand{\sK}{\mathcal{K}}
\newcommand{\sL}{\mathcal{L}}
\newcommand{\sM}{\mathcal{M}}
\newcommand{\sN}{\mathcal{N}}
\newcommand{\sO}{\mathcal{O}}
\newcommand{\sS}{\mathcal{S}}
\newcommand{\sX}{\mathcal{X}}
\newcommand{\fM}{\mathfrak{M}}
\newcommand{\fB}{\mathfrak{B}}
\newcommand{\fC}{\mathfrak{C}}
\newcommand{\CharM}{\overline{\mathcal{M}}}
\newcommand{\Sch}{\mathfrak{Sch}}
\newcommand{\LogSch}{\mathfrak{LogSch}}
\newcommand{\Spec}{\operatorname{Spec}}
\newcommand{\lra}{\longrightarrow}
\begin{document}
\title{Stable logarithmic maps to Deligne--Faltings pairs I}

\author[Qile Chen]{Qile Chen}

\thanks{Research partially supported by funds from NSF award DMS-0901278.}

\address{Qile Chen, Department of Mathematics, Box 1917, Brown University,
Providence, RI, 02912, U.S.A} \email{q.chen@math.brown.edu}

\maketitle \setcounter{tocdepth}{1} \tableofcontents
%
%

%
\section{Introduction}

\subsection{Background on relative Gromov-Witten theory}
Gromov-Witten theory relative to a smooth divisor was introduced during the past decade, for the purpose of proving a degeneration formula, in the symplectic setting by A.M. Li and Y. Ruan \cite{LR}, and about the same time by E.N. Ionel and T. Parker \cite{IP1}. On the algebraic side, this was worked out by Jun Li \cite{Jun1,Jun2}. This approach uses the idea of expanded degenerations, which was introduced by Ziv Ran \cite{Ra}. A related idea of admissible covers was introduced even earlier by Harris and Mumford \cite{HM}. 

Recently, the idea of expanded degeneration was systematically studied using orbifold techniques by D. Abramovich and B. Fantechi \cite{AF}, and an elegant proof of degeneration formula was given there. 

On the other hand, the idea of admissible covers was revisited by Mochizuki \cite{Mochizuki} using logarithmic geometry. Along the similar idea, B. Kim defined the logarithmic stable maps \cite{Kim}, by putting certain log structures on Jun Li's predeformable maps. Then he proved that the stack parameterizing such maps is a proper DM stack, and has an explicit perfect obstruction theory using the work of M.Olsson \cite{LogStack,LogCot}. A degeneration formula under Kim's setting is proved in \cite{LogDeg}. 

Another approach using logarithmic structures without expansions was first proposed by Bernd Siebert in 2001 \cite{Siebert}. The goal here
is also to obtain the degeneration formula, but in a more general situation, such as simple normal crossings divisors. However, the program has been on hold for a while, since Mark Gross and Bernd Siebert were working on other projects in mirror symmetry. Only recently they have taken up the unfinished project of Siebert jointly. In particular, they succeeded in finding a definition of basic log maps, a crucial ingredient for a good moduli theory of stable log maps to a fixed target with Zariski log structures \cite{GS2}. Their definition builds on insights from tropical geometry, obtained by probing the stack of log maps using the standard log point, and is compatible with the minimality introduced in this paper.


A different approach using exploded manifolds to studying holomorphic curves was recently introduced by Brett Parker in \cite{Parker1}, \cite{Parker3}, and \cite{Parker2}. It also aimed at defining and computing relative and degenerated Gromov-Witten theories in general situation. The theory of exploded manifolds uses a generalized version of tropical curves, and is closely related to logarithmic geometries --- the explosion functors, which is central to this theory, can be phrased in terms of certain kind of base change in log geometry, see \cite[Section 5]{Parker1}. It was pointed out by Mark Gross that this approach is parallel, and possibly equivalent to the logarithmic approach.

\subsection{The approach and main results of this paper}
The goal of this paper is to develop the relative Gromov-Witten theory along the logarithmic approach proposed by Bernd Siebert. However, we use somewhat different methods. Instead of using tropical geometry, and probing the stack using standard log point, we associated to each log map a marked graph as in Section \ref{ss:AdGraph}, which allows us to define the right base log structure. We now describe our methods as follows.

The target we will consider in this paper is a projective variety $X$ equipped with a rank-$1$ Deligne-Faltings log structure $\sM_{X}$ on $X$ as in Definition \ref{defn:Target}, which comes from a line bundle $L$ on $X$, with a morphism of sheaves $s:L\to \sO_{X}$. In particular, if $L$ is the ideal sheaf of a smooth divisor $D\subset X$, and $s$ is the natural inclusion, then this will recover the relative case. See Section \ref{ss:DFLog} for more details on DF log structures. Denote by $X^{log}=(X,\sM_{X})$ the corresponding log scheme. Instead of considering usual stable maps to the expansions of $X$, we investigate the usual stable maps to $X$, with fixed log structure $\sM_{X}$ on the target, and suitable log structures on the source curves. 

In the subsequent paper \cite{AC}, we will consider target $X$ with generalized Deligne-Faltings log structure $\sM_{X}$, namely where there exists a global map $P\to \CharM_{X}$ from a toric monoid $P$, which locally lifts to a chart. In particular, this covers many interesting cases, such as a variety with a simple normal crossings divisor, or a simple normal crossings degeneration of a variety with simple normal crossings singularities. It does not cover the case of a normal crossings divisor which is not simple. We hope one can also cover this using the descent argument along this approach. 

A key point of this paper is the observation made in Section \ref{ss:LogMapChar}, which describes the log map on the level of characteristic monoids. This leads us to the notions of marked graphs \ref{defn:Orientation} and minimality \ref{defn:Minimal}. Such minimality condition can be explained as the ``minimal requirements'' which a log map needs to satisfy. Then our minimal stable log maps are defined to be usual stable maps with the minimal log structures. Denote by $\sK_{\Gamma}(X^{log})$ the category fibered over the category of schemes, which for any scheme $T$ associates the groupoid of minimal stable log maps over $T$ with numerical data $\Gamma$. We refer to Section \ref{ss:LogStableMap} for the precise meaning of $\sK_{\Gamma}(X^{log})$. The main result of this paper is the following:

\begin{thm}\label{thm:Main}
The fibered category $\sK_{\Gamma}(X^{log})$ is a proper Deligne-Mumford stack. Furthermore, the natural map $\sK_{\Gamma}(X^{log})\to \sK_{g,n}(X,\beta)$ by removing the log structures from minimal stable log maps is representable and finite.
\end{thm}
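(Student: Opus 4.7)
The plan is to reduce the claim to properties of the forgetful morphism $\pi : \sK_{\Gamma}(X^{log}) \to \sK_{g,n}(X,\beta)$, exploiting that the target is a classical proper Deligne--Mumford stack. If I can prove that $\pi$ is representable and finite, then $\sK_{\Gamma}(X^{log})$ automatically inherits algebraicity, the DM property, and properness via composition with the structure morphism of the Kontsevich space.

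For representability, I would examine automorphisms of a minimal stable log map over a scheme $T$ that fix the underlying ordinary stable map. Any such automorphism acts trivially on the underlying morphism of schemes, so it is determined by its action on log structures. By the universal property that characterizes minimal log structures through the marked graph of Section~\ref{ss:AdGraph} and Definition~\ref{defn:Minimal}, the base log structure is reconstructed from a combinatorial recipe that is rigid once the marked graph is fixed; hence any such automorphism must act trivially on characteristic monoids, and therefore trivially on the log structures. This rules out nontrivial inertia in the fibers of $\pi$.

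For finiteness, I would first establish quasi-finiteness: over a fixed geometric stable map of topological type $\Gamma$, the set of minimal log enhancements is parameterized by finitely many combinatorial choices attached to the marked graph (orientations, contact orders, and the node-by-node compatibilities among them), so each fiber is a finite set. Separatedness is essentially built into the minimality construction, because two extensions of the log structure producing minimal log maps must agree by the universal property. For properness, I would verify the valuative criterion: given a DVR $R$ with fraction field $K$ and a minimal stable log map over $\Spec K$, the underlying stable map extends to $\Spec R$ after a finite base change by properness of $\sK_{g,n}(X,\beta)$; then I would argue that the marked graph specializes uniquely from the generic to the special fiber, and that the associated minimal log structure on the base extends uniquely along this specialization.

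The main obstacle will be the extension step in the valuative criterion, and more generally the precise control of the minimal log structure on the special fiber. This calls for a careful analysis of how characteristic monoids on nodal curves degenerate in one-parameter families and of how the marked graph data on the generic fiber force the graph data on the special fiber, so that minimality is preserved throughout. Once these log-geometric specialization statements are in place, the combination of quasi-finiteness, separatedness, and properness of $\pi$ yields a representable finite morphism to a proper DM stack, and all conclusions of the theorem follow formally.
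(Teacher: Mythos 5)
Your overall architecture coincides with the paper's: representability via rigidity of log automorphisms over the underlying automorphisms, quasi-finiteness of the forgetful map, the weak valuative criterion, and then a formal deduction of properness and finiteness from the corresponding properties of $\sK_{g,n}(X,\beta)$. However, as written the proposal has gaps at precisely the three points where the real work lies. First, your quasi-finiteness step asserts that the minimal log enhancements of a fixed geometric stable map with fixed marked graph are ``parameterized by finitely many combinatorial choices''; this is the statement to be proved, not an argument, and it is not a pure counting problem. For a fixed graph and underlying map, the candidate maps of log structures form torsors under units, and finiteness of the fiber is obtained in the paper by showing that the \emph{coarse} log map (the sub-log structure generated by $N(G)$) is unique up to canonical isomorphism (Lemma \ref{lem:quasi-finit}) and that the minimal log map sits as a closed substack inside the finite saturation of the coarse one (Proposition \ref{prop:quasi-finite}). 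Relatedly, finiteness of $\pi$ requires $\pi$ to be of finite type, which is the content of Section \ref{sec:Boundedness}: after bounding the graphs, one must exhibit a finite-type family exhausting all minimal log structures, and this is done by identifying the choice of $f^{\flat}$ with a trivialization of the line bundle $\underline{f}^{*}L\otimes L_{C}^{-1}$, i.e.\ with a $\G_{m}$-torsor over a closed subscheme of the base (Proposition \ref{prop:ExhaustStrata}). None of this is visible in your sketch.

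Second, in the representability step the inference ``trivial on characteristic monoids, therefore trivial on the log structures'' is a non sequitur: an automorphism of $\sM_{S}$ can act by units while inducing the identity on $\CharM_{S}$. The paper instead pins down the action on actual sections, using that the automorphism must preserve $f^{\flat}(\delta)$ and the sections $\log x+\log y$ at the nodes (Proposition \ref{prop:FiniteAuto}), and then proves bijectivity of $\sA ut_{S}(\xi)(S)\to \sA ut_{S}(\underline{\xi})(S)$ via the coarse log map and the saturation adjunction (Lemma \ref{lem:auto-inject}). Third, separatedness does not follow from the universal property of minimality: that property compares a given log map with its minimalization, but says nothing about two \emph{a priori} distinct minimal extensions over a DVR with the same generic fiber. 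The uniqueness of the extension is a substantial argument in the paper, requiring that the marked graph of the special fiber be determined by the generic fiber (Proposition \ref{prop:UniGraph}) and a comparison of charts via the valuation map $\bar{\beta}$ and specializable charts (Lemma \ref{lem:ChartDif}). You correctly flag the extension step as the main obstacle, but the existence argument (passing to a rank-one quotient log structure, extending it using the local analysis of $u_{p}$, and then invoking minimalization) and the uniqueness argument are both absent, so the proposal does not yet contain the ideas needed to close the proof.
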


\begin{rem}
In fact, the stack $\sK_{\Gamma}(X^{log})$ carries a universal minimal log structure, which will be denoted by $\sM_{\sK_{\Gamma}(X^{log})}$. Thus the pair $(\sK_{\Gamma}(X^{log}),\sM_{\sK_{\Gamma}(X^{log})})$ can be viewed as a log stack in the sense of Olsson, see Section \ref{s:LogStack}. By applying the standard technique in \cite{Behrend-Fantechi}, and replacing the usual cotangent complex by logarithmic cotangent complex \cite{LogCot}, one can produce a perfect obstruction theory of $\sK_{\Gamma}(X^{log})$ relative to $\fM^{pre}_{g,n}$, the stack of log prestable curves defined in Section \ref{ss:LogCurveStack}. We will discuss this perfect obstruction theory and the corresponding virtual fundamental class in another paper.
\end{rem}

Up to now, we only introduce $\sK_{\Gamma}(X^{log})$ as category fibered over $\Sch$, the category of schemes. Denote by $\LogSch^{fs}$ the category of fine and saturated log schemes as introduced in Section \ref{ss:BacisLog}. The following result exhibits another important aspect of our construction:

\begin{thm}\label{thm:Main2}
The pair $(\sK_{\Gamma}(X^{log}),\sM_{\sK_{\Gamma}(X^{log})})$ defines a category fibered over $\LogSch^{fs}$, which for any fs log scheme $(S,\sM_{S})$ associates the category of stable log maps over $(S,\sM_{S})$.
\end{thm}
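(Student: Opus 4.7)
The plan is to establish, for every fs log scheme $(S,\sM_S)$, a canonical equivalence between morphisms of log schemes $(S,\sM_S)\to (\sK_\Gamma(X^{log}),\sM_{\sK_\Gamma(X^{log})})$ and stable log maps over $(S,\sM_S)$ with numerical data $\Gamma$. The easy direction is by pullback: given such a morphism of log schemes, pulling back the universal minimal stable log map produces a stable log map over $(S,\sM_S)$, and this is obviously functorial in the base. The cartesian structure of the resulting fibered category over $\LogSch^{fs}$ then comes from ordinary base change of log schemes along morphisms of fs log schemes.

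For the nontrivial direction, suppose we are given a stable log map $f:(C,\sM_C)\to X^{log}$ over $(S,\sM_S)$ with numerical data $\Gamma$. The key step is to extract from $f$ a canonical log structure $\sM_S^{\min}$ on $\underline{S}$ together with a morphism $\sM_S^{\min}\to\sM_S$, such that $(\underline{S},\sM_S^{\min})$ underlies a minimal stable log map $f^{\min}:(C,\sM_C^{\min})\to X^{log}$ from which the original $f$ is recovered by base change along $(S,\sM_S)\to(S,\sM_S^{\min})$. The description of log maps on characteristic monoids from Section \ref{ss:LogMapChar}, combined with the minimality condition of Definition \ref{defn:Minimal}, determines at each geometric point $\bar{s}\to S$ a unique minimal characteristic monoid with a canonical arrow into $\CharM_{S,\bar{s}}$. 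Since the associated marked graph is locally constant on $S$ up to specialization, these pointwise data glue \'etale locally into a global fs log structure $\sM_S^{\min}$ equipped with the required morphism to $\sM_S$, and by construction the pullback of $f^{\min}$ recovers $f$.

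By Theorem \ref{thm:Main}, the minimal stable log map $f^{\min}$ corresponds to a morphism of schemes $g:\underline{S}\to\sK_\Gamma(X^{log})$ with $g^{\ast}\sM_{\sK_\Gamma(X^{log})}\simeq \sM_S^{\min}$; together with the morphism $\sM_S^{\min}\to\sM_S$, this assembles into the desired morphism of log schemes, and uniqueness follows from the universal property of the minimal log structure. The main obstacle I anticipate is precisely the global construction of $\sM_S^{\min}$ from the pointwise minimal data: one must check that the combinatorial type of the marked graph behaves constructibly along $S$, that the resulting sheaf of monoids is automatically fine and saturated, and that the canonical arrows into $\CharM_S$ at different geometric points assemble compatibly under specialization. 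The fs hypothesis on $\sM_S$ is essential here, since the minimality characterization is well-behaved only after saturation; without it the pointwise minimal monoids would not necessarily assemble into a coherent log structure, and the desired morphism of log schemes would fail to exist globally.
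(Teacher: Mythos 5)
Your proposal is correct and follows essentially the same route as the paper: the whole theorem reduces to the universal property of minimal log maps (Proposition \ref{prop:UnivMinLog}), whose proof is exactly the local construction of $\sM_{S}^{\min}$ from the canonical map $\CharM(G_{\xi_{\bar{s}}})\to\CharM_{S,\bar{s}}$ at a geometric point, followed by chart-independence and spreading out via openness of minimality. The ``main obstacle'' you flag is precisely where the paper spends its effort (Lemmas \ref{lem:MinCharUniq}, \ref{lem:CompCurveLog}, \ref{lem:Generalization}).
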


The above categorical interpretation allows us to work systematically with fs log schemes rather than usual schemes. This point of view will be a useful tool in \cite{AC}, where we reduce the case with generalized DF-log structure on the target to the case of this paper.

\subsection{Outline of the paper}
In Section \ref{sec:StackAlg}, we fix a morphism of fs log schemes $X^{log}\to B^{log}$, define an auxiliary category $\sL\sM_{g,n}(X^{log}/B^{log})$ of all log maps with target X, fibered over schemes, and show that it is an algebraic stack in the sense of Artin. This stack is unbounded and serves mainly as a construction technique. This will be achieved by verifying Artin's criteria \cite[5.1]{Artin}. Here the deformation theory of our log maps will be given by the log cotangent complex developed in \cite{LogCot}.

Section \ref{sec:LogMap} is aimed at the construction of minimal log maps. In fact, for each log map over a geometric point with fs log structure, we can associate a marked graph, see Construction \ref{cons:DualGraphMap}. These graphs will be used to describe the minimality condition. Then we show that minimality is an open condition, see Proposition \ref{prop:MinOpen}. This implies the algebricity of the stack of minimal log maps using the results of Section \ref{sec:StackAlg}. In Section \ref{sec:finiteness} we show that there are at most finitely many minimal stable log maps over a fixed underlying stable map with a fixed marked graph. The finiteness of automorphisms of minimal stable log maps over a geometric point is proved in Proposition \ref{prop:FiniteAuto}. 

Section \ref{sec:UniverMin} is devoted to proving Theorem \ref{thm:Main2}. This will follow naturally from the universal property of minimal log maps in Proposition \ref{prop:UnivMinLog}.

In Section \ref{sec:Boundedness}, we will show that $\sK_{\Gamma}(X^{log})$ is of finite type by stratifying the stack, and bounding the stratum associated to each marked graph. Indeed, we will prove the boundedness of $\sK_{\Gamma}(X^{log})$ relative to the stack of usual stable maps. One issue here is in constructing all maps of log structures for a given graph. We will turn this into constructing isomorphisms of corresponding line bundles.

The weak valuative criterion of $\sK_{\Gamma}(X^{log})$ is proved in Section \ref{sec:Valuative}. In fact, the universal property of minimality produces an extension of \textit{minimal} stable log map, once we can find \textit{any} extension of stable log maps, not necessarily minimal. For separateness, we first show that the marked graph is uniquely determined by the generic fiber. Then we introduce a new map $\bar{\beta}$ as in (\ref{equ:SpeDegMap}), which helps us compare any two possible extensions, see Lemma \ref{lem:ChartDif}. This leads us to the uniqueness of the extension. In the end, we give a proof of Theorem \ref{thm:Main}, and show that the stack of minimal stable log maps is representable and finite over the stack of usual stable maps.

Finally, we have two appendices collecting various results of logarithmic geometry and logarithmic curves, as we need in this paper. The notion of log pre-stable curve is introduced in Definition \ref{LogPreCurve}.

\subsection{Conventions}
Throughout this paper, we will work over an algebraically closed field of characteristic $0$, denoted by $\C$.

The word locally always means \'etale locally, and neighborhood always means \'etale neighborhood, unless otherwise specified.

Given a scheme or algebraic stack $X$, a point $p\in X$ means a closed point unless otherwise specified. We denote by $\bar{p}$ an algebraic closure of $p$.

We usually use $X^{log}$ to denote a log scheme $(X,\sM_{X})$ if no confusion could arise. The map $\exp:\sM_{X}\to \sO_{X}$ is reserved for the structure map of $\sM_{X}$. Given a section $u\in \sO_{X}$, we sometimes use $\log u$ to denote the corresponding section in $\sM_{X}$ if no confusion could arise. 

The category of schemes, fine log schemes, and fs log schemes are denoted by $\Sch$, $\LogSch$, and $\LogSch^{fs}$ respectively. See Section \ref{ss:BacisLog} for the precise definitions.

The letter $\xi$ (respectively $\xi^{log}$) is reserved for log maps over a usual scheme (respectively log scheme). Given a log map $\xi=(C\to S,\sM_{S},f)$ over $S$ as in Remark \ref{rem:LogMapIso}, we will denote by $\sM_{C}$ the corresponding log structure on $C$, if no confusion could arise.

\subsection{Acknowledgements}
I am very grateful to my advisor Dan Abramovich, who has suggested several interesting problems to me, including the main problem of this paper. He has given me continuous support and encouragement, and his suggestions have greatly influenced the shape of this paper. I am grateful to Mark Gross, Bernd Siebert, and Martin Olsson who pointed out several mistakes in the previous draft of this paper, and also gave many nice suggestions and comments. I would also like to thank Davesh Maulik, William Gillam, and Jonathan Wise for their helpful conversations. 

\section{Algebricity of the stack of log maps}\label{sec:StackAlg}
In this section, we prove that the stack $\sL\sM_{g,n}(X^{log}/B^{log})$ parameterizing log maps as in Definition \ref{defn:GeneralMapStack}, is algebraic by checking Artin's criteria \cite[5.1]{Artin}. The result in this section is only used to prove the stack of minimal stable log maps $\sK_{n,g}^{}(X^{log},\beta)$ as in Definition \ref{defn:MinStableStack} is algebraic. The reader may wish to assume the result of Theorem \ref{thm:AlgStackLogMap}, and skip to the next section.

\subsection{Log maps over $\LogSch$ and over $\Sch$}\label{ss:SetNote}

\begin{con}
In this section, we fix a separated, finite type, log flat morphism of log schemes $\pi: X^{log}\to B^{log}$. Denote by $B$ and $X$ the underlying schemes of $B^{log}$ and $X^{log}$ respectively. Let $\sM_{B}$ and $\sM_{X}$ be the log structure on $B^{log}$ and $X^{log}$ respectively. Given any $B$-scheme $S$, denote by $(X_{S},\sM_{X_{S}}^{X_{S}/S})\to (S,\sM^{X_{S}/S}_{S})$ the pull-back of $X^{log}\to B^{log}$ over $S$.
\end{con}

As an analogue of usual pre-stable maps, we introduce our log maps over log schemes as follows:

\begin{defn}\label{defn:LogMapOverLog}
A log map $\xi^{log}$ over a fine $B^{log}$-log scheme $(S,\sM_{S})$ is a commutative diagram of log schemes:
\begin{equation}\label{diag:LogMapOverLog}
\xymatrix{
(C,\sM_{C}) \ar[rr]^{f} \ar[rd] && (X_{S},\sM_{X_{S}}) \ar[ld]\\
 & (S,\sM_{S}) &
}
\end{equation}
where 
\begin{enumerate}
 \item The arrow $(C,\sM_{C})\to (S,\sM_{S})$ is given by the log curve $(C\rightarrow S,\sM_{S}^{C/S}\to\sM_{S})$ as in Definition \ref{DefLogC2}.
 \item The arrow $(X_{S},\sM_{X_{S}})\to (S,\sM_{S})$ is obtained from the following cartesian diagram of log schemes:
 \[
 \xymatrix{
 (X_{S},\sM_{X}) \ar[r] \ar[d] & X^{log} \ar[d]^{\pi}\\
 (S,\sM_{S}) \ar[r] & B^{log}.
 }
 \]
\end{enumerate}

Given a log map of fine log schemes $g:(T,\sM_{T})\to (S,\sM_{S})$, we define the pull-back $g^{*}\xi^{log}$ to be the log map $\xi^{log}_{T}$ over $(T,\sM_{T})$ obtained by pulling back (\ref{diag:LogMapOverLog}) via the log map $g$.
\end{defn}

The above definition gives a category of log maps fibered over $\LogSch_{B^{log}}$, the category of fine log schemes over $B^{log}$. This category allows pull-back via arbitrary log maps, hence changes the base log structures. In another word, it only parametrizes the ``log maps'', without the information of log structures on the base. This is the category of most interest to us. 

However, algebraic stacks are built over the category of schemes, rather than the category of log schemes. In order to have the algebraic structure, we need to introduce another fibered category over $\Sch_{B}$, the category of $B$-schemes. This leads to the following definition:

\begin{defn}\label{defn:LogMap}
A log map $\xi$ over a $B$-scheme $S$ consists of a fine log scheme $(S,\sM_{S})$ over $B^{log}$, and a log map $\xi^{log}$ over $(S,\sM_{S})$ as in Definition \ref{defn:LogMapOverLog}. Usually we denote it by 
\[\xi=(C\to S,X_{S}\to S, \sM_{S}^{X_{S}/S}\to\sM_{S}, \sM_{S}^{C/S}\to\sM_{S}, f),\]
where $\sM^{X_{S}/S}_{S}$ is the pull-back of $\sM_{B}$ via the structure map $S\to B$.

Consider another $B$-scheme $T$, and a $B$-scheme morphism $T\to S$. Then we have an induced strict arrow $g:(T,\sM_{T})\to (S,\sM_{S})$, where $\sM_{T}:=g^{*}(\sM_{S})$. The pull-back $\xi_{T}$ of $\xi$ via $T\to S$ is given by the log scheme $(T,\sM_{T})$, and the log map $\xi^{log}_{T}=g^{*}\xi^{log}$ over $(T,\sM_{T})$. In the rest of this section, if no confusion could arise, we will use $(C\to S,X_{S}\to S, \sM_{S}, f)$ to denote the log map $\xi$ over $S$.
\end{defn}

Since isomorphisms are central to the structure of stacks, we spell out the resulting notion of an isomorphism of log maps over schemes:

\begin{defn}\label{defn:LogMapIso}
Consider two log maps $\xi_{1}=(C_{1}\to S,X_{S}\to S, \sM_{1}, f_{1})$ and $\xi_{2}=(C_{2}\to S,X_{S}\to S, \sM_{2}, f_{2})$ over a scheme $S$. An isomorphism $\xi_{1}\to\xi_{2}$ over $S$ is given by a triple $(\rho,\theta,\gamma)$ fitting in the following commutative diagram of log schemes:
\begin{equation}\label{diag:LogMapIso}
\xymatrix{
  (C_{1},\sM_{C,1}) \ar[rr] \ar[rd] \ar[dd]_{\rho} && (X_{S},\sM_{X,1}) \ar[ld] \ar[dd]_{\gamma}\\
   & (S,\sM_{1}) \ar[dd]^(.3){\theta} \\
  (C,\sM_{C,2}) \ar'[r][rr]^{} \ar[rd] && (X_{S},\sM_{X,2}) \ar[ld] \\
  &(S,\sM_{2})& 
  }
\end{equation}
where
 \begin{enumerate}
  \item The pair $(\rho,\theta)$ is an arrow of log curves $(C_{1}\to S,\sM_{1})\to (C_{2}\to S,\sM_{2})$ as in Definition \ref{IsoLogCurve}.
  \item The arrow $\theta$ is an isomorphism of log schemes over $B^{log}$ fitting in the following commutative diagram:
  \[
  \xymatrix{
  (S,\sM_{1}) \ar[rr]^{\theta} \ar[rd] && (S,\sM_{2}) \ar[ld]\\
  &B^{log}&
  }
  \]
  \item The arrow $\gamma$ is obtained from the following cartesian diagram of log schemes:
\[
   \xymatrix{
   (X_{S},\sM_{X_{S},1}) \ar[r]^{\gamma} \ar[d] & (X_{S},\sM_{X_{S},2}) \ar[d] \\
   (S,\sM_{1}) \ar[r]^{\theta} & (S,\sM_{2}).
   }
\]
Note that the underlying map $\underline{\theta}$ and $\underline{\gamma}$ are both identities of the corresponding underlying schemes.
 \end{enumerate}
 
Denote by $\sI som_{S}(\xi_{1},\xi_{2})$ the functor over $S$, which for any $S$-scheme $T \rightarrow S$ associates the set of isomorphisms of $\xi_{T,1}$ and $\xi_{T,2}$ over $T$, where $\xi_{T,1}$ and $\xi_{T,2}$ are the pull-back of $\xi_{1}$ and $\xi_{2}$ via $T \rightarrow S$ respectively. Denote by $\sA ut_{S}(\xi)$ the functor of automorphisms of $\xi$ over $S$.
\end{defn}

\begin{defn}\label{defn:GeneralMapStack}
Denote by $\sL\sM_{g,n}(X^{log}/B^{log})$ the fibered category over $\Sch_{B}$, which for any $S\to B$ associates the groupoid of log maps $\xi$ over $S$, with the underlying pre-stable curves of genus $g$, and $n$-markings. For simplicity of notation, in this section we will use $\sL\sM$ to denote $\sL\sM_{g,n}(X^{log}/B^{log})$.
\end{defn}

\begin{rem}\label{rem:WarningOfStudy}
By Definition \ref{defn:LogMap}, we only allow pull-backs of log maps via strict log maps in $\sL\sM$, hence do not change the log structures. Thus, given a scheme $S$, the groupoid $\sL\sM(S)$ contains all possible log structures $\sM_{S}$ on $S$ with log maps over $(S,\sM_{S})$. This is a huge stack, as it parametrizes in particular all possible log structures on the base. One would like to consider a smaller stack parameterizing only log maps without the information of the base log structures. It will be shown in Section \ref{sec:UniverMin} that if we work over fs log schemes rather than the usual category of schemes for the base, then the stack we want is $\sK_{g,n}^{pre}(X^{log})$ as introduced in Section \ref{sec:LogMap}. 
\end{rem}

Denote by $\fM_{g,n}$ the algebraic stack of genus $g$, $n$-marked pre-stable curves with the canonical log structure as in Section \ref{ss:CanCurveLog}. Consider the new algebraic stack 
\[\fB=\sL og_{\fM_{g,n}\times B^{log}},\]
where the fibered product is in the log sense, and $\sL og_{\bullet}$ is the log stack introduced in Section \ref{s:LogStack}. By Theorem \ref{thm:LogStack}, the stack $\fB$ is algebraic over $B$. 

\begin{rem}\label{rem:ModuliBase}
We give the moduli interpretation of $\fB$ as follows. For any $B$-scheme $S$, an object $\zeta\in\fB(S)$ is a diagram 
\begin{equation}\label{diag:TarSou}
 \xymatrix{
 (C,\sM_{C}) \ar[rd] && (X_{S},\sM_{X_{S}}) \ar[ld]\\
  &(S,\sM) & 
 }
\end{equation}
where the left arrow is a family of genus $g$, $n$-marked log curves given by the induced map $(S,\sM_{S})\to \fM_{g,n}$, and the right arrow is given by the induced map $(S,\sM_{S})\to B^{log}$. Consider two objects $\zeta_{1}$ and $\zeta_{2}$ in $\fB(S)$. An arrow $\zeta_{1}\to \zeta_{2}$ over the scheme $S$ is a triple $(\rho,\theta,\gamma)$ given by the following diagram
\begin{equation}\label{diag:IsoTarSou}
 \xymatrix{
 (C_{1},\sM_{C,1}) \ar[rd] \ar[dd]_{\rho} && (X_{S},\sM_{X_{S},1}) \ar[ld] \ar[dd]_{\gamma}\\
   & (S,\sM_{1}) \ar[dd]_{\theta} \\
 (C,\sM_{C,2})  \ar[rd] && (X_{S},\sM_{X_{S},2}) \ar[ld] \\
  &(S,\sM_{2})& 
 }
\end{equation}
where the square on the left is an isomorphism of log curves, and the square on the right satisfies the condition in Definition \ref{defn:LogMapIso}(2) and (3).
\end{rem}
  
\begin{obs}\label{obs:RelateToBase}
Note that there is a natural morphism of fibered categories $\sL\sM\to \fB$ by removing the log map $f$ as in Definition \ref{defn:LogMap}. Note that any non-trivial isomorphism of a log map is a non-trivial isomorphism of the corresponding log source and target. This implies that $\sL\sM$ is a pre-sheaf over $\fB$.
\end{obs}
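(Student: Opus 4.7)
The plan is to separate the statement into two assertions and handle each directly: first, that discarding $f$ yields a well-defined morphism of fibered categories $\sL\sM\to\fB$; and second, that this morphism is faithful, which is precisely the condition for $\sL\sM$ to be fibered in sets over $\fB$, i.e.\ a pre-sheaf.

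For the first assertion, given an object $\xi=(C\to S,X_{S}\to S,\sM_{S},f)\in\sL\sM(S)$, forgetting $f$ leaves the log curve $(C,\sM_{C})\to(S,\sM_{S})$ (which corresponds by Section \ref{ss:CanCurveLog} to a classifying morphism $(S,\sM_{S})\to\fM_{g,n}$) together with the map $(S,\sM_{S})\to B^{log}$ that produced the cartesian square defining $(X_{S},\sM_{X_{S}})$. These data are exactly an object $\zeta\in\fB(S)$ as described in Remark \ref{rem:ModuliBase}. An isomorphism $(\rho,\theta,\gamma)$ of log maps in the sense of Definition \ref{defn:LogMapIso} sends under this recipe to the triple $(\rho,\theta,\gamma)$ of diagram (\ref{diag:IsoTarSou}), which is an arrow in $\fB(S)$. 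Compatibility with pullbacks is automatic because pullbacks in $\sL\sM$ are taken along strict arrows $g\colon(T,\sM_{T})\to(S,\sM_{S})$, and strict base change commutes with the formation of both the log curve datum and the fibered product defining $(X_{S},\sM_{X_{S}})$.

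For the second assertion, faithfulness is essentially tautological from Definition \ref{defn:LogMapIso}: an isomorphism of log maps is by definition a triple $(\rho,\theta,\gamma)$ satisfying the commutativity of diagram (\ref{diag:LogMapIso}) (which constrains how $f_{1}$ and $f_{2}$ interact with the triple), and the forgetful functor returns exactly the underlying triple. Hence two isomorphisms that agree as triples in $\fB$ must coincide. Equivalently, if $(\rho,\theta,\gamma)$ is an automorphism of $\xi$ whose image in $\fB$ is the identity of $\zeta$, then $\rho=\mathrm{id}_{C}$, $\theta=\mathrm{id}_{(S,\sM_{S})}$, $\gamma=\mathrm{id}_{X_{S}}$; the compatibility $f\circ\rho=\gamma\circ f$ is then trivially satisfied, so $(\rho,\theta,\gamma)$ is the identity of $\xi$.

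A fibered category over $\fB$ whose defining functor is faithful is fibered in sets, which is the same as a pre-sheaf on $\fB$; this yields the claim. There is no real obstacle to overcome: the entire content is unpacking Definitions \ref{defn:LogMap} and \ref{defn:LogMapIso} and matching them against Remark \ref{rem:ModuliBase}, the point being that in the notion of isomorphism of log maps the datum $f$ is a property rather than additional structure, so no automorphism of a log map can be invisible from the source-and-target data alone.
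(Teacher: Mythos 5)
Your proposal is correct and matches the paper's own (one-line) justification: the paper's entire argument is the observation that an isomorphism of log maps is determined by the triple $(\rho,\theta,\gamma)$ it induces on the log source and target, which is exactly the faithfulness of the forgetful functor that you spell out. Your write-up simply unpacks this tautology in more detail, with no difference in substance.
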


We denote by $\sK^{pre}_{g,n}(X/B)$ the stack of usual pre-stable maps with the source given by genus $g$, $n$-marked pre-stable curves, and family of targets given by $X\to B$. This is an algebraic stack over $B$. For simplicity of notations, we will denote this stack by $\sK$. 

\begin{obs}\label{obs:CompMapStack}
Note that we have a natural arrow $\sL\sM\to \sK$ by removing all log structures. Given a log map $\xi$, denote by $\underline{\xi}$ the corresponding object in $\sK$.
\end{obs}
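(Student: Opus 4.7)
The plan is to unpack the definitions and verify that the forgetful assignment is well-defined on objects, respects isomorphisms, and commutes with pull-back. First, for objects: given $\xi = (C \to S, X_S \to S, \sM_S^{X_S/S}\to \sM_S, \sM_S^{C/S}\to\sM_S, f) \in \sL\sM(S)$, I would define $\underline{\xi}$ to be the underlying scheme-theoretic data, namely the pre-stable curve $C \to S$ with its $n$ marked sections, the usual pull-back $X_S \to S$ of $X \to B$, and the underlying morphism $\underline{f}\colon C \to X_S$ of $S$-schemes. That this yields a genus $g$, $n$-marked pre-stable map of the required type uses the fact, from the appendix on log curves (Definition \ref{LogPreCurve}), that the underlying scheme of a log pre-stable curve is a pre-stable curve with the same genus and marking data.

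Second, for morphisms: an isomorphism $(\rho,\theta,\gamma)\colon \xi_1 \to \xi_2$ in $\sL\sM(S)$ satisfies $\underline{\theta} = \mathrm{id}_S$ and $\underline{\gamma} = \mathrm{id}_{X_S}$ by Definition \ref{defn:LogMapIso}(2)--(3), so the underlying data reduces to an isomorphism $\underline{\rho}\colon C_1 \to C_2$ of pre-stable curves over $S$ compatible with $\underline{f}_1$ and $\underline{f}_2$. This is exactly an isomorphism $\underline{\xi}_1 \to \underline{\xi}_2$ in $\sK(S)$, so the assignment $\xi \mapsto \underline{\xi}$ descends to groupoids.

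Third, for functoriality under pull-back along a $B$-morphism $g\colon T \to S$: by Definition \ref{defn:LogMap}, pull-back in $\sL\sM$ is taken along the \emph{strict} log morphism $(T,g^*\sM_S) \to (S,\sM_S)$, so on underlying schemes it reduces to the usual scheme-theoretic pull-back. In particular both the underlying log curve and the target $X_{S}$ pull back in the ordinary sense, so the underlying of $\xi_T$ is the ordinary pull-back $\underline{\xi}_T$. This gives the required morphism of fibered categories $\sL\sM \to \sK$ over $\Sch_B$.

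The main (and only) subtlety worth flagging is the strictness convention built into Definition \ref{defn:LogMap}: without restricting to strict pull-backs one could not identify $\underline{\xi_T}$ with the scheme-theoretic pull-back of $\underline{\xi}$, and the forgetful assignment would not preserve cartesian arrows. Given that convention, however, the observation is purely a matter of bookkeeping and requires no further argument beyond tracking the underlying data through each piece of the definitions in Section~\ref{ss:SetNote}.
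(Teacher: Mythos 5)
Your proposal is correct and matches the paper's (implicit) reasoning: the paper states this as an observation with no proof, treating it as immediate from Definitions \ref{defn:LogMap} and \ref{defn:LogMapIso}, and your verification is exactly the routine unpacking the paper leaves to the reader. Your remark about strictness of pull-backs being what makes the forgetful functor preserve cartesian arrows is accurate and consistent with the conventions of Section \ref{ss:SetNote}.
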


Our main result of this section is the following:
\begin{thm}\label{thm:AlgStackLogMap}
The fibered category $\sL\sM$ is an algebraic stack.
\end{thm}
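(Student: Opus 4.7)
The plan is to verify Artin's criteria \cite[5.1]{Artin} directly for $\sL\sM$, using the logarithmic cotangent complex of \cite{LogCot} as the source of deformation theory. A convenient preliminary reduction is provided by Observation \ref{obs:RelateToBase}: the natural morphism $\sL\sM \to \fB$ is a presheaf, and $\fB$ is algebraic by Theorem \ref{thm:LogStack}, so it suffices to verify the criteria for the relative morphism $\sL\sM \to \fB$. Concretely, the fiber of $\sL\sM$ over an object $\zeta \in \fB(S)$---carrying the log curve $(C, \sM_C)/(S, \sM_S)$ and the pulled-back log target $(X_S, \sM_{X_S})/(S, \sM_S)$---parametrizes log morphisms $f : (C, \sM_C) \to (X_S, \sM_{X_S})$ over $(S, \sM_S)$. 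This is a logarithmic analogue of the classical Hom-functor from a proper curve to a finite-type target, and the presheaf property immediately reduces representability of the diagonal of $\sL\sM$ to that of $\fB$.

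First I would dispatch the formal bookkeeping: $\sL\sM$ is a stack for the \'etale topology because log morphisms satisfy \'etale descent, and it is limit preserving because log curves are proper over the base, $X \to B$ is of finite type, and all log structures in sight are coherent. For the deformation theory, fix $\xi_0$ over $(S_0, \sM_{S_0})$ and a square-zero strict thickening $(S_0, \sM_{S_0}) \hookrightarrow (S, \sM_S)$ with ideal $J$, together with a chosen extension $\zeta$ of the underlying $\fB$-object. Then the set of deformations of $\xi_0$ lying over $\zeta$ is a torsor under $\mathrm{Ext}^{0}(f_0^* \BL^{log}_{X_{S_0}/S_0}, J \otimes \sO_{C_0})$, with obstruction class to existence lying in the corresponding $\mathrm{Ext}^{1}$, via Olsson's logarithmic cotangent complex \cite{LogCot}. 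Properness of $C_0$ forces both groups to be finite dimensional, and the Schlessinger--Rim conditions follow formally.

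The hard part will be the \emph{effectivity of formal log deformations}: given a compatible system of log maps over Artinian thickenings $(S_n, \sM_{S_n})$ of a geometric point, one must algebraize the limit to a log map over the formal spectrum. For the underlying morphism this is classical Grothendieck existence applied to $f_n : C_n \to X_{S_n}$; the compatibility of the log structures then carries through because $\sM_{X_S}$ is simply the pull-back of $\sM_X$ and $\sM_C$ is determined by the log curve structure, so the logarithmic data is rigid enough to glue into the effectivization. Combined with openness of versality---which follows in the standard way from the constructible behavior in families of the obstruction theory built from $\BL^{log}$---all five of Artin's conditions hold. Hence $\sL\sM \to \fB$ is representable by algebraic spaces, and since $\fB$ is algebraic, so is $\sL\sM$.
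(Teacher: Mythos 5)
Your overall strategy coincides with the paper's: reduce to Artin's criteria for $\sL\sM$ relative to the algebraic stack $\fB$, with deformations and obstructions supplied by $\mathrm{Ext}^{i}(f^{*}\BL^{log}_{X_S/S}, \cdot)$ via Olsson's log cotangent complex. However, two of the steps you declare easy are precisely where the real work lies, and as written they are gaps.

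First, representability of the diagonal does not follow ``immediately'' from the presheaf property of $\sL\sM \to \fB$. Being fibered in setoids over $\fB$ only tells you that $\sI som_S(\xi_1,\xi_2) \to \sI som_S(\zeta_1,\zeta_2)$ is a monomorphism; you still must show that the condition ``a given isomorphism of log sources and targets intertwines $f_1^{\flat}$ and $f_2^{\flat}$'' is representable. The paper proves (Lemma \ref{lem:IsoFinIm}) that this condition is a quasi-compact locally closed immersion: equality on the level of characteristic monoids is open by \cite[3.6]{LogStack}, the remaining equality of lifts is closed on the curve, and one then pushes this down to the base using properness and flatness of $C \to S$ together with the ``essentially free'' case of \cite[VIII Theorem 6.5]{SGA3}. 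None of this is automatic, and without it you have no representable diagonal and hence no algebraic stack.

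Second, your treatment of effectivity is the more serious gap. Grothendieck existence handles the underlying map $\underline{f}$, but the assertion that ``the logarithmic data is rigid enough to glue into the effectivization'' begs the question: given compatible morphisms of log structures $f_n^{\flat}: \underline{f}_n^{*}\sM_{X_{S_n}} \to \sM_{C_n}$, you must produce $f^{\flat}$ over the formal limit, and this requires knowing that sections of the relevant log structures over an affine chart $V$ of $C$ are recovered as the inverse limit of their truncations, i.e.\ that $\Gamma(\sN,V) \to \varprojlim \Gamma(\sN_n,V_n)$ is an isomorphism of monoids. This is not formal: the paper devotes Lemma \ref{lem:compare-section} and Proposition \ref{prop:compare-section} to it, comparing units, characteristics, and the limit monoid step by step, and then checks compatibility with the structure maps and with the base log structure before gluing over a cover. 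A correct proof must either reproduce this analysis or cite a result that supplies it; appealing to ``rigidity'' of pulled-back log structures does not.
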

\begin{proof}
The rest of this section is devote to the proof of this theorem. The representability of the diagonal $\sL\sM\to \sL\sM\times\sL\sM$ is proved in Section \ref{ss:DiagRep}. By Observation \ref{obs:RelateToBase}, we have a natural map $\sL\sM\to \fB$ to the algebraic stack $\fB$. Thus, to produce a smooth cover for $\sL\sM$, it is enough to check Artin's criteria \cite[5.1]{Artin} relative to $\fB$. This will be done from Section \ref{ss:StackGlue} to \ref{ss:CompWithCompletion}. 
\end{proof}

\subsection{Representability of the isomorphism functors of log maps}\label{ss:DiagRep}
\begin{prop}\label{prop:RepIsoLogMap}
Consider two log maps $\xi_{1}$ and $\xi_{2}$ over a $B$-scheme $S$ as in Definition \ref{defn:LogMapIso}. The functor $\sI som_{S}(\xi_{1},\xi_{2})$ is represented by an algebraic space of finite type over $S$.
\end{prop}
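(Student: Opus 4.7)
The plan is to factor $\sI som_S(\xi_1,\xi_2)$ through an isomorphism functor on the algebraic stack $\fB$, and then cut out the additional compatibility with the maps $f_1$ and $f_2$ as a closed condition. First I would let $\zeta_i\in\fB(S)$ denote the image of $\xi_i$ under the forgetful functor $\sL\sM\to\fB$ from Observation \ref{obs:RelateToBase}, and set $I:=\sI som^{\fB}_S(\zeta_1,\zeta_2)$. Since $\fB$ is algebraic of finite presentation over $B$ by Theorem \ref{thm:LogStack}, the functor $I$ is represented by an algebraic space of finite type over $S$. Observation \ref{obs:RelateToBase} further asserts that $\sL\sM\to\fB$ is a pre-sheaf, so the natural map $\sI som_S(\xi_1,\xi_2)\to I$ is an injection of functors. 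It therefore suffices to show that this subfunctor is cut out by a closed condition on $I$.

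Next I would use the universal isomorphism $(\rho^u,\theta^u,\gamma^u)$ over $I$ to produce two log maps, $\gamma^u\circ f_{1,I}$ and $f_{2,I}\circ\rho^u$, from $C_{1,I}$ to the pullback of $(X_S,\sM_{X_S,2})$. A point of $I$ lifts to $\sI som_S(\xi_1,\xi_2)$ precisely when these two log maps coincide, so I would split the equality condition into its underlying and its log components. Equality of the two underlying morphisms to $\underline{X_{S,I}}$ is a closed condition on $I$ because $X_{S,I}\to I$ is separated, inherited from the separatedness of $\pi\colon X\to B$. Restricting to the resulting closed subspace $I'\subset I$, the two log maps share a common underlying morphism $\underline f$, and one reduces to comparing two morphisms of fine log structures $\underline f^{-1}\sM_{X_S,2}\to\sM_{C_2}$ over $I'$.

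The main obstacle will be this last comparison. \'Etale-locally I would choose a finitely generated chart for $\sM_{X_S,2}$, so that equality of the two log morphisms is determined by finitely many equalities of sections of the fine log structure $\sM_{C_2}$; each such equality is a closed condition on the base because the diagonal of an \'etale sheaf of monoids is a closed subspace. Gluing these local closed conditions produces a closed subspace of $I'$ representing $\sI som_S(\xi_1,\xi_2)$, which is thus an algebraic space of finite type over $S$. One could alternatively bypass the chart argument by invoking Olsson's theory of $\sL og$-stacks, which directly yields representability of the equalizer of two morphisms of fine log schemes by a locally closed subspace of the base.
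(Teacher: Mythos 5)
Your overall strategy---reduce to the Isom space of the algebraic stack $\fB$ and then cut out compatibility with $f_1,f_2$---is the same as the paper's, but the last step, which is where all the content lies, has two genuine problems. First, the conditions you are imposing (equality of the two underlying morphisms to $X_{S}$, and equality of the two morphisms of log structures) are conditions at every point of the \emph{curve} $C_{1,I}$, not of the base $I$. Locally on the curve you do get a (locally) closed subscheme $T\subset C_{1,I}$, but the subfunctor of $I$ you need is ``the locus of points $i$ over which the entire fiber $C_{1,i}$ is contained in $T$,'' and representing \emph{that} by a locally closed immersion into $I$ is a separate step. The paper's Lemma \ref{lem:IsoFinIm} spends its second half on exactly this descent, using flatness and properness of $C_{1}\to S$ together with the ``essentially free'' case of \cite[VIII Theorem 6.5]{SGA3} (via \cite{Abramovich}) to produce the maximal closed subscheme $Z\subset S$ with $C_{1}\times_{S}Z\subset T$. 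Your proposal asserts the conditions are ``closed conditions on the base'' without this argument, which is the missing idea.

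Second, the justification ``each such equality is a closed condition\ldots because the diagonal of an \'etale sheaf of monoids is a closed subspace'' is not correct for fine log structures: two local sections of $\sM_{C}$ agree only where (i) their images in the characteristic $\CharM_{C}$ agree, which by \cite[3.6]{LogStack} is an \emph{open} condition, and (ii) the resulting unit discrepancy equals $1$, which is closed. So the equalizer is only locally closed, as the paper states in Lemma \ref{lem:IsoFinIm}; your closing remark about Olsson's $\sL og$-stacks giving a locally closed equalizer is the correct version of this point and should replace the sheaf-diagonal argument. Neither issue changes the truth of the statement (a locally closed subspace is still an algebraic space of finite type), but as written the proof has a real gap at the descent-to-the-base step and an incorrect lemma at the monoid step.
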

\begin{proof}
Using the notations as in Definition \ref{defn:LogMapIso}, Remark \ref{rem:ModuliBase}, and Observation \ref{obs:CompMapStack}, we form the following commutative diagram:
\begin{equation}\label{diag:IsoRelToBK}
\xymatrix{
\sI som_{S}(\xi_{1},\xi_{2}) \ar[ddr]_{\phi_{1}} \ar[drr]^{\phi_{2}} \ar[dr]|-{\phi_{3}} \\
& I \ar[d] \ar[r] & \sI som_{S}(\underline{\xi_{1}},\underline{\xi_{2}}) \ar[d]_{\psi_{2}}\\
& \sI som_{S}(\zeta_{1},\zeta_{2}) \ar[r]^{\psi_{1}} & \sI som_{S}(\underline{\zeta_{1}},\underline{\zeta_{2}}),
}
\end{equation}
where the square is cartesian, and $\phi_{3}$ is given by the universal property of fiber product. Here $\zeta_{i}$ is the corresponding log source and target of $\xi_{i}$ given by the natural map in Observation \ref{obs:RelateToBase}, and $\underline{\xi_{i}}$ is the underline map of $\xi_{i}$ given by the natural map in Observation \ref{obs:CompMapStack}. The object $\underline{\zeta_{i}}$ can be obtained by removing log structures on $\zeta_{i}$, or given by the source and target of $\underline{\xi_{i}}$. 

Note that any isomorphism of $\xi_{1}$ and $\xi_{2}$ induces trivial isomorphism of the underlying structure of the target $X_{S}\to S$. Thus, the sheaf $\sI som_{S}(\underline{\zeta_{1}},\underline{\zeta_{2}})$ is the sheaf of isomorphisms of the underlying curves. Since $\sI som_{S}(\underline{\xi_{1}},\underline{\xi_{2}})$, $\sI som_{S}(\underline{\zeta_{1}},\underline{\zeta_{2}})$, and $\sI som_{S}(\zeta_{1},\zeta_{2})$ are represented by algebraic spaces of finite type over $S$, it is enough to show that $\phi_{3}$ is representable and of finite type.

Consider an $S$-scheme $U$, and an arrow $U \rightarrow I$ given by a pair $(\tau,\lambda)$, where 
\[\tau \in \sI som_{S}(\zeta_{1},\zeta_{2})(U) \mbox{\ \ and\ \ } \lambda \in \sI som_{S}(\underline{\xi_{1}},\underline{\xi_{2}})(U), \]
such that their induced elements in $\sI som_{S}(\underline{\zeta_{1}},\underline{\zeta_{2}})(U)$ coincide. Now we have a cartesian diagram :
\[
\xymatrix{
I' \ar[r] \ar[d] & \sI som_{S}(\xi_{1},\xi_{2}) \ar[d] \\
U \ar[r]_{(\tau,\lambda)} & I.
}
\]
Here $I'$ is the sheaf over $U$ which for any $V \rightarrow U$ associated a unital set $\{*\}$ if $(\tau,\lambda)_{V}$ induces an isomorphism between $\xi_{1,V}$ and $\xi_{2,V}$, and the empty set otherwise. Next we will show that $I' \rightarrow U$ is a locally closed immersion of finite type.

For simplicity, we assume $U = S$, denote by $\tau=(\rho,\theta,\gamma)$ as in Remark \ref{rem:ModuliBase}. We need to show that the commutativity of the following diagram of log schemes is represented by a locally closed immersion of finite type:
\[
\xymatrix{
(C_{1},\sM_{C_{1}}) \ar[rr]^{f_{1}} \ar[d]_{\rho} && (X,\sM_{X,1}) \ar[d]_{\gamma}\\
(C_{2},\sM_{C_{2}}) \ar[rr]^{f_{2}} && (X,\sM_{X,2}) .
}
\]
Since the map $\tau$ already gives an isomorphism of the underlying structure, we only need to consider the commutativity of 
\begin{equation}\label{diag:LogCommute}
\xymatrix{
\sM_{C_{1}}  && f_{1}^{*}\sM_{X,1} \ar[ll]_{f_{1}^{\flat}}\\
\rho^{*}\sM_{C_{2}} \ar[u]^{\rho^{\flat}}  && \rho^{*}\circ f_{2}^{*}\sM_{X,2} \ar[ll]^{\rho^{*}\circ f_{2}^{\flat}} \ar[u]_{\gamma^{\flat}}.
}
\end{equation}
Our statement follows from the following lemma. 
\end{proof}

\begin{lem}\label{lem:IsoFinIm}
The condition that (\ref{diag:LogCommute}) commutes, is represented by a quasi-compact locally closed immersion $Z\to S$.
\end{lem}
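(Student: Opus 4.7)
Plan: The assertion is \'etale-local on $S$, so we may choose local charts. Since $\gamma:(X_S,\sM_{X,1})\to(X_S,\sM_{X,2})$ is an isomorphism of log structures with $\underline{\gamma}=\mathrm{id}$, after an \'etale cover of $X_S$ we may pick a single finitely generated monoid $P$ serving as a chart for both $\sM_{X,1}$ and $\sM_{X,2}$, compatibly with $\gamma^{\flat}$. Pulling back along the common underlying map (the maps $f_1$ and $f_2\circ\rho$ agree on underlying schemes by the isomorphism $\lambda$ of the underlying stable maps), each generator $p\in P$ yields two local sections $\alpha(p),\beta(p)\in\sM_{C_1}$ arising from the two compositions in diagram (\ref{diag:LogCommute}). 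The commutativity of the square is equivalent to $\alpha(p)=\beta(p)$ in $\sM_{C_1}$ for the finitely many generators.

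Since $\alpha(p)$ and $\beta(p)$ have the same image in $\sO_{C_1}$ (the underlying map is fixed), the equality naturally splits into two pieces: first, equality of the images $\bar\alpha(p)=\bar\beta(p)$ in the characteristic sheaf $\CharM_{C_1}$; second, given that, vanishing of $u_p-1$ for the well-defined unit $u_p\in\sO_{C_1}^{\times}$ determined by $\alpha(p)=u_p\cdot\beta(p)$.

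For the characteristic step, the sheaf $\CharM_{C_1}$ is constructible, being locally constant on the finitely many strata of the log pre-stable curve indexed by smooth points, markings, and nodes. Each marking gives a section of $C_1\to S$ and the node locus is finite over $S$, so properness of these strata reduces the condition ``$\bar\alpha(p)=\bar\beta(p)$ globally on $C_{1,T}$'' to a finite collection of open and closed conditions on $S$, yielding a locally closed subscheme. For the unit step, once the characteristic condition is imposed, $u_p-1$ is a well-defined global section of $\sO_{C_1}$; via the coherent $\sO_S$-module $\pi_{\ast}\sO_{C_1}$ (with $\pi:C_1\to S$ proper and flat by the log curve structure), its vanishing after base change is represented by a closed subscheme. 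Combining the finitely many conditions (finitely many generators of $P$, finitely many patches in the cover of $X_S$) yields a quasi-compact locally closed immersion $Z\to S$.

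The main obstacle is the characteristic step: one must upgrade a constructible condition on $C_1$ to a locally closed condition on $S$. The combinatorial rigidity of $\CharM_{C_1}$---its local constancy on the stratification by nodes and markings, combined with the fact that each such stratum is proper over $S$---is what makes this possible; without it, one would only obtain a constructible subset of $S$ rather than a locally closed subscheme.
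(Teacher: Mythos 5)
Your overall decomposition --- equality on the level of characteristic monoids first, then equality of the residual units --- is exactly the paper's, but the characteristic step as you argue it has a genuine gap. You assert that the condition $\bar\alpha(p)=\bar\beta(p)$ on all of $C_{1,T}$ reduces, by ``properness of the strata,'' to conditions along the markings and the node locus. The stratum that actually carries most of the condition, namely the smooth non-marked locus, is an open subset of $C_1$ and is \emph{not} proper over $S$, so this mechanism does not apply to it; and the implicit claim that checking at the nodes and markings suffices is itself a nontrivial fact (it uses that equality of two sections of $\CharM_{C_1}$ at a point propagates to all generizations of that point, and it still misses fibers with no special points at all, e.g.\ a smooth unmarked irreducible fiber). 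The correct mechanism, which is what the paper invokes via \cite[3.6]{LogStack}, is that for a fine log structure the locus where two sections of the characteristic sheaf agree is \emph{open} in $C_1$: near any geometric point a chart identifies $\CharM_{C_1}$ with a quotient of the constant sheaf on its stalk there, so agreement at the point forces agreement in a neighborhood. Combining this with properness of $C_1\to S$ itself (take the image of the closed complement of the agreement locus) makes ``agreement on the whole fiber'' an open condition on $S$; no stratification is needed, and one gets an open, not merely locally closed, condition at this stage.

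Your unit step is right in spirit but also glossed: $\pi_{*}\sO_{C_1}$ does not commute with base change in general, so the claim that vanishing of $u_p-1$ after arbitrary base change is a closed condition requires the representability result for such conditions on proper flat families (the coherent module $Q$ on $S$ with $\pi_{T*}\sO_{C_T}=\mathrm{Hom}(Q_T,\sO_T)$ functorially in $T$, or equivalently the SGA3 statement the paper cites through \cite{Abramovich}). That is a standard repair, so I would treat it as a presentational issue; the step that genuinely needs to be redone is the characteristic one.
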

\begin{proof}
The commutativity of (\ref{diag:LogCommute}) is equivalent to the equality 
\begin{equation}\label{equ:LogCommute}
\rho^{\flat}\circ(\rho^{*}\circ f_{2}^{\flat})=f_{1}^{\flat}\circ \gamma^{\flat}.
\end{equation}
By \cite[3.6]{LogStack}, the condition that (\ref{equ:LogCommute}) holds on the level of characteristic is an open condition on $C_{1}$. Since $C_{1}\to S$ is flat and proper, by shrinking $S$, we can assume that the equality (\ref{equ:LogCommute}) on the level of characteristic holds. 

With this assumption on the characteristic, the proof in \cite[3.6]{LogStack} shows that the (\ref{equ:LogCommute}) is represented by a closed subscheme $T\subset C_{1}$ on the fiber. Note that the statement is locally on $S$. Further shrinking $S$, we can assume that the family $C\to S$ is projective. Now what we want is the maximal closed subscheme $Z\subset S$ parameterizing fibers $C_{1}\times_{S}Z\subset T$ as in \cite[Definition 3, 4]{Abramovich}. Then the lemma can be deduced from the ``essential free'' case of \cite[VIII Theorem 6.5]{SGA3}. See \cite[Theorem 6(3)]{Abramovich} for the reduction argument. 
\end{proof}

Next, we check the Artin's criteria \cite[5.1]{Artin}.
\subsection{$\sL\sM$ is a stack in the \'etale topology}\label{ss:StackGlue}
By \cite[1.1]{Artin}, or \cite[Definition 3.1]{LMB}, we need to prove the following:
\begin{enumerate}
 \item the isomorphism functor is a sheaf in the \'etale topology;
 \item any \'etale descent datum for an object of $\sL\sM$ is effective.
\end{enumerate}
Since the isomorphism functor is shown to be representable, it is a sheaf in the \'etale topology. For the second condition, let $\{S_{i}\rightarrow S\}_{i}$ be an \'etale covering of $S$, and $\xi_{i} \in \sL\sM(S_{i})$ for each $i$. Assume that we have isomorphisms $\phi_{ij}:\xi_{i}|_{S_{i}\times_{S}S_{j}}\rightarrow \xi_{j}|_{S_{i}\times_{S}S_{j}}$ for each pair $(i,j)$, which satisfy the cocycle condition. 

For any $i$, let $\zeta_{i}$ be the corresponding log curve and target as in Remark \ref{rem:ModuliBase} for $\xi_{i}$. Since such $\zeta_{i}$ is parametrized by the algebraic stack $\fB$, we can glue them together to obtain $\zeta$ over $S$, whose restriction to each $S_{i}$ is $\zeta_{i}$. Then \'etale locally we have a log map from $\zeta$ given by $\xi_{i}$. Since log maps are defined in terms of homomorphisms of \'etale sheaves, they can be glued from \'etale local data. Therefore we can glue $\xi_{i}$ to obtain the log map $\xi$ with the source curve given by $\zeta$.

\subsection{$\sL\sM$ is limit preserving}\label{ss:LocalFiniteType}
Consider 
\[R=\lim_{\rightarrow}R_{i},\] 
where $\{R_{i}\}$ is a filtering inductive system of neotherian rings. Denote by $S=\Spec R$ and $S_{i}=\Spec R_{i}$. By \cite[Section 1]{Artin}, we need to show that the following map of groupoids is an equivalence of categories:
\[\lim_{\leftarrow}\sL\sM(S_{i})\rightarrow \sL\sM(S)\] 

Consider a log map $\xi=(C\rightarrow S, X_{S}\to S,\sM_{S},f)$ in $\sL\sM(S)$. Since the stack $\fB$ is locally of finite type, we have the family $\zeta=(C\rightarrow S,X_{S}\to S, \sM_{S})$ coming from $\zeta_{i}=(C_{i}\rightarrow S_{i},X_{S_{i}}\to S_{i},\sM_{S_{i}})$ over $S_{i}$ for some $i$. Also notice that we have an induced map $S\to \sK$ given by the underlying map. Since $\sK$ is locally of finite type, the underlying map $\underline{f}$ comes from $\underline{f}_{i'}$ over some $S_{i'}$. We choose an index $i_{0}$ such that $i_{0}>i$ and $i_{0}>i'$.

It remains to consider the map of log structures $f^{\flat}:f^{*}\sM_{X}\to \sM_{C}$. We first introduce two stacks $\sL^{\Delta}$ and $\sL^{\Lambda}$ as in \cite[section 2]{LogCot}. 

\begin{rem}
Consider a scheme $U$ over $\Z$. Objects in $\sL^{\Delta}(U)$ are commutative diagrams of log structures on $U$ of the following form
\begin{equation}\label{diag:DeltaLogStr}
\xymatrix{
& \sM_{1} \ar[ld] \ar[rd] &\\
\sM_{2} \ar[rr]& & \sM_{3}.
}
\end{equation}
Objects in $\sL^{\Lambda}$ are diagrams of log structures on $U$ of the following form
\begin{equation}\label{diag:LambdaLogStr}
\xymatrix{
& \sM_{1} \ar[ld] \ar[rd] &\\
\sM_{2} & & \sM_{3}.
}
\end{equation}
It was shown in \cite[2.4]{LogCot} that those two stacks $\sL^{\Delta}$ and $\sL^{\Lambda}$ are algebraic stacks locally of finite type. Note that there is a natural morphism $\sL^{\Delta}\to\sL^{\Lambda}$ by dropping the bottom arrow in (\ref{diag:DeltaLogStr}) to obtain (\ref{diag:LambdaLogStr}).
\end{rem}

\begin{obs}\label{obs:LogStrMap}
Consider $\zeta=(\pi_{C}:C\to S, X_{S}\to S,\sM_{S})$ the family of log sources and targets constructed above. There is a natural diagram of log structures on $C$ as follows
\begin{equation}\label{diag:ComLogOnCurve}
\xymatrix{
& \pi^{*}_{C}\sM_{S} \ar[ld] \ar[rd] &\\
f^{*}\sM_{X} & & \sM_{C}.
}
\end{equation}
This induces a natural map $C\to \sL^{\Lambda}$. Consider the fiber product $\sL^{\Delta}\times_{\sL^{\Lambda}}C$. This gives an algebraic stack parameterizing the bottom arrows $f^{\flat}$ that fits in the above commutative diagram.
\end{obs}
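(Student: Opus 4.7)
The plan is to verify in turn the three claims of the observation: the existence of the $\Lambda$-shaped diagram (\ref{diag:ComLogOnCurve}), the morphism $C\to \sL^{\Lambda}$ it induces, and the moduli interpretation of $\sL^{\Delta}\times_{\sL^{\Lambda}}C$.

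First I would construct the two arrows of (\ref{diag:ComLogOnCurve}) by unpacking the relevant structures. The right-hand arrow $\pi_{C}^{*}\sM_{S}\to \sM_{C}$ is built into the definition of a log curve (Definition \ref{DefLogC2}): $\sM_{C}$ comes equipped with an action of the pullback of the base log structure along $\pi_{C}$. For the left-hand arrow $\pi_{C}^{*}\sM_{S}\to f^{*}\sM_{X_{S}}$, recall that $\sM_{X_{S}}$ is defined via the cartesian square of Definition \ref{defn:LogMapOverLog}(2), so it carries a canonical map from the pullback of $\sM_{S}$ to $X_{S}$; pulling this along $f$ produces the desired arrow on $C$. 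No commutativity condition needs to be checked, since the shape is $\Lambda$ rather than $\Delta$.

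Next, the map $C\to \sL^{\Lambda}$ is immediate from the moduli description: by \cite[2.4]{LogCot}, recalled in the preceding Remark, $\sL^{\Lambda}$ represents the pseudo-functor whose $U$-points are $\Lambda$-shaped diagrams of fine log structures on $U$, and the preceding paragraph exhibits exactly such an object on $U=C$.

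Finally, for the moduli interpretation of the fiber product, an object of $\sL^{\Delta}\times_{\sL^{\Lambda}}C$ over a $C$-scheme $U$ consists of a $\Delta$-diagram of fine log structures on $U$ whose underlying $\Lambda$-subdiagram coincides with the pullback to $U$ of (\ref{diag:ComLogOnCurve}). Since the three log structures and the two upper arrows are thus pre-determined, the only remaining datum is a bottom arrow $f^{*}\sM_{X}|_{U}\to \sM_{C}|_{U}$ fitting into a commutative triangle, which is exactly a candidate for the restriction of $f^{\flat}$. Algebraicity of $\sL^{\Delta}\times_{\sL^{\Lambda}}C$ then follows by base change from the algebraicity of $\sL^{\Delta}$ and $\sL^{\Lambda}$ recalled in the preceding Remark together with the algebraicity of $C$. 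I expect no serious obstacle here; the only mild point of care is to confirm that the $\Lambda$-diagram on $C$ is functorial in the scheme $C$ rather than only pointwise, so that honest pullback to $U$ makes sense, and this is automatic from the strict-pullback construction of $\sM_{X_{S}}$ and the universal property built into $\sM_{C}$.
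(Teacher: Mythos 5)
Your proposal is correct and fills in exactly the routine verifications the paper leaves implicit in this Observation: the two legs of the $\Lambda$-diagram come from the log-curve structure map and from the cartesian square defining $\sM_{X_{S}}$ (using $\pi_{2}\circ f=\pi_{C}$), and the fiber product with $\sL^{\Delta}$ over $\sL^{\Lambda}$ then parameterizes precisely the bottom arrows. This matches the paper's intended argument, so there is nothing further to add.
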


The map $f^{\flat}$ is equivalent to a map $C\to \sL^{\Delta}\times_{\sL^{\Lambda}}C$. Note that the algebraic stack $\sL^{\Delta}\times_{\sL^{\Lambda}}C$ is locally of finite presentation. By \cite[Proposition 4.18(i)]{LMB}, we have the map $f^{\flat}$ coming from some $f^{\flat}_{i_{1}}$ over $S_{i_{1}}$ for some $i_{1}>i_{0}$. This map is compatible with all the log structures coming from base and target. Indeed, consider the composition 
\[p_{j}:C_{j}\to \sL^{\Delta}\times_{\sL^{\Lambda}}C_{j} \to C_{j}.\]
Applying \cite[Proposition 4.18(i)]{LMB} again, we see that the identity $p=id_{C}:C\to C$ comes from $p_{j}$ for some $i_{2}>i_{1}$. Thus, the map $f_{i_{2}}$ also compatible with the underlying map $\underline{f}$. This proves the essential surjectivity.

The full faithfulness follows from \cite[Proposition 4.15(i)]{LMB} and the fact that the diagonal $\sL\sM\rightarrow \sL\sM\times_{B}\sL\sM$ is representable and locally of finite type.

\subsection{Deformations and obstructions}\label{DefObs}
By \cite[Definition 5.1]{Artin}, it remains to find a smooth cover of $\sL\sM$. As in Observation \ref{obs:RelateToBase}, we have a representable map of stack $\sL\sM\to \fB$. Since $\fB$ is an algebraic stack, it would be enough to produce a smooth cover for $\sL\sM_{U}:=\sL\sM\times_{\fB} U$, where $U\rightarrow \fB$ is an arbitrary smooth map. This can be done by checking Artin's criteria \cite[5.2]{Artin} for $\sL\sM_{U}$ relative to $U$. First we consider the deformations and obstructions.

Let $A_{0}$ be a reduced neotherian ring over $U$, and $A'\rightarrow A \rightarrow A_{0}$ be an infinitesimal extension of $A_{0}$, where $A'\rightarrow A$ is surjective whose kernel $I$ is a finite $A_{0}-$module, hence is a square-zero ideal. Denote by $S=\Spec A$ and $S'=\Spec A'$. Consider a log map $\xi_{A}=(C\rightarrow S,X_{S}\to S, \sM_{S},f)\in \sL\sM_{U}$. Let $\xi_{0}=(C_{0}\rightarrow S_{0},X_{S_{0}}\to S_{0},\sM_{S_{0}},f_{0})$ be the restriction of $\xi_{A}$ over $A_{0}$. Since we are over $U$, the log sources and targets $(C\rightarrow S, X_{S}\to S, \sM_{S})$ comes from the structure morphism $S\to U$. Note that we have another family of log sources and targets $(C'\rightarrow S', X_{S'}\to S', \sM_{S'})$, which are also from the structure map $S'\rightarrow U$. To obtain a deformation of $\xi_{A}$ over $S'$, it is equivalent to producing a dotted arrow $f'$ that fits in the following commutative diagram:
\begin{equation}\label{diag:DeformMap}
\xymatrix{
(C,\sM_{C}) \ar[rr]^{k} \ar[dr]^{f} \ar@/_1pc/[dddr] && (C',\sM_{C'}) \ar@/_1pc/[dddr] \ar@{.>}[dr]^{f'} &\\
&(X_{S},\sM_{X_{S}}) \ar[dd] \ar[rr]^{j} && (X_{S'},\sM_{X_{S'}}) \ar[dd]\\
&&&\\
&(S,\sM_{S}) \ar[rr]^{i} && (S',\sM_{S'})
}
\end{equation}
Note that the front and back squares in (\ref{diag:DeformMap}) are cartesian squares. Let $\BL_{X_{S}/S}^{log}$ be the logarithmic cotangent complex of the log map $(X_{S},\sM_{X_{S}})\to(S,\sM_{S})$ as in \cite{LogCot}. By \cite[5.9]{LogCot}, we have the following results:
\begin{enumerate}
 \item there is a canonical class $o\in Ext^{1}(f^{*}\BL_{X_{S}/S}^{log},I\otimes_{A_{0}}\sO_{C_{0}})$, whose vanishing is necessary and sufficient for the existence of a morphism $f'$ fitting in the above diagram.
 \item if $o=0$, then the set of such maps $f'$ is a torsor under $Ext^{0}(f^{*}\BL_{X_{S}/S}^{log},I\otimes_{A_{0}}\sO_{C_{0}})$.
\end{enumerate}
Since the family of targets $X^{log}\to B^{log}$ is log flat, by \cite[1.1(iv)]{LogCot} we have 
\[Ext^{1}(f^{*}\BL_{X_{S}/S}^{log},I\otimes_{A_{0}}\sO_{C_{0}})\cong Ext^{1}(f^{*}_{0}\BL_{X_{S_{0}}/S_{0}}^{log},I\otimes_{A_{0}}\sO_{C_{0}}).\] 
Thus we define $\sD_{\xi_{A}}(I)=Ext^{0}(f^{*}\BL_{X_{S}/S}^{log},I\otimes_{A_{0}}\sO_{C_{0}})$ and $\sO_{\xi_{0}}(I)=Ext^{1}(f^{*}_{0}\BL_{X_{S_{0}}/S_{0}}^{log},I\otimes_{A_{0}}\sO_{C_{0}})$ to be the modules of deformations and obstructions respectively. Note that the log cotangent complex $\BL_{X_{S}/S}^{log}$ is bounded above with coherent cohomologies. The conditions of deformation and obstruction modules in \cite[5.2(4)]{Artin} follows from the standard property of cohomology, see for example \cite[5.3.4]{AV}.

\subsection{Schlessinger's conditions}\label{ss:SchCond}
By \cite[5.2(2)]{Artin}, we need to verify Schlessinger's conditions (S1) and (S2) as in \cite[section 2]{Artin}. The condition (S2) follows from the cohomological description of the module of deformation $\sD$. Next we check the condition (S1$'$) \cite[2.3]{Artin}, which is a stronger version of (S1).

Indeed, consider an infinitesimal extension $A'\rightarrow A\rightarrow A_{0}$ as in Section \ref{DefObs}, and a $U$-algebra homomorphism $B\rightarrow A$ such that the composition $B\rightarrow A_{0}$ is surjective. Consider $\xi_{A}\in \sL\sM_{U}(A)$. For any surjection $R\rightarrow A$, denote by $\sL\sM_{\xi_{A}}(R)$ the category of log maps over $\Spec R$ whose restriction to $\Spec A$ is $\xi_{A}$. Then we need to show that \[\sL\sM_{\xi_{A}}(A'\times_{A}B)\rightarrow \sL\sM_{\xi_{A}}(A')\times\sL\sM_{\xi_{A}}(B)\]
is an equivalence of categories.

First, consider the essential surjectivity. Consider objects $\xi_{A'}\in\sL\sM_{\xi_{A}}(A')$ and $\xi_{B}\in\sL\sM_{\xi_{A}}(B)$. Denote by $\xi_{A'}=(\zeta_{A'},f_{A'})$ and $\xi_{B}=(\zeta_{B},f_{B})$, where $\zeta_{A'}$ and $\zeta_{B}$ are the corresponding log sources and targets as in Remark \ref{rem:ModuliBase}. Note that the two families $\zeta_{A'}$ and $\zeta_{B}$ correspond to maps $\Spec A'\rightarrow U$ and $\Spec B\rightarrow U$, which induce the same map $\Spec A\rightarrow U$ by restricting to $\Spec A$. Then we can glue them together to obtain a map $\Spec B\times_{A}A' \rightarrow U$. This induces a family $\zeta_{B\times_{A}A'}$ over $\Spec B\times_{A}A'$, whose restrictions to $\Spec A'$ and $\Spec B$ are $\zeta_{A'}$ and $\zeta_{B}$ respectively. Since the stack $\sK$ parameterizing the underlying maps is algebraic, the same argument as above produces a gluing $\underline{f}_{A'\times_{A}B}$ of $\underline{f}_{A'}$ and $\underline{f}_{B}$.

It remains to produce a compatible morphism of log structures $f^{\flat}_{A'\times_{A}B}$. Next we choose an affine open cover $V_{B\times_{A}A'}=\bigcup_{i}V_{i}$ of the log source curve in $\zeta_{B\times_{A}A'}$, its restrictions to $A'$ and $B$ give the affine open covers $V_{B}$ and $V_{A}$ for curves of $\zeta_{A'}$ and $\zeta_{B}$ respectively. Consider the stack 
\[\sL^{\Delta}\times_{\sL^{\Lambda}}C_{A'} \]
induced by $\zeta_{A'}$ and the map $\underline{f}_{A}$ as in Observation \ref{obs:LogStrMap}. Similarly, we have stack
\[\sL^{\Delta}\times_{\sL^{\Lambda}}C_{B}\]
induced by $\zeta_{B}$ and $\underline{f}_{B}$. They can be glued to give $\sL^{\Delta}\times_{\sL^{\Lambda}}C_{A'\times_{A}B}$ which corresponds to $\zeta_{A'\times_{A}B}$. Consider the maps $V_{A'}\rightarrow \sL^{\Delta}\times_{\sL^{\Lambda}}C_{A'}$ and $V_{B}\rightarrow \sL^{\Delta}\times_{\sL^{\Lambda}}C_{B}$ induced by $f_{A'}$ and $f_{B}$ respectively. Note that these maps can be glued together and descend to a map 
\[C_{A'\times_{A}B} \to \sL^{\Delta}\times_{\sL^{\Lambda}}C_{A'\times_{A}B}.\]
This induce a map of log structures 
\[f^{\flat}_{A'\times_{A}B}: \underline{f}_{A'\times_{A}B}^{*}\sM_{X_{A'\times_{A}B}} \to \sM_{C_{A'\times_{A}B}}.\] 
By construction $f^{\flat}_{A'\times_{A}B}$ is compatible with $\zeta_{A'}\times_{A}B$ and the underlying map $\underline{f}_{A'\times_{A}B}$. 

The full faithfulness follows from the representability of isomorphism functor of log maps.

\subsection{Compatibility with formal completions}\label{ss:CompWithCompletion}
Let $\hat{A}$ be a complete local ring, and $m$ be the maximal ideal of $\hat{A}$. Denote by $A_{n}=\hat{A}/m^{n}$, $S=\Spec \hat{A}$, and $S_{n}=\Spec A_{n}$. Since we work over a fixed chart $U\to \sL\sM$, it is enough to consider a family of log maps $\{\xi_{n}=(C_{n}\rightarrow S_{n}, X_{S_{n}}\to S_{n},\sM_{S},f_{n})\}_{n}$ such that $\xi_{n}\in\sL\sM_{U}(S_{n})$, and $\xi_{n}|_{S_{k}}=\xi_{k}$ for any $n\geq k$. According to \cite[5.2(3)]{Artin}, we need to show that there exists an element $\xi\in\sL\sM_{U}(S)$, such that $\xi|_{S_{n}}=\xi_{n}$ for any $n$.

Denote by $\zeta_{n}=(C_{n}\to S_{n},X_{S_{n}}\to S_{n}, \sM_{S_{n}})$ the family of log sources and targets of $\xi_{n}$. For each $n$, there is a map $S_{n}\to U$ induced by $\zeta_{n}$, such that they fit in the following commutative diagrams for any $k\leq n$:
\[
\xymatrix{
S_{n} \ar[r] \ar[d] & U\\
S_{k} \ar[ru].
}
\]
Thus the above diagram induces a map $S\rightarrow U$, whose restriction to $S_{n}$ is the map given by $\zeta_{n}$ as above. By pulling back the universal family over $U$, we obtain a family of log sources and targets $\zeta=(C\rightarrow S, X_{S}\to S,\sM_{S})$. Note that $\zeta|_{S_{n}}=\zeta_{n}$ for any $n$.

Denote by $\underline{\xi}_{n}$ the usual pre-stable map over $S_{n}$. Consider the family of compatible underlying maps $\{\underline{\xi}_{n}\}$. By \cite[5.4.1]{EGAIII(1)}, there exists a unique underlying map (up to a unique isomorphism) $\underline{f}:C\rightarrow X_{S}$ such that $\underline{f}|_{S_{n}}=\underline{f}_{n}$.

Now to construct $\xi$, we need to construct a log map $f:(C,\sM_{C})\rightarrow (X_{S},\sM_{X_{S}})$, which is compatible with the underlying map $\underline{f}$ and $f_{n}$ for all $n$. By definition of log maps, this is equivalent to constructing a map of log structures $f^{\flat}: \underline{f}^{*}\sM_{X_{S}} \rightarrow \sM_{C}$, which is compatible with $f^{\flat}_{n}$ and the log structure $\sM_{S}$ on the base. For simplicity, denote by $\sM=\underline{f}^{*}\sM_{X_{S}}$.

Choose an affine \'etale cover of $C$, such that over each affine chart $V\to C$ the log structures $\sM|_{V}$, $\sM_{C}|_{V}$, and $\pi_{1}^{*}\sM_{S}$ can be obtained by taking the log structure associated to $\Gamma(\sM,V)\to \sO_{V}$, $\Gamma(\sM_{C},V)\to\sO_{V}$, and $\pi_{1}^{*}\sM_{S}\to \sO_{V}$ respectively. Since the charts of fine log structures always exist \'etale locally, the above choice of cover exists. We first construct $f^{\flat}$ on such chart $V$.

Denote by $V_{n}=V\times_{S}S_{n}$. Then the canonical map $V_{n}\to C_{n}$ gives an affine \'etale chart. Consider the compatible families of monoids $\{\Gamma(\sM_{n},V_{n})\}_{n}$ and $\{\Gamma(\sM_{C_{n}},V_{n})\}_{n}$. For simplicity, let $\sN$ to be one of the monoids $\Gamma(\sM,V)$, $\Gamma(\sM_{C},V)$, or $\Gamma(\pi_{1}^{*}\sM_{S},V)$, and $\sN_{n}$ to be the one of the corresponding reductions $\Gamma(\sM_{n},V_{n})$, $\Gamma(\sM_{C_{n}},V_{n})$, or $\Gamma(\pi_{1}^{*}\sM_{S_{n}},V_{n})$. Denote by $q_{n}: \sN\to \sN_{n}$ the restriction map, and by $p_{n}: \varprojlim\sN_{n}\to \sN_{n}$ the canonical map. Let $p_{nk}: \sN_{n}\to \sN_{k}$ be the restriction map for all $k\leq n$. Assume that $V_{n}=\Spec R_{n}$, and $R=\varprojlim R_{n}$. Thus, we write $V=\Spec R$. 

Note that inverse limit exists in the category of monoids, and their formation commutes with the forgetful functor to the category of sets (\cite[Chapter I, 1.1]{Ogus}). Furthermore, the inverse limit of a family of integral monoids is again integral (\cite[Chapter I, 1.2]{Ogus}). Consider an element $e\in \sN$. This induces a family of compatible elements $\{q_{n}(e)\}_{n}\in \varprojlim \sN_{n}$. In this way, we obtain a canonical map of integral monoids:
\[p: \sN \to \varprojlim \sN_{n}.\]

\begin{lem}\label{lem:compare-section}
\begin{enumerate}
  \item Consider an element $e\in \sN_{n}$. Then $p_{nk}(e)\in R_{k}^{*}$ for some $k\leq n$ if and only if $e\in R^{*}_{n}$. Furthermore, the map $p_{nk}$ induces a natural isomorphism $\bar{p}_{nk}:\sN_{n}/R_{n}^{*}\to \sN_{k}/R_{k}^{*}$.
  \item Consider an element $e\in \sN$. Then $q_{n}(e)\in R_{n}^{*}$ for some $n$ if and only if $e\in R^{*}$. Furthermore, the map $q_{n}$ induces a natural isomorphism of monoids $\bar{q}_{n}: \sN/R^{*}\to \sN_{n}/R^{*}_{n}$.
  \item There is a natural inclusion $R^{*}\hookrightarrow \varprojlim\sN_{n}$, which fits in the following commutative diagram
  \begin{equation}\label{diag:compare-unit}
  \xymatrix{
  &R^{*} \ar[ld] \ar[rd]&\\
  \sN \ar[rr]^{p} && \varprojlim\sN_{n},
  }
  \end{equation}
  where the left side arrow is the natural inclusion of units given by the corresponding log structures.
  \item The canonical projection $p_{n}:\varprojlim\sN_{n}\to \sN_{n}$ induces an isomorphism of monoids
  \[\bar{p}_{n}: (\varprojlim\sN_{n})/R^{*}\to \sN_{n}/R^{*}_{n}.\]
  \item The canonical map $p_{}:\sN \to \varprojlim\sN_{n}$ induces an isomorphism of monoids
  \[\bar{p}: \sN/R^{*} \to (\varprojlim\sN_{n})/R^{*}.\]
\end{enumerate}
\end{lem}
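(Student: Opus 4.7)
The plan is to reduce each of the five claims to the behavior of characteristic monoids $\overline{\sM} = \sM/\sO^*$ under strict closed immersions. Two basic facts will be used throughout. First, for a fine log structure, a section lies in $\sO^*$ iff its image in the characteristic vanishes. Second, a strict closed immersion $V' \hookrightarrow V$ defined by an ideal contained in the Jacobson radical induces an isomorphism on characteristic monoids at the stalks of $V'$. After shrinking $V$, I will use that $\sM$ admits an \'etale-local chart $P \to \Gamma(\sM, V)$ with $P$ a finitely generated integral monoid; that $R = \varprojlim R_n$ is $m$-adically complete, so $R^* = \varprojlim R_n^*$; and that each ideal $m^n R \subset R$ sits in the Jacobson radical.

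For part (1), the closed immersion $V_k \hookrightarrow V_n$ is defined by the nilpotent ideal $m^k R_n$, so $R_n \to R_k$ reflects units, yielding $u \in R_n^*$ iff $p_{nk}(u) \in R_k^*$. Since $\sM_k = (j_k^n)^* \sM_n$ is the strict pullback along this nilpotent thickening, the characteristic sheaves $\overline{\sM}_n$ and $\overline{\sM}_k$ coincide on the common underlying space, giving the induced isomorphism $\bar{p}_{nk}$. An element $e \in \sN_n$ then lies in $R_n^*$ iff its class in $\overline{\sM}_n$ vanishes iff the class of $p_{nk}(e)$ in $\overline{\sM}_k$ vanishes iff $p_{nk}(e) \in R_k^*$.

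Part (2) proceeds analogously: $V_n \hookrightarrow V$ is defined by $m^n R$, which is in the Jacobson radical by completeness of $R$, so $R \to R_n$ reflects units. The chart $P \to \Gamma(\sM, V)$ descends to a chart on $V_n$ via composition with $q_n$, and the same characteristic-monoid argument as in (1) gives the equivalence $e \in R^*$ iff $q_n(e) \in R_n^*$, together with the isomorphism $\bar{q}_n$.

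Parts (3)--(5) are then essentially bookkeeping, using that inverse limits of integral monoids exist and commute with the forgetful functor to sets. For (3), the inclusion $R^* \hookrightarrow \varprojlim \sN_n$ is constructed from $R^* = \varprojlim R_n^*$ together with the natural inclusions $R_n^* \hookrightarrow \sN_n$, and compatibility with $p$ is immediate from the definition of $p$ on units. For (4), an element of $\varprojlim \sN_n$ has each component in $\sN_n$, and by (1) these components are units iff any single one of them is; the kernel of $\varprojlim \sN_n \to \sN_n/R_n^*$ is thus exactly $\varprojlim R_n^* = R^*$, yielding $\bar{p}_n$. Finally (5) follows from $q_n = p_n \circ p$ by setting $\bar{p} = \bar{p}_n^{-1} \circ \bar{q}_n$, an isomorphism by (2) and (4). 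The main technical obstacle is part (2), where the closed immersion is not a nilpotent thickening but only defined by an ideal in the Jacobson radical; this forces us to combine the completeness of $R$ with the chart structure rather than appeal to a purely nilpotent-ideal statement.
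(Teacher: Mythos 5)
Your proof follows essentially the same route as the paper's: both arguments reduce everything to the facts that units are reflected along $R_{n}\to R_{k}$ (nilpotent kernel) and along $R\to R_{n}$ (kernel contained in the Jacobson radical of the complete ring $R$), that the characteristic is unchanged under these strict restrictions, and then deduce (3)--(5) by the same diagram chases using $R^{*}=\varprojlim R_{n}^{*}$. The only substantive differences are presentational: you phrase (1) in terms of the characteristic sheaf on the common underlying space, whereas the paper argues element-by-element (surjectivity of $p_{nk}$ on sections plus an integrality/cancellation argument for injectivity of $\bar{p}_{nk}$), and you are more explicit than the paper --- which disposes of (2) with ``proved similarly'' --- about why the non-nilpotent inclusion $V_{n}\hookrightarrow V$ still reflects units.
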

\begin{proof}
The first part of Statement (1) follows from the following commutative diagram
\[
\xymatrix{
\sN_{n} \ar[rr]^{p_{nk}} \ar[d] && \sN_{k} \ar[d] \\
R^{}_{n} \ar[rr] && R_{k},
}
\]
where the two vertical maps are given by the structure morphism of the corresponding log structures. This immediately implies the existence of $\bar{p}_{nk}$. The surjectiviry of $p_{nk}$ for any $k\leq n$ implies that $\bar{p}_{nk}$ is also surjective. To see the injectivity, consider two elements $a,b\in \sN_{n}$ such that $p_{nk}(a)=p_{nk}(b)+\log u$ for some $u\in R_{k}^{*}$. Without loss of generality, we can assume that $a + c' = b + c$ in $\sN_{n}$. Thus $p_{nk}(c') = p_{nk}(c) + \log u$, which implies $p_{nk}(c'-c)\in R^{*}_{k}$, hence $c'-c\in R^{*}_{n}$. This proves the second part of Statement (1).

Statement (2) can be proved similarly as the first one.

To prove (3), consider $e\in \varprojlim\sN_{n}$, which can be represented by a family of compatible elements $\{e_{n}\in \sN_{n}\}_{n}$. Assume that $e_{n'}\in R_{n'}^{*}$ for some $n'$. Then the first statement implies that $e_{n}\in R_{n}^{*}$ for all $n$. Thus we have a unique element $e\in R^{*}\subset R$ such that $e|_{V_{n}}=e_{n}$. This induces a canonical map $R^{*}\hookrightarrow \varprojlim\sN_{n}$. Now the commutativity of (\ref{diag:compare-unit}) can be checked directly.

In fact the above argument proves that $p_{n}(e)\in R^{*}_{n}$ for some $e\in \varprojlim\sN_{n}$, if and only if $e\in R^{*}$. Thus we obtain a canonical map $\bar{p}_{n}: (\varprojlim\sN_{n})/R^{*}\to \sN_{n}/R^{*}_{n}.$ Note that we have the following commutative diagram
\begin{equation}\label{diag:compare-limit}
\xymatrix{
\sN \ar[d]_{p} \ar[rrrd]^{q_{n}} &&& \\
\varprojlim\sN_{n} \ar[rrr]^{p_{n}} &&& \sN_{n}. 
}
\end{equation}
The surjectivity of $q_{n}$ implies that $\bar{p}_{n}$ is also surjective. The injectivity of $\bar{p}_{n}$ can be proved similarly as for the first statement. This proves (4). 

Finally, note that (\ref{diag:compare-limit}) induces a commutative diagram 
\begin{equation}\label{diag:compare-limit-monoid}
\xymatrix{
\sN/R^{*} \ar[d]_{\bar{p}} \ar[rrrd]^{\bar{q}_{n}} &&& \\
(\varprojlim\sN_{n})/R^{*} \ar[rrr]^{\bar{p}_{n}} &&& \sN_{n}/R^{*}_{n}. 
}
\end{equation}
Since both $\bar{q}_{n}$ and $\bar{p}_{n}$ are isomorphisms of monoids, we conclude that $\bar{p}$ is also an isomorphism. This proves (5).
\end{proof}

\begin{prop}\label{prop:compare-section}
The map of monoids $p: \sN \to \varprojlim\sN_{n}$ is an isomorphism.
\end{prop}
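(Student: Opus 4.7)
The plan is to derive the proposition from Lemma~\ref{lem:compare-section}, using only two of its outputs: part (5), which says that the induced map $\bar p: \sN/R^* \to (\varprojlim\sN_n)/R^*$ is an isomorphism, and part (3), which says that $p$ is compatible with the natural inclusions of $R^*$ into both monoids (diagram (\ref{diag:compare-unit})). The other ingredient is the integrality of both monoids: $\sN$ is a fine monoid from a fine log structure, and $\varprojlim\sN_n$ is integral by the remark before the lemma (\cite[Chapter I, 1.2]{Ogus}). In each monoid, the elements coming from $R^*$ form a group (as the unit part of the corresponding log structure), so equivalence classes modulo $R^*$ are $R^*$-torsors, and this is the structure I will exploit.

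For injectivity, suppose $p(a)=p(b)$ for $a,b\in \sN$. By part (5) their classes in $\sN/R^*$ coincide, so there is a unit $u\in R^*$ with $a=b+u$ in $\sN$. Applying $p$ and using the compatibility from part (3), $p(a)=p(b)+u$ in $\varprojlim\sN_n$. The hypothesis $p(a)=p(b)$ then yields $p(b)+u=p(b)$, and cancelling using integrality of $\varprojlim\sN_n$ shows $u$ equals the identity element of the monoid, whence $a=b$.

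For surjectivity, fix $\tilde e\in \varprojlim\sN_n$. Since $\bar p$ is surjective (part (5)) there exists $e\in\sN$ whose class in $\sN/R^*$ maps to the class of $\tilde e$. Thus we may write $p(e)=\tilde e + u$ for some $u\in R^*$ (the symmetric case $\tilde e = p(e)+u$ is identical). The unit $u$ has an additive inverse $-u\in R^*\subset\sN$, and applying part (3) gives $p(e+(-u))=p(e)+(-u)=\tilde e$, so $e+(-u)$ is the desired preimage.

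No step presents a genuine obstacle; the proof is a formal manipulation of facts already compiled in Lemma~\ref{lem:compare-section}, with integrality invoked exactly once to cancel in the injectivity argument. The main conceptual point worth highlighting in the writeup is that, because $R^*$ embeds as a \emph{group} (not merely a submonoid) inside each of $\sN$ and $\varprojlim\sN_n$, a lift of $\tilde e$ constructed modulo $R^*$ can always be corrected by a unit to yield an exact lift.
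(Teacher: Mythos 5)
Your proof is correct and follows essentially the same route as the paper: both arguments deduce injectivity and surjectivity of $p$ from the isomorphism $\bar{p}$ of Lemma \ref{lem:compare-section}(5) together with the compatibility of $p$ with the inclusions of $R^{*}$ from part (3), correcting lifts by units and cancelling a unit (using integrality) in the injectivity step. No meaningful difference from the paper's proof.
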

\begin{proof}
By Lemma \ref{lem:compare-section}(3) and (5), we have a commutative diagram
\[
\xymatrix{
\sN \ar[rr]^{p} \ar[d] && \varprojlim\sN_{n} \ar[d]\\
\sN/R^{*} \ar[rr]^{\bar{p}} && \varprojlim\sN_{n}/R^{*}.
}
\]
Pick two sections $e,e'\in \sN$ such that $p(e)=p(e')$. Denote by $\bar{e}$ and $\bar{e}'$ the corresponding images in $\sN/R^{*}$. It follows from Lemma \ref{lem:compare-section}(5) that $\bar{e}=\bar{e}'$. Thus, we have $e=e'+\log u$ for some $u\in R^{*}$. But the assumption $p(e)=p(e')$ implies that $p(u)=1\in \varprojlim\sN_{n}$. By Lemma \ref{lem:compare-section}(3), we have $u=1$. This proves that $p$ is also injective. 

To prove the surjectivity, consider an element $a\in \varprojlim\sN_{n}$. Since $\bar{p}$ is an isomorphism, denote by $\bar{a}$ the image of $a$ in $\sN/R^{*}$. Let $a'$ be a lifting of $\bar{a}$ in $\sN$. Then there exists an element $u\in R^{*}$ such that $a=p(a')+\log u$. Thus $p(a'+\log u)=a$.
\end{proof}

Pick an element $\{e_{n}\in \Gamma(\sM_{n},V_{n})\}_{n}\in \varprojlim\Gamma(\sM_{n},V_{n})$. We obtain a compatible family $\{f_{n}(e_{n})\}_{n}\in \varprojlim\Gamma(\sM_{C_{n}},V_{n})$. Thus the compatible morphism of log structures $\{f_{n}\}$ induces a natural map of monoids 
\[\varprojlim\Gamma(\sM_{n},V_{n}) \to \varprojlim\Gamma(\sM_{C_{n}},V_{n}).\]
By Proposition \ref{prop:compare-section}, we have a natural map of monoids:
\[\Gamma(f^{\flat},V):\Gamma(\sM,V)\to \Gamma(\sM_{C},V).\]

Next we show that $\Gamma(f^{\flat},V)$ induces a map of log structures $f^{\flat}_{V}:\sM|_{V}\to \sM_{C}|_{V}$. Since the two log structures $\sM|_{V}$ and $\sM_{C}|_{V}$ can be obtained from $\Gamma(\sM,V)$ and $\Gamma(\sM_{C},V)$ respectively, it is enough to show that the following diagram is commutative:
\[
\xymatrix{
\Gamma(\sM,V) \ar[rr]^{\Gamma(f^{\flat},V)} \ar[dr]_{\exp_{1}} && \Gamma(\sM_{C},V) \ar[dl]^{\exp_{2}}\\
&R&
}
\]
where $\exp_{1}$ and $\exp_{2}$ are the structure morphism of the corresponding log structures. To see this, consider any section $s\in \Gamma(\sM,V)$. Since $\exp_{1}(s)|_{S_{n}}=\exp_{1}\circ \Gamma(f^{\flat},V)(s)|_{S_{n}}$ for any $n$, we have $\exp_{1}(s)=\exp_{1}\circ \Gamma(f^{\flat},V)(s)$. This proves the commutativity.

We claim that $f^{\flat}_{V}$ is compatible with the log structure on the base. This is equivalent to showing the commutativity of the following diagram of log structures on $V$:
\begin{equation}\label{diag:local-base-commute}
\xymatrix{
&\pi_{1}^{*}\sM_{S}|_{V} \ar[ld]_{\pi_{2}^{\flat}} \ar[rd]^{\pi_{1}^{\flat}} & \\
\sM|_{V} \ar[rr]^{f^{\flat}_{V}} &  & \sM_{C}|_{V},
}
\end{equation}
where $\pi_{1}:C\to S$ and $\pi_{2}:X_{S}\to S$ are the projections. Since $\pi_{1}^{*}\sM_{S}|_{V}$ can be obtained by taking the log structure associated to $\Gamma(\pi_{1}^{*}\sM_{S},V)\to \sO_{V}$. Hence to verify the commutativity of (\ref{diag:local-base-commute}) it is enough to show that the following diagram is commutative:
\[
\xymatrix{
&\Gamma(\pi_{1}^{*}\sM_{S},V) \ar[ld]_{\pi_{2}^{\flat}} \ar[rd]^{\pi_{1}^{\flat}} & \\
\Gamma(\sM,V) \ar[rr]^{f^{\flat}_{V}} &  & \Gamma(\sM_{C},V).
}
\]
But This follows from the definition of $f^{\flat}_{V}$, and the following commutative diagram for each $n$:
\[
\xymatrix{
  & \pi^{*}_{1,n}\sM_{S_{n}} \ar[ld] \ar[rd] & \\
f^{*}\sM_{n} \ar[rr]^{f^{\flat}_{n}} &  & \sM_{C_{n}}.
}
\]
Thus we obtain the desired map $f^{\flat}_{V}$ over each affine chart $V$. 

Finally, notice that the construction of $f^{\flat}_{V}$ is functorial. Hence, we are able to obtain a global map $f^{\flat}$ by gluing $f^{\flat}_{V}$ on each affine chart. This finishes the proof of compatibility with formal completions.

\section{Minimal logarithmic maps to rank one Deligne-Faltings log pairs}\label{sec:LogMap}

\subsection{Basic definitions and notations}

\begin{defn}\label{defn:Target}
We call the log scheme $X^{log}=(X,\sM_{X})$ a \textit{rank one Deligne-Faltings pair}, if 
\begin{enumerate}
 \item $X$ is a projective variety over $\C$; 
 \item $\sM_{X}$ is a DF log structure on $X$ as in Definition \ref{defn:DF}, with a global presentation $\N\to \CharM_{X}$.
\end{enumerate}
\end{defn}

\begin{rem}
The results in Section \ref{sec:LogMap} and \ref{sec:UniverMin} still hold if we assume $X$ to be only separated of finite type over $\C$. However, the projectivity is essential for the properness of the stack $\sK_{\Gamma}(X)$ as in Definition \ref{defn:MinStableStack}.
\end{rem}

\begin{con}\label{con:LogTarget}
In the rest of this paper, we fix a Deligne-Faltings pair $(X,\sM_{X})$ as our target of log maps, with a global presentation $\N\to \CharM_{X}$. Denote by $(L,s)$ the pair consisting of a line bundle $L$, and a morphism of sheaves $s:L\to \sO_{X}$ corresponding to $\sM_{X}$. Let $D$ be the vanishing locus of the section $s\in H^{0}(L^{\vee})$. Denote by $\delta$ the standard generator of $\N$. For convenience, locally we identify $\delta$ with its image in $\CharM_{X}$.
\end{con}

\begin{rem}\label{rem:BundleExplain}
Note that if $s=0$, then $D=X$. If $s$ is not a zero section, then $D$ is a divisor in $X$. Thus, we have $L= \sO_{X}(-D)$, with the natural inclusion $s:\sO_{X}(-D)\hookrightarrow \sO_{X}$. The section $\delta$ locally lifts to a section in $\sO_{X}$, whose vanishing locus gives the divisor $D$.
\end{rem}

\begin{rem}\label{rem:LogMapIso}
The target $X^{log}$ should be viewed as a log scheme over a point with trivial log structures. Thus, we can simplify the notations in Section \ref{ss:SetNote} as follows. A log map over a usual scheme $S$ is given by the triple $(C\to S, \sM_{S},f)$, where $(C\rightarrow S,\sM_{S})$ is a log curve in Definition \ref{DefLogC2}, and $f:(C,\sM_{C})\rightarrow (X,\sM_{X})$ is a log map. 

Consider two log maps $\xi=(C\to S,\sM_{S},f)$ and $\xi'=(C'\to S,\sM_{S}',f')$ over a scheme $S$. An arrow $\xi\to\xi'$ over $S$ is a pair $(\rho,\theta)$ as in Definition \ref{IsoLogCurve} such that the following diagram commutes:
\[
\xymatrix{
&&(X,\sM_{X}) \\
(C,\sM_{C}) \ar@/^/[urr]^{f} \ar[r]^{\rho} \ar[d]& (C',\sM_{C'}) \ar@/_/[ur]_{f'} \ar[d]&\\
(S,\sM_{S})  \ar[r]^{\theta} &(S,\sM'_{S})}
\]
where the square is a cartesian square of log schemes. This is compatible with Definition \ref{defn:LogMap} and \ref{defn:LogMapIso}. 
\end{rem}

\subsection{Log maps on the level of characteristic}\label{ss:LogMapChar}
Consider a log map $\xi=(\pi:C\to S, \sM_{S},f)$ as in Remark \ref{rem:LogMapIso}, where $S=\Spec k$ is a geometric point and $(C\to S,\sM_{S})$ is a log pre-stable curve. Pick a point $p\in C$, which sits in an irreducible component $Z$. We have a map of characteristic monoids:
\begin{equation}\label{equ:CharMapSm}
\bar{f}^{\flat}:f^{*}(\CharM_{X})_{p}\to\CharM_{C,p}.
\end{equation}

First consider the case $p$ is a smooth non-marked point. By the description in Definition \ref{DefLogC1}, we have $\bar{f}^{\flat}(\delta)=e\in \CharM_{S}$ at $p$. By \cite[3.5(i),(iii)]{LogStack}, the equality $\bar{f}^{\flat}(\delta)=e$ lifts to an \'etale neighborhood of $p$. 

\begin{defn}
We call $e$ \textit{the degeneracy of $Z$}. Note that if $p\notin D$ for some $p\in Z$, then the image $e$ vanishes in $\CharM_{S}$. A component $Z$ is called \textit{degenerate} if its degeneracy is not zero. This is equivalent to saying that $Z$ maps to $D$ via $f$.
\end{defn}

Next, we consider the case where $p$ is a marked point. Locally at $p$, we have $\sM_{C}\cong \pi^{*}\sM_{S}\oplus_{\sO^{*}_{C}}\sN$, where $\sN$ is the canonical log structure associated to the marked point $p$. Then on the level of characteristic, we have 
\begin{equation}\label{equ:MarkedPt}
\bar{f}^{\flat}(\delta)=e+ c_{p}\cdot \sigma_{p}, 
\end{equation}
where $e\in \CharM_{S}$, the element $\sigma_{p}$ is the generator of $\overline{\sN}_{p}$, and $c_{p}$ is a non-negative integer.

\begin{obs}\label{obs:DegMarkedPt}
When we generalize (\ref{equ:MarkedPt}) to nearby smooth points, any lifting of $\sigma_{p}$ in the structure sheaf becomes invertible. Thus, the element $e$ is the degeneracy of the component $Z$ containing $p$.
\end{obs}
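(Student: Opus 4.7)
The plan is to unravel the local structure of $\sM_{C}$ near the marked point $p$ and to see how it specializes at a nearby smooth non-marked point. By construction (cf.\ Definition \ref{DefLogC2}), there is a local decomposition $\sM_{C}\cong \pi^{*}\sM_{S}\oplus_{\sO_{C}^{*}}\sN$ near $p$, where $\sN$ is the canonical log structure associated to the marked section $p\hookrightarrow C$. In particular, any lift $\tilde{\sigma}_{p}\in \sM_{C}$ of the characteristic generator $\sigma_{p}\in\overline{\sN}_{p}$ has image under $\exp\colon \sM_{C}\to\sO_{C}$ a local defining equation of $p$, namely a regular function vanishing only at $p$.

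Next I would pick a smooth non-marked point $q$ in the same irreducible component $Z$, lying in an étale neighborhood of $p$ small enough that the local decomposition above is defined and that \cite[3.5(i),(iii)]{LogStack} applies to propagate the equality $\bar{f}^{\flat}(\delta)=e+c_{p}\sigma_{p}$ from the stalk at $p$ to the stalk at $q$. Since $\exp(\tilde{\sigma}_{p})$ does not vanish at $q$, the section $\tilde{\sigma}_{p}$ lies in the image of $\sO_{C,q}^{*}\hookrightarrow \sM_{C,q}$ at $q$, hence its class $\sigma_{p}|_{q}\in\CharM_{C,q}$ is zero. Restricting the propagated equation to $q$ therefore gives $\bar{f}^{\flat}(\delta)=e$ in $\CharM_{C,q}$, which by the description (\ref{equ:CharMapSm}) at a smooth non-marked point identifies $e$ with the degeneracy of $Z$.

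The only subtle point, and hence the expected obstacle, is justifying the propagation of the equality $\bar{f}^{\flat}(\delta)=e+c_{p}\sigma_{p}$ from $p$ to a nearby smooth point; once this is in hand, the invertibility of $\tilde{\sigma}_{p}$ away from $p$ is immediate from the explicit form of $\sN$. For the propagation I would quote \cite[3.5]{LogStack} directly, so that the remainder of the argument is a transparent calculation in the chart $\sM_{C}\cong \pi^{*}\sM_{S}\oplus_{\sO_{C}^{*}}\sN$.
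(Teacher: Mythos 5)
Your argument is correct and is exactly the one the paper intends: the observation is stated without a separate proof precisely because its content is this computation, namely that a lift of $\sigma_{p}$ maps under $\exp$ to a defining equation of the marked section, hence becomes a unit (and so dies in the characteristic) at a nearby smooth non-marked point, reducing (\ref{equ:MarkedPt}) to $\bar{f}^{\flat}(\delta)=e$ there. Your appeal to \cite[3.5]{LogStack} for propagating the stalkwise identity to an \'etale neighborhood matches the paper's own use of that result in the smooth non-marked case just above.
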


\begin{defn}
We call $c_{p}$ \textit{the contact order of $f$ at $p$}.
\end{defn}

\begin{lem}\label{lem:ContactOpen}
Consider a log map $\xi=(C'\to S', \sM_{S'}, g)$ over a scheme $S'$, and a marking $\Sigma_{i}$ on $C'$. There is an open subset in $S'$, such that the contact order along the fixed marking $\Sigma_{i}$ is constant.
\end{lem}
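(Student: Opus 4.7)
The plan is to realise the contact order along $\Sigma_i$ as the value of a global section of the constant \'etale sheaf $\underline{\N}$ on $S'$; local constancy is then automatic.

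First, by the description of a log curve at a marked point (Definition \ref{DefLogC2}), there is an \'etale neighbourhood $U \subset C'$ of the marking admitting a canonical decomposition
\[
\sM_{C'}|_U \;\cong\; \pi^*\sM_{S'} \,\oplus_{\sO^{*}_U}\, \sN_i,
\]
where $\pi : C' \to S'$ and $\sN_i$ is the canonical log structure of the marking. Pulling back by the section $\Sigma_i : S' \hookrightarrow U$ and passing to characteristics yields a canonical splitting
\[
\Sigma_i^* \CharM_{C'} \;\cong\; \CharM_{S'} \,\oplus\, \underline{\N}\cdot \sigma_i,
\]
where $\sigma_i$ is the canonical generator from the marking. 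The crucial feature is that the second summand is globally the constant sheaf $\underline{\N}$ on $S'$, since $\Sigma_i^* \overline{\sN}_i$ is the pull-back of the standard characteristic sheaf $\underline{\N}$ on $\A^1$ at the origin.

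Second, the global presentation $\N \to \CharM_X$ fixed in Construction \ref{con:LogTarget} provides a global section $\delta$ of $\CharM_X$, whose image $\bar g^{\flat}(\delta)$ is a global section of $\CharM_{C'}$. Restricting it to $S'$ via $\Sigma_i$ and applying the splitting above gives
\[
\Sigma_i^* \bar g^{\flat}(\delta) \;=\; \bar e \,+\, c\cdot \sigma_i,\qquad \bar e \in \Gamma(S', \CharM_{S'}),\ c \in \Gamma(S', \underline{\N}).
\]
At any geometric point $\bar s \in S'$, comparing this equality with Equation (\ref{equ:MarkedPt}) shows that $c(\bar s)$ is precisely the contact order of $g$ at $\Sigma_i(\bar s)$.

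A section of a constant sheaf on the \'etale site is locally constant, so $c : S' \to \N$ is locally constant. In particular, for any $s \in S'$ there is an \'etale open neighbourhood of $s$ on which the contact order along $\Sigma_i$ is constant, and for each value $n \in \N$ the locus $\{c=n\}$ is open (and closed) in $S'$. The only point requiring care is the canonicity of the splitting and the identification $\Sigma_i^*\overline{\sN}_i \cong \underline{\N}$; both follow directly from the standard description of the canonical log structure on a log curve at a marking recalled in the appendix.
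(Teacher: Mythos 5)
Your proof is correct and follows essentially the same route as the paper: the paper passes to the relative characteristic $\CharM_{C'/S'}$, which along $\Sigma_i$ is a locally constant sheaf with stalk $\N$, and observes that the induced map $g^*\CharM_X\to\CharM_{C'/S'}$, being a map of locally constant sheaves sending $1\mapsto c$, is locally determined by its value at a point; your splitting $\Sigma_i^*\CharM_{C'}\cong\CharM_{S'}\oplus\underline{\N}\cdot\sigma_i$ identifies exactly that quotient, and reading off the coefficient of $\sigma_i$ is the same locally constant section $c$.
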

\begin{proof}
Consider the relative characteristic $\CharM_{C'/S'}$. This is a locally constant sheaf along $\Sigma_{i}$, with stalks given by $\N$. Thus along $\Sigma_{i}$ there is a map of locally constant sheaves $g^{*}\CharM_{X}\to \CharM_{C'/S'}$, which locally at $p\in \Sigma_{i}$ is given by $\N\to\N$ by $1\mapsto c$, for some positive integer $c$. Note that the correspondence $1\mapsto c$ can be generalize to the nearby points of $p$. Therefore it forms an open condition on the base. 
\end{proof}

\begin{rem}
When $D$ is a divisor, the contact order of a marked point $\Sigma$ in a non-degenerate component can be identified with the local intersection multiplicity $(C\cdot D)_{\Sigma}$.
\end{rem}

Finally, let us consider the case where $p$ is a node joining two irreducible components $Z$ and $Z'$. Let $e_{p}$ be the element in $\CharM_{S}$ smoothing the node $p$. Denote by $\log x_{p}$ and $\log y_{p}$ the elements in $\CharM_{C}$ given by the local coordinates of the two components $Z$ and $Z'$ at $p$ respectively as in Section \ref{CanLog}. Then locally at $p$ we have the equation in $\CharM_{C}$:
\begin{equation}\label{equ:NodeCurve}
e_{p} = \log x_{p} + \log y_{p}.
\end{equation}
Thus, without loss of generality we can assume that 
\begin{equation}\label{equ:Node}
\bar{f}^{\flat}(\delta)= e + c_{p}\cdot \log x_{p},
\end{equation}
where $c_{p}$ is a positive integer.

\begin{defn}\label{defn:DistNode}
The integer $c_{p}$ is called \textit{the contact order of $f$ at the node $p$.} If $c_{p}\neq 0$, then $p$ is called a \textit{distinguished node.} A point $p\in C$ is called a \textit{distinguished point}, if it is a marked point or node with non-trivial contact order. Otherwise, it is called \textit{non-distinguished point}.
\end{defn}

\begin{lem}\label{lem:DegJump}
Using notations as above, the degeneracy of $Z$ is $e$, and the degeneracy of $Z'$ is $e_{}+c_{p}\cdot e_{p}$.
\end{lem}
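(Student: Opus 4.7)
The plan is to compute the degeneracy on each component by generizing the given local description of $\bar{f}^\flat$ at the node $p$ to a nearby smooth point on the component in question, and reading off the image of $\delta$ in the characteristic monoid there.

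For $Z$, I would take a smooth point $q$ on $Z$ near $p$. Since $x_p$ is a local coordinate of $Z$ and $q\neq p$ on $Z$, the section $x_p$ is a unit in $\sO_{C,q}$, hence its preimage $\log x_p$ lies in $\sO_{C,q}^{*}\subset \sM_{C,q}$, so its image in $\CharM_{C,q}$ is $0$. By \cite[3.5(i),(iii)]{LogStack} the equation (\ref{equ:Node}) propagates to an \'etale neighborhood of $p$, so specializing it to $q$ yields $\bar{f}^\flat(\delta)=e$ in $\CharM_{C,q}=\CharM_{S}$, which is precisely the degeneracy of $Z$ by its definition at a smooth non-marked point.

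For $Z'$, I would symmetrically take a smooth point $q'$ on $Z'$ near $p$. Now $y_p$ is a unit in $\sO_{C,q'}$, so $\log y_p$ projects to $0$ in $\CharM_{C,q'}$. Specializing the nodal relation (\ref{equ:NodeCurve}), namely $e_p=\log x_p+\log y_p$, to $q'$ gives $\log x_p = e_p$ in $\CharM_{C,q'}$. Substituting into (\ref{equ:Node}) yields
\[
\bar{f}^\flat(\delta)\big|_{q'}=e+c_p\cdot\log x_p\big|_{q'}=e+c_p\cdot e_p,
\]
which identifies the degeneracy of $Z'$ as $e+c_p\cdot e_p$.

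The only point requiring care is verifying that the generization map $\CharM_{C,p}\to\CharM_{C,q}$ (resp.\ $\CharM_{C,q'}$) really does send $\log x_p$ (resp.\ $\log y_p$) to $0$; this is immediate from the definition of the characteristic as $\sM_C/\sO_C^{*}$ combined with the fact that the relevant coordinate becomes a unit away from the node, and it is compatible with (\ref{equ:NodeCurve}) because the latter is an equation in $\CharM_{C,p}$ whose generization is the corresponding equation in $\CharM_{C,q'}$. No further global input is needed; the argument is purely local at $p$.
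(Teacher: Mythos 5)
Your argument is correct and is essentially the paper's own proof: the paper likewise generizes equation (\ref{equ:Node}) to a smooth point of each component, noting that the relevant coordinate becomes invertible so that $\log y_{p}\mapsto 0$ (forcing $\log x_{p}\mapsto e_{p}$ via (\ref{equ:NodeCurve})) on $Z'$, and $\log x_{p}\mapsto 0$ on $Z$. You have simply spelled out the generization of the characteristic monoids in more detail than the paper does.
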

\begin{proof}
When we generalize (\ref{equ:Node}) to a smooth point in $Z'$, the section $y$ becomes invertible. Then the statement for $Z'$ follows from the definition of degeneracy of a component. For $Z$, the proof is similar. 
\end{proof}

Lemma \ref{lem:DegJump} gives a way to put a partial order on the set of irreducible components as follows:

\begin{defn}\label{defn:DegNodeLow}
Using the notations as above, we call $Z$ the \textit{lower component} of $p$, and $Z'$ the \textit{upper component} of $p$. 
\end{defn}

\begin{lem}\label{lem:NodeContactOpen}
Consider a log map $\xi=(C'\to S', \sM_{S'}, g)$, and a connected singularity $p\subset C'$. There is an open subset in $S'$, such that over each fiber we have that either the node $p$ is smoothed out, or its contact order remains the same.
\end{lem}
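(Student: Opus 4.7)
The plan is to mirror the proof of Lemma \ref{lem:ContactOpen}, using the propagation of characteristic-level equations to étale neighborhoods provided by \cite[3.5(i),(iii)]{LogStack}, together with the local normal form of a node in a log curve from Section \ref{CanLog}.

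First, I would work in an étale neighborhood of $p$. By the local description in Section \ref{ss:LogMapChar}, at $p$ we have
\[
\bar{g}^{\flat}(\delta) = e + c_{p}\cdot \log x_{p} \quad \text{in } \CharM_{C',p},
\]
together with the nodal relation $e_{p} = \log x_{p} + \log y_{p}$, where $e,e_{p}\in \CharM_{S',\pi(p)}$ and $e_{p}$ is the smoothing parameter of $p$. By \cite[3.5(i),(iii)]{LogStack}, the first equation extends on the level of characteristics to an étale neighborhood $V$ of $p$ in $C'$. Shrinking $V$ if necessary, I may also arrange that the singular locus of $C'\to S'$ meeting $V$ is cut out by $x_p = y_p = 0$, so that on $V$ there is at most one branch of nodes extending $p$.

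Second, I would extract the desired open subset of $S'$. Since $C'\to S'$ is proper and $C'\setminus V$ is closed in $C'$, its image $Z := \pi(C'\setminus V)$ is closed in $S'$, and $U := S'\setminus Z$ is open. After restricting to the connected component of $p$ inside $V$, this $U$ is an open neighborhood of $\pi(p)$ with the property that every nearby specialization of $p$ (as a node or as its smoothing) takes place inside $V$.

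Finally, I would analyze the two cases over a point $s'\in U$. The image of $e_{p}$ in $\CharM_{S',s'}$ is either zero or nonzero. In the first case, the nodal relation $\log x_p + \log y_p = e_p$ forces both $\log x_p$ and $\log y_p$ to lie in $\sO_{C'}^{*}$ near $p$, i.e.\ the node has been smoothed. In the second case, the node persists as a node $p'\in V\cap C'_{s'}$; because the equation $\bar{g}^{\flat}(\delta) = e + c_{p}\cdot \log x_{p}$ was propagated over all of $V$, it still holds at $p'$, and by Definition \ref{defn:DistNode} the contact order at $p'$ is exactly $c_p$. The main obstacle is arranging $V$ small enough that the singular locus is controlled on $V$ so as to rule out the node splitting into several distinct nodes with different contact orders in nearby fibers; this is handled by the local normal form of a log curve at a node, which exhibits the singular locus as $\{x_p = y_p = 0\}$.
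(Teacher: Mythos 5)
Your overall strategy --- propagate the characteristic-level identity $\bar{g}^{\flat}(\delta)=e+c_{p}\cdot\log x_{p}$ to an \'etale neighborhood via \cite[3.5]{LogStack} and then push the condition down to the base by properness --- is the right one and is essentially what the paper intends (its proof is a one-line reference to the argument of Lemma \ref{lem:ContactOpen}). However, the step where you produce the open subset of $S'$ is broken. You set $Z=\pi(C'\setminus V)$ and $U=S'\setminus Z$; this $U$ is exactly the set of points $s'$ whose \emph{entire} fiber $C'_{s'}$ is contained in $V$. Since $V$ is a small neighborhood of the node(s) and each fiber is a whole curve, $C'\setminus V$ meets (indeed contains most of) the fiber over $\pi(p)$, so $\pi(p)\in Z$ and your $U$ is \emph{not} a neighborhood of $\pi(p)$ --- it is typically empty, in which case the lemma holds only vacuously and is useless for its intended application (shrinking the base around a given point, as in Proposition \ref{prop:MinOpen} and Lemma \ref{lem:Generalization}).

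The correct bad locus is not $C'\setminus V$ but $p\setminus V$, where $p$ is regarded as what the statement says it is: a connected component of the relative singular locus of $C'\to S'$, a closed subscheme finite (hence proper) over $S'$ and in general a multisection rather than a single point. Fix a geometric point $\bar{s}'$ in its image, and let $V$ be a union of \'etale neighborhoods, one for each point of $p\cap C'_{\bar{s}'}$, on which the characteristic identity propagates. Then $p\setminus V$ is closed in $C'$ and disjoint from $C'_{\bar{s}'}$, so $\pi(p\setminus V)$ is closed and misses $\bar{s}'$, and its complement is the desired open set: over it, either $p\cap C'_{s'}=\emptyset$ (the node is smoothed, your case $e_{p}\mapsto 0$) or every point of $p\cap C'_{s'}$ lies in $V$ and carries contact order $c_{p}$. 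Note also that the constancy of $c_{p}$ across the several points of $p$ in a single fiber does not follow from a computation at one point alone; it holds because the contact order defines a locally constant function on the connected scheme $p$. Your case analysis in the final step is otherwise fine.
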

\begin{proof}
The proof is similar to the one for Lemma \ref{lem:ContactOpen}. 
\end{proof}

\subsection{Marked graph}\label{ss:AdGraph}
We next introduce the marked graph, which will be used later to describe the combinatorial data associated to log maps.

\begin{defn}\label{defn:WeightGraph}
A \textit{weighted graph} $G$ is a connected graph with the following data:
\begin{enumerate}
 \item A subset $V_{n}(G)\subset V(G)$ of the set of vertices of $G$, which is called the {\em set of nondegenerate vertices}.
 \item For each edge $l\in E(G)$, we associate a non-negative integer weight $c_{l}$ called the \textit{contact order} of $l$.
\end{enumerate}
These data are subject to the only condition that if the edge $l$ is a loop, then $c_{l}=0$. 
\end{defn}

Note that the set $V_{n}(G)$ can be empty. If the contact order of an edge $l$ is zero, then $l$ is called the \textit{non-distinguished edge}, otherwise is called a \textit{distinguished edge}. Two vertices are called \textit{adjacent} if they are connected by an edge. Denote by $\underline{G}$ the underlying graph of $G$, obtained by removing all weights.

\begin{defn}\label{defn:Orientation}
Consider a weighted graph $G$ as in the above definition. An \textit{orientation} on $G$ is an orientation on the underlying graph $\underline{G}$, except that we allow some edges to be \textit{non-oriented}, i.e. an edge with two directions. Consider an edge $l$ from $v_{1}$ to $v_{2}$ under the orientation. Then $v_{1}$ is called the \textit{initial vertex of $l$}, and $v_{2}$ is called the \textit{end vertex of $l$}. We denote this by $v_{1}\leq v_{2}$. If $l$ is orientated, then we write $v_{1}<v_{2}$.

An orientation on $G$ is called \textit{compatible} if
\begin{enumerate}
 \item An edge $l\in E(G)$ is non-oriented if and only if $c_{l}=0$.
 \item If $v\in V_{n}(G)$, then for any other adjacent vertex $v'$ of $v$ we have $v\leq v'$.
\end{enumerate}
Note that if $v, v'\in V_{n}(G)$, then any edges between them is non-oriented. The graph $G$ is called a \textit{marked graph}, if it is a weighted graph with a compatible orientation.
\end{defn}

A \textit{path} is a non-repeated sequence of edges $(l_{1},l_{2},\cdots,l_{m})$, such that the end vertex of $l_{j}$ is the initial vertex of $l_{j+1}$. Such a path is called a \textit{cycle} if the initial vertex of $l_{1}$ is the end vertex of $l_{m}$. A cycle is called \textit{strict} if it contains at least one oriented edges. A vertex $v\in V(G)$ is called \textit{minimal} (respectively \textit{maximal}) if it is not the end (respectively initial) vertex of any oriented edge. Thus by condition (2) above, any vertex $v\in V_{n}(G)$ is minimal.

\begin{constr}\label{cons:AssoMonoid}
Consider a marked graph $G$ as in Definition \ref{defn:Orientation}. For each edge $l\in E(G)$ (respectively each vertex $v\in V(G)$), we introduce a variable $e_{l}$ (resp. $e_{v}$), which is called the \textit{element associated to $l$} (resp. $v$). For any $v\in V_{n}(G)$, we set
\begin{equation}\label{equ:NonDegVer}
h_{v}: \ \ \ e_{v}=0.
\end{equation} 

Consider an edge $l\in E(G)$ with initial vertex $v_{1}$ and end vertex $v_{2}$. We associate an equation 
\begin{equation}\label{equ:GraphEqu}
h_{l}: \ \ \ e_{v_{2}}=e_{v_{1}} + c_{l}\cdot e_{l}.
\end{equation}

Consider the monoid
\begin{equation}\label{equ:CoarseMonoid}
M(G)=\Big\langle\ e_{v},e_{l}\ \Big|\  v\in V_{}(G),\ l\in E(G) \ \Big\rangle\ \bigg/ \big\langle\ h_{l}, h_{v}\ \big|\  l\in E(G),\ v\in V_{n}(G) \  \big\rangle
\end{equation}
Denote by $T(G)$ the torsion part of $M(G)^{gp}$. Then we have the following composition
\[M(G)\to M(G)^{gp} \to M(G)^{gp}/T(G).\]
Denote by $N(G)$ the image of $M(G)$ in $M(G)^{gp}/T(G)$, and $\CharM(G)$ the saturation of $N(G)$ in $M(G)^{gp}/T(G)$.
\end{constr}

\begin{defn}
The monoid $\CharM(G)$ constructed above is called the \textit{associated monoid} of the marked graph $G$.
\end{defn}

Note that $N(G)$ is the image of $M(G)$ in $\CharM(G)$. By the definition of $\CharM(G)$ and Proposition \ref{prop:AdjSat}, we have the following:

\begin{lem}\label{lem:WeightInMonoid}
By viewing $N(G)$ and $\CharM(G)$ as sub-monoids of $\CharM(G)^{gp}=M(G)^{gp}/T(G)$, we have that for any $a\in \CharM(G)$ there exists $b\in N(G)$ and a positive integer $m$ such that $b=m\cdot a$.
\end{lem}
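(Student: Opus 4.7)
The proof proposal is that this lemma is essentially just unwinding the definition of saturation, once one has set up the ambient group correctly.

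First I would observe that the ambient group $M(G)^{gp}/T(G)$ is exactly the group $N(G)^{gp}$ of the monoid $N(G)$. Indeed, $N(G)$ is by construction the image of $M(G)$ under the composition $M(G) \to M(G)^{gp} \to M(G)^{gp}/T(G)$; since the second arrow is surjective and $M(G)^{gp}/T(G)$ is generated as a group by the image of $M(G)$, the group completion $N(G)^{gp}$ sits inside $M(G)^{gp}/T(G)$ as the whole group. Equivalently, $M(G)^{gp}/T(G)$ is a torsion-free finitely generated abelian group in which $N(G)$ is an integral sub-monoid generating it as a group.

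Next I would invoke the definition of saturation: the saturation of $N(G)$ inside $N(G)^{gp}$ consists precisely of those elements $a$ of $N(G)^{gp}$ for which some positive integer multiple $m \cdot a$ lies in $N(G)$. By construction $\CharM(G)$ is this saturation, which is exactly the statement of the lemma: given $a \in \CharM(G)$, one obtains the desired $b = m \cdot a \in N(G)$ and $m \geq 1$.

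The only step that might not be completely formal is checking that one may legitimately identify the saturation of $N(G)$ in $M(G)^{gp}/T(G)$ with the saturation in $N(G)^{gp}$ itself; this is where the cited Proposition~\ref{prop:AdjSat} enters, and it is also why torsion was killed in the construction of $\CharM(G)$, so that $N(G)^{gp}$ injects into (in fact equals) the ambient group in which we saturate. Once that identification is in place, no further work is required, since the statement is literally the definition of the elements of the saturation.
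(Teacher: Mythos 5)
Your proposal is correct and matches the paper, which states this lemma without a separate proof precisely because, as you observe, $N(G)$ generates $M(G)^{gp}/T(G)$ as a group (so $N(G)^{gp}$ equals the ambient group) and $\CharM(G)$ is by definition the saturation $\{a : m\cdot a\in N(G)\ \text{for some}\ m\geq 1\}$. Your identification of the ambient group with $N(G)^{gp}$ is the only non-trivial observation, and you have it right.
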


\begin{defn}\label{defn:AdGraph}
The marked graph $G$ is called \textit{admissible} if $\CharM(G)$ is a sharp monoid, and the image of $e_{l}$ in $\CharM(G)$ is non-trivial for all $l\in E(G)$.
\end{defn}

\begin{cor}\label{defn:AdmGraph}
If $G$ is admissible, then there is no strict cycle in $G$.
\end{cor}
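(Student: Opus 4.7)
The plan is to argue by contradiction. Suppose $G$ contains a strict cycle $(l_1,\ldots,l_m)$; write $v_j$ for the initial vertex of $l_j$ and $v_{j+1}$ for its end vertex (cyclically, so $v_{m+1}=v_1$), and let $J\subset\{1,\ldots,m\}$ be the nonempty set of indices for which $l_j$ is oriented. For $j\in J$ one has $c_{l_j}>0$, while for $j\notin J$ the edge $l_j$ is non-oriented and $c_{l_j}=0$. Rewriting the defining relation $h_{l_j}$ from Construction \ref{cons:AssoMonoid} inside $M(G)^{gp}$ as $e_{v_{j+1}}-e_{v_j}=c_{l_j}\cdot e_{l_j}$ and summing over $j=1,\ldots,m$, the left side telescopes to $0$, yielding
\[
\sum_{j=1}^{m} c_{l_j}\cdot e_{l_j} = 0
\]
in $M(G)^{gp}$, hence in the quotient $\CharM(G)^{gp}=M(G)^{gp}/T(G)$, and a fortiori as an equation inside $\CharM(G)$ itself.

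Now I invoke the two admissibility hypotheses in turn. Since $\CharM(G)$ is sharp, its only invertible element is $0$, so a finite sum of elements of $\CharM(G)$ that equals $0$ forces each summand to vanish; in particular $c_{l_j}\cdot e_{l_j}=0$ in $\CharM(G)$ for every $j\in J$. But $\CharM(G)^{gp}$ is torsion-free by construction, and $c_{l_j}>0$ for $j\in J$, so this forces $e_{l_j}=0$ in $\CharM(G)$ for each such $j$. This contradicts the remaining clause of Definition \ref{defn:AdGraph}, which demands that the image of every edge element $e_l$ in $\CharM(G)$ be non-trivial, so no strict cycle can exist. The only delicate point worth checking is that non-oriented edges in the cycle are harmless when assembling the telescoping sum, but since their weights are $0$ their contribution vanishes regardless of the direction in which they are traversed; the heart of the argument is simply that the two conditions packaged into admissibility plug in at exactly the places demanded by the sharpness-plus-torsion-free reasoning.
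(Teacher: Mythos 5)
Your proof is correct and follows essentially the same route as the paper's: both derive the relation $\sum_j c_{l_j}\cdot e_{l_j}=0$ by telescoping the defining relations $h_{l_j}$ around the strict cycle and then appeal to admissibility. The only cosmetic difference is that you land the contradiction on the non-triviality of the edge elements $e_l$ (after using sharpness and torsion-freeness to force each $e_{l_j}=0$), whereas the paper phrases the contradiction as a failure of sharpness; your version spells out the intermediate steps slightly more carefully, but the argument is the same.
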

\begin{proof}
If there is a strict cycle $(l_{1},\cdots,l_{k})$, then we have $\sum_{i=1}^{k}c_{l_{i}}e_{l_{i}}=0$. The strictness implies that at least one of the $c_{l_{i}}$ is non-zero. Thus, the monoid $\CharM(G)$ fails to be sharp, which contradicts the admissibility assumption. 
\end{proof}

Note that when $G$ is admissible, the monoid $\CharM(G)$ generates a strongly convex rational cone $C(\CharM(G))$ in the vector space $\CharM(G)^{gp}\otimes \Q$ (see \cite[Page 4]{Toric}).

\begin{lem}\label{lem:IrrEle}
Consider an irreducible element $e\in \CharM(G)$, where $G$ is admissible. Assume that $e$ lies on an extremal ray of $C(\CharM(G))$. Then at least one of the following holds:
\begin{enumerate}
 \item There is a positive integer $n$ and a minimal vertex $v$, such that $n\cdot e=e_{v}$.
 \item There is a positive integer $n$ and an edge $l$, such that $n\cdot e=e_{l}$. 
\end{enumerate}
\end{lem}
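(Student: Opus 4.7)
The plan is to combine Lemma \ref{lem:WeightInMonoid}, the extremality of the ray containing $e$, and the defining relations $h_{l'}$ to pin down $e$ as a primitive lattice generator that is, up to an integer factor, either of the form $e_{v}$ or of the form $e_{l}$.

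First, using Lemma \ref{lem:WeightInMonoid} I would choose a positive integer $m$ with $m\cdot e\in N(G)$ and expand
\[m\cdot e \;=\; \sum_{v} a_{v}\,e_{v} \;+\; \sum_{l} b_{l}\,e_{l}\]
with non-negative integer coefficients, dropping summands indexed by $v\in V_{n}(G)$ (whose image vanishes in $\CharM(G)$). Let $\rho$ be the extremal ray of $C(\CharM(G))$ containing $e$. Because every $e_{v}, e_{l}$ sits in $\CharM(G)\subset C(\CharM(G))$, extremality forces each nonzero summand above to lie on $\rho$, and admissibility gives $e_{l}\neq 0$ for every edge, so at least one generator $e_{v}$ or $e_{l}$ is really a nonzero element of $\rho$. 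Next, sharpness of $\CharM(G)$ together with the fact that $\CharM(G)$ is saturated in its groupification imply that $\rho\cap \CharM(G)$ is freely generated by a primitive lattice element $e_{0}\in \CharM(G)$. Writing $e=k\,e_{0}$ with $k\ge 1$ and invoking irreducibility of $e$ through the decomposition $e=e_{0}+(k-1)e_{0}$ in $\CharM(G)$, we conclude $k=1$. Hence every element of $\CharM(G)$ on $\rho$ is a positive integer multiple of $e$, and applied to the generator produced above this yields either $n\cdot e=e_{l}$ for some $n>0$, which is case (2), or $n\cdot e=e_{v}$ for some vertex $v$ and some $n>0$.

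If the second outcome arises with $v$ minimal, we are done with case (1). Otherwise $v$ is the end vertex of some oriented edge $l'$ from a vertex $v_{1}$, and the defining relation $h_{l'}$ reads $e_{v}=e_{v_{1}}+c_{l'}\,e_{l'}$ in $\CharM(G)$. Since $l'$ is oriented, $c_{l'}>0$, and admissibility gives $e_{l'}\neq 0$; applying extremality to this decomposition of $e_{v}\in\rho$, the nonzero summand $c_{l'}\,e_{l'}$ must lie on $\rho$, so $e_{l'}$ does as well, and primitivity of $e$ produces a positive integer $n'$ with $n'\cdot e=e_{l'}$, placing us in case (2). The only delicate point I foresee is the primitivity step---really just the identification $\rho\cap \CharM(G)=\Z_{\ge 0}\cdot e_{0}$---but this follows directly from $\CharM(G)$ being the saturation of $N(G)$ inside $\CharM(G)^{gp}$, so no genuine obstacle should arise.
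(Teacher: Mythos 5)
Your proof is correct and follows essentially the same route as the paper's: both arguments rest on the extremality of the ray together with the fact that $N(G)$ is generated by the elements $e_{v}$ and $e_{l}$ — the paper takes the minimal $n$ with $n\cdot e\in N(G)$ and shows $n\cdot e$ is irreducible in $N(G)$, while you expand a multiple of $e$ in the generators and use primitivity of $e$ on the ray to reach the same conclusion. Your explicit verification that $e$ generates $\rho\cap\CharM(G)$ and your reduction of a non-minimal vertex to an edge via the relation $h_{l'}$ fill in steps that the paper's terser argument leaves implicit.
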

\begin{proof}
Let $n$ be the minimal positive integer such that $n\cdot e\in N(G)$. Assume that $n\cdot e=b+c$ with $b,c\in N(G)$. Note that $e$ generates an extremal ray of the strongly convex rational cone $C(\CharM(G))$.
Thus we have positive numbers $n_{1}$ and $n_{2}$ such that $b=n_{1}\cdot e$ and $c=n_{2}\cdot e$. The minimality of $n$ implies that either $b=0$ or $c=0$. Since $b$ and $c$ in $N(G)$ are elements associated to edges or vertices, the element $n\cdot e$ must satisfy one of the two possibilities above. 
\end{proof}

\subsection{Marked graphs associated to log maps}
Consider a log map $\xi=(C\to S,\sM_{S}, f)$ over a geometric point $S$, such that the log structure $\sM_{S}$ is fs.

\begin{constr}\label{cons:DualGraphMap}
We construct a weighted graph $G_{\xi}$ of $\xi$ with an orientation as in Definition \ref{defn:Orientation}:
\begin{enumerate}
 \item The underlying graph $\underline{G}_{\xi}$ is given by the dual graph of the curve $C$.
 \item The subset $V_{n}(G)$ consists of the vertices corresponding to non-degenerate components.
 \item For each edge $l\in E(G_{\xi})$, we associate a non-negative integers $c_{l}$, where $c_{l}$ is the contact order of the node $l$ as in Definition \ref{defn:DistNode}.
 \item Let $l\in E(G_{\xi})$ be a node joining two irreducible components $v_{1}, v_{2}\in V(G_{\xi})$. Then we define an orientation by putting $v_{1}\leq v_{2}$ if $v_{1}$ is the lower component, and $v_{2}$ is the upper component of $l$ as in Definition \ref{defn:DegNodeLow}.
\end{enumerate}

\end{constr}

Consider a node $l\in E(G_{\xi})$. Denote by $e_{l}'$ the element in $\CharM_{S}$ smoothing $l$, and by $e_{l}$ the element associated to $l$ in $\CharM(G_{\xi})$. Then consider an irreducible component $v\in V(G_{\xi})$. Denote by $e_{v}'$ the degeneracy of $v$ in $\xi$, and by $e_{v}\in \CharM(G_{\xi})$ the element associated to $v$.  
We define a correspondence 
\begin{equation}\label{equ:CanMapChar}
e_{l}\mapsto e_{l}' \ \ \ \mbox{and} \ \ \ e_{v}\mapsto e_{v}'
\end{equation}

\begin{prop}\label{prop:CanChar}
Assuming that $\sM_{S}$ is fs, the correspondence (\ref{equ:CanMapChar}) induces a canonical morphism of monoids 
\[\phi: \CharM(G_{\xi}) \to \CharM_{S}.\]
\end{prop}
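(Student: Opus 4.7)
\textbf{Proof plan for Proposition \ref{prop:CanChar}.}

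The plan is to build $\phi$ in four stages, following the construction of $\CharM(G_\xi)$: first define a morphism out of $M(G_\xi)$, then groupify, then kill torsion, and finally extend to the saturation using that $\CharM_S$ is fs.

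The first step is to check that the correspondence $e_v \mapsto e_v'$, $e_l \mapsto e_l'$ respects the defining relations of $M(G_\xi)$ and hence extends to a morphism $\psi\colon M(G_\xi) \to \CharM_S$. For a vertex $v \in V_n(G_\xi)$ corresponding to a non-degenerate component, the relation $h_v\colon e_v = 0$ holds under substitution because a non-degenerate component has zero degeneracy by definition. For an edge $l \in E(G_\xi)$ with initial vertex $v_1$ and end vertex $v_2$ (the orientation being that of Construction \ref{cons:DualGraphMap}), the relation $h_l\colon e_{v_2} = e_{v_1} + c_l \cdot e_l$ holds under substitution precisely because of Lemma \ref{lem:DegJump}, which describes how the degeneracy jumps from the lower to the upper component across a node. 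Thus $\psi$ is well defined.

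Next I would groupify to obtain $\psi^{gp}\colon M(G_\xi)^{gp} \to \CharM_S^{gp}$ and argue that this factors through $M(G_\xi)^{gp}/T(G_\xi)$. For this we use that $\sM_S$ is fs and that $\CharM_S$ is sharp (as the characteristic of an fs log structure on a geometric point); a sharp saturated monoid has torsion-free groupification, for if $nx = 0$ in $\CharM_S^{gp}$ with $n > 0$ then both $x$ and $-x$ lie in $\CharM_S$ by saturation, hence are units, hence zero by sharpness. Therefore every element of $T(G_\xi)$ maps to $0$, yielding $\bar{\psi}\colon M(G_\xi)^{gp}/T(G_\xi) \to \CharM_S^{gp}$ whose restriction to $N(G_\xi)$ (the image of $M(G_\xi)$) factors through $\CharM_S$.

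Finally I would extend $\bar{\psi}|_{N(G_\xi)}$ across the saturation step. Given $a \in \CharM(G_\xi) \subset M(G_\xi)^{gp}/T(G_\xi)$, Lemma \ref{lem:WeightInMonoid} supplies a positive integer $m$ and $b \in N(G_\xi)$ with $b = m \cdot a$. Then $\bar{\psi}(a) \in \CharM_S^{gp}$ satisfies $m \cdot \bar{\psi}(a) = \bar{\psi}(b) \in \CharM_S$, so saturation of $\CharM_S$ forces $\bar{\psi}(a) \in \CharM_S$. Setting $\phi := \bar{\psi}|_{\CharM(G_\xi)}$ gives the required morphism, and it is canonical because the prescription on generators is canonical and every later step was forced. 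The main subtlety is simply keeping track of which property of $\sM_S$ being fs is invoked where: sharpness and integrality kill torsion at the groupification stage, and saturation then lets the map extend from $N(G_\xi)$ to its saturation $\CharM(G_\xi)$.
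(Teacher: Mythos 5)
Your proof is correct and follows essentially the same route as the paper: define the map on $M(G_\xi)$ via the generators, use sharpness of $\CharM_S$ to kill torsion, and use saturatedness of $\CharM_S$ (via Lemma \ref{lem:WeightInMonoid}, which is the concrete form of the adjunction in Proposition \ref{prop:AdjSat} that the paper invokes) to extend across the saturation step. The only difference is cosmetic — you kill torsion before saturating while the paper saturates first — and your explicit check that the relations $h_v$, $h_l$ are respected via Lemma \ref{lem:DegJump} is a detail the paper leaves implicit.
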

\begin{proof}
Note that (\ref{equ:CanMapChar}) induces a map $M(G_{\xi})\to \CharM_{S}$. By Proposition \ref{prop:AdjSat}, this induces a unique map $\phi': M(G_{\xi})^{Sat}\to \CharM_{S}$. Since the monoid $\CharM_{S}$ is sharp, if $e\in M(G_{\xi})^{Sat}$ is torsion, then $\phi'(e)=0$. Thus, there is a unique map $\phi: \CharM(G_{\xi}) \to \CharM_{S}$ induced by $\phi'$.
\end{proof}

\begin{cor}\label{cor:DualGraphAdm}
The graph $G_{\xi}$ is an admissible marked graph.
\end{cor}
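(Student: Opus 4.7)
The plan is to reduce both admissibility conditions for $G_{\xi}$ to the canonical morphism $\phi\colon\CharM(G_{\xi})\to\CharM_{S}$ of Proposition \ref{prop:CanChar}. Since $\sM_{S}$ is fs, $\CharM_{S}$ is sharp fs, so $\CharM_{S}^{gp}$ is torsion-free (if $m\alpha=0$ then saturatedness puts $\alpha,-\alpha\in\CharM_{S}$, and sharpness kills $\alpha$), and any vanishing sum $\sum x_{i}=0$ in $\CharM_{S}$ is termwise zero (the first summand is invertible in $\CharM_{S}$, hence zero, and induct). Moreover, for every $l\in E(G_{\xi})$, the image $e_{l}'=\phi(e_{l})\in\CharM_{S}$ is nonzero: $l$ is a genuine node, and the relation $e_{l}'=\log x_{p}+\log y_{p}$ in $\CharM_{C}$ would otherwise force the branch coordinates to be units, contradicting the nodal local model. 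The non-triviality of $e_{l}$ in $\CharM(G_{\xi})$ is then immediate: vanishing there means $e_{l}\in T(G_{\xi})$, i.e.\ $m e_{l}=0$ in $M(G_{\xi})^{gp}$ for some $m>0$; applying $\phi^{gp}$ and using torsion-freeness of $\CharM_{S}^{gp}$ would give $e_{l}'=0$, a contradiction.

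For sharpness, I would take $a\in\CharM(G_{\xi})$ with $-a\in\CharM(G_{\xi})$ and use Lemma \ref{lem:WeightInMonoid} twice to produce $N>0$ such that both $Na$ and $-Na$ lie in $N(G_{\xi})$. Writing them as nonnegative integer combinations $Na=\sum_{v}\alpha_{v}e_{v}+\sum_{l}\alpha_{l}e_{l}$ and $-Na=\sum_{v}\beta_{v}e_{v}+\sum_{l}\beta_{l}e_{l}$, addition gives $\sum_{v}(\alpha_{v}+\beta_{v})e_{v}+\sum_{l}(\alpha_{l}+\beta_{l})e_{l}=0$ in $\CharM(G_{\xi})$. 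Applying $\phi$ and invoking termwise vanishing in the sharp monoid $\CharM_{S}$ together with torsion-freeness of $\CharM_{S}^{gp}$ forces $\alpha_{l}+\beta_{l}=0$ for all $l$ (since $e_{l}'\neq 0$) and, for each $v$, either $\alpha_{v}+\beta_{v}=0$ or $e_{v}'=0$; in the latter case $v\in V_{n}(G_{\xi})$ and the relation $h_{v}$ already zeroes out $e_{v}$ inside $N(G_{\xi})$. Thus $Na=0$ in $\CharM(G_{\xi})$, and since $\CharM(G_{\xi})^{gp}=M(G_{\xi})^{gp}/T(G_{\xi})$ is torsion-free by construction, $a=0$.

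The main obstacle is bridging the gap between $\CharM(G_{\xi})$ and its defining submonoid $N(G_{\xi})$, since saturation can introduce elements not transparently expressible in the generators $e_{v},e_{l}$. Lemma \ref{lem:WeightInMonoid} closes this gap precisely: it allows the unit equation $a+(-a)=0$ to be cleared of denominators and lifted to an additive identity among the generators, which can then be tested against $\CharM_{S}$ via $\phi$. Once this reduction is made, the argument becomes a routine manipulation using sharpness and torsion-freeness on the target side.
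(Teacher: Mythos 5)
Your treatment of the two conditions in Definition \ref{defn:AdGraph} is correct and follows essentially the same route as the paper: both arguments push everything through the canonical map $\phi\colon\CharM(G_{\xi})\to\CharM_{S}$ of Proposition \ref{prop:CanChar}, use Lemma \ref{lem:WeightInMonoid} to clear the saturation, and then exploit sharpness of $\CharM_{S}$ to force the coefficients of the edge elements and of the degenerate-vertex elements to vanish. The only cosmetic difference is that the paper applies Lemma \ref{lem:WeightInMonoid} once to an invertible element $a$ and observes directly that $\phi(a)=0$ because $\phi(a)$ is a unit in the sharp monoid $\CharM_{S}$, whereas you apply the lemma to both $a$ and $-a$ and add; both versions work. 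Your justification that $e_{l}'\neq 0$ via the nodal local model, and your explicit appeal to torsion-freeness of $\CharM(G_{\xi})^{gp}$ to pass from $Na=0$ to $a=0$, are points the paper leaves implicit, and they are correct.

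The one genuine omission is that the corollary asserts $G_{\xi}$ is an admissible \emph{marked graph}, and being a marked graph (Definition \ref{defn:Orientation}) requires the orientation of Construction \ref{cons:DualGraphMap} to be compatible: an edge is non-oriented if and only if its contact order vanishes, and every vertex in $V_{n}(G_{\xi})$ is minimal. You address only the two admissibility conditions. The compatibility is not automatic from the construction; it follows from Lemma \ref{lem:DegJump} together with sharpness of $\CharM_{S}$ --- if the upper component of a node with contact order $c_{p}>0$ were non-degenerate, its degeneracy $e+c_{p}\cdot e_{p}$ would vanish in the sharp monoid $\CharM_{S}$, forcing $c_{p}\cdot e_{p}=0$ and hence $c_{p}=0$ since $e_{p}\neq 0$, a contradiction. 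The paper dispatches this in one sentence by citing Lemma \ref{lem:DegJump}; your proof needs the corresponding sentence to cover the full statement.
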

\begin{proof}
The compatibility of the orientation follows from Lemma \ref{lem:DegJump}. Let us consider the admissibility. First note that $e_{l}$ is non-trivial for any $l\in E(G)$, since its image in $\CharM_{S}$ is the element smoothing the node $l$, which is non-trivial. For any element $a\in \CharM(G_{\xi})$, if $a$ is invertible, then by Lemma \ref{lem:WeightInMonoid}, there exists some positive integer $m$ such that $m\cdot a =\sum_{i}d_{i}e_{i}$, where $e_{i}$ are elements associated to some edges or vertices, and $d_{i}$ are non-negative integers. Since the monoid $\CharM_{S}$ is sharp, we have $\phi(a)=\sum_{i}d_{i}\phi(e_{i})=0$ in $\CharM_{S}$. If $d_{i}\neq 0$, then $\phi(e_{i})=0$, which implies that $e_{i}$ is the element associated to a non-degenerate component. Thus we have $e_{i}=0$ in $\CharM(G_{\xi})$. This implies that $a=0$ in $\CharM(G_{\xi})$, which proves the statement. 
\end{proof}

\begin{defn}
We call $G_{\xi}$ the {\em marked graph of $\xi$}.
\end{defn}

\subsection{Minimal logarithmic maps}
We still consider a log map $\xi=(C\to S,\sM_{S}, f)$ over a geometric point $S$ with fs log structure $\sM_{S}$.

\begin{defn}\label{defn:Minimal}
The log map $\xi$ over $S$ is called \textit{minimal} if the induced canonical map $\phi$ in Proposition \ref{prop:CanChar} is an isomorphism of monoids. A family of log maps $\xi_{T}$ over a scheme $T$ is called {\em minimal} if each geometric fiber is minimal.
\end{defn}

\begin{prop}[\textbf{Openness of minimal log maps}]\label{prop:MinOpen}
Let $\xi=(C\to S,\sM_{S},f)$ be a family of log maps over a scheme $S$, and assume that $\xi_{\bar{s}}$ is minimal for some point $s\in S$. Then there exists an \'etale neighborhood of $s$ with all geometric fibers minimal.
\end{prop}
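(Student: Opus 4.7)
The plan is to work étale locally around $s$ and show that the canonical map $\phi$ at a nearby geometric fiber factors as a quotient of the canonical map at $\bar s$ by matching faces on both sides, so that an isomorphism at $\bar s$ forces an isomorphism at the nearby fiber.

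First, I would pass to an étale neighborhood of $s$ on which no new nodes or markings appear and on which we have a chart $P\to \sM_{S}$ with $P=\CharM_{S,\bar{s}}$. For any geometric point $\bar{t}$ lying near $\bar{s}$, the stalk $\CharM_{S,\bar{t}}$ is then the quotient $P/F_{t}$ where $F_{t}\subset P$ is the face of elements whose image in $\sO_{S,\bar{t}}$ is a unit. In particular, the specialization map $\CharM_{S,\bar{s}}\to\CharM_{S,\bar{t}}$ is identified with the face quotient $P\to P/F_{t}$.

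Second, I would compare the marked graphs $G_{\bar{s}}:=G_{\xi_{\bar{s}}}$ and $G_{\bar{t}}:=G_{\xi_{\bar{t}}}$. A node $l$ of $C_{\bar{s}}$ persists in $C_{\bar{t}}$ precisely when its smoothing element $e'_{l}\in P$ is not in $F_{t}$; the remaining nodes get contracted, gluing together their adjacent components. By Lemmas~\ref{lem:ContactOpen} and~\ref{lem:NodeContactOpen}, the contact orders at surviving nodes and at markings are constant in the family, and by the description of degeneracies, a component of $C_{\bar{t}}$ is non-degenerate exactly when the common image in $P/F_{t}$ of the degeneracies of its constituent components of $C_{\bar{s}}$ vanishes. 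Using this dictionary I would verify that the evident correspondence $e_{l}\mapsto e_{l}$ (for surviving edges), $e_{v}\mapsto e_{\tilde v}$ (sending a vertex to the vertex of the contracted graph it lies in) induces a surjection $q:\CharM(G_{\bar{s}})\to\CharM(G_{\bar{t}})$ identifying $\CharM(G_{\bar{t}})$ with the saturated quotient of $\CharM(G_{\bar{s}})$ by the face generated by $\{e_{l}:e'_{l}\in F_{t}\}$ together with the equations identifying degeneracies made equal modulo $F_{t}$.

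Third, I would check that the canonical maps $\phi_{\bar s}$ and $\phi_{\bar t}$ of Proposition~\ref{prop:CanChar} fit into the commutative square
\[
\xymatrix{
\CharM(G_{\bar s}) \ar[r]^{\phi_{\bar s}} \ar[d]_{q} & P \ar[d] \\
\CharM(G_{\bar t}) \ar[r]^{\phi_{\bar t}} & P/F_{t}.
}
\]
This is immediate on the generators $e_{l}, e_{v}$ from the defining assignment $e_{l}\mapsto e'_{l}$, $e_{v}\mapsto e'_{v}$, and survives passage to $\CharM(\bullet)$ because both vertical maps are face quotients followed by saturation. The crux is then to observe that the face of $\CharM(G_{\bar{s}})$ by which $q$ is a quotient is exactly the preimage under $\phi_{\bar{s}}$ of the face $F_{t}\subset P$. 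Granting minimality at $\bar{s}$, i.e.\ $\phi_{\bar{s}}$ an isomorphism, this identification is automatic, and it follows formally that $\phi_{\bar{t}}$ is an isomorphism as well.

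The main obstacle I anticipate is the bookkeeping in step two: matching the face generated by smoothed-node and newly non-degenerate-component variables in $\CharM(G_{\bar{s}})$ with the face $F_{t}\subset P$, while keeping track of the saturation and torsion-killing steps built into the definition of $\CharM(G)$ in Construction~\ref{cons:AssoMonoid}. Making the specialization map $q$ into a bona fide quotient of fs monoids, rather than of the pre-saturated $M(G)$, requires showing that saturation and the face quotient commute in this situation, which I expect to handle by noting that the face in question is cut out by an idempotent collection of linear equations already present in $M(G)$.
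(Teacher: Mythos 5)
Your proposal is correct and follows essentially the same route as the paper: identify the specialization map $\CharM_{S,\bar s}\to\CharM_{S,\bar t}$ as the quotient by the face (the paper's submonoid $K_{\bar t}$) of elements becoming units at $\bar t$, build the contracted graph and verify it equals $G_{\xi_{\bar t}}$, fit everything into the commutative square, and conclude from minimality at $\bar s$ that the two quotients agree. The only cosmetic difference is that the paper phrases the final comparison in terms of the kernels of the groupified maps $q^{gp}$ and $(q')^{gp}$ both being $K_{\bar t}^{gp}$, whereas you phrase it via matching faces; these are equivalent.
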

\begin{proof}
Shrinking $S$, we may assume that $S$ is connected, and we have a chart $\beta: \CharM_{S,\bar{s}} \to \sM_{S}$ by Proposition \ref{prop:ChartLogStr}. We next show that for any $\bar{t}\in S$, the fiber $\xi_{\bar{t}}$ is minimal.

Denote by 
\[K_{\bar{t}}=\{\ a\in \CharM_{S,\bar{s}}\ | \ \mbox{$\beta(a)$ is invertible at $\bar{t}$}\ \}.\]
Note that $K_{\bar{t}}$ is a submonoid of $\CharM_{S,\bar{s}}$. Consider the following composition
\[\CharM_{S,\bar{s}}\to\CharM_{S,\bar{s}}^{gp}\to\CharM_{S,\bar{s}}^{gp}/K_{\bar{t}}^{gp}.\]
By \cite[3.5]{LogStack}, we have $\CharM_{S,\bar{s}}^{gp}/K_{\bar{t}}^{gp}\cong\CharM_{S,\bar{t}}^{gp}$. The above composition induces a map $q:\CharM_{S,\bar{s}}\to \CharM_{S,\bar{t}}$, which is exactly the specialization map as in \cite[3.5(iii)]{LogStack}. We construct a new graph from the marked graph $G_{\xi_{\bar{s}}}$ as follows.
\begin{enumerate}
 \item For an edge $l\in E(G_{\xi,\bar{s}})$, if $q(e_{l})=0$, then we contract $l$, and identify the two end vertices of $l$ and the corresponding associated elements.
 \item For a vertex $v\in V(G_{\xi,\bar{s}})$, if $q(e_{v})=0$, then we put $e_{v}=0$ in $G'$. 
\end{enumerate}
Other vertices and edges in $G_{\xi_{\bar{s}}}$, and their associated elements and contact orders remain the same. We denote by $G'$ the resulting graph. 

First note that the underlying graph $\underline{G}'$ is the dual graph of $C_{\bar{t}}$, since an edge $l\in E(G_{\xi,\bar{s}})$ gets contracted if and only if the corresponding node is smoothed out over $\bar{t}$. Furthermore, the orientation on $G_{\xi_{\bar{s}}}$ induces a natural orientation on $G'$. Since all contact orders remain the same, the graph $G'$ is in fact the marked graph $G_{\xi_{\bar{t}}}$ of $\xi_{\bar{t}}$.

The construction of $G'$ gives a map of monoids:
\begin{equation}\label{}
M(G_{\xi_{\bar{s}}})\to M(G_{\xi_{\bar{t}}}) \to \CharM_{G_{\xi_{\bar{t}}}}.
\end{equation}
By the same argument in Proposition \ref{prop:CanChar}, we obtain a canonical map of monoids:
\begin{equation}\label{equ:generalize}
q':\CharM_{S,\bar{s}}\cong \CharM(G_{\xi_{\bar{s}}})\to \CharM(G_{\xi_{\bar{t}}}),
\end{equation}
which gives the following commutative diagram:
\begin{equation}\label{diag:GeneralizeChar}
\xymatrix{
&\CharM_{S,\bar{s}} \ar[ld]_{q'} \ar[rd]^{q}&\\
\CharM(G_{\xi_{\bar{t}}}) \ar[rr] && \CharM_{S,\bar{t}},
}
\end{equation}
where the bottom map is the canonical map as in Proposition \ref{prop:CanChar}. Consider the induced commutative diagram:
\begin{equation}
\xymatrix{
&\CharM_{S,\bar{s}}^{gp} \ar[ld]_{(q')^{gp}} \ar[rd]^{q^{gp}}&\\
\CharM(G_{\xi_{\bar{t}}})^{gp} \ar[rr] && \CharM_{S,\bar{t}}^{gp},
}
\end{equation}
Note that both $q^{gp}$ and $(q')^{gp}$ are surjective maps. By the construction of $q$, the group $K^{gp}_{\bar{t}}$ is the kernel of $q^{gp}$. On the other hand, the construction of $G'$ and the fact that $G'=G_{\xi_{\bar{t}}}$ implies that $K^{gp}_{\bar{t}}$ is also the kernel of $(q')^{gp}$. Since the monoids in (\ref{diag:GeneralizeChar}) are fine and saturated, the map $\CharM(G_{\xi_{\bar{t}}}) \to \CharM_{S,\bar{t}}$ is an isomorphism. This proves the statement. 
\end{proof}

\begin{defn}\label{defn:min-pre-stab-map}
Denote by $\sK_{g,n}^{pre}(X^{log})$ the stack parameterizing minimal log maps to $X^{log}$, with the fixed genus $g$, and $n$-markings.
\end{defn}

\begin{cor}
$\sK_{g,n}^{pre}(X^{log})$ is an open substack of the stack $\sL\sM_{g,n}(X^{log})$ of log maps, hence is an algebraic stack.
\end{cor}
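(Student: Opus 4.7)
The plan is to realize $\sK_{g,n}^{pre}(X^{log})$ as an open substack of $\sL\sM_{g,n}(X^{log})$, and then invoke Theorem \ref{thm:AlgStackLogMap} to conclude algebraicity. Since minimality has been defined as a fiberwise condition (Definition \ref{defn:Minimal}), the first step is to observe that $\sK_{g,n}^{pre}(X^{log})$ is well-defined as a full subcategory of $\sL\sM_{g,n}(X^{log})$: indeed, the pullback in $\sL\sM$ is via strict morphisms (see Definition \ref{defn:LogMap}), so the geometric fibers of a pullback $g^{*}\xi$ coincide with those of $\xi$ at the image points of $g$. Hence minimality is preserved under pullback in $\sL\sM$, and $\sK_{g,n}^{pre}(X^{log})$ forms a fibered subcategory.

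Next, I would check that the inclusion $\sK_{g,n}^{pre}(X^{log}) \hookrightarrow \sL\sM_{g,n}(X^{log})$ is representable by open immersions. Let $T \to \sL\sM_{g,n}(X^{log})$ be an arbitrary map from a scheme $T$, corresponding to a log map $\xi = (C \to T, \sM_T, f)$ over $T$. The fiber product $T \times_{\sL\sM} \sK^{pre}$ is represented by the subfunctor of $T$ classifying those $T$-schemes $T' \to T$ for which the pullback $\xi_{T'}$ is minimal, and by the fiberwise nature of minimality this is precisely the subset $T^{\min} \subset T$ of points $t$ at which $\xi_{\bar{t}}$ is minimal. By Proposition \ref{prop:MinOpen}, whenever $\xi_{\bar{s}}$ is minimal there is an étale neighborhood of $s$ contained in $T^{\min}$, so $T^{\min}$ is open in $T$. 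Thus the inclusion is representable by open immersions.

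Combining these two points, $\sK_{g,n}^{pre}(X^{log})$ is an open substack of $\sL\sM_{g,n}(X^{log})$. Since the latter is an algebraic stack by Theorem \ref{thm:AlgStackLogMap}, and open substacks of algebraic stacks are algebraic, the corollary follows. There is essentially no obstacle here: the only nontrivial input is Proposition \ref{prop:MinOpen}, which supplies the openness, and all remaining verifications (stability of minimality under strict pullback, compatibility with the fibered structure) are formal.
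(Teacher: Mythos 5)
Your proposal is correct and follows exactly the paper's route: the paper's own proof is a one-line citation of Theorem \ref{thm:AlgStackLogMap} and Proposition \ref{prop:MinOpen}, and you have simply spelled out the standard verification (minimality is fiberwise, stable under strict pullback, and open by Proposition \ref{prop:MinOpen}) that those citations implicitly encode.
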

\begin{proof}
This follows from Theorem \ref{thm:AlgStackLogMap} and Proposition \ref{prop:MinOpen}. 
\end{proof}

\subsection{Stable log maps}\label{ss:LogStableMap}

\begin{defn}\label{defn:MinStable}
A log map $\xi=(C\to S,\sM_{S},f)$ over a geometric point $S$ is called \textit{stable} if its underlying map is stable in the usual sense, and $\sM_{S}$ is fs. A family of log maps $\xi_{T}$ over a scheme $T$ is called stable if its geometric fibers are stable. A stable log map is called \textit{minimal stable} if it satisfies the minimality condition as in Definition \ref{defn:Minimal}.
\end{defn}

Similarly, we can work over log schemes rather than the usual schemes. Then we have the following:

\begin{defn}
A log map $\xi^{log}$ over a fs log scheme $(S,\sM_{S})$ as in Definition \ref{defn:LogMapOverLog} is called stable, if its underlying map is stable in the usual sense.
\end{defn}

\begin{con}\label{con:DataGamma}
In this paper, we fix the discrete data $\Gamma = (\beta,g,n,\textbf{c})$ where
 \begin{enumerate}
   \item $\beta \in H^{2}(X,\Z)$ is a curve class in $X$;
   \item $n$ and $g$ are two non-negative integers;
   \item $\textbf{c}=(c_{i})_{i=1}^{n}$ is a set of non-negative integers such that 
   \begin{equation}\label{equ:ContactOrder}
   \sum_{i=1}^{n}c_{i} = c_{1}(L_{}^{\vee})\cap\beta
   \end{equation}
where $c_{1}(L^{\vee})$ is the first chern class of the line bundle $L^{\vee}$ as in Conventions \ref{con:LogTarget}.
 \end{enumerate}
\end{con}

\begin{defn}\label{defn:GammaLogMap}
A minimal stable log map $\xi=(C\to S,\sM_{S},f)$ over a geometric point $S$ is called a \textit{$\Gamma$-minimal stable log} if
\begin{enumerate}
 \item The source curve $(C\to S,\sM_{S})$ is a log pre-stable curve of genus $g$ with $n$ marked points. 
 \item $f_{*}[C]=\beta$.
 \item For any $i\in \{1,2,\cdots, n\}$, the contact order along section $\Sigma_{i}$ is given by $c_{i}$.
\end{enumerate}
A log map $\xi'$ over an arbitrary scheme $T$ is called {\em $\Gamma$-minimal stable log} if its geometric fibers are all $\Gamma$-minimal stable log. The arrows between stable log maps are the same as the arrow of log maps in Definition \ref{defn:LogMapIso}.
\end{defn}

\begin{defn}\label{defn:MinStableStack}
Denote by $\sK_{g,n}^{}(X^{log},\beta)$ the stack parameterizing minimal stable log maps with genus $g$, $n$ marked points, and curve class $\beta$. Let $\sK_{\Gamma}(X^{log})$ be the stack parameterizing $\Gamma$-minimal stable log maps. These are substacks of $\sL\sM_{g,n}(X^{log})$ as in Theorem \ref{thm:AlgStackLogMap}.
\end{defn}

\begin{thm}\label{thm:StabLogAlg}
The stack $\sK_{g,n}^{}(X^{log},\beta)$ is an open substack of $\sK^{pre}_{n,g}(X^{log})$, hence is algebraic.
\end{thm}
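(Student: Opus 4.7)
The plan is to exhibit $\sK_{g,n}(X^{log},\beta)$ as the preimage of an open substack under the natural morphism that forgets all log structures, and then invoke the corollary following Proposition \ref{prop:MinOpen} to get algebraicity. Since $\sK^{pre}_{g,n}(X^{log})$ is already known to be an algebraic stack, it suffices to verify that the two additional conditions imposed in Definition \ref{defn:MinStableStack}, namely stability of the underlying map and the constraint $f_*[C]=\beta$, cut out an open substack of $\sK^{pre}_{g,n}(X^{log})$.

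First I would consider the forgetful morphism $\sK^{pre}_{g,n}(X^{log}) \to \sK^{pre}_{g,n}(X)$ from Observation \ref{obs:CompMapStack}, sending a minimal log map $\xi$ to its underlying pre-stable map $\underline{\xi}$. Both conditions in question depend only on $\underline{\xi}$: stability in the sense of Kontsevich is a property of the underlying morphism of schemes together with its marked points, and the curve class $f_*[C] \in H^2(X,\Z)$ is computed from the underlying map. Therefore it is enough to check that the locus in $\sK^{pre}_{g,n}(X)$ cut out by these two conditions is open.

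For stability, this is the standard fact that in any flat family of pre-stable maps, the locus where every contracted rational component carries at least three special points is open; equivalently, the non-stable locus is characterized by the existence of a destabilizing irreducible component of arithmetic genus zero with fewer than three special points, which is a closed condition on the base. For the curve class, one uses that in a flat family of proper curves over a connected base the pushforward class $f_*[C]\in H^2(X,\Z)$ is locally constant, so fixing $\beta$ selects a union of connected components of $\sK^{pre}_{g,n}(X)$ and is in particular an open condition.

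Combining these, the preimage of this open substack under the forgetful morphism $\sK^{pre}_{g,n}(X^{log}) \to \sK^{pre}_{g,n}(X)$ is exactly $\sK_{g,n}(X^{log},\beta)$, hence open in $\sK^{pre}_{g,n}(X^{log})$. Algebraicity then follows immediately from the algebraicity of $\sK^{pre}_{g,n}(X^{log})$ established via Theorem \ref{thm:AlgStackLogMap} and Proposition \ref{prop:MinOpen}. I do not expect a serious obstacle here since the theorem is essentially a formal consequence of openness of stability and local constancy of the curve class; the only point requiring care is making sure the forgetful morphism to $\sK^{pre}_{g,n}(X)$ is well-defined on $\sK^{pre}_{g,n}(X^{log})$, which is exactly the content of Observation \ref{obs:CompMapStack}.
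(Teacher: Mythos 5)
Your proposal is correct and takes essentially the same route as the paper: the paper's proof is a one-line observation that stability (and implicitly the curve-class condition) is a condition on the underlying map which is well known to be open, which is exactly the content you spell out via the forgetful morphism of Observation \ref{obs:CompMapStack}. Your additional remarks on local constancy of $f_*[C]$ and openness of the stable locus are just the standard facts the paper is silently invoking.
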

\begin{proof}
Note that the stability condition is a condition on the underlying map, which is well known to be open. 
\end{proof}

\begin{rem}
Denote by $\Lambda$ the set of discrete data $\Gamma$ as in Convention \ref{con:DataGamma} with fixed $g$, $n$ and $\beta$. Note that $\Lambda$ is a finite set. By Lemma \ref{lem:ContactOpen}, we have the disjoint union of open and closed substacks
\[\sK_{g,n}^{}(X^{log},\beta)=\coprod_{\Gamma\in\Lambda}\sK_{\Gamma}(X^{log}).\]
\end{rem}

\subsection{A quasi-finiteness result}\label{sec:finiteness}

We fix a minimal stable log map $\xi_{1}=(\pi:C\to S,\sM_{1},f_{1})$ over a geometric point $S$. Denote by $G$ the marked graph of $\xi_{1}$. Choose a chart $\beta:\CharM(G)\to \sM_{S}$. Let $\sN_{1}\subset \sM_{S}$ be the sub-log structure generated by $\beta(N(G))$. Since different choice of $\beta$ only differ by invertible elements, the log structure $\sN_{1}$ does not depend on the choice of $\beta$. 

Consider the fine log scheme $(S,\sN_{1})$ with the sub-log structure $\sN_{1}\subset \sM_{1}$ induced by $N(G)$. Since the map of characteristics $\CharM_{S}^{C/S}\to \CharM(G)$ factors through $N(G)$, the structure map $\sM_{S}^{C/S}\to \sM_{S}$ factors through $\sN_{1}$. This induces a log curve $(C\to S, \sN_{1})$. Denote by $\sN_{C,1}$ the log structure of $(C\to S, \sN_{1})$ on $C$, and by $\sM_{C,1}$ the log structure of $\xi_{1}$ on $C$. Then $\sN_{C,1}$ is a sub-log structure of $\sM_{C,1}$. Again by considering the map of characteristics, we see that the log map $f_{1}^{\flat}:f_{1}^{*}\sM_{X}\to \sM_{C,1}$ factors through $\sN_{C,1}$. Then we obtain a log map $g_{1}:(C,\sN_{C,1})\to X^{log}$. Denote by $\xi'_{1}=(C\to S,\sN_{1}, g_{1})$ the log map over $S$. 

\begin{defn}
The log map $\xi'_{1}$ is called the {\em coarse log map} of $\xi_{1}$.
\end{defn}

\begin{rem}
The log structures of coarse log maps are in general not saturated. The above construction yields a natural arrow $\xi_{1}\to \xi_{1}'$. 
\end{rem}

\begin{cor}
The pair $(\xi_{1},\xi_{1}\to \xi_{1}')$ is unique up to a unique isomorphism
\end{cor}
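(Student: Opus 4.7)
The plan is to show that the construction of the coarse log map depends only on the sub-characteristic $N(G) \subset \CharM(G) \cong \CharM_S$, not on the choice of chart, and that the arrow $\xi_1 \to \xi_1'$ is rigid. Interpret the pair in question as $(\xi_1', \xi_1 \to \xi_1')$ associated to the given minimal stable log map $\xi_1$.

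The first step is to prove independence of the chart. Suppose $\beta, \beta' \colon \CharM(G) \to \sM_S$ are two charts lifting the isomorphism $\phi$ of Proposition \ref{prop:CanChar}. Since both $\beta$ and $\beta'$ reduce to the same map $\CharM(G) \to \CharM_S$, for each $a \in \CharM(G)$ there is a unique $u_a \in \sO_S^*$ with $\beta'(a) = \beta(a) + \log u_a$ in $\sM_S$. Consequently $\beta(N(G))$ and $\beta'(N(G))$ generate the same sub-log structure $\sN_1 \subset \sM_S$. Thus $\sN_1$ is canonically determined by $\xi_1$ and the marked graph $G$ (and $G$ itself is canonically extracted from $\xi_1$ by Construction \ref{cons:DualGraphMap}).

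The second step observes that, once $\sN_1$ is fixed, every other ingredient of $\xi_1'$ is forced. The factorization $\sM_S^{C/S} \to \sN_1 \hookrightarrow \sM_S$ on characteristics is unique because it is the restriction of a map of sheaves of monoids through a subsheaf; this produces $(C \to S, \sN_1)$ and $\sN_{C,1} \hookrightarrow \sM_{C,1}$ uniquely. Similarly $f_1^\flat$ factors through $\sN_{C,1}$ in at most one way, giving $g_1$ uniquely. The arrow $\xi_1 \to \xi_1'$ is then the tautological one induced by these sub-log-structure inclusions, so the pair $(\xi_1', \xi_1 \to \xi_1')$ is canonically defined.

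For the uniqueness clause, suppose $(\xi_1'', \alpha \colon \xi_1 \to \xi_1'')$ is another coarse log map of $\xi_1$, so $\xi_1''$ has log structure $\sN_1''$ on $S$ with the same underlying $C \to S$ and same underlying map to $X$. Chasing characteristics through $\alpha$ and through $\xi_1 \to \xi_1'$, both $\sN_1'$ and $\sN_1''$ have characteristic equal to the image of $N(G)$ in $\CharM_S$, and the arrows from $\xi_1$ identify their images inside $\sM_S$. Hence there is an induced isomorphism $\xi_1' \to \xi_1''$ over $\xi_1$. Any automorphism of $\xi_1'$ over $\xi_1$ restricts to the identity on $\sN_1$ (because it is the identity after composing with the inclusion into $\sM_S$), so the isomorphism is unique. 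The main technical point is the first step, making precise that the ambiguity in $\beta$ lies entirely in $\sO_S^*$ so that the sub-log structure $\sN_1$ it generates is canonical; everything else is a formal consequence of the universal property of sub-log structures.
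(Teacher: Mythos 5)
Your proposal is correct and follows essentially the same route as the paper: the paper's proof of this corollary is precisely the observation that $\sN_{1}$ is canonically determined because different charts $\beta$ differ only by units and hence generate the same sub-log structure, after which the rest of $\xi_{1}'$ and the arrow $\xi_{1}\to\xi_{1}'$ are forced. Your additional spelling-out of the factorization steps and the rigidity of automorphisms over $\xi_{1}$ is a faithful elaboration of what the paper leaves implicit.
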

\begin{proof}
This follows from the uniqueness of the log structure $\sN_{1}$.
\end{proof}

The following result reveals the importance of the notion of coarse log maps.

\begin{lem}\label{lem:quasi-finit}
Consider another minimal stable log map $\xi_{2}=(C\to S,\sM_{2},f_{2})$ whose underlying structure and marked graph are identical to those of $\xi_{1}$. Then there exists a canonical isomorphism of coarse log maps $\xi_{2}'\cong\xi_{1}'$.
\end{lem}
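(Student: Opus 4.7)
The plan is to construct the canonical isomorphism of coarse log maps $\xi_2' \xrightarrow{\sim} \xi_1'$ with underlying morphisms on $S$, $C$, and $X_S$ all equal to identities; the content is an iso $\theta^\flat: \sN_2 \to \sN_1$ of log structures on $S$ lifting to a compatible iso $\sN_{C,2} \to \sN_{C,1}$ on $C$ under which $g_1^\flat$ and $g_2^\flat$ agree. First, I identify characteristics canonically: by minimality and Proposition \ref{prop:CanChar}, both $\phi_i: \CharM(G) \to \CharM_i$ are isomorphisms, and restricting to the sub-monoid generated by the $e_v$ and $e_l$ gives canonical isos $N(G) \xrightarrow{\sim} \overline{\sN}_i$, hence a canonical $\overline{\sN}_1 \cong \overline{\sN}_2$.

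Next, I exhibit canonical lifts in $\sN_i$ of the monoid generators of $N(G)$. For each edge $l \in E(G)$, the structure morphism $\sM_S^{C/S} \to \sM_i$ of the source log curve factors through $\sN_i$ by construction, and since $\sM_S^{C/S}$ depends only on the underlying curve, the image in $\sN_i$ of any fixed section of $\sM_S^{C/S}$ lifting the smoothing parameter of $l$ provides canonically matched lifts of $e_l$ in $\sN_1$ and $\sN_2$. For each degenerate vertex $v$, pick a smooth non-marked point $p \in v$: then $\sN_{C,i,\bar p} = (\pi^* \sN_i)_{\bar p}$, and the section $g_i^\flat(\delta)$ is a specific element of this stalk whose image in $\sO_{C,\bar p}$ under the structure map is $\underline{f}^*(x)$, for $x$ a local lift of $\delta$ in $\sO_X$. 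Normalizing the representative (using the explicit description of the pullback log structure as $\sN_{i,\bar s} \oplus_{k^*} \sO_{C,\bar p}^*$) extracts a lift of $e_v$ in $\sN_i$, and the shared underlying map forces the lifts in $\sN_1$ and $\sN_2$ to correspond.

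I then assemble $\theta^\flat$ by sending each generator-lift in $\sN_2$ to its counterpart in $\sN_1$ and extending as a monoid map. Well-definedness requires respecting the defining relations $h_l$ and $h_v$ of $M(G)$: the $h_v$ for $v \in V_n(G)$ are automatic (both sides zero), and the $h_l$ follow from the degeneracy-propagation formula of Lemma \ref{lem:DegJump}, which yields the same constraint in both $\xi_1$ and $\xi_2$ because the marked graph is shared and the $e_l$-lifts from the previous step agree. Being an iso on characteristics and the identity on units $\sO_S^*$, $\theta^\flat$ is an iso of log structures. Compatibility with $g_i^\flat$ is a point-wise check: at smooth non-marked points it is forced by construction; at marked points, the formula (\ref{equ:MarkedPt}) with the canonical marking-element $\sigma_p$ yields agreement; at nodes, the formula (\ref{equ:Node}) together with the relation $e_l = \log x_p + \log y_p$ from (\ref{equ:NodeCurve})---involving canonical node-sections $\log x_p$, $\log y_p$ of the source log curve---yields agreement.

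The main technical obstacle is canonicity of the vertex-lifts: a priori $g_i^\flat(\delta)$ only determines a lift of $e_v$ in $\sN_i$ modulo a unit in $k^*$. The resolution is that this unit ambiguity is governed entirely by the underlying data (the local lift $x$ and the common pullback $\underline{f}^* x$) and so takes the same value in both $\sN_1$ and $\sN_2$, causing it to cancel in the comparison and producing a well-defined canonical iso independent of all choices made.
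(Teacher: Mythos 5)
Your overall architecture coincides with the paper's: both arguments build the canonical isomorphism generator by generator on $N(G)$, taking the lifts of the edge elements $e_l$ from the shared canonical log structure $\sM_S^{C/S}$ of the source curve and the lifts of the vertex elements $e_v$ from $f_i^\flat(\delta)$ at a non-distinguished point of the corresponding component, and then verifying consistency at the distinguished nodes. The paper assembles the map as a morphism $\pi^*\sN_1\to\pi^*\sN_2$ of log structures on $C$ and descends it along the fppf cover $\pi:C\to S$, whereas you try to extract matched elements of $\sN_{i,\bar s}$ directly on the base; that difference is cosmetic.

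The gap lies in your resolution of what you correctly identify as the main obstacle. On a degenerate component the local lift $x$ of $\delta$ pulls back to $\underline{f}^*x=0$, so the structure map to $\sO_C$ imposes no constraint whatsoever on the unit part of $g_i^\flat(\delta)$; the ambiguity is therefore not ``governed by the underlying data'' in any useful sense. Concretely, write $g_i^\flat(\delta)=\pi^*(\tilde e_{v,i})+\log h_i$ near your chosen point $p$, with $h_i\in\sO_{C,\bar p}^*$ normalized so that $h_i(\bar p)=1$. Replacing $p$ by another smooth non-marked point $p'$ of the same component changes $\tilde e_{v,i}$ by $\log h_i(\bar p')$, and there is no local reason why $h_1(\bar p')=h_2(\bar p')$: the two log maps $f_1^\flat$ and $f_2^\flat$ differ precisely in these unit parts. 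What actually saves the construction is that $h_2/h_1$, a priori only a unit on the non-distinguished locus of the component $C_v$, extends to a global unit on the proper connected curve $C_v$ and hence is a constant in $k^*$, because $h_1$ and $h_2$ acquire the same prescribed orders of vanishing or poles at the distinguished points of $C_v$ (the contact orders of the shared marked graph; compare Proposition \ref{prop:LineBdIso}). Establishing this, or equivalently carrying out the paper's explicit computation with the relation $e_l=\log x_1+\log x_2$ at each distinguished node as in (\ref{equ:low-comp})--(\ref{equ:up-comp}), is the real content of the lemma, and your sketch omits it. The same issue undercuts your verification that the relations $h_l$ are respected: Lemma \ref{lem:DegJump} is a statement about characteristic monoids only and cannot produce the required identity of units; only the node computation (or the global-unit argument just described) does.
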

\begin{proof}
Let $\xi_{2}'=(C\to S,\sN_{2},g_{2})$ be the coarse log map of $\xi_{2}$. Denote by $\sN_{C,i}$ the log structures on $C$ corresponding to $\xi_{i}'$ for $i=1,2$. Consider the solid diagram of log structures on $C$:
\begin{equation}\label{diag:on-curve-coarse}
\xymatrix{
 && \sN_{C,1} \ar@{-->}[dd]^{\psi_{\sN}} && \\
\underline{f}^{*}\sM_{X} \ar[rru]^{g_{1}^{\flat}} \ar[rrd]_{g_{2}^{\flat}} &&&&  \sM_{C}^{C/S} \ar[llu]_{\phi_{1}} \ar[lld]^{\phi_{2}} \\
 && \sN_{C,2} &&
}
\end{equation}
where $\sN_{C,i}$ is the associated log structure on $C$ with respect to $\sN_{i}$. We will first construct the dashed arrow $\psi_{\sN}$, which makes (\ref{diag:on-curve-coarse}) commutative. 

Since the underlying structures of $\xi_{1}$ and $\xi_{2}$ are identical, it would be enough to construct $\psi_{\sN}: \pi^{*}\sN_{1}\to \pi^{*}\sN_{2}$. We fix a chart $\beta_{i}: N(G)\to \sN_{i}$. Consider a section $e\in\pi^{*}\sN_{i}$ for $i=1,2$. We want to define the image $\psi_{\sN}(e)$. For this, it is enough to assume that $e=\pi^{*}(\beta_{1}(\bar{e}))$ for some element $\bar{e}\in N(G)$ associated to a vertex or an edge.

We first assume that $\bar{e}$ is an element associated to an edge $l\in E(G)$. Then there exists a section $e_{l}\in \sM_{S}^{C/S}$, such that $\phi_{1}(e_{l})=e$. Thus, to make (\ref{diag:on-curve-coarse}) commutative one has to define
\begin{equation}\label{equ:node-def}
\psi_{\sN}(e)=\phi_{2}(e_{l})
\end{equation}

We then consider the case that $\bar{e}$ is an element associated to a vertex $v\in V(G)$. We can assume that $v$ is degenerate, otherwise $\bar{e}$ is trivial in $N(G)$. We may restrict (\ref{diag:on-curve-coarse}) to a small neighborhood of a non-distinguished point as in Definition \ref{defn:DistNode} on the component corresponding to $v$. Denote by $\delta\in \underline{f}^{*}\sM_{X}$ a local generator. Since on the level of characteristic we have $\bar{\phi}_{1}(\delta)=\bar{e}$ in $\overline{\sN}_{C,1}$, we may assume $f^{\flat}_{1}(\delta)=e$. Thus, one has to define
\begin{equation}\label{equ:vertex-def}
\psi_{\sN}(e)=f_{2}^{\flat}(\delta)
\end{equation}

It is clear that (\ref{equ:node-def}) and (\ref{equ:vertex-def}) make (\ref{diag:on-curve-coarse}) commutative. Now we need to show that the map $\psi_{\sN}$ is well-defined. The only issue here is to check the left triangle of (\ref{diag:on-curve-coarse}) at a distinguished node. 

We may assume that $p$ is a distinguished node joining two components $Z_{1}$ and $Z_{2}$ with contact order $c$. We need to check that the map $\psi_{\sN}$ defined at the generic points of the two components can be extended to $p$. Let $x_{j}$ be the local coordinate of $Z_{j}$ at $p$. Denote by $\log x_{j}$ the corresponding section in both $\sN_{C,1}$ and $\sN_{C,2}$. Then we automatically have 
\[\psi_{\sN}(\log x_{i})=\log x_{i}.\]
Without loss of generality, assume that the orientation of the node is given by $Z_{1}>Z_{2}$. Then locally at $p$, we have
\begin{equation}\label{equ:low-comp}
f_{1}^{\flat}(\delta)=e_{Z_{2}} + c\cdot \log x_{2}
\end{equation}
and
\begin{equation}\label{equ:up-comp}
f_{2}^{\flat}(\delta)=\psi_{\sN}(e_{Z_{2}}) + c\cdot \log x_{2}
\end{equation}
where $\psi_{\sN}(e_{Z_{2}})$ is defined by (\ref{equ:vertex-def}) at some smooth non-marked point of $Z_{2}$. 

On the other hand, we have a section $e_{l}\in \sM_{S}^{C/S}$ such that 
\[e_{l}= \log x_{1} + \log x_{2}.\]
We identify $e_{l}$ with the corresponding sections in $\sN_{S,i}$ and $\sN_{C,i}$ via $\phi_{i}$. Now combining this with (\ref{equ:low-comp}) and (\ref{equ:up-comp}), and generalizing to a smooth non-marked point of $Z_{1}$, we get 
\[
f_{1}^{\flat}(\delta) + c\cdot \log x_{1}=e_{Z_{2}} + c\cdot e_{l} 
\]
and
\[
f_{2}^{\flat}(\delta) + c\cdot \log x_{1}=\psi_{\sN}(e_{Z_{2}}) + c\cdot e_{l} .
\]
Since $x_{1}$ is invertible away from $p$, it make sense to write ``$- log x_{1}$'' as above. Hence 
\[\psi_{\sN}(e_{Z_{2}}+c\cdot e_{l}) = \psi_{\sN}(e_{Z_{2}}) + c\cdot e_{l}.\]
This proves that the definitions of $\psi_{\sN}$ on the two components meeting at $p$ are compatible. Therefore the map $\psi_{\sN}$ is well-defined. In particular, the above construction gives a canonical isomorphism $\pi^{*}\sN_{1}\cong \pi^{*}\sN_{2}$.

Note that $\pi: C\to S$ forms a flat cover. Since log structures can be glued under fppf topology \cite{LogStack}, the map $\psi_{\sN}$ descends to a well-defined isomorphism of log structures $\sN_{1}\to \sN_{2}$, which induces an isomorphism $\xi_{1}'\cong\xi_{2}'$. The uniqueness follows from that of $\psi_{\sN}$ in the above construction. 
\end{proof}

\begin{prop}\label{prop:quasi-finite}
There are at most finitely many minimal stable log maps over a geometric point with fixed underlying map and marked graph.
\end{prop}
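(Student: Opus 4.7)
The plan is to fix the coarse log map $\xi_1'$ and count the minimal stable log maps extending it. By Lemma \ref{lem:quasi-finit}, every minimal stable log map $\xi$ with the prescribed underlying map and marked graph $G$ has coarse log map canonically isomorphic to $\xi_1'$, so $\xi$ comes equipped with a canonical arrow $\xi \to \xi_1'$. Hence it suffices to bound the number of isomorphism classes of such pairs $(\xi, \xi\to\xi_1')$.

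The data of such a pair amounts to a fine saturated log structure $\sM$ on $S$ with characteristic $\CharM(G)$, an inclusion $\sN \hookrightarrow \sM$ inducing the fixed inclusion $N(G) \hookrightarrow \CharM(G)$ on characteristics, together with the induced log curve and a log map $f$ extending $g$. Since $\CharM(G)$ is defined in Construction \ref{cons:AssoMonoid} as the saturation of $N(G)$ in $M(G)^{gp}/T(G)$, Lemma \ref{lem:WeightInMonoid} guarantees that for each $a \in \CharM(G) \setminus N(G)$ there is a smallest $n_a \geq 1$ with $n_a\cdot a \in N(G)$. A lift $\tilde{a} \in \sM$ of such an $a$ must satisfy $n_a \tilde{a} = \beta(n_a a)$, where $\beta$ is the fixed chart of $\sN$. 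Over the geometric point $S$, whose residue field is algebraically closed of characteristic zero, the set of $n_a$-th roots of the fixed section $\beta(n_a a)$ is either a single element (when the target is zero under the structure map) or a torsor under the finite group $\mu_{n_a}$; in either case it is finite.

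Since $\CharM(G)$ is finitely generated, only finitely many elements must be lifted, and compatibility with the monoid relations in $\CharM(G)$ only constrains the joint choices further. The induced log curve structure on $C$ is then determined by $\sM$ via the chart $\CharM_S^{C/S} \to \sM$, and the log map $f^\flat$ is the composition of $g^\flat$ with the inclusion $\sN_C \hookrightarrow \sM_C$, so no additional choices arise from the source. Passing to isomorphism classes can only reduce the count, yielding the desired finiteness.

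The main obstacle will be verifying that each compatible choice of lifts $\{\tilde{a}_i\}$ actually produces a well-defined minimal stable log map, and conversely that every such $\xi$ arises this way. In particular, one must check that the relations in $\CharM(G)$ are simultaneously preserved by the chosen lifts, and that the resulting $f^\flat$ remains compatible at distinguished nodes and marked points via the formulas (\ref{equ:MarkedPt}), (\ref{equ:NodeCurve}), (\ref{equ:Node}). These are local checks analogous to those performed at the end of the proof of Lemma \ref{lem:quasi-finit}, but must be carried out carefully.
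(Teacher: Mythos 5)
Your proposal is correct and follows the paper's overall strategy: both arguments reduce, via Lemma \ref{lem:quasi-finit}, to counting the minimal stable log maps lying over the canonically determined coarse log map $\xi'=(C\to S,\sN_{S},g)$. Where you diverge is in how that count is performed. The paper packages it functorially: it forms the saturation $\sS:(S^{\sS},\sM)=(S,\sN_{S})^{\sS at}\to(S,\sN_{S})$, notes that the underlying map of $\sS$ is finite, and shows every minimal $\xi$ over $\xi'$ factors through a strict closed immersion into $(S^{\sS},\sM)$, so the minimal log maps are indexed by the finitely many points of $S^{\sS}$ over the geometric point. You instead unwind what those points are: extensions of the multiplicative map $\exp\circ\beta:N(G)\to k$ to $\CharM(G)$, each generator $a$ being constrained to an $n_{a}$-th root of a fixed scalar, hence finitely many choices since $\CharM(G)$ is finitely generated and $k$ is a field. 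These are the same finite set; your version is hands-on, the paper's borrows the finiteness of saturation from Ogus. Two remarks. First, the ``main obstacle'' you flag at the end --- verifying that every compatible system of lifts is realized by an actual minimal stable log map --- is not needed for this proposition: at-most-finiteness only requires that every $\xi$ determines such a system and is determined by it, both of which your argument supplies; realizability would only matter for computing the exact count. Second, a lift $\tilde a\in\sM$ satisfies $n_{a}\tilde a=\beta(n_{a}a)$ only up to a unit; the cleaner statement is that the sheaf of monoids underlying $\sM$ is forced to be the preimage of $\CharM(G)$ under $\sN^{gp}\to N(G)^{gp}=\CharM(G)^{gp}$, so the only freedom is in the structure map $\sM\to\sO_{S}$, whose value on $\tilde a$ must be an $n_{a}$-th root of $\exp(\beta(n_{a}a))$ --- which is exactly the finite set you describe.
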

\begin{proof}
Fixing a discrete data $\Gamma$, the number of possible choices of contact orders along marked points is finite. It is enough to show that the number of $\Gamma$-minimal stable log maps with fixed underlying structure and marked graph is at most finite.

Consider a $\Gamma$-minimal stable log map $\xi=(C\to S,\sM_{S},f)$ over a geometric point $S$ with the fixed underlying structure $\underline{\xi}$ and marked graph $G$. Denote by $\xi'=(C\to S,\sN_{S},g)$ the coarse log map of $\xi$ over $S$. Then we have the natural arrow $\xi\to\xi'$.

On the other hand, consider the saturation map $\sS: (S^{\sS},\sM)=(S,\sN_{S})^{\sS at}\to (S,\sN_{S})$. Denote by $\xi^{\sS}$ the stable log map over $S^{\sS}$ obtained by pulling back $\xi'$ via $\sS$. It is easy to check that $\xi^{\sS}$ is minimal. By \cite[Chapter II,2.4.5]{Ogus}, we have a canonical map $h':(S,\sM_{S})\to (S^{\sS},\sM)$, such that $h=\sS\circ h'$. This induces an arrow of minimal stable log maps $\xi\to\xi^{\sS}$. By comparing the characteristic, it is easy to see that $h'$ is a strict closed immersion.

Since the underlying map of $\sS$ is finite, the statement follows from Lemma \ref{lem:quasi-finit}.
\end{proof}

\subsection{Finiteness of automorphisms}\label{ss:FiniteAuto}
Let $\xi=(C\to S, \sM_{S}, f)$ be a minimal stable log map over a geometric point $S$. We fix a chart $\CharM_{S}\to \sM_{S}$, and identify the elements $e_{v}$ and $e_{l}$ for $v\in V(G_{\xi})$ and $l\in E(G_{\xi})$ with their images in $\sM_{S}$.
\begin{prop}\label{prop:FiniteAuto}
Notations as above, the set $\sA ut_{S}(\xi)(S)$ is finite.
\end{prop}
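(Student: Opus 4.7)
The plan is to reduce, via the forgetful map $\sA ut_S(\xi) \to \sA ut_S(\underline\xi)$, to showing that the kernel $K$ consisting of pairs $(\rho,\theta)$ with $\underline\rho = \mathrm{id}_C$ and $\underline\theta = \mathrm{id}_S$ is trivial. Since $\underline\xi$ is an ordinary stable map to a projective variety over a geometric point, the target of the forgetful map is a finite group by standard Gromov--Witten finiteness; if $K$ is trivial, then $\sA ut_S(\xi)$ embeds into $\sA ut_S(\underline\xi)$ and is therefore finite.

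For $(\rho,\theta) \in K$, I first show that the induced map $\bar\theta^\flat:\CharM_S \to \CharM_S$ is the identity. By minimality, $\CharM_S \cong \CharM(G_\xi)$, whose group is generated by the elements $e_v$ attached to degenerate vertices $v$ and $e_l$ attached to edges $l$. For a degenerate vertex $v$: evaluating the compatibility $\rho^\flat \circ f^\flat(\delta) = f^\flat(\delta)$ (with $\gamma^\flat(\delta)=\delta$ since $X^{log}$ has trivial base log structure) at a smooth non-distinguished point of $Z_v$, where $\bar f^\flat(\delta) = e_v$, forces $\bar\theta^\flat(e_v) = e_v$. For an edge $l$: the canonical log-curve relation $\log x_l + \log y_l = \pi^\ast \tilde e_l$ combined with $\underline\rho=\mathrm{id}$ fixing the local coordinates on characteristics forces $\bar\theta^\flat(e_l) = e_l$. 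Hence $\bar\theta^\flat = \mathrm{id}$, and $\theta^\flat$ is determined by a character $\chi \in \operatorname{Hom}(\CharM_S^{gp}, k^\ast)$ twisting a chosen chart.

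It remains to show $\chi$ is trivial. Compatibility with $f^\flat$ at a smooth non-distinguished point of a degenerate $Z_v$ immediately yields $\chi(e_v) = 1$. The crucial step is $\chi(e_l) = 1$: working in the nodal local ring $\sO_{C,p} = k[x_l,y_l]/(x_l y_l)$, write $\rho^\flat(\log x_l) = u \log x_l$ and $\rho^\flat(\log y_l) = v \log y_l$; compatibility with the structure morphism $\alpha:\sM_C \to \sO_C$ forces $u \in 1 + (y_l)$ and $v \in 1 + (x_l)$, so modulo $x_l y_l = 0$ one has $uv = 1 + x_l q + y_l p$ for some $p,q$. On the other hand the log-curve relation forces $uv = \chi(e_l) \in k^\ast$, whence $\chi(e_l) - 1 \in k \cap \mathfrak m_{C,p} = 0$, so $\chi(e_l) = 1$. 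Since $\CharM(G_\xi)^{gp}$ is generated by the $e_v$ and $e_l$, $\chi$ is trivial and $\theta = \mathrm{id}$. The same nodal analysis now forces $u = v = 1$ at each node, and at each marking $\alpha(\sigma_p)$ being a non-zerodivisor forces $\rho^\flat(\sigma_p) = \sigma_p$; combined with $\rho^\flat$ being pulled back from $\theta^\flat = \mathrm{id}$ at smooth non-special points, this gives $\rho = \mathrm{id}$, so $K$ is trivial. The main obstacle is the a priori positive-dimensional torus $\operatorname{Hom}(\CharM_S^{gp}, k^\ast)$ of possible twists of $\sM_S$; the delicate point is that the non-reducedness of the nodal local ring along both coordinate axes is exactly what cuts this torus down, and without this local rigidity the stabilizer could fail to be finite.
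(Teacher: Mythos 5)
Your proof is correct, and its core mechanism is the same as the paper's: reduce to automorphisms lying over a fixed underlying automorphism, observe that the twist is governed by a character $\chi$ on $\CharM_{S}$, and pin down $\chi$ on the generators $e_{v}$ (via $f^{\flat}(\delta)=e_{v}+\log h$ at a smooth non-distinguished point) and $e_{l}$ (via the nodal relation $e_{l}=\log x+\log y$). Two remarks on where you diverge. First, your nodal computation is a more careful version of the paper's equation $\rho^{\flat}(\log x)=\log\rho^{*}(x)$: since $x$ is a zerodivisor at the node, one only gets $\rho^{\flat}(\log x)=\log u+\log x$ with $u\in 1+(y)$, and it is exactly your observation that $(1+(y))(1+(x))\cap k^{*}=\{1\}$ that closes the argument; this is the right way to justify the paper's terse step. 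Second, you prove the stronger statement that the kernel of $\sA ut_{S}(\xi)(S)\to\sA ut_{S}(\underline{\xi})(S)$ is trivial, whereas the paper at this point only concludes finiteness (allowing a priori roots-of-unity values of $\chi$ on the saturation $\CharM(G)\setminus N(G)$, via Lemmas \ref{lem:WeightInMonoid} and \ref{lem:IrrEle}) and establishes injectivity separately in Lemma \ref{lem:auto-inject} using the coarse log map and the saturation adjunction. Since $N(G)$ generates $\CharM(G)^{gp}$ as a group, your direct conclusion that $\chi$ is trivial is valid, so your single argument subsumes both Proposition \ref{prop:FiniteAuto} and the injectivity half of Lemma \ref{lem:auto-inject}.
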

\begin{proof}
Note that the set of underlying automorphisms of $\underline{f}$ is finite. Fixing a underlying automorphism $(\underline{\rho},id_{S})$, it is enough to show that there are finitely many automorphisms of $\xi$ whose underlying structure are given by $(\underline{\rho},id_{S})$. For simplicity, we assume that $\underline{\rho}=id_{C}$, and other cases can be proved similarly.

Let $(\rho,\theta)$ be an automorphism with the underlying structure given by $(id_{C},id_{S})$. First we consider a node $l\in E(G_{\xi})$. Denote by $x$ and $y$ the local coordinates of $l$. We can choose $x$ and $y$ so that $e_{l}=\log x +\log y$. Note that we have 
\begin{equation}\label{equ:unsat-underlying}
\rho^{\flat}(e_{l})= \rho^{\flat}(\log x) + \rho^{\flat}(\log y)=\log \rho^{*}(x) + \log \rho^{*}(y)=\log x+\log y.
\end{equation}
Since $\underline{\rho}=id_{C}$, the element $e_{l}$ is fixed by $\rho$ for any $l$. The same argument shows that the log structure from the marked points is also fixed by $\rho$.

Now consider a minimal vertex $v\in V(G_{\xi})$. Locally on the component of $v$, we have 
\[f^{\flat}(\delta)=e_{v}+\log h,\] 
where $h$ is a local invertible section. Note that we have 
\begin{equation}\label{equ:unsat-minivert}
\rho^{\flat}(e_{v}+\log h)=\rho^{\flat}(e_{v})+\log \rho^{*}(h)=\rho^{\flat}(e_{v})+\log h.
\end{equation}
Since $\rho$ fixes the section $f^{\flat}(\delta)$, the map $\rho^{\flat}$ also fixes the element $e_{v}$. Thus, the automorphism $(\rho,\theta)$ acts trivially on all elements associated to vertices and edges of $G_{\xi}$. By Lemma \ref{lem:WeightInMonoid} and \ref{lem:IrrEle}, the number of choices of $(\rho,\theta)$ is finite. 
\end{proof}

Denote by $\xi'=(C\to S,\sN_{S},g)$ the coarse log map of $\xi$. Then any automorphism $(\rho,\theta)\in \sA ut_{S}(\xi)(S)$ induces a unique automorphism of $\xi'$. Indeed, the above proof of Proposition \ref{prop:FiniteAuto} implies the following:

\begin{cor}\label{cor:unsat-log-iso}
Consider an automorphism $(\rho,\theta)\in \sA ut_{S}(\xi)(S)$. The induced map of sub-log structures $\theta^{\flat}:\sN_{S}\to\sN_{S}$ is uniquely determined by the underlying automorphism $(\underline{\rho},id_{S})$. In particular, the automorphism of $\xi'$ induced by $(\rho,\theta)$ is uniquely determined by the underlying automorphism $(\underline{\rho},id_{S})$.
\end{cor}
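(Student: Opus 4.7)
The plan is to extract from the argument of Proposition~\ref{prop:FiniteAuto} that the value of $\theta^\flat$ on a set of generators of $\sN_S$ is already forced by the underlying automorphism $\underline{\rho}$, and then deduce the corollary. First I would fix a chart $\beta:\CharM(G)\to \sM_S$ and observe that $\sN_S$ is generated as a sub-log structure of $\sM_S$ over $\sO_S^*$ by the sections $\beta(e_l)$ for $l\in E(G)$ and $\beta(e_v)$ for $v\in V(G)$, since $N(G)$ is generated as a monoid by the elements $e_l$ and $e_v$. Because $\underline{\theta}=id_S$, the map $\theta^\flat$ acts as the identity on the unit part $\sO_S^*\subset \sN_S$, so it is uniquely determined as soon as its values on these generators are.

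Next I would revisit the computations~(\ref{equ:unsat-underlying}) and (\ref{equ:unsat-minivert}). The compatibility diagram of $(\rho,\theta)$ gives $\rho^\flat\circ \pi^*=\pi^*\circ \theta^\flat$. At a node $l$ with local coordinates $x,y$ chosen so that $\pi^*\beta(e_l)=\log x+\log y$ in $\sM_C$, applying $\rho^\flat$ and using $\rho^\flat(\log x)=\log \underline{\rho}^*(x)$ yields
\[
\pi^*\bigl(\theta^\flat \beta(e_l)\bigr)=\log \underline{\rho}^*(x)+\log \underline{\rho}^*(y),
\]
a section of $\pi^*\sN_S$ that depends only on $\underline{\rho}$. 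An analogous calculation at a non-distinguished smooth point using $f^\flat(\delta)=\pi^*\beta(e_v)+\log h$ and the identity $\underline{f}\circ\underline{\rho}=\underline{f}$ pins down $\pi^*(\theta^\flat\beta(e_v))$ whenever $v$ is a minimal vertex. The relations $e_v=e_{v'}+c_l\cdot e_l$ along each oriented edge $l$ from $v'$ to $v$ then propagate the determination to all vertices by induction along the partial order on $V(G)$, using that the graph has no strict cycle by Corollary~\ref{defn:AdmGraph}.

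Finally, I would transfer these identities back to $\sN_S$ using that the pull-back $\pi^*:\sN_S\hookrightarrow \pi_*\sN_C$ is injective, which follows from the fact that $\pi$ has geometrically connected fibers and $\sN_S$ is the sub-log structure coming from the sharp monoid $N(G)$ through the chart $\beta$. This forces $\theta^\flat(\beta(e_l))$ and $\theta^\flat(\beta(e_v))$ in $\sN_S$ to be uniquely determined by $\underline{\rho}$, proving the first assertion. For the ``in particular'' clause, an automorphism of $\xi'=(C\to S,\sN_S,g)$ consists of a pair $(\rho|_{\sN_C},\theta|_{\sN_S})$; the base component is handled by the above, and the curve component is determined because $\sN_C$ is generated over $\sO_C^*$ by $\pi^*\sN_S$ together with the local coordinates at nodes and marked points, on all of which $\rho^\flat$ is already pinned down by $\underline{\rho}$.

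The main obstacle I expect is verifying the injectivity $\sN_S\hookrightarrow \pi_*\sN_C$, which is the only non-formal ingredient needed to promote an equality of pulled-back sections in $\sM_C$ to an equality in $\sN_S$; the remaining content is essentially a bookkeeping repackaging of the identities established inside the proof of Proposition~\ref{prop:FiniteAuto}.
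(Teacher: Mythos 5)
Your proposal is correct and follows essentially the same route as the paper, which gives no separate argument for this corollary but simply points back to the computations (\ref{equ:unsat-underlying}) and (\ref{equ:unsat-minivert}) in the proof of Proposition \ref{prop:FiniteAuto}; your write-up is a careful unpacking of exactly those two identities, determining $\theta^\flat$ on the generators $\beta(e_l)$ and $\beta(e_v)$ of $\sN_S$ and descending from $C$ to $S$. The extra points you flag (injectivity of $\pi^*$ on sections of the base log structure over a geometric point, and propagation to non-minimal vertices via the relations $h_l$) are genuine but routine, and are left implicit in the paper.
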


Denote by $\underline{\xi}$ the usual stable maps obtained by removing log structures on $\xi$. We can strengthen the result of Proposition \ref{prop:FiniteAuto} as follows:
\begin{lem}\label{lem:auto-inject}
The map of groups $\sA ut_{S}(\xi)(S)\to \sA ut_{S}(\underline{\xi})(S)$ is an isomorphism.
\end{lem}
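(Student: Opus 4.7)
The natural map is a group homomorphism by functoriality of pullbacks, so it suffices to exhibit a two-sided inverse $\Psi$. The key observation is that the marked graph $G_{\xi}$ is determined intrinsically by the underlying stable map $\underline{\xi}$: the degeneracy of each irreducible component (whether it maps into $D$), the contact orders at nodes and markings (local intersection multiplicities with $D$), and the induced orientation are all read off from $\underline{f}$. Hence any $\underline{\rho} \in \sA ut_{S}(\underline{\xi})(S)$ preserves $G_{\xi}$.

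Given $\underline{\rho}$, form the pullback log map $\underline{\rho}^{*}\xi$. It is a minimal stable log map with the same underlying structure and marked graph as $\xi$: minimality is a condition on the characteristic monoid $\CharM_{S}$ (unchanged under pullback along $\underline{\rho}$), and the marked graph is preserved by the previous remark. Lemma \ref{lem:quasi-finit} then produces a canonical isomorphism of coarse log maps $\xi' \cong (\underline{\rho}^{*}\xi)'$. Since saturation is functorial (\cite[Chapter II, 2.4.5]{Ogus}) and the strict closed immersion $(S,\sM_{S}) \hookrightarrow (S^{\sS},\sM)$ appearing in the proof of Proposition \ref{prop:quasi-finite} is canonically determined, this coarse isomorphism lifts uniquely to an isomorphism $\xi \cong \underline{\rho}^{*}\xi$, which reinterprets as an automorphism $\Psi(\underline{\rho})$ of $\xi$ with underlying $\underline{\rho}$.

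The two-sided inverse property follows quickly: the composition $\sA ut_{S}(\underline{\xi})(S) \xrightarrow{\Psi} \sA ut_{S}(\xi)(S) \to \sA ut_{S}(\underline{\xi})(S)$ is the identity by construction, and given any $(\rho,\theta) \in \sA ut_{S}(\xi)(S)$ lifting $\underline{\rho}$, Corollary \ref{cor:unsat-log-iso} says that its induced automorphism of $\xi'$ is uniquely determined by $\underline{\rho}$, hence coincides with the canonical coarse isomorphism defining $\Psi(\underline{\rho})$; unique lifting through saturation then forces $(\rho,\theta) = \Psi(\underline{\rho})$. The main obstacle is to verify that the unique lifts through saturation and through the strict closed immersion are natural with respect to isomorphisms of the coarse log map, so that the chain defining $\Psi$ is genuinely a group homomorphism and not merely a set-theoretic inverse; this amounts to bookkeeping with the universal properties of saturation and of strict closed immersions, applied to the construction in the proof of Proposition \ref{prop:quasi-finite}.
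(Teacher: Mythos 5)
Your proof follows essentially the same route as the paper's: rigidify the coarse log map by the underlying automorphism and then recover the automorphism of $(S,\sM_{S})$ uniquely via the adjunction between saturation and inclusion, using that $(S,\sM_{S})\to (S,\sN_{S})^{\sS at}$ is strict. The only packaging difference is that you obtain the coarse isomorphism by forming $\underline{\rho}^{*}\xi$ and invoking Lemma \ref{lem:quasi-finit}, whereas the paper constructs the arrow $\sN_{S}\to\sN_{S}$ directly from $(\underline{\rho},id_{S})$ via the computations (\ref{equ:unsat-underlying})--(\ref{equ:unsat-minivert}) and then quotes Corollary \ref{cor:unsat-log-iso}; these amount to the same thing.

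There is, however, one claim you lean on that is false as stated: the marked graph $G_{\xi}$ is \emph{not} determined intrinsically by the underlying stable map. Contact orders at markings and at nodes lying on degenerate components (components mapped into $D$) are genuinely logarithmic data and are not local intersection multiplicities -- the paper only identifies the contact order with $(C\cdot D)_{\Sigma}$ for markings on non-degenerate components, and the whole point of Proposition \ref{prop:quasi-finite} and of the stratification in Section \ref{sec:Boundedness} is that finitely many distinct marked graphs (and then finitely many minimal log maps per graph) can sit over one underlying map. What your surjectivity argument actually needs is the weaker statement that $\underline{\rho}^{*}\xi$ has marked graph \emph{identical} to that of $\xi$, i.e.\ that the permutation of nodes and components induced by $\underline{\rho}$ preserves the contact orders and orientation of $G_{\xi}$; this does not follow from the intersection-theoretic reasoning you give, and without it Lemma \ref{lem:quasi-finit} does not apply and no lift of $\underline{\rho}$ can exist. (The paper's own proof is terse on exactly this point -- it asserts that the left vertical arrow of (\ref{diag:iso-under-log}) is ``constructed similarly'' for a general $\underline{\rho}$ -- so you are in good company, but your stated justification is the one step of the argument that is actually wrong rather than merely compressed, and it should be replaced by an argument that uses $f^{\flat}$ and the identity $\underline{f}\circ\underline{\rho}=\underline{f}$ rather than intersection multiplicities.)
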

\begin{proof}
Consider an element $(\underline{\rho},id_{S})\in \sA ut_{S}(\underline{\xi})(S)$. It is enough to show that there is exactly one element $(\rho,\theta)\in \sA ut_{S}(\xi)(S)$, which is the pre-image of $(\underline{\rho},id_{S})$. Consider the following diagram:
\begin{equation}\label{diag:iso-under-log}
\xymatrix{
\sN_{S} \ar[d] \ar@{^{(}->}[r] & \sM_{S} \ar@{-->}[d]^{\theta^{\flat}}  \\
\sN_{S} \ar@{^{(}->}[r] & \sM_{S},
}
\end{equation}
where the left vertical arrow can be constructed similarly by (\ref{equ:unsat-underlying}) and (\ref{equ:unsat-minivert}), which is uniquely determined by the underlying map $(\underline{\rho},id_{S})$. Corollary \ref{cor:unsat-log-iso} implies that any $(\rho,\theta)$ over $(\underline{\rho},id_{S})$ induces the same map $\sN_{S}\to \sN_{S}$ as in (\ref{diag:iso-under-log}). Hence to find $(\rho,\theta)$, it is equivalent to find the dashed arrow $\theta^{\flat}$, which makes the above diagram (\ref{diag:iso-under-log}) commutative. By the adjointness of saturation and inclusion functors of log structures as in \cite[Chapter II,2.4.5]{Ogus}, we have the following commutative diagram:
\begin{equation}\label{diag:sat-iso}
\xymatrix{
(S,\sM_{S}) \ar@{^{(}->}[r]^{i} \ar[dr]^{\theta'} & (S,\sN_{S})^{\sS at} \ar[r] \ar[d]^{\exists ! \cong} \ar[r] & (S,\sN_{S}) \ar[d]^{}\\
 & (S,\sN_{S})^{\sS at} \ar[r] & (S,\sN_{S})
}
\end{equation}
where $(S,\sN_{S})^{\sS at}$ is the saturation of $(S,\sN_{S})$. 

Denote by $\xi'$ the coarse log curve of $\xi$ over $S$. Then by taking the saturation, we obtain a minimal stable log map $(\xi')^{\sS}$ over $(S,\sN_{S})^{\sS at}$. Note that the left triangle of (\ref{diag:sat-iso}) induces a commutative diagram of minimal stable log maps:
\begin{equation}
\xymatrix{
\xi \ar[r] \ar[dr] & (\xi')^{\sS} \ar[d]\\
 & (\xi')^{\sS}
}
\end{equation}
This gives a unique $\theta^{\flat}$ as in (\ref{diag:iso-under-log}), hence a unique isomorphism of $\xi$.
\end{proof}

\begin{prop}\label{prop:remove-log-rep}
The natural map $\sK_{\Gamma}(X^{log})\to \sK_{g,n}(X,\beta)$ by removing log structures from minimal stable log maps is representable.
\end{prop}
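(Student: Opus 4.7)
The plan is to verify the standard criterion that a $1$-morphism $F:\mathcal{X}\to\mathcal{Y}$ of algebraic stacks is representable by algebraic spaces if and only if, for every scheme $T$ and every object $x\in\mathcal{X}(T)$, the induced homomorphism of automorphism sheaves $\sA ut_T(x)\to\sA ut_T(F(x))$ is a monomorphism. Equivalently, the relative diagonal $\mathcal{X}\to\mathcal{X}\times_{\mathcal{Y}}\mathcal{X}$ is required to be a monomorphism. I would reduce the proposition to this condition and then quote Lemma~\ref{lem:auto-inject}.

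First, given a $T$-point $\xi=(C\to T,\sM_T,f)$ of $\sK_\Gamma(X^{log})$ lying over $\underline{\xi}\in\sK_{g,n}(X,\beta)(T)$, I would consider the natural map of sheaves $\sA ut_T(\xi)\to\sA ut_T(\underline{\xi})$. Both sides are representable by algebraic spaces of finite type over $T$: the left hand side by Proposition~\ref{prop:RepIsoLogMap} applied with $\xi_1=\xi_2=\xi$, and the right hand side by the well-known Deligne--Mumford property of $\sK_{g,n}(X,\beta)$.

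Next, because we work in characteristic zero and the relevant automorphism algebraic spaces are of finite type and unramified over $T$, the injectivity of this morphism can be checked on geometric points. At each geometric point $\bar{t}\to T$, the restriction becomes the induced map of finite groups $\sA ut_{\bar{t}}(\xi_{\bar{t}})\to\sA ut_{\bar{t}}(\underline{\xi}_{\bar{t}})$. By Lemma~\ref{lem:auto-inject} this map is an isomorphism, and in particular it is injective, which is exactly what is required. Therefore the diagonal of the forgetful morphism is a monomorphism, and the morphism is representable.

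The main obstacle is already resolved in Lemma~\ref{lem:auto-inject}, whose proof uses the saturation comparison diagram~(\ref{diag:sat-iso}) together with the explicit construction~(\ref{equ:unsat-underlying})--(\ref{equ:unsat-minivert}) controlling how an underlying automorphism lifts. The remainder of the argument for Proposition~\ref{prop:remove-log-rep} is then a routine application of the representability criterion for morphisms of algebraic stacks, and no further combinatorial input from the theory of marked graphs is needed.
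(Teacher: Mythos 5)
Your argument is correct and is essentially the paper's own proof: the paper simply invokes Lemma \ref{lem:auto-inject} together with \cite[8.1.1]{LMB}, which is exactly the criterion (injectivity of the induced map on automorphism groups at geometric points) that you spell out and verify.
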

\begin{proof}
This follows from Lemma \ref{lem:auto-inject} and \cite[8.1.1]{LMB}. 
\end{proof}

\section{The stack of minimal log maps as category fibered over $\LogSch^{fs}$}\label{sec:UniverMin}
By the construction in last section, the stacks $\sK_{g,n}^{pre}(X^{log})$, $\sK_{g,n}(X^{log},\beta)$, and $\sK_{\Gamma}(X^{log})$ as open sub-stacks of $\sL\sM_{g,n}(X^{log})$, are fibered categories over $\Sch$, parameterizing minimal log maps over usual schemes with various numerical conditions. In this section, we give a different categorical explanation as categories fibered over $\LogSch^{fs}$.

\subsection{The universal property of minimal log maps}
In this subsection, we fix a log map $\xi=(C\to S,\sM_{S},f:(C,\sM_{C})\to(X,\sM_{X}))$ such that the log structure $\sM_{S}$ is fs. 

\begin{prop}\label{prop:UnivMinLog}
There exists a minimal log map over $S$
\[\xi_{min}=(C\to S,\sM_{S}^{min},f_{min}:(C,\sM_{C}^{min})\to (X,\sM_{X})),\] 
and a map of fs log schemes $\Phi:(S,\sM_{S})\to (S,\sM_{S}^{min})$, which fits in the following commutative diagram 
\begin{equation}\label{diag:UniverMinLogMap}
\xymatrix{
&&(X,\sM_{X}) \\
(C,\sM_{C}) \ar@/^/[urr]^{f} \ar[r]_{\Phi_{C}} \ar[d]& (C,\sM_{C}^{min}) \ar@/_/[ur]_{f_{min}} \ar[d]&\\
(S,\sM_{S})  \ar[r]^{\Phi} &(S,\sM^{min}_{S})}
\end{equation}
where the square is a caterian square of log schemes. Furthermore, the datum $(\Phi,\xi_{min})$ is unique up to a unique isomorphism.
\end{prop}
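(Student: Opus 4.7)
The strategy is to build $\sM_{S}^{min}$ étale locally using the canonical map of Proposition \ref{prop:CanChar}, and then extend the construction to the full datum $(\Phi, \xi_{min})$.

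Working étale locally around a geometric point $\bar{s}\in S$, pick a chart $\beta: \CharM_{S,\bar{s}}\to \sM_{S}$. Composing with the canonical map $\phi_{\bar{s}}: \CharM(G_{\xi,\bar{s}})\to \CharM_{S,\bar{s}}$ from Proposition \ref{prop:CanChar} yields a morphism $\tilde\beta:\CharM(G_{\xi,\bar{s}})\to \sM_{S}$. Since $\CharM(G_{\xi,\bar{s}})$ is sharp and fs by Corollary \ref{cor:DualGraphAdm}, the composition $\CharM(G_{\xi,\bar{s}})\to \sM_{S}\to \sO_{S}$ defines an fs log structure $\sM_{S}^{min}$ together with a canonical morphism $\Phi:(S,\sM_{S})\to (S,\sM_{S}^{min})$ whose induced map on characteristics at $\bar{s}$ is exactly $\phi_{\bar{s}}$.

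For the curve, I take $(C\to S,\sM_{S}^{min})$ to be the log curve of Definition \ref{DefLogC2} with base $(S,\sM_{S}^{min})$ and the same nodal and marking data as $(C\to S,\sM_{S})$, and let $\sM_{C}^{min}$ denote its log structure on $C$. By construction $\sM_{C}$ arises as the appropriate fs pushout of $\sM_{C}^{min}$ along the map induced by $\Phi$, so the outer square of the diagram in the statement is cartesian in fs log schemes. The computations of Section \ref{ss:LogMapChar} show that $\bar f^{\flat}(\delta)$ lies in the image of $N(G_{\xi,\bar{s}})\subset \CharM(G_{\xi,\bar{s}})$ at every geometric point, so $f^{\flat}$ factors through $\sM_{C}^{min}$, producing $f_{min}^{\flat}$. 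Minimality of $\xi_{min}$ at $\bar{s}$ is immediate from $\CharM_{S,\bar{s}}^{min}=\CharM(G_{\xi,\bar{s}})=\CharM(G_{\xi_{min},\bar{s}})$; minimality at a nearby point $\bar t$ follows from the specialization argument in the proof of Proposition \ref{prop:MinOpen}, which identifies the quotient of $\CharM(G_{\xi,\bar{s}})$ by the submonoid of elements becoming invertible at $\bar t$ with $\CharM(G_{\xi,\bar{t}})$.

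For uniqueness, suppose $(\Phi',\xi_{min}')$ is another such datum. Minimality of $\xi_{min}'$ forces the characteristic of the corresponding $(\sM_{S}^{min})'$ at every $\bar{s}$ to be $\CharM(G_{\xi,\bar{s}})$, and the characteristic of $\Phi'$ must coincide with $\phi_{\bar{s}}$ by the canonical description in Proposition \ref{prop:CanChar}. This rigidifies the chart of $(\sM_{S}^{min})'$ up to $\sO_{S}^{*}$, producing a unique isomorphism $\sM_{S}^{min}\cong (\sM_{S}^{min})'$ compatible with $\Phi$ and $\Phi'$; the isomorphism $\xi_{min}\cong \xi_{min}'$ then follows from the canonical constructions of $\sM_{C}^{min}$ and $f_{min}$. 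By the same canonicity, the étale-local pieces glue. The main obstacle I expect is verifying that $f^{\flat}$ really factors through $\sM_{C}^{min}$ on the nose at the level of log structures rather than merely on characteristics, which requires carefully tracking the pushout description of $\sM_{C}$ against the chart of $\sM_{S}^{min}$, especially near distinguished nodes where both the node-smoothing elements and the target contact data contribute, and confirming uniqueness of the lift.
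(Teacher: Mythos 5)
Your proposal follows essentially the same route as the paper: define $\sM_{S}^{min}$ locally as the log structure associated to the pre-log structure $\CharM(G_{\xi_{\bar{s}}})\xrightarrow{\phi_{\bar{s}}}\CharM_{S,\bar{s}}\xrightarrow{\beta}\sM_{S}\to\sO_{S}$, check independence of the chart to glue, lift the curve structure map and $f^{\flat}$, and use openness of minimality plus the specialization of marked graphs to get minimality on a neighborhood. The one step you flag as an obstacle — that $f^{\flat}$ factors through $\sM_{C}^{min}$ on the nose, and likewise that $\sM_{S}^{C/S}\to\sM_{S}$ lifts to $\sM_{S}^{min}$ — is resolved in the paper (Lemmas \ref{lem:UniqMinMap} and \ref{lem:CompCurveLog}) exactly as you anticipate: any two lifts of $e_{v,0}$ differ by $\log u$ with $\Phi_{C}^{\flat}(\log u)=1$, and since $\underline{\Phi}_{C}=\mathrm{id}_{C}$ this forces $u=1$, so the local lifts are unique and glue.
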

\begin{proof}
Note that the statement is local on $S$. Then the proposition follows from Lemmas \ref{lem:UniqMinMap}, \ref{lem:MinCharUniq}, \ref{lem:CompCurveLog}, and \ref{lem:Generalization} below. 
\end{proof}

By Construction \ref{cons:DualGraphMap}, for each geometric point $\bar{t}\in S$ we can associate a marked graph $G_{\xi_{\bar{t}}}$ to the fiber $\xi_{\bar{t}}$. It was shown in Lemma \ref{cor:DualGraphAdm} that $G_{\xi_{\bar{t}}}$ is admissible. By Proposition \ref{prop:CanChar}, we have a canonical morphism of monoids 
\begin{equation}\label{equ:CanMapUniv}
\phi_{\bar{t}}:\CharM(G_{\xi_{\bar{t}}})\to \CharM_{S,\bar{t}}
\end{equation}

\begin{lem}\label{lem:UniqMinMap}
Assume that we have a log pre-stable curve $(C\to S,\sM_{S}^{min})$ and a morphism $\Phi:(S,\sM_{S})\to(S,\sM_{S}^{min})$ such that 
\begin{enumerate}
 \item For each point $s\in S$, we have a fixed isomorphism $\CharM_{S,\bar{s}}^{min}\cong \CharM(G_{\xi_{\bar{s}}})$.
 \item The induced map $\bar{\Phi}^{\flat}_{\bar{s}}:\CharM(G_{\xi_{\bar{s}}})\cong\CharM_{S,\bar{s}}^{min}\to\CharM_{S,\bar{s}}$ on the level of characteristic is identical to the canonical map $\phi_{\bar{s}}$ as (\ref{equ:CanMapUniv}).
 \item The log pre-stable curve $(C\to S,\sM_{S})$ is the pull-back of $(C\to S,\sM_{S}^{min})$ via $\Phi$.
\end{enumerate}
Then we have a unique log map $f_{min}:(C,\sM_{C}^{min})\to (X,\sM_{X})$, which fits in diagram \ref{diag:UniverMinLogMap}. Note that $(C\to S,\sM_{S}^{min},f_{min})$ forms a minimal log map over the scheme $S$.
\end{lem}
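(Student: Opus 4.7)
The plan is to build the log map $f_{min}$ by first defining it on characteristics via the data provided by conditions (1) and (2), then lifting uniquely to the level of sheaves of monoids using the compatibility with the given $f^{\flat}$. The question is \'etale local on $S$, so I may assume $S$ is affine and $\sM_{S}^{min}$ admits a global chart. Since $\sM_{X}$ is a rank-one DF log structure with a global presentation $\N\to\CharM_{X}$, constructing $f_{min}^{\flat}$ amounts locally on $C$ to specifying the image of the generator $\delta$ in $\sM_{C}^{min}$, and checking it descends to $f^{\flat}(\delta)\in \sM_{C}$ under the map $\Phi_{C}^{\flat}$ induced by condition (3).

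First I would construct the characteristic map $\bar{f}_{min}^{\flat}:f^{-1}\CharM_{X}\to \CharM_{C}^{min}$. The analysis in Section \ref{ss:LogMapChar} shows that at any geometric point $\bar{p}\in C$ lying over $\bar{s}\in S$, the section $\bar{f}^{\flat}(\delta)\in \CharM_{C,\bar{p}}$ has a canonical combinatorial description: it equals $\phi_{\bar s}(e_{v})$ at a smooth non-marked point of a component corresponding to $v$, $\phi_{\bar s}(e_{v})+c_{p}\sigma_{p}$ at a marking, or $\phi_{\bar s}(e_{v})+c_{l}\log x_{l}$ at a node. Under the identification $\CharM_{S,\bar{s}}^{min}\cong \CharM(G_{\xi_{\bar{s}}})$ of condition (1), each element $e_{v}\in \CharM(G_{\xi_{\bar{s}}})$ lifts to a canonical element of $\CharM_{S,\bar{s}}^{min}$, and I set $\bar{f}_{min}^{\flat}(\delta)$ to be the corresponding expression in $\CharM_{C,\bar{p}}^{min}$. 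Condition (2), which says $\bar{\Phi}^{\flat}_{\bar s}=\phi_{\bar s}$, forces $\bar{\Phi}_{C}^{\flat}\circ\bar{f}_{min}^{\flat}=\bar{f}^{\flat}$, and the openness statements of Lemmas \ref{lem:ContactOpen} and \ref{lem:NodeContactOpen} show these local formulas assemble into a morphism of sheaves $f^{-1}\CharM_{X}\to \CharM_{C}^{min}$.

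Next I would lift to the level of log structures. Around any $p\in C$, any local lift $\alpha\in \sM_{C,p}^{min}$ of $\bar{f}_{min}^{\flat}(\delta)$ has the property that $\Phi_{C}^{\flat}(\alpha)\in \sM_{C,p}$ and $f^{\flat}(\delta)\in \sM_{C,p}$ both map to $\bar{f}^{\flat}(\delta)$ in $\CharM_{C,\bar{p}}$. Therefore they differ by a unique unit: there is a unique $u\in \sO_{C,p}^{*}$ with $f^{\flat}(\delta)=\Phi_{C}^{\flat}(\alpha)\cdot u$. Since $\sO_{C,p}^{*}\subset \sM_{C,p}^{min}$, I define $f_{min}^{\flat}(\delta)_{p}:=\alpha\cdot u\in \sM_{C,p}^{min}$. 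This element is independent of the choice of $\alpha$ (a different lift $\alpha'=\alpha\cdot v$ forces the compensating unit to change by $v^{-1}$), and by construction $\Phi_{C}^{\flat}(f_{min}^{\flat}(\delta)_{p})=f^{\flat}(\delta)$. Functoriality of the local recipe gives a global morphism of log structures $f_{min}^{\flat}$.

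Finally, uniqueness is automatic: any competitor must agree with the above on characteristics (forced by the compatibility $\Phi_{C}^{\flat}\circ f_{min}^{\flat}=f^{\flat}$), and the sheaf-level lift is pinned down by the unit $u$, so the construction produces the only possible morphism. The resulting triple $\xi_{min}=(C\to S,\sM_{S}^{min},f_{min})$ satisfies $\Phi^{*}\xi_{min}=\xi$ by construction, fitting into diagram (\ref{diag:UniverMinLogMap}), and minimality of $\xi_{min}$ is precisely the content of conditions (1) and (2), since the characteristic of the base equals $\CharM(G_{\xi_{min,\bar{s}}})$ with the canonical identification. The principal subtlety lies in the characteristic construction of the second paragraph: one must check that the combinatorial formulas for $\bar{f}_{min}^{\flat}(\delta)$ at nodes, markings and smooth points are mutually consistent as one moves between strata, which is exactly where the orientation and contact-order bookkeeping of Construction \ref{cons:DualGraphMap} is used.
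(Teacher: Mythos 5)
Your proposal is correct and follows essentially the same route as the paper: both reduce to lifting $f^{\flat}(\delta)$ through $\Phi_{C}^{\flat}$, with existence forced by condition (2) on characteristics and uniqueness coming from the fact that $\Phi_{C}^{\flat}$ is injective on units because $\underline{\Phi}_{C}=\mathrm{id}_{C}$. The only cosmetic difference is that the paper lifts the base element $e_{v,0}$ to $\tilde{e}_{v}\in\sM_{S}^{min}$ and keeps the local factor $\log h$ untouched, whereas you perform the same $\sO_{C}^{*}$-torsor correction directly on $\sM_{C}^{min}$.
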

\begin{proof}
Since all the underlying maps are fixed, it is enough to construct a map of log structures $f_{min}^{\flat}:f^{*}(\sM_{X})\to\sM_{C}^{min}$, which fits in the following commutative diagram
\[
\xymatrix{
 &f^{*}(\sM_{X}) \ar[rd]^{f^{\flat}} \ar[ld]_{f^{\flat}_{min}} & \\
\sM_{C}^{min} \ar[rr]^{\Phi_{C}^{\flat}} && \sM_{C}. 
}
\]
Consider an arbitrary closed point $p\in C$, which lies in an irreducible component corresponding to the vertex $v\in V(G_{\xi_{\bar{s}}})$. Then locally at $p$, we have
\begin{equation}\label{equ:LocalDecomp}
f^{\flat}(\delta)=e_{v,0}+\log h,
\end{equation}
where $e_{v,0}\in \sM_{S}$ near $\bar{s}$, and $h$ is a non-zero regular section locally near $p$. Note that there are two possible cases: if $p$ is a smooth non-marked point, then $h$ is a locally invertible section; if $p$ is a special point with contact order $c$, then $h=u\cdot \sigma^{c}$, where $u$ is a locally invertible section, and $\sigma$ is a local coordinate function vanishing at $p$. Note that the underlying map $\underline{\Phi}_{C}$ is an identity. Thus, to define $f^{\flat}_{min}(\delta)$ locally at $p$, it is enough to find a lifting $\tilde{e}_{v}\in \sM_{S}^{min}$ of $e_{v,0}$, such that the image of $\tilde{e}_{v}$ in $\CharM_{S}^{min}$ is the element associated to the vertex $v$.

We first consider the uniqueness. Assume that we have two lifting $\tilde{e}_{v}$ and $\tilde{e}_{v}'$, such that their images in $\CharM_{S}^{min}$ are given by the element associated to $v$. Then, we have $\tilde{e}_{v}=\log u + \tilde{e}_{v}'$ for some locally invertible function $u$. This implies that 
\[\Phi^{\flat}_{C}(\tilde{e}_{v})=\Phi^{\flat}_{C}(\log u) + \Phi_{C}^{\flat}(\tilde{e}_{v}').\]
Since $\tilde{e}_{v}$ and $\tilde{e}_{v}'$ are two lifting of $e_{v,0}$, we have $\Phi^{\flat}_{C}(\log u)=1$. Note that the underlying map $\underline{\Phi}_{C}=id_{C}$. It follows that $u=1$. This shows that the lifting is unique.

Now we consider the existence of the lifting. Denote by $\bar{e}_{v,0}$ the image of $e_{v,0}$ in the characteristic $\CharM_{S,\bar{s}}$. By (2) the map of monoids $\bar{\Phi}_{\bar{s}}^{\flat}$ is identical to $\phi_{\bar{s}}$. Then we have a unique element $\bar{e}_{v}\in \CharM_{S,\bar{s}}^{min}$, which corresponds to the element associated to $v$ in the graph $G$, and $\bar{\Phi}_{\bar{s}}^{\flat}(\bar{e}_{v})=\bar{e}_{v,0}$. Thus, locally we can lift $\bar{e}_{v}$ to an element $\tilde{e}_{v}\in \sM_{S}^{min}$ such that $\Phi_{\bar{s}}^{\flat}(\tilde{e}_{v})=e_{v}$. Then we define
\begin{equation}\label{equ:MinLocalConstr}
f_{min}^{\flat}(\delta)= \tilde{e}_{v}+\log h.
\end{equation}

The uniqueness of the lifting shows that the construction in (\ref{equ:MinLocalConstr}) can be glued globally to obtain a unique map $f^{\flat}_{min}$. We can check locally that the map of monoids $f^{\flat}_{min}$ is compatible with the structure morphisms of the corresponding log structures. This finishes the proof of the statement.
\end{proof}

In fact, in the above proof we constructed a log map $f_{min}$, which is minimal at $\bar{s}$, hence minimal in a neighborhood of $\bar{s}$, by the openness of minimality. We next construct a unique log prestable curve $(C\to S, \sM_{S}^{min})$ satisfying the three conditions in the above lemma. Note that the question is local on $S$. Pick a point $\bar{s}\in S$. Shrinking $S$, we can assume that there is a global chart $\beta:\CharM_{S,\bar{s}}\to\sM_{S}$. We have the canonical map $\phi_{\bar{s}}:\CharM(G_{\xi_{\bar{s}}})\to \CharM_{S,\bar{s}}$. Consider the pre-log structure given by the following composition:
\[\CharM(G_{\xi_{\bar{s}}})\stackrel{\phi_{\bar{s}}}{\longrightarrow}\CharM_{S,\bar{s}}\stackrel{\beta}{\longrightarrow}\sM_{S}\stackrel{\exp}{\longrightarrow}\sO_{S}.\]
Denote by $\sM^{min}_{S}$ the log structure associated to the above pre-log structure. Thus, the construction above gives a global chart $\beta_{min}:\CharM(G_{\xi_{\bar{s}}})\to \sM_{S}^{min}$ and a natural map $\Phi^{\flat}:\sM_{S}^{min}\to \sM_{S}$.

Note that the construction of $\sM_{S}^{min}$ depends on the choice of the chart $\beta$. Assume that we have another log structure $\sM_{1}^{min}$  and a map $\Phi_{1}^{\flat}:\sM_{1}^{min}\to \sM_{S}$ over $S$, which comes from another chart $\beta_{1}:\CharM_{S,\bar{s}}\to \sM_{S}$. Then we have:

\begin{lem}\label{lem:MinCharUniq}
There is a unique isomorphism of log structures $\sM_{1}^{min}\to \sM_{S}^{min}$ fitting in the following commutative diagram:
\[
\xymatrix{
\sM_{1}^{min} \ar[rr] \ar[rd]_{\Phi_{1}^{\flat}} && \sM_{S}^{min} \ar[ld]^{\Phi^{\flat}} \\
 &\sM_{S} &
}
\]
\end{lem}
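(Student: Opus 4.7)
The plan is to construct the isomorphism explicitly by comparing the two charts, and to deduce uniqueness from the fact that both log structures share the same characteristic $\CharM(G_{\xi_{\bar s}})$. Since $\beta$ and $\beta_1$ are both charts of $\sM_S$ inducing the identity on characteristics, for each section $b \in \CharM_{S,\bar s}$ there is a unique $u_b \in \sO_S^*$ with $\beta_1(b) = \beta(b) + \log u_b$ in $\sM_S$; the assignment $b \mapsto u_b$ is a monoid homomorphism $u: \CharM_{S,\bar s} \to \sO_S^*$.

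By construction, $\sM_S^{min}$ and $\sM_1^{min}$ are the log structures associated to the pre-log structures
\[
\alpha := \exp \circ \beta \circ \phi_{\bar s}, \qquad \alpha_1 := \exp \circ \beta_1 \circ \phi_{\bar s}
\]
from $\CharM(G_{\xi_{\bar s}})$ to $\sO_S$, and these differ by the twist $\alpha_1(a) = u_{\phi_{\bar s}(a)} \cdot \alpha(a)$. Using the presentation of an associated log structure as $(P \oplus \sO_S^*)/{\sim}$, I would define
\[
\psi: \sM_1^{min} \to \sM_S^{min}, \qquad (a, v) \mapsto (a, v \cdot u_{\phi_{\bar s}(a)}).
\]
That $\psi$ respects the equivalence relations defining both log structures follows from $u$ being a monoid homomorphism; this is the routine bookkeeping step. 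A direct computation in $\sM_S$ then yields $\Phi^\flat \circ \psi = \Phi_1^\flat$ together with compatibility with the structure maps to $\sO_S$. The inverse is the analogous formula with $u^{-1}$ in place of $u$, so $\psi$ is an isomorphism of log structures.

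For uniqueness, any isomorphism making the triangle commute induces the identity on characteristics, since both sides are canonically $\CharM(G_{\xi_{\bar s}})$ and the composition with the characteristic map of $\Phi^\flat$ (resp.\ $\Phi_1^\flat$) is forced to be $\phi_{\bar s}$. Thus any two such isomorphisms differ by an automorphism of $\sM_1^{min}$ inducing the identity on characteristics and compatible with $\Phi_1^\flat$. Such automorphisms are classified by homomorphisms from $\CharM(G_{\xi_{\bar s}})$ to $\sO_S^*$, and compatibility with $\Phi_1^\flat$ forces this classifying homomorphism to vanish. The main subtle point will be verifying well-definedness of $\psi$ on equivalence classes, which reduces to the observation that $u$ is a monoid homomorphism; every other step is a direct consequence of $\beta$ and $\beta_1$ being two charts inducing the identity on $\CharM_{S,\bar s}$.
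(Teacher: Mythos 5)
Your proof is correct and follows essentially the same route as the paper: both arguments observe that the two constructions differ by a uniquely determined unit (a homomorphism into $\sO_{S}^{*}$, obtained by comparing the two charts inside the integral log structure $\sM_{S}$) and define the isomorphism as the corresponding twist, with uniqueness forced by cancellation in $\sM_{S}$. The only cosmetic difference is that you work with the pushout presentation of the associated log structure and the full homomorphism $b\mapsto u_{b}$, while the paper defines the map only on irreducible elements of $\CharM(G)$ and lets it extend; your uniqueness discussion is, if anything, slightly more explicit than the paper's.
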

\begin{proof}
Consider an irreducible element $a\in \CharM(G)$. Then the construction of $\sM_{1}^{min}$ and $\sM_{S}^{min}$ implies that 
\[\Phi_{1}^{\flat}\circ \beta_{1}(a) + \log u=\Phi^{\flat}\circ\beta_{min}(a),\]
where $u$ is a unique invertible section. We define
\[\beta_{1}(a)\mapsto \beta_{min}(a)+\log u^{-1}.\]
This induces a unique map $\sM_{1}^{min}\to \sM_{S}^{min}$, which satisfies the condition of the lemma. 
\end{proof}

\begin{lem}\label{lem:CompCurveLog}
There exists a unique dashed arrow which fits in the following commutative diagram:
\begin{equation}\label{diag:CurveArrow}
\xymatrix{
\sM_{S}^{min} \ar[rr]^{\Phi^{\flat}} && \sM_{S} \\
 & \sM_{S}^{C/S} \ar@{-->}[lu]^{\phi_{min}} \ar[ru]_{\phi} &.
}
\end{equation}
where $\phi$ is the structure arrow defining the log pre-stable curve $(C\to S,\sM_{S})$, see Definition \ref{DefLogC2}.
\end{lem}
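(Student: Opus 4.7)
The approach is to work \'etale locally near a point $\bar{s}\in S$, construct $\phi_{min}$ using the chart $\beta_{min}:\CharM(G_{\xi_{\bar{s}}})\to \sM_S^{min}$ of the previous construction, and then invoke uniqueness to glue. Since the question is local on $S$, by Lemma \ref{lem:MinCharUniq} I may shrink $S$ and fix such a chart together with the induced map $\Phi^{\flat}:\sM_S^{min}\to \sM_S$, recalling that $\underline{\Phi}=\mathrm{id}_S$ so that $\Phi^{\flat}$ restricts to the identity on $\sO_S^*$.

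I first describe $\phi_{min}$ on the level of characteristics. By the canonical log structure on log prestable curves (Section \ref{CanLog}), locally at $\bar{s}$ the structure sheaf $\sM_S^{C/S}$ admits a chart $\N^{E(G_{\xi_{\bar{s}}})}\to \sM_S^{C/S}$ whose generator indexed by $l$ maps to an element $e_l^{C/S}$ smoothing the node $l$. Define the characteristic lift $\bar{\phi}_{min}:\CharM_S^{C/S}\to \CharM_S^{min}\cong \CharM(G_{\xi_{\bar{s}}})$ by $e_l^{C/S}\mapsto e_l$, the element associated to the edge $l$. The construction of $\phi_{\bar{s}}$ via the correspondence (\ref{equ:CanMapChar}) shows that $\phi_{\bar{s}}\circ\bar{\phi}_{min}$ agrees with the characteristic of $\phi$.

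Next I lift this to a map of log structures. For each edge $l$, the two sections $\phi(e_l^{C/S})\in\sM_S$ and $\Phi^{\flat}(\beta_{min}(e_l))\in\sM_S$ have the same image in $\CharM_S$ by the previous paragraph, so there is a unique unit $u_l\in \sO_S^*$ with $\phi(e_l^{C/S})=\Phi^{\flat}\bigl(\beta_{min}(e_l)+\log u_l\bigr)$. Setting $\phi_{min}(e_l^{C/S}):=\beta_{min}(e_l)+\log u_l$ and extending along the chart gives a map of pre-log structures whose associated map of log structures $\phi_{min}:\sM_S^{C/S}\to \sM_S^{min}$ satisfies $\Phi^{\flat}\circ \phi_{min}=\phi$. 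Uniqueness is immediate: any other lift $\phi'_{min}$ differs from $\phi_{min}$ on each chart generator by adding $\log v_l$ for some $v_l\in\sO_S^*$, and since $\Phi^{\flat}$ is the identity on $\sO_S^*$, the equation $\Phi^{\flat}\circ \phi'_{min}=\phi$ forces $v_l=1$. This same uniqueness guarantees that the local construction is canonical, hence patches on overlapping \'etale neighborhoods agree and glue to a global $\phi_{min}$.

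The only delicate point is verifying that $\bar{\phi}_{min}$ really is a well-defined monoid homomorphism lifting the characteristic of $\phi$; but this is exactly the content of Equation (\ref{equ:NodeCurve}) together with the construction of the canonical map $\phi_{\bar{s}}$ in Proposition \ref{prop:CanChar}, which sends the formal generator $e_l$ to the element $e_l'\in \CharM_S$ smoothing the node $l$. I expect no serious obstacle here; all the work is in bookkeeping the compatibility between the edge generators of $\sM_S^{C/S}$ (node-smoothing parameters) and the edge generators of the abstract monoid $\CharM(G_{\xi_{\bar{s}}})$ used to construct $\sM_S^{min}$.
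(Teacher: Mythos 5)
Your proposal is correct and follows essentially the same route as the paper: choose a chart for $\sM_S^{C/S}$ locally, determine $\phi_{min}$ on each node-smoothing generator by matching characteristics and then lifting uniquely using that $\Phi^{\flat}$ restricts to the identity on $\sO_S^*$ (so the ambient unit ambiguity is killed), and glue by canonicity. The only cosmetic difference is that you make the unit $u_l$ explicit, whereas the paper delegates the lifting step to the uniqueness argument of Lemma \ref{lem:UniqMinMap}.
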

\begin{proof}
Further shrinking $S$, we can choose a global chart $\N^{m}\to \sM_{S}^{C/S}$. Let $e$ be a generator of $\N^{m}$ which corresponds to an edge $l\in V(G_{\xi_{\bar{s}}})$. For convenience, we will identify $e$ with its image in $\sM_{S}^{C/S}$. Consider $\phi(e)\in \sM_{S}$, and its image $\bar{\phi}(e)\in \CharM_{S}$. Now on the level of characteristic, there is a unique element $\bar{e}'\in \CharM_{S}^{min}$, which corresponds to the element associated to $l$, such that $\bar{\Phi}^{\flat}(\bar{e}')=\bar{\phi}(e)$. A similar argument as in the proof of Lemma \ref{lem:UniqMinMap} shows that there is a unique section $e'\in \sM_{S}^{min}$ such that $\Phi^{\flat}(e')=\phi(e)$. Then we can define $\phi_{min}(e)=e'$ for every generator $e$. This gives the map $\phi_{min}:\sM_{S}^{min}\to \sM_{S}$. 

Note that our construction depends on a fixed chart $\N^{m}\to \sM_{S}^{C/S}$. However, a similar argument as in the proof of Lemma \ref{lem:MinCharUniq} shows that different choice of the global chart will induces the same map $\phi_{min}$. This finishes the proof. 
\end{proof}

Note that the arrow $\phi_{min}$ induces a log pre-stable curve $(C\to S,\sM_{S}^{min})$. Denote by $\sM_{C}^{min}$ the corresponding log structure on $C$ associated to the log curve. By Lemma \ref{lem:NodeContactOpen}, we can further shrink $S$, and assume that the contact order of the nodes on each geometric fiber is given by the weights of the edges of $G_{\xi_{\bar{s}}}$. Now we have: 
\begin{lem}\label{lem:Generalization}
The log pre-stable curve $(C\to S,\sM_{S}^{min})$ and the log map $(S,\sM_{S})^{min}\to(S,\sM_{S})$ induced by $\Phi^{\flat}$ satisfy the condition (1), (2) and (3) in Lemma \ref{lem:UniqMinMap}.
\end{lem}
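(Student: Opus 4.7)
The plan is to verify conditions (1), (2), (3) at an arbitrary geometric point $\bar{t}$ in the (further shrunk) neighborhood of $\bar{s}$ where $\sM_S^{min}$ was constructed. At $\bar{s}$ itself all three conditions are immediate from the construction together with Lemma \ref{lem:CompCurveLog}; the content is therefore to propagate (1) and (2) to $\bar{t}$. Using \cite[3.5(iii)]{LogStack} I would describe $\CharM_{S,\bar{t}}^{min}$ as the image of $P := \CharM(G_{\xi_{\bar{s}}})$ in $P^{gp}/(F^{min})^{gp}$, where $F^{min} = \{a \in P : \beta_{min}(a) \text{ is invertible at } \bar{t}\}$ is the corresponding face. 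Because $\Phi^{\flat} \circ \beta_{min}$ equals the composition $\beta \circ \phi_{\bar{s}}$ by construction of $\sM_S^{min}$ and $\Phi^{\flat}$ preserves units, $F^{min}$ coincides with $\phi_{\bar{s}}^{-1}(K_{\bar{t}})$, where $K_{\bar{t}} \subset \CharM_{S,\bar{s}}$ is the face appearing in the proof of Proposition \ref{prop:MinOpen}.

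Next I would identify this face with the graph-theoretic modification. By Proposition \ref{prop:CanChar} and Lemma \ref{lem:DegJump}, the image $\phi_{\bar{s}}(e_l)$ is the element smoothing the node $l$, and $\phi_{\bar{s}}(e_v)$ is the degeneracy of the component $v$. Thus $e_l \in F^{min}$ iff the node $l$ is smoothed over $\bar{t}$, and $e_v \in F^{min}$ iff $v$ becomes non-degenerate in the fiber over $\bar{t}$. After shrinking $S$ further using Lemmas \ref{lem:ContactOpen} and \ref{lem:NodeContactOpen} so that contact orders along persisting nodes and markings remain constant, these are precisely the operations used in the proof of Proposition \ref{prop:MinOpen} to produce $G_{\xi_{\bar{t}}}$ from $G_{\xi_{\bar{s}}}$. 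Hence the natural surjection $q': P \to \CharM(G_{\xi_{\bar{t}}})$ constructed there has $(F^{min})^{gp}$ as its kernel, and since $\CharM(G_{\xi_{\bar{t}}})$ is fs by Corollary \ref{cor:DualGraphAdm}, this yields the canonical isomorphism $\CharM_{S,\bar{t}}^{min} \cong \CharM(G_{\xi_{\bar{t}}})$, proving (1). Condition (2) then follows from commutativity of the triangle in diagram \ref{diag:GeneralizeChar}, which identifies the specialization of $\phi_{\bar{s}}$ at $\bar{t}$ with $\phi_{\bar{t}}$.

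Condition (3) is a global statement and reduces to the factorization of the original log curve structure map $\sM_S^{C/S} \to \sM_S$ through $\phi_{min}: \sM_S^{C/S} \to \sM_S^{min}$ followed by $\Phi^{\flat}$; this is precisely the content of Lemma \ref{lem:CompCurveLog}. I expect the main technical hurdle to be the identification of $F^{min}$ with the kernel of $(q')^{gp}$: this is where one must check that the saturation built into $\CharM(G_{\xi_{\bar{t}}})$ via Construction \ref{cons:AssoMonoid} agrees with the sharpening and saturation implicit in specializing the fs log structure $\sM_S^{min}$ to $\bar{t}$, which in turn rests on admissibility of the marked graph and on the fs hypothesis for $\sM_S$.
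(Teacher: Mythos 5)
Your proof is correct and follows essentially the same route as the paper's: condition (3) is disposed of via Lemma \ref{lem:CompCurveLog} and Definition \ref{DefLogC2}, while (1) and (2) come down to the specialization-of-characteristics versus contraction-of-marked-graphs comparison. The only cosmetic difference is that the paper packages this last step by first running the construction of Lemma \ref{lem:UniqMinMap} to get a log map minimal at $\bar{s}$ and then citing openness of minimality (Proposition \ref{prop:MinOpen}), whereas you inline the proof of that proposition (faces $F^{min}=\phi_{\bar{s}}^{-1}(K_{\bar{t}})$, the quotient description of $\CharM_{S,\bar{t}}^{min}$, and the kernel identification) — the underlying computation is identical.
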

\begin{proof}
Note that (3) follows from the commutativity of (\ref{diag:CurveArrow}) and Definition \ref{DefLogC2}. For (1) and (2), we can repeat the argument in Lemma \ref{lem:UniqMinMap}. Indeed, the same construction there yields a log map $(C,\sM_{C}^{min})\to (X,\sM_{X})$, which is minimal at $\bar{s}$. Now the openness of minimality implies that all points in a neighborhood of $\bar{s}$ are minimal. Therefore, the property (1) and (2) in Lemma \ref{lem:UniqMinMap} follows. 
\end{proof}

\subsection{Proof of Theorem \ref{thm:Main2}}\label{ss:ChangeBaseCat}
By the definition of log stack in Section \ref{s:LogStack}, the stack $\sK_{\Gamma}(X^{log})$ carries a natural log structure $\sM_{\sK_{\Gamma}(X^{log})}$ as follows. For any $g: S\to \sK_{\Gamma}(X^{log})$, the log structure $g^{*}\sM_{\sK_{\Gamma}(X^{log})}$ is exactly the log structure on $S$ given by the minimal log map $\xi$ over $S$ induced by $g$. Now we have a universal diagram of log stacks:
\[
\xymatrix{
(\fC_{\Gamma},\sM_{\fC_{\Gamma}}) \ar[r] \ar[d] & X^{log} \\
(\sK_{\Gamma}(X^{log}),\sM_{\sK_{\Gamma}(X^{log})})
}
\]
where the pair $(\fC_{\Gamma},\sM_{\fC_{\Gamma}})$ is the universal curve over $\sK_{\Gamma}(X^{log})$ with universal log structures $\sM_{\fC_{\Gamma}}$. This diagram gives a stable log map $\xi_{\sK_{\Gamma}(X^{log})}$ over $(\sK_{\Gamma}(X^{log}),\sM_{\sK_{\Gamma}(X^{log})})$.

Now consider a stable log map $\xi^{log}$ over $(S,\sM_{S})$. Then the tuple $(\xi^{log},S,\sM_{S})$ gives a stable log map over $S$. The universal property of minimal log map implies that there is a unique minimal log map $\xi_{min}=(\xi^{log}_{min},S,\sM_{S}^{min})$ over $S$, and a map of log schemes $g:(S,\sM_{S})\to(S,\sM_{S}^{min})$, such that $\xi^{log}=g^{*}\xi^{log}_{min}$ as in Definition \ref{defn:LogMapOverLog}. This induces a unique log map $f: (S,\sM_{S})\to (\sK_{\Gamma}(X^{log}),\sM_{\sK_{\Gamma}(X^{log})})$ such that $\xi^{log}=f^{*}\xi_{\sK_{\Gamma}(X^{log})}$. 

Theorem \ref{thm:Main2} follows. 

\begin{rem}\label{rem:ModuliOverTor}
Using the same argument as above, we can shows that the two stacks $\sK_{n,g}(X^{log},\beta)$ and $\sK_{g,n}^{pre}(X^{log})$ with their universal minimal log structures can be viewed as categories fibered over $\LogSch^{fs}$, parameterizing log maps over fs log schemes with corresponding numerical conditions.
\end{rem}

\begin{rem}
If the log structure $\sM_{X}$ on the target $X$ is trivial, the stack $\sK_{g,n}^{}(X^{log},\beta)$ is isomorphic to the stack $\sK_{g,n}(X,\beta)$ of usual stable maps with the minimal log structure coming from the canonical log structure of its universal curve.
\end{rem}

\section{The boundedness theorem for minimal stable log maps}\label{sec:Boundedness}

\subsection{Statement of the boundedness theorem}
In this section, we fix the target $X^{log}=(X,\sM_{X})$ as in Convention \ref{con:LogTarget}. The main result of this section is the following:

\begin{thm}\label{Boundedness}
There exists a scheme $T$ of finite type, and a map $g:T\to \sK_{\Gamma}(X^{log})$, which exhausts all geometric point of $\sK_{\Gamma}(X^{log})$. Namely, for any point $\xi\in \sK_{\Gamma}(X^{log})(\C)$, there exists a lifting $\Spec\C\to T$, such that its composition with $g$ gives $\xi$.
\end{thm}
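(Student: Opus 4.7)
The plan is to reduce boundedness of $\sK_\Gamma(X^{log})$ to the classical fact that the stack $\sK_{g,n}(X,\beta)$ of ordinary stable maps is of finite type. By Proposition \ref{prop:remove-log-rep} we have a representable morphism $\sK_\Gamma(X^{log}) \to \sK_{g,n}(X,\beta)$, so it suffices to exhibit, for some atlas $V\to \sK_{g,n}(X,\beta)$ by a finite type scheme, a finite type scheme $T$ together with a surjection on geometric points $T\to \sK_\Gamma(X^{log}) \times_{\sK_{g,n}(X,\beta)} V$. The first step is therefore to replace the problem by a relative one over $V$, and then stratify $V$ by the topological type (dual graph) of the underlying stable map. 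Each such stratum is of finite type, and there are only finitely many dual graphs occurring.

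The next step is to enhance each topological stratum by a marked graph in the sense of Definition \ref{defn:Orientation}. The discrete data carried by $G_\xi$ beyond the dual graph of $\underline{C}$ consist of: (i) the set $V_n(G)$ of non-degenerate vertices, (ii) the orientation, and (iii) the integer weights $c_l$ on edges. Items (i) and (ii) are determined by $\underline{f}$ together with the divisor $D$, so are constant on locally closed strata of $V$. For item (iii), the consistency relation (\ref{equ:ContactOrder}) together with the contact orders $c_i$ at the marked points bounds $\sum_l c_l$ in terms of $\Gamma$ and the dual graph, hence leaves only finitely many admissible marked graphs $G$. Stratifying further, it is enough to bound the locally closed substack $V_G \subset V$ of underlying stable maps equipped with a fixed admissible marked graph $G$, and then the corresponding preimage $\sK_G \to V_G$ in $\sK_\Gamma(X^{log})$.

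On $V_G$, the base log structure of the universal minimal log map has characteristic sheaf canonically isomorphic to $\CharM(G)$, which is constant. The log structure $\sM_S^{min}$ on a point $\Spec k \to V_G$ is thus a $k^*$--extension of $\CharM(G)^{gp}$ by the group $\CharM(G)^{gp}$, so up to isomorphism is determined by finitely many invertible parameters. The log map $f^\flat$ is governed, component by component, by the local decomposition
\[
f^\flat(\delta) \;=\; e_v + \log h_v
\]
of Section \ref{ss:LogMapChar}, where the section $h_v$ on a component $Z_v$ is a distinguished trivialization of $\underline{f}|_{Z_v}^{*}L$ away from the distinguished points, with prescribed orders of vanishing at the markings and distinguished nodes (given by the weights in $G$). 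Equivalently, a choice of $h_v$ is an isomorphism
\[
\underline{f}|_{Z_v}^{*}L \;\cong\; \sO_{Z_v}\Bigl(-\sum_p c_p [p]\Bigr),
\]
and globally the choice of the collection $\{h_v\}$, subject to the gluing conditions over the distinguished nodes dictated by the equations (\ref{equ:GraphEqu}) in $\CharM(G)$, corresponds to a choice of isomorphism between two fixed line bundles on $C$ (after passing to an appropriate line bundle built from $G$). This turns the parameter space of all possible $f^\flat$ over a fixed underlying map into a torsor under $\Gm$, and after quotienting by the isomorphisms of log structures it becomes a finite cover of $V_G$.

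The main obstacle will be the third step: formalizing the observation that ``a compatible family of local lifts $\{h_v\}$ is the same thing as an isomorphism of line bundles on $C$.'' Concretely, one must (a) assemble the local trivializations $h_v$ on components into a global section of a line bundle built combinatorially from $G$, $L$, and the marked/node divisors, (b) verify that the compatibilities at distinguished nodes coming from (\ref{equ:Node}) and (\ref{equ:NodeCurve}) precisely match the gluing data of this bundle, and (c) show that the resulting $\Gm$--torsor of choices is indeed representable and finite over $V_G$ once one quotients by the action of the torus $\mathrm{Hom}(\CharM(G)^{gp},\Gm)$ of automorphisms of $\sM_S^{min}$ fixing its characteristic; the finiteness here is essentially Proposition \ref{prop:quasi-finite} made uniform in families. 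Once this is done, the desired $T$ is obtained by taking a finite disjoint union indexed by the finitely many dual graphs and admissible marked graphs, each piece of finite type over $V_G$, yielding the theorem.
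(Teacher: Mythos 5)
Your overall route is the same as the paper's: reduce to showing $\sK_{\Gamma}(X^{log})\to\sK_{g,n}(X,\beta)$ is of finite type, stratify by dual graph, bound the finitely many marked graphs, and then parametrize the possible $f^{\flat}$ over a fixed underlying map and marked graph as a torsor of line-bundle isomorphisms. The third step is exactly what the paper does in Section \ref{ss:PfBd}: a lift of $\bar{f}^{\flat}$ to $f^{\flat}$ is a global section of $\sI som_{C}(\underline{f}^{*}L\otimes L_{C}^{-1},\sO_{C})$, whose pushforward $\pi_{*}I$ is an algebraic space locally of finite type by Olsson's Hom-stack theorem, of finite type because the two bundles have matching multidegree, and one then cuts out the \emph{closed} locus where the torsor isomorphism is compatible with the structure maps $\exp$ to $\sO_{C}$ (i.e., is genuinely a morphism of log structures). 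Note that for boundedness you do not need the finite cover you describe in (c): a finite-type scheme surjecting onto geometric points suffices, so no quotient by $\mathrm{Hom}(\CharM(G)^{gp},\G_{m})$ is taken in the paper.

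There is, however, a genuine gap in your second step. The relation (\ref{equ:ContactOrder}) only constrains the contact orders at the \emph{marked points}; it says nothing about the weights $c_{l}$ at the nodes, so it cannot by itself bound $\sum_{l}c_{l}$ over the edges of $G$. The correct argument (Section \ref{ss:FinDisData}, Step 3) is that the log map induces, on each degenerate component $C_{v}$, an isomorphism $\underline{f}^{*}(L)|_{C_{v}}\cong L_{v}$ (Proposition \ref{prop:LineBdIso}), whence the degree identity
\begin{equation*}
\deg \underline{f}^{*}(L)|_{C_{v}} \;=\; \sum_{l\in \Lambda_{v}^{low}}c_{l}-\sum_{l'\in \Lambda_{v}^{up}}c_{l'};
\end{equation*}
since the multidegree of $\underline{f}^{*}L$ is fixed on each stratum and all contact orders are positive on distinguished edges, a downward induction starting from the maximal vertices (where $\Lambda_{v}^{up}$ consists only of markings, whose contact orders are fixed by $\Gamma$) bounds the $c_{l}$. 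You do write down the isomorphism $\underline{f}|_{Z_{v}}^{*}L\cong\sO_{Z_{v}}(-\sum_{p}c_{p}[p])$ in your third step, but it is needed already here, together with the admissibility constraint that no strict cycles occur so that the induction terminates. A smaller imprecision: the orientation of $G$ is not determined by $\underline{f}$ and $D$ alone; the paper simply enumerates the finitely many compatible orientations on the finite graph, which is all that is required.
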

\begin{proof}
The proof of this theorem will occupy the whole section. Indeed, we will prove that the map $\sK_{\Gamma}(X^{log})\to \sK_{g,n}(X,\beta)$, obtained by removing all log structures, is of finite type. In Section \ref{ss:FinDisData}, we will bound the choices of marked graph by stratifying $\sK_{g,n}(X,\beta)$. In Section \ref{ss:PfBd}, we will construct a family of minimal stable log maps, which exhausts all possible minimal log structures with fixed underlying map and marked graph. This will be achieved by considering isomorphisms of corresponding line bundles. The result from Section \ref{ss:RemoveGenericDeg} will be used in the above argument.
\end{proof}

\subsection{Isomorphisms of line bundles induced by stable log maps}\label{ss:RemoveGenericDeg}
Consider a stable log map (not necessarily minimal) $\xi=(C\to S,\sM_{S},f)$ over a scheme $S$. In this subsection, we put the following assumption 
\begin{equation}\label{assum:GenericDeg}
 \mbox{The characteristic $\CharM_{S}$ is a constant sheaf of monoids on $S$}.
\end{equation}

\begin{lem}\label{lem:ConstantDegGraph}
With the assumptions as above, the marked graphs of all geometric fibers of $\xi$ are isomorphic.
\end{lem}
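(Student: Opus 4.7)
The plan is to reduce to the case where $S$ is connected, and then show that every piece of data defining the marked graph (underlying dual graph, set of non-degenerate vertices, contact orders, orientation) is constant on $S$ under the assumption that $\CharM_S$ is a constant sheaf. It suffices to fix two geometric points $\bar s, \bar t \in S$ with $\bar t$ specializing to $\bar s$ and to produce an isomorphism $G_{\xi_{\bar s}} \xrightarrow{\sim} G_{\xi_{\bar t}}$.

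First I would handle the underlying graph. By \cite[3.5(iii)]{LogStack} the specialization map $q: \CharM_{S,\bar s} \to \CharM_{S,\bar t}$ is the quotient by the submonoid of sections that become invertible at $\bar t$; the hypothesis that $\CharM_S$ is constant forces $q$ to be an isomorphism. In particular, for any node $l$ of $C_{\bar s}$ the smoothing element $e_l \in \CharM_{S,\bar s}$ is nonzero, so $q(e_l) \neq 0$ and the node $l$ persists on $C_{\bar t}$; conversely, any node of $C_{\bar t}$ contributes a generator which, under the canonical identification given by $q^{-1}$, lifts to a node of $C_{\bar s}$. This gives a canonical bijection between the nodes, and hence between the irreducible components, of the two fibers, i.e.\ an isomorphism $\underline{G}_{\xi_{\bar s}} \cong \underline{G}_{\xi_{\bar t}}$.

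Next I would transport the decorations. Contact orders at the nodes and at the markings are locally constant on $S$ by Lemmas \ref{lem:NodeContactOpen} and \ref{lem:ContactOpen}, and (thanks to the previous paragraph) no node actually gets smoothed, so these weights agree under the bijection. For the set $V_n$ of non-degenerate vertices, recall that at a smooth non-marked point $p$ of the component $Z_v$ we have $\bar{f}^\flat(\delta) = e_v \in \CharM_{S,\bar t}$, and this element is by \cite[3.5(i)]{LogStack} the restriction of a section of $\CharM_S$ on a neighborhood in $C$; constancy of $\CharM_S$ then implies that $e_v$ takes the same value in the common stalk for every geometric fiber in which $Z_v$ appears. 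The vanishing of $e_v$, which is exactly the condition for $v$ to lie in $V_n$, is therefore preserved.

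Finally, the orientation on a marked graph is completely determined by the degeneracies and contact orders via Lemma \ref{lem:DegJump}: for an edge $l$ between $v_1, v_2$ with $c_l > 0$, we put $v_1 < v_2$ precisely when $e_{v_2} = e_{v_1} + c_l \cdot e_l$. Since all these elements are canonically identified under $q^{-1}$, the orientation matches as well, so the bijection of underlying graphs upgrades to an isomorphism of marked graphs. The main subtlety I expect is the bookkeeping in the second paragraph, namely that constancy of the characteristic sheaf genuinely excludes both the smoothing of existing nodes and the appearance of new ones on nearby fibers, and that the degeneracy of a component, a priori a stalk-wise element, can be promoted to a section of $\CharM_S$ on a neighborhood so that the constant-sheaf hypothesis applies to it.
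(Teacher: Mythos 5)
Your argument is correct and is essentially the paper's own proof written out in full: the paper's one-line justification --- that the elements smoothing the nodes lie in the constant sheaf $\CharM_{S}$ and therefore cannot become invertible at any fiber --- is exactly the content of your second paragraph, and the remaining decorations (contact orders, degeneracies, orientation) are handled by the same openness and cospecialization facts the paper relies on elsewhere (Lemmas \ref{lem:ContactOpen}, \ref{lem:NodeContactOpen}, \ref{lem:DegJump} and \cite[3.5]{LogStack}). There is no gap.
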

\begin{proof}
Note that the elements smoothing the distinguished nodes are in $\CharM_{S}$. Then the statement follows from the assumption (\ref{assum:GenericDeg}). 
\end{proof}

Given the stable log map $\xi$ over $S$ as above, let us consider the following commutative diagram:
\[
\xymatrix{
 & f^{*}(\sM_{X}) \ar[d]^{p_{1}} \ar[rr]^{f^{\flat}} && \sM_{C} \ar[d]^{p_{2}} \\
\N \ar[r] & f^{*}(\CharM_{X}) \ar[rr]^{\bar{f}^{\flat}} && \CharM_{C}. 
}
\]
The composition of the bottom arrow $\N\to \CharM_{C}$ locally lifts to a chart of a sub-log structure of $\sM_{C}$. Denote by $\sM$ the resulting sub-log structure. Note that this is also a DF-log structure. The map $f^{\flat}$ induces an isomorphism of log structures $f^{*}(\sM_{X})\to \sM$. By the argument in Section \ref{ss:DFLog}, this gives an isomorphism of the corresponding line bundles and sections. Next, we will describe this isomorphism on each irreducible component of $C$.

Pick a point $\bar{s}\in S$. Shrinking $S$, we may choose a lifting of global chart $\beta:\CharM_{S,\bar{s}}\to \sM_{S}$. Consider the induced map $\hat{\beta}:\CharM_{S}\to \sM_{C}$. Denote by $\widehat{\sM}_{C}=\sM_{C}^{gp}/(\CharM_{S})^{gp}$ the quotient given by the map $\hat{\beta}$. Consider the following commutative diagram:
\begin{equation}\label{diag:QuotDeg}
\xymatrix{
0 \ar[r] & (\CharM_{S})^{gp} \ar[r]^{\hat{\beta}^{gp}} & \sM_{C}^{gp} \ar[r] & \widehat{\sM}_{C}\\
 & & f^{*}(\sM_{X}) \ar[u] \ar[ur]_{\hat{f}^{\flat}}& 
}
\end{equation}
where the map $\hat{f}^{\flat}$ is given by the composition $f^{*}(\sM_{X})\to \sM_{C} \to \widehat{\sM}_{C}$.

\begin{rem}\label{rem:DepOnChart}
Note that the morphism $\hat{f}^{\flat}$ depends on the choice of a lifting $\beta:\CharM_{S,\bar{s}}\to \sM_{S}$. This will be important when we discuss the valuative criterion.
\end{rem}

\begin{con}\label{con:LabDistPoint}
Consider the irreducible component $C_{v}$ of $C$ corresponding to a vertex $v\in G_{\xi}$. Note that $C_{v}$ is connected. Denote by $\{p_{l}\}_{l\in \Lambda^{low}_{v}}$ the set of splitting nodes, joining $v$ with $v'$ for some $v'\leq_{}v$. Let $\{p_{l}\}_{l\in \Lambda^{up}_{v}}$ be the set consisting of the following special points in $C_{v}$:
\begin{enumerate}
 \item the set of splitting nodes, joining $v$ with $v''$ for some $v\leq v''$;
 \item the marked points with non-trivial contact orders.
\end{enumerate}
Denote by $c_{l}$ the contact order at $p_{l}$ for $l\in \Lambda^{low}_{v}\cup \Lambda^{up}_{v}$. Consider the line bundle
\[L_{v}=\prod_{l\in \Lambda^{low}_{v}}\sO_{C_{v}}(c_{l}\cdot p_{l})\otimes\prod_{l\in \Lambda^{up}_{v}}\sO_{C_{v}}(-c_{l}\cdot p_{l}).\]
Note that the line bundle $L_{v}$ only depends on the graph $G_{\xi}$.
\end{con}

\begin{prop}\label{prop:LineBdIso}
Assume that the element associated to a vertex $v\in G_{\xi}$ is not zero. Then the map $\hat{f}^{\flat}$ induces a natural isomorphism of line bundles 
\[\hat{f}^{\flat}_{v}:f^{*}(L)_{v}\to L_{v}.\]
\end{prop}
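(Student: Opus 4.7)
The approach is to realize both line bundles in the proposition as associated to canonical $\sO^{*}$-torsors sitting inside the relevant logarithmic sheaves of groups, and then to observe that $\hat{f}^{\flat}$ induces a morphism between these torsors. Any morphism of $\sO^{*}$-torsors is automatically an isomorphism and extends uniquely, via the associated-bundle construction, to the required isomorphism of line bundles.

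For the rank-one Deligne--Faltings log structure $\sM_X$ coming from the pair $(L,s)$, the standard construction identifies the preimage of $\delta \in \CharM_X$ in $\sM_X^{gp}$ with the $\sO_X^{*}$-torsor associated to $L$. Pulling back along $f$ and restricting to $C_v$ therefore identifies the preimage of $\delta$ in $f^{*}(\sM_X)^{gp}|_{C_v}$ with the $\sO_{C_v}^{*}$-torsor of $f^{*}(L)|_{C_v}$. On the other side, set $\widehat{\CharM}_C^{gp} := \CharM_C^{gp}/\CharM_S^{gp}$ (so that $\widehat{\sM}_C^{gp}/\sO_C^{*} = \widehat{\CharM}_C^{gp}$), and let $\sT_v$ be the preimage in $\widehat{\sM}_C^{gp}|_{C_v}$ of the global section $\overline{\hat{f}^{\flat}(\delta)} \in \widehat{\CharM}_C^{gp}|_{C_v}$; note that $\widehat{\CharM}_C^{gp}|_{C_v}$ is a skyscraper supported at the special points of $C_v$.

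I would next identify $\sT_v$ with the $\sO^{*}$-torsor of $L_v$ by a local calculation at each special point $p_l$. Using (\ref{equ:MarkedPt}), (\ref{equ:Node}), and the nodal relation (\ref{equ:NodeCurve}) to rewrite $\bar{f}^{\flat}(\delta)$ from the perspective of $C_v$, one finds that $\overline{\hat{f}^{\flat}(\delta)}$ equals $+c_l \cdot \overline{\log x_v}$ at $p_l \in \Lambda_v^{up}$ and $-c_l \cdot \overline{\log x_v}$ at $p_l \in \Lambda_v^{low}$, where $x_v$ is a local coordinate on $C_v$ vanishing at $p_l$. Hence the local sections of $\sT_v$ near $p_l$ are of the form $u \cdot x_v^{\pm c_l}$ (sign according to whether $l$ lies in $\Lambda_v^{up}$ or $\Lambda_v^{low}$) with $u \in \sO^{*}$, while away from the special points $\sT_v$ is just $\sO_{C_v}^{*}$. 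These local pieces glue with transition functions $x_v^{\pm c_l}$, and reading off the associated line bundle yields $\prod_{l \in \Lambda_v^{low}} \sO_{C_v}(c_l p_l) \otimes \prod_{l \in \Lambda_v^{up}} \sO_{C_v}(-c_l p_l) = L_v$ as in Construction \ref{con:LabDistPoint}.

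The map $\hat{f}^{\flat}$ then restricts to an $\sO_{C_v}^{*}$-equivariant map of torsors from the preimage of $\delta$ to $\sT_v$, producing the canonical isomorphism $\hat{f}^{\flat}_v : f^{*}(L)|_{C_v} \to L_v$. The main technical obstacle will be the sign bookkeeping in the local calculation of $\overline{\hat{f}^{\flat}(\delta)}$: one must carefully track whether $v$ is the upper or lower component at each node (using the compatible orientation of $G_\xi$) in order to land on $L_v$ rather than its dual, and verify that the identification is canonical, independent of the auxiliary chart $\beta$ of $\sM_S$ and of the local coordinates. The hypothesis $e_v \neq 0$ enters implicitly: it ensures $f(C_v) \subseteq D$, so that $f^{*}(s)$ vanishes on $C_v$ and the line bundle $f^{*}(L)|_{C_v}$ is not already tautologically trivialized.
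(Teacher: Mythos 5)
Your proposal is correct and follows essentially the same route as the paper: the paper likewise performs the local computation at the three kinds of points (upper special points giving $\sigma_{l}^{c_{l}}$, lower special points giving $\sigma_{l}^{-c_{l}}$ via the nodal relation $e_{l}=\log\sigma_{l}+\log\sigma_{l}'$, and generic points giving an invertible $h$), then glues; your torsor-morphism packaging is just a cleaner way of stating the same gluing step, and your sign bookkeeping and your reading of where the hypothesis $e_{v}\neq 0$ enters both match the paper's argument.
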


\begin{proof}
We first construct $\hat{f}^{\flat}_{v}$ locally. There are three cases.

\textbf{Case 1:} Consider a closed point $p$ of $p_{l}$ for $l\in \Lambda^{up}_{v}$. Locally at $p$ we have 
\[f^{\flat}(\delta)= e_{v} + c_{l} \log \sigma_{l},\]
where $\sigma_{l}$ is the local coordinate of $p$ in $C_{v}$ defining the marking $p_{l}$, and $e_{v}$ is contained in the image of $\hat{\beta}$. Thus, we have $\hat{f}^{\flat}(\delta)= c_{l}\log \sigma_{l}.$ Then locally near $p$ we define 
\begin{equation}\label{equ:SplitUpNode}
\hat{f}^{\flat}_{v}(\delta)= \sigma_{l}^{c_{l}},
\end{equation}
Note that $\sigma_{l}^{c_{l}}$ is the local section of $L_{v}$ at $p$.

\textbf{Case 2:} Consider a closed point $p$ of the splitting node $p_{l}$ for $l\in \Lambda^{low}_{v}$. Assume that $p_{l}$ joining vertices $v'$ and $v$ such that $v'\leq_{}v$. Locally at $p$ we have 
\begin{equation}\label{equ:LowNodeLog}
f^{\flat}(\delta)= e_{v'} + c_{l} \log \sigma_{l}',
\end{equation}
where $e_{v'}$ is in the image of $\hat{\beta}$. By a nice choice of coordinates we have
\begin{equation}\label{equ:LowNodeCur}
c_{l}\cdot e_{l}= c_{l}\log \sigma_{l} + c_{l}\log \sigma_{l}' \mbox{,\ \ \ in } \sM_{C}
\end{equation}
where $\sigma_{l}'$ is the local coordinate of $p_{l}$ in $C_{v}'$, and $e_{l}$ is the element smoothing node, and contained in the image of $\hat{\beta}$. Then we have
\[1= c_{l}\log \sigma_{l} + c_{l}\log \sigma_{l}' \mbox{,\ \ \ in } \hat{\sM}_{C}.\]
This induces
\[\hat{f}^{\flat}(\delta)= c_{l} \log \sigma_{l} = 1 - c_{l}\log \sigma_{l}.\]
Then locally at the node $p$ we define
\begin{equation}\label{equ:SplitLowNode}
\hat{f}^{\flat}_{v}(\delta)= (\frac{1}{\sigma_{l}})^{c_{l}}.
\end{equation}
Note that this is a local generator of $L_{v}$ at $p$.

\textbf{Case 3:} Locally at a point $p$ which is not contained in one of the $p_{l}$ for $l\in \Lambda^{up}_{v}\cup\Lambda^{low}_{v}$, we have
\[f^{\flat}(\delta)= e_{v}+ \log h,\]
where $h$ is an invertible function at $p$ and $e_{v}$ is contained in the image of $\hat{\beta}$. Then the map $\hat{f}^{\flat}(\delta)= \log h$ induces 
\begin{equation}\label{equ:SplitGenPt}
\hat{f}^{\flat}_{v}(\delta_{\lambda})=h.
\end{equation}

Note that the local construction of $\hat{f}^{\flat}_{v}$ is uniquely determined by $\hat{f}^{\flat}$, which is a map of sheaves of monoids. Thus these local definitions can be glued to obtain a global map. We also notice that $\delta$ lifts to a the local generator of $L_{}$. Therefore we construct an isomorphism of line bundles $\hat{f}^{\flat}_{v}$ as required. 
\end{proof}

\begin{rem}
The local calculation shows that the isomorphism $\hat{f}^{\flat}_{v}$ in Proposition \ref{prop:LineBdIso} depends on the choice of the chart $\beta$. 
\end{rem}

\subsection{Finiteness of the discrete data}\label{ss:FinDisData}
\begin{prop}\label{prop:FinDisData}
The following set is finite:
\[\{G\ | \ G \mbox{\ is the marked graph of some\ } \xi\in \sK_{\Gamma}(X^{log})(\C) \}.\]
\end{prop}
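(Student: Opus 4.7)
The plan is to show that each piece of combinatorial data defining a marked graph $G_\xi$ takes only finitely many values as $\xi$ ranges over $\sK_\Gamma(X^{log})(\C)$.

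First, since $\sK_{g,n}(X,\beta)$ is of finite type, its natural stratification by topological type of the underlying stable map---i.e.\ by the decorated dual graph $(\underline{G}, g_v, \beta_v)$ with vertex genera $g_v$ and component curve classes $\beta_v \in H^2(X,\Z)$---has finitely many strata. I would fix one such stratum and argue that only finitely many marked graph structures are supported on it. On a fixed decorated dual graph the sets $V(G)$ and $E(G)$ are fixed, so the choices of the nondegenerate subset $V_n(G)\subset V(G)$ and of a compatible orientation on $\underline{G}$ account for finitely many discrete possibilities, and the integers $d_v := c_1(L)\cdot \beta_v$ are determined by the stratum. What remains is to bound the edge contact orders $c_l \in \Z_{\geq 0}$ uniformly.

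Second, I would combine Proposition~\ref{prop:LineBdIso} with the local description of $\bar f^\flat(\delta)$ at distinguished points from Section~\ref{ss:LogMapChar} to obtain, for every vertex $v$, the degree relation
\[
d_v \;=\; \sum_{l \in \Lambda^{low}_v} c_l \;-\; \sum_{l \in \Lambda^{up}_v} c_l,
\]
where $\Lambda^{up}_v$ includes the markings on $C_v$ with their fixed contact orders from $\textbf{c}$. For degenerate $v$, this is Proposition~\ref{prop:LineBdIso} after taking degrees of both sides of the isomorphism $f^*(L)|_{C_v}\cong L_v$. For nondegenerate $v$, the set $\Lambda^{low}_v$ is empty because such a vertex is minimal by Definition~\ref{defn:Orientation}, and the relation reduces to the statement that the nonzero section $f^*s \in H^0(C_v, f^*L^\vee|_{C_v})$ has zero divisor $\sum c_l p_l$, as read off from the local expressions (\ref{equ:CharMapSm}) and (\ref{equ:MarkedPt}).

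Third, by Corollary~\ref{defn:AdmGraph} the oriented part of $G_\xi$ has no strict cycle, so I can induct on the height of $v$ in this directed acyclic graph. At a minimal vertex $\Lambda^{low}_v=\emptyset$, and the displayed relation reads $\sum_{l\in\Lambda^{up}_v} c_l = -d_v$; since each $c_l\geq 0$, every such $c_l$ is bounded by $|d_v|$ (and in particular the stratum is empty unless $d_v\leq 0$). For a vertex $v$ of positive height, every edge $l\in\Lambda^{low}_v$ lies in $\Lambda^{up}_{v'}$ for some strictly lower $v'$, so $c_l$ is bounded by the inductive hypothesis; the degree relation then bounds $\sum_{l\in\Lambda^{up}_v}c_l$, and hence each $c_l$ in that sum. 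Since every edge with positive weight appears as an upper edge at its initial vertex (edges with $c_l=0$ need no bound), all contact orders are uniformly controlled, and $G_\xi$ therefore ranges over a finite set. The main delicate step will be checking the degree relation uniformly on nondegenerate and degenerate components; once this is in hand, the inductive boundedness argument is immediate.
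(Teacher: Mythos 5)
Your proposal is correct and follows essentially the same route as the paper: stratify $\sK_{g,n}(X,\beta)$ by dual graph, observe that the choices of $V_{n}(G)$ and of a compatible orientation are finite, and bound the contact orders via the degree relation of Proposition \ref{prop:LineBdIso} together with an induction over the acyclic oriented graph. The only cosmetic differences are that the paper runs the induction from maximal vertices downward (where $\Lambda^{up}_{v}$ consists only of markings fixed by $\Gamma$) while you run it from minimal vertices upward, and that you explicitly treat the degree relation on nondegenerate components (which Proposition \ref{prop:LineBdIso} formally excludes) --- a welcome extra check.
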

\begin{proof}
\textbf{Step 1: Bounding the choices of underlying dual graph.} 
Denote by $\sK_{g,n}(X,\beta)$ the Kontsevich moduli space of stable maps, with $n$-marked points, genus $g$, and curve class $\beta$ in $X$. Note that we have a morphism 
\[\sK_{\Gamma}(X^{log})\to \sK_{g,n}(X,\beta),\] 
by removing all log structures. Let $U\to \sK_{g,n}(X,\beta)$ be an affine \'etale chart. Consider the following cartesian diagram without log structures:
\[
\xymatrix{
\sK_{U} \ar[r] \ar[d] & \sK_{\Gamma}(X^{log}) \ar[d] \\
U \ar[r] & \sK_{g,n}(X,\beta).
}
\]
Since the stack $\sK_{g,n}(X,\beta)$ is of finite type, it is enough to prove that the set of dual graphs corresponding to the geometric point of $\sK_{U}$ is finite. Denote by $C_{U}\to U$ the universal curve and $\underline{f}_{U}: C_{U}\to X$ the universal map over $U$.

Since $U$ is of finite type, it is covered by finitely many strata, where the family of curves over each stratum have a fixed dual graph. We put the reduced scheme structure on each stratum. Let $S$ be the stratum corresponding to a graph $G$. Denote by $\underline{f}: C\to X$ the universal map over $S$.

\textbf{Step 2: Bounding the choices of distinguished nodes and orientations.} 
Since $G$ is a finite graph, the number of choices of distinguished nodes is finite. We first fix a choice of distinguished nodes on $G$. So we fix an orientation on $G$ such that:
\begin{enumerate}
 \item If $C_{v}$ does not degenerate to $D$, then $v\in V_{n}(G)$ 
 \item The non-oriented edges are in one-to-one correspondence to the non-distinguished nodes.
 \item No cycles contain distinguished edges.
\end{enumerate}

\textbf{Step 3: Bounding the choices of contact orders.}
Since we fixed the orientation and distinguished edges on $G$, we can use the notations $\{p_{l}\}_{l\in \Lambda_{v}^{low}}$ and $\{p_{l}\}_{l\in \Lambda_{v}^{up}}$ for the two sets of distinguished points on the subcurve $C_{v}$ as in Conventions \ref{con:LabDistPoint}. Denote by $c_{l}$ the possible contact order at the distinguished point $p_{l}$. Since the dual graph of the underlying curve is fixed, the multi-degree of $\underline{f}^{*}(L)$ on $C_{v}$ is fixed for any $v\in V(G)$. By Proposition \ref{prop:LineBdIso}, we have 
\begin{equation}\label{equ:CompDeg}
\deg \underline{f}^{*}(L)|_{C_{v}} = \sum_{l\in \Lambda_{v}^{low}}c_{l}-\sum_{l'\in \Lambda_{v}^{up}}c_{l'}.
\end{equation}
First, consider a maximal vertex $v\in V(G)$. Then the set $\{p_{l}\}_{l\in \Lambda_{v}^{up}}$ is given by the discrete data $\Gamma$. Since the contact orders are all positive, the choices of $c_{l}$ for $l\in \Lambda_{v}^{low}$ is finite by (\ref{equ:CompDeg}).

Consider an arbitrary vertex $v'\in V(G)$. We assume that for any adjacent vertex $v$ of $v'$ such that $v'\leq_{}v_{}$, the number of choices of the contact orders along the splitting nodes joining $v'$ and $v$ is finite. Then by taking into account all contact orders from adjacent vertices and those from marked points of $C$, a similar argument shows that the possible choices of contact order $c_{l}$ for $l\in \Lambda_{v'}^{low}$ are also finite in number. Since $G^{}$ is a finite graph, this proves that the choice of contact orders on $G$ is finite.

This finishes the proof of the proposition. 
\end{proof}

\subsection{Proof of Theorem \ref{Boundedness}}\label{ss:PfBd}
Consider the family of usual stable maps $\underline{f}:C\to X$ over $S$ as in Step 1 of the above proof. Fix a possible marked graph $G_{0}$ with $\underline{G}_{0}=G$ the dual graph of $C$. We use the notations as in Step 3 of the above proof, and assume that (\ref{equ:CompDeg}) holds for any $v\in V(G_{0})$. Since the stack $\sK_{g,n}(X,\beta)$ is of finite type, to prove Theorem \ref{Boundedness}, it is enough to prove the following:

\begin{prop}\label{prop:ExhaustStrata}
Notations and assumptions as above, there exists a scheme $T$ of finite type over $S$, and a family of minimal stable log maps $\xi$ over $T$, which satisfies the following conditions: for any minimal stable log map $\xi'$ over $\bar{s}$, with marked graph given by $G_{0}$, and underlying map $\underline{\xi}'$ given by the pull-back of $\underline{f}$ via $\bar{s}\to S$, there exists a lifting $\bar{s}\to T$, such that $\xi'$ is isomorphic to the pull-back $\xi_{\bar{s}}$.
\end{prop}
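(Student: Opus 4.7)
The plan is to parameterize minimal stable log maps with fixed underlying map $\underline{f}\colon C\to X$ and marked graph $G_0$ using the line-bundle isomorphisms produced in Proposition \ref{prop:LineBdIso}. Writing $S$ for the chosen stratum and $G_0$ for a marked graph compatible with the numerical constraints in \eqref{equ:CompDeg} and with $\underline{G}_0=G$, the content of a minimal stable log map refining $\underline{f}$ is essentially captured, for each degenerate vertex $v\in V(G_0)\setminus V_n(G_0)$, by an isomorphism $\hat{f}^{\flat}_v\colon\underline{f}^{*}L|_{C_v}\to L_v$ with $L_v$ as in Construction \ref{con:LabDistPoint}.

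Accordingly, for each such $v$ I would form the $\sI som$-scheme
\[
I_v \;=\; \sI som_{C_v/S}\bigl(\underline{f}^{*}L|_{C_v},\,L_v\bigr),
\]
which is representable and of finite type over $S$: it is an open subscheme of the total space of the locally free pushforward $\pi_{C_v,*}(L_v\otimes\underline{f}^{*}L^{\vee}|_{C_v})$ (after further stratifying $S$ if necessary). Take $T'$ to be the fibered product over $S$ of the $I_v$ for $v$ degenerate; it is of finite type over $S$ and carries a universal isomorphism $\hat f^{\flat}_v$ for each degenerate $v$.

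Over $T'$ I would then assemble a universal candidate minimal stable log map. \'Etale locally on $T'$, build a chart $\CharM(G_0)\to\sO_{T'}$ by sending each edge generator $e_l$ to a smoothing function $a_l$ of the node $l$ (furnished by the log curve structure, well defined up to units) and each vertex generator $e_v$ to a scalar $b_v\in\sO_{T'}$ extracted from the universal isomorphism $\hat{f}^{\flat}_v$ via the formulas \eqref{equ:SplitUpNode}--\eqref{equ:SplitGenPt} at a smooth non-distinguished point of $C_v$; let $\sM_{T'}^{min}$ be the associated log structure. The monoid relations $e_{v_2}=e_{v_1}+c_l e_l$ become multiplicative identities $b_{v_2}=b_{v_1}a_l^{c_l}$ that hold up to the unit ambiguities in the construction, hence exactly after passing to the associated log structure. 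The log map $f^{\flat}\colon\underline{f}^{*}\sM_X\to\sM^{min}_{C}$ is then prescribed locally by $f^{\flat}(\delta)=\tilde e_v+\log h_v$, with $h_v$ the function realizing the universal isomorphism on $C_v$ near the chosen point. I would let $T\subset T'$ be the open locus where the resulting family is a minimal stable log map with marked graph exactly $G_0$; this locus is open by Proposition \ref{prop:MinOpen} together with Lemmas \ref{lem:ContactOpen} and \ref{lem:NodeContactOpen}. Exhaustion is then immediate: any $\xi'$ as in the proposition gives, via Proposition \ref{prop:LineBdIso} applied fiberwise, a compatible tuple of line-bundle isomorphisms, hence a lift $\bar s\to T$ over which the universal family is isomorphic to $\xi'$ (uniqueness of recovery following from Lemma \ref{lem:quasi-finit}).

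The principal technical obstacle will be the global construction of $\sM^{min}_{T'}$ and of the log map $f^{\flat}$: the local charts depend on auxiliary choices of smoothing functions and of local lifts of vertex generators, and one must show that these unit ambiguities cancel in the associated log structure so that the local descriptions glue. The most delicate point is at each distinguished node $p_l$, where the formulas \eqref{equ:SplitUpNode} and \eqref{equ:SplitLowNode} give apparently-incompatible local trivializations $\sigma^{c_l}$ and $(1/\sigma')^{c_l}$ on the upper and lower branches respectively, which must be reconciled through the log-curve identity $\log\sigma+\log\sigma'=e_l$; once this matching is verified, the remainder of the argument is standard representability of Isom-schemes.
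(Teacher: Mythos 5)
Your overall strategy --- encoding the lift of $\bar f^{\flat}$ as line-bundle isomorphisms, bounding them via the degree constraint (\ref{equ:CompDeg}), and then cutting out the locus where the data define a log map --- is close in spirit to the paper's proof, and you correctly identify Proposition \ref{prop:LineBdIso} as the key input. But there is a genuine gap in the implementation. The paper works with a \emph{single global} torsor: it forms the log structure $\sM_{C}$ of the log curve $(C\to S,\sM_{S})$ with $\sM_{S}$ associated to $\CharM(G_{0})\to\sO_{S}$, $e\mapsto 0$, takes the $\sO_{C}^{*}$-torsor $p_{2}^{-1}(\delta_{C})\subset\sM_{C}$ over the image of $\delta$ under the characteristic-level map, and parametrizes lifts $f^{\flat}$ by sections of $\sI som_{C}(\underline{f}^{*}L\otimes L_{C}^{-1},\sO_{C})$ pushed forward to $S$ via \cite[Theorem 1.5]{HomStack}; a point of $\pi_{*}I$ is by construction an isomorphism defined on all of $C$. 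You instead take the fibered product $T'=\prod_{S}I_{v}$ of \emph{per-component} Isom schemes built from the $L_{v}$ of \ref{con:LabDistPoint}. This over-parametrizes: an independent tuple $(\hat f^{\flat}_{v})_{v}$ assembles into a morphism of log structures $f^{\flat}:\underline{f}^{*}\sM_{X}\to\sM_{C}$ only if, at every node, the restrictions of the two isomorphisms to the two branches arise from a single local section $f^{\flat}(\delta)\in\sM_{C,p}$ via the relation $e_{l}=\log x+\log y$ --- and at nodes joining a degenerate component to a non-degenerate one the isomorphism on the degenerate side is pinned against the map forced on the non-degenerate side by $\exp$. These matchings are equalities of units, hence \emph{closed} conditions cutting $T'$ down (generically from a torsor under a product of copies of $\G_{m}$, one per degenerate vertex, to a torsor under a single $\G_{m}$); they are not identities that hold ``once the matching is verified,'' and the resulting locus is certainly not the \emph{open} locus you describe --- indeed your candidate family is not even defined away from it, so Proposition \ref{prop:MinOpen} and Lemmas \ref{lem:ContactOpen}, \ref{lem:NodeContactOpen} cannot be applied. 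The paper never faces this because its Isom sheaf is global from the start; the only condition it must impose afterwards is compatibility of the monoid-level lift with the structure morphisms to $\sO_{C}$, which it isolates as a separate closed-locus lemma and which your proposal does not address.

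Two secondary points. First, over the fixed stratum $S$ no node is smoothed and every degenerate component stays degenerate, so the minimal base log structure is forced to be the one associated to $\CharM(G_{0})\to\sO_{T'}$, $e\mapsto 0$; there are no meaningful ``smoothing functions $a_{l}$'' or ``scalars $b_{v}$'' to feed into the chart, and the isomorphism data lives entirely in the choice of the lift $f^{\flat}$, not in the structure map of $\sM^{min}_{T'}$. Second, representability and finite-typeness of the Isom space requires locating where $\underline{f}^{*}L\otimes L_{C}^{-1}$ is fiberwise trivial, which the paper extracts from \cite[Proposition 1]{FP} as a closed subscheme of $S$; ``further stratifying $S$'' does not by itself yield the locally free pushforward you invoke. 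Your approach is repairable --- replace the open locus by the closed locus in $T'$ where the node matchings and the compatibility with $\exp$ hold --- but as written the construction of the family over $T$ does not go through.
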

\begin{proof}
By shrinking $S$, we can assume that $S$ is affine, and the canonical log structure $\sM_{S}^{C/S}$ on $S$ coming from the family $C\to S$ has a global chart $\N^{n}\to\CharM_{S,\bar{s}}^{C/S}$ for some geometric point $\bar{s}\in S$. Consider the pre-log structure $\CharM(G_{0})\to \sO_{S}$, given by $e\mapsto 0$ for any non-trivial element $e\in \CharM(G_{0})$. Denote by $\sM_{S}$ the new log structure associated to the pre-log structure. Note that there is a map $\N^{n}\to \CharM(G_{0})$ given by the corresponding nodes. This induces a map $\sM_{S}^{C/S}\to \sM_{S}$, hence a log pre-stable curve $\zeta=(C\to S,\sM_{S})$ over $S$. Note that any minimal log map $\xi'$ over $\bar{s}\in S$ as in the statement has the source log curve isomorphic to $\zeta_{\bar{s}}$. 

Denote by $\sM_{C}$ the log structure on $C$ corresponding to the log pre-stable curve $\zeta$. Note that over $C$ we have another log structure $\underline{f}^{*}(\sM_{X})$. Since the dual graph $G_{0}$ is fixed, we have a morphism of sheaves of monoids on $C$:
\[\bar{f}^{\flat}: \underline{f}^{*}(\CharM_{X})\to \CharM_{C},\]
which is locally described as in Section \ref{ss:LogMapChar}. To define a log map $f:(C,\sM_{C})\to X^{log}$, it is enough to define a map of log structures $f^{\flat}:\underline{f}^{*}(\sM_{X})\to \sM_{C}$ fitting in the following commutative diagram:
\begin{equation}\label{diag:LiftCharMap}
\xymatrix{
 &&&& \sO_{C}\\
 & \underline{f}^{*}(\sM_{X}) \ar[d]^{p_{1}} \ar@/^/[rrru] \ar@{-->}[rr]^{f^{\flat}} && \sM_{C} \ar[d]^{p_{2}} \ar@/_/[ru]& \\
\N \ar[r] & \underline{f}^{*}(\CharM_{X}) \ar[rr]^{\bar{f}^{\flat}} && \CharM_{C}&,
}
\end{equation}
where the two vertical arrows are the canonical projection, and the arrow $\N \to \underline{f}^{*}(\CharM_{X})$ is the pull-back of the global presentation. Note that the arrow $\bar{f}^{\flat}$ is injective. Denote by $\delta_{X}$ and $\delta_{C}$ the image of $\delta$ in $\underline{f}^{*}(\CharM_{X})$ and $\CharM_{C}$ respectively. The inverse images $p_{1}^{-1}(\delta_{X})$ and $p_{2}^{-1}(\delta_{C})$ form two $\sO_{C}^{*}$-torsors. Denote by $\sI som_{C}(p_{1}^{-1}(\delta_{X}), p_{2}^{-1}(\delta_{C}))$ the presheaf of isomorphisms of the two torsors over $C$. To find a dashed arrow as in (\ref{diag:LiftCharMap}) is equivalent to have a global section of $\sI som_{C}(p_{1}^{-1}(\delta_{X}), p_{2}^{-1}(\delta_{C}))$. Note that the torsor $p_{1}^{-1}(\delta_{X})$ corresponds to the line bundle $\underline{f}^{*}L$. Denote by $L_{C}$ the corresponding line bundle of $p_{2}^{-1}(\delta_{C})$. Then we have
\[\sI som_{C}(p_{1}^{-1}(\delta_{X}), p_{2}^{-1}(\delta_{C}))\cong \sI som_{C}(\underline{f}^{*}L, L_{C})\cong \sI som_{C}(\underline{f}^{*}L\otimes L_{C}^{-1},\sO_{C}).\]
Denote by $I$ the above presheaf. It is well-known that line bundles are parametrized by the algebraic stack $\sB\G_{m}$. Thus, $I$ represented by a $\G_{m}$-torsor, which is a separated algebraic space of finite type. Let $\pi:C\to S$ be the projection. By \cite[Theorem 1.5]{HomStack}, there is an algebraic space $\pi_{*}I$ locally of finite type over $S$, which for any $Y\to S$ associates the groupoid of isomorphisms $(\underline{f}^{*}L\otimes L_{C}^{-1})_{Y}\to\sO_{C_{Y}}$. We have the following lemma for the boundedness of $\pi_{*}I$.
\end{proof}

\begin{lem}
The algebraic space $\pi_{*}I$ is of finite type over $S$. 
\end{lem}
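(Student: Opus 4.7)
The plan is to identify $\pi_{*}I$ as a torsor over an explicit locally closed subscheme of $S$ cut out by a Picard-theoretic condition, and then argue boundedness through the relative Picard scheme.

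First, I would unpack $\pi_{*}I$ as follows: for any $Y \to S$, sections of $\pi_{*}I$ over $Y$ are isomorphisms $\underline{f}^{*}L \otimes L_{C}^{-1}|_{C_{Y}} \stackrel{\sim}{\to} \sO_{C_{Y}}$, equivalently isomorphisms of line bundles $\underline{f}^{*}L|_{C_{Y}} \cong L_{C}|_{C_{Y}}$ on $C_{Y}$. By shrinking $S$ in Proposition \ref{prop:ExhaustStrata} we may assume $C \to S$ is projective, so the relative Picard space $\operatorname{Pic}_{C/S}$ exists as an algebraic space locally of finite type over $S$. The two line bundles $\underline{f}^{*}L$ and $L_{C}$ define morphisms $\tau_{1}, \tau_{2} : S \to \operatorname{Pic}_{C/S}$.

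Next, I would cut out the relevant locus. By (\ref{equ:CompDeg}) together with Proposition \ref{prop:LineBdIso}, the line bundles $\underline{f}^{*}L$ and $L_{C}$ have the same multi-degree on every irreducible component of every fiber, so both $\tau_{1}$ and $\tau_{2}$ factor through the same connected component of $\operatorname{Pic}_{C/S}$, which is of finite type over $S$. Let $Z \subset S$ be the pullback of the diagonal along $(\tau_{1}, \tau_{2}) : S \to \operatorname{Pic}_{C/S} \times_{S} \operatorname{Pic}_{C/S}$. Since the relative Picard is separated along this component, $Z$ is a closed subscheme of $S$, hence of finite type. By construction $\pi_{*}I$ is empty outside $Z$, so the natural map $\pi_{*}I \to S$ factors through $Z$.

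Finally, I would show $\pi_{*}I$ is of finite type over $Z$. Over $Z$, by definition, the two line bundles are fiberwise isomorphic, so $\pi_{*}I|_{Z}$ is a torsor under the group space $\pi_{*}\G_{m,C}|_{Z}$. Because $C \to S$ is proper, flat, with geometrically connected (nodal) reduced fibers, cohomology and base change gives $\pi_{*}\sO_{C} = \sO_{S}$, hence $\pi_{*}\G_{m,C} = \G_{m,S}$. Thus $\pi_{*}I|_{Z}$ is a $\G_{m}$-torsor over $Z$, which is an algebraic space of finite type over $Z$, and therefore of finite type over $S$.

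The main obstacle I anticipate is verifying that $Z$ is genuinely finite type (rather than merely locally so), i.e.\ that the image of $(\tau_{1},\tau_{2})$ lands in a quasi-compact piece of the relative Picard; this is handled by the fact that the componentwise degrees of both bundles are fixed by the marked graph $G_{0}$, pinning down a single connected component of $\operatorname{Pic}_{C/S}$. Once this is in place, everything else is a formal consequence of the $\G_{m}$-torsor structure and $\pi_{*}\sO_{C} = \sO_{S}$.
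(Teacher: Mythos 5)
Your argument is correct and, at the level of ideas, is the same as the paper's: both cut out the locus in $S$ over which $\underline{f}^{*}L\otimes L_{C}^{-1}$ is fiberwise trivial --- using the multidegree identity (\ref{equ:CompDeg}) to know this is the right condition --- and then identify $\pi_{*}I$ over that locus with a $\G_{m}$-torsor via $\pi_{*}\sO_{C}=\sO_{S}$. The difference is in how the closed locus is produced. The paper invokes \cite[Proposition 1]{FP}, which constructs the closed subscheme representing fiberwise triviality directly from semicontinuity of $h^{0}(N_{\bar s})$ and $h^{0}(N_{\bar s}^{-1})$ together with cohomology and base change; this is elementary and self-contained. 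You instead route through the relative Picard space and pull back the diagonal, which is clean but makes the argument rest on the separatedness of the fixed-multidegree component of $\operatorname{Pic}_{C/S}$ for a family of nodal curves. That statement is true (the multidegree-zero part is semi-abelian; cf.\ Bosch--L\"utkebohmert--Raynaud, \emph{N\'eron Models}, \S 9.4), but it is not free: the full Picard space of a degenerating family of nodal curves is famously non-separated, and separatedness of the multidegree-zero piece is exactly the kind of fact that needs a citation or the intersection-matrix computation behind it --- as written it is the one unjustified step. Two small further remarks: your closing worry about $Z$ being of finite type is vacuous, since any closed subscheme of $S$ is automatically of finite type over $S$, so the only content you need from the Picard space is closedness; and in the torsor step one should note that equality of the two classes in the fppf-sheafified Picard functor, combined with $\pi_{*}\sO_{C}=\sO_{S}$ and cohomology and base change, gives that $\underline{f}^{*}L\otimes L_{C}^{-1}$ is the pullback of an honest line bundle on $Z$, which is what guarantees local nonemptiness of $\pi_{*}I|_{Z}$ and hence that it is a genuine $\G_{m}$-torsor rather than a pseudo-torsor.
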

\begin{proof}
By our assumption on $G_{0}$, the two line bundles $L_{C}$ and $f^{*}L$ have the same degree when restrict to each irreducible component over $\bar{s}\in S$. Since $S$ is affine, by \cite[Proposition 1]{FP}, there is a unique closed subscheme $T\subset S$ which represents the condition that the line bundle $f^{*}L\otimes L_{C_{T}}^{-1}$ is trivial. The same proof shows that over that locus the line bundle is pulled back from the base. Its sheaf of trivializations is again represented by a $\G_{m}$-torsor $U \to \pi_*I$, which is of finite type. 
\end{proof}

By pulling back via $\pi_{*}I\to S$, we have a family of log pre-stable curves $\zeta_{\pi_{*}I}=(C_{I}\to \pi_{*}I,\sM_{\pi_{*}I})$, a usual stable map $\underline{f}_{\pi_{*}I}:C_{I}\to X$, and a morphism of sheaves of monoids $f^{\flat}_{\pi_{*}I}:\underline{f}_{\pi_{*}I}^{*}\sM_{X}\to \sM_{C_{I}}$, where $\sM_{C_{I}}$ is the log structure on $C_{I}$ given by the log curve $\zeta_{\pi_{*}I}$. 

\begin{lem}
The set of points $t\in \pi_{*}I$, whose fiber $f^{\flat}_{\pi_{*}I,\bar{t}}$ gives a morphism of log structures, forms a closed subset of $\pi_{*}I$.
\end{lem}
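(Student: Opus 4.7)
The condition ``$f^{\flat}_{\pi_{*}I,\bar t}$ is a morphism of log structures'' (as opposed to merely a morphism of sheaves of monoids) requires compatibility with the structure maps $\exp$ to $\sO_{C_{I}}$. Since the induced map on characteristics is already prescribed by $\bar f^{\flat}$, and since the DF log structure on $X$ is generated by the single element $\delta$, this compatibility on a given fiber reduces to a single equation: $\exp_{C_{I}}\circ f^{\flat}_{\pi_{*}I}(\underline{f}_{\pi_{*}I}^{*}\delta)$ must agree with $\underline{f}_{\pi_{*}I}^{*} s$ on $C_{I,\bar t}$.

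My plan is to reformulate this as the vanishing of a single global section of a line bundle on $C_{I}$. The universal trivialization of $\underline{f}_{\pi_{*}I}^{*}L\otimes L_{C_{I}}^{-1}\cong \sO_{C_{I}}$ used to construct $\pi_{*}I$ identifies the $\sO_{C_{I}}^{*}$-torsors $p_{1}^{-1}(\underline{f}_{\pi_{*}I}^{*}\delta)$ and $p_{2}^{-1}(\delta_{C_{I}})$, hence identifies the associated line bundles $\underline{f}_{\pi_{*}I}^{*}L\cong L_{C_{I}}$. Under this identification, both $\underline{f}_{\pi_{*}I}^{*} s$ and the tautological section $s_{C_{I}}:=\exp_{C_{I}}\circ f^{\flat}_{\pi_{*}I}(\underline{f}_{\pi_{*}I}^{*}\delta)$ become global sections of $L_{C_{I}}$ on $C_{I}$, and the fiberwise compatibility is then equivalent to the identical vanishing of $\tau:=\underline{f}_{\pi_{*}I}^{*}s-s_{C_{I}}\in\Gamma(C_{I},L_{C_{I}})$ on the fiber $C_{I,\bar t}$. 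A small subtlety to verify here is that at nodes and distinguished marked points the two sections $\underline{f}_{\pi_{*}I}^{*}s$ and $s_{C_{I}}$ acquire matching orders of vanishing, as dictated by the fixed marked graph $G_{0}$; this is essentially the content of Proposition \ref{prop:LineBdIso} applied in the present setting, and ensures that their difference is genuinely a global section of $L_{C_{I}}$ rather than a meromorphic object.

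Finally, the locus $Z:=\{t\in\pi_{*}I\mid \tau|_{C_{I,t}}\equiv 0\}$ is closed by a standard argument. Let $V:=\{q\in C_{I}\mid \tau(q)\neq 0\}$, an open subset of $C_{I}$. The projection $\pi:C_{I}\to\pi_{*}I$ is flat of finite presentation, being pulled back from the flat family $C\to S$ of pre-stable curves, hence it is open, and $\pi(V)\subset\pi_{*}I$ is open. A point $t\in\pi_{*}I$ fails to lie in $Z$ precisely when $C_{I,t}\cap V\neq\emptyset$, i.e.\ when $t\in\pi(V)$. Therefore $Z=\pi_{*}I\setminus\pi(V)$ is closed, which is the desired conclusion. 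I expect the main effort to lie in the second step, where one must carefully identify the two sides as sections of a common line bundle on $C_I$; once this line-bundle comparison is in place, the closedness in the last step is the routine observation that fiberwise vanishing of a section is a closed condition under a flat family.
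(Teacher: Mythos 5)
Your proof is correct and follows essentially the same route as the paper: the paper phrases the compatibility condition locally as the equality $\exp_{X}(\delta)=\exp_{C}\circ f^{\flat}_{\pi_{*}I}(\delta)$ of sections of $\sO_{C_{I}}$ (a closed condition on $C_{I}$) rather than globally as the vanishing of a section of a line bundle, but the two formulations carry the same content, and the concluding step --- the non-vanishing locus is open in $C_{I}$, its image under the flat projection is open, and the complement is the desired closed set --- is identical. (One cosmetic remark: your $\tau$ is naturally a morphism $L_{C_{I}}\to\sO_{C_{I}}$, i.e.\ a section of $L_{C_{I}}^{\vee}$ rather than of $L_{C_{I}}$, but this does not affect the argument.)
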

\begin{proof}
The condition that $f^{\flat}_{\pi_{*}I}$ is a morphism of log structures is equivalent to having the following commutative diagram:
\begin{equation}\label{diag:BdLogMap}
\xymatrix{
\underline{f}_{\pi_{*}I}^{*}\sM_{X} \ar[rr]^{f^{\flat}_{\pi_{*}I}} \ar[dr]_{\exp_{X}} && \sM_{C_{I}} \ar[ld]^{\exp_{C}}\\
 & \sO_{C_{I}} &
},
\end{equation}
where the two arrows $\exp_{X}$ and $\exp_{C}$ are the structure maps of the corresponding log structures $\underline{f}_{\pi_{*}I}^{*}\sM_{X}$ and $\sM_{C_{I}}$. Locally on $C_{I}$, we choose a generator $\delta\in \underline{f}_{\pi_{*}I}^{*}\sM_{X}$, then the commutativity of the diagram is equivalent to the following equality of sections of $\sO_{C_{I}}$:
\[\exp_{X}(\delta)=\exp_{C}\circ f^{\flat}_{\pi_{*}I}(\delta),\]
which is a closed condition. Let $V\subset C_{I}$ be the closed sub-scheme representing the commutativity of (\ref{diag:BdLogMap}) over $C_{I}$, and $V^{c}$ the complement of $V$ in $C_{I}$. Denote by $W$ the image of $V^{c}$ in $\pi_{*}I$ via the projection $C_{I}\to \pi_{*}I$. Since the family of curves is flat, the image $W$ is open in $\pi_{*}I$. Thus, the complement $W^{c}$ of $W$ is closed in $\pi_{*}I$. This proves the lemma. 
\end{proof}

We take $T=W^{c}$ as in the above proof with the reduced scheme structure. Then $T$ is a closed subspace of $\pi_{*}I$. By pulling back families from $\pi_{*}I$, we obtain a family of minimal stable log maps $\xi$ over $T$. According to our construction, this family $\xi$ over $T$ satisfies the lifting property in Proposition \ref{prop:ExhaustStrata}.

Theorem \ref{Boundedness} follows from the above arguments. 

\section{The weak valuative criterion for minimal stable log maps}\label{sec:Valuative}

\subsection{Statement of the weak valuative criterion}
Let $R$ be a discrete valuation ring, and $K$ the fraction field of $R$. Denote by $\pi$ the uniformizer of $R$, and write $S=\Spec R$. Let $s$ and $\eta$ be the closed and generic point of $S$ respectively. If $R'$ is another discrete valuation ring, we will write $\pi'$ for its uniformizer. Denote by $s'$ and $\eta'$ the closed and generic point of $S'=\Spec R'$ respectively. 

\begin{thm}\label{thm:Val}
With the notations above, consider a minimal stable log map $\xi_{\eta}$ over $\eta$. Possibly after an base change given by an injection $R \hookrightarrow R'$ of DVR, which induce a finite extension of fraction fields, we have an extension of minimal stable log maps given by the following cartesian diagram:
\[
 \xymatrix{
 \xi_{\eta'} \ar[r] \ar[d] & \xi_{S'} \ar[d] \\
 \eta' \ar[r] & S',
 }
\]
where $\xi_{\eta'}$ is the pull-back of $\xi_{\eta}$ via $\eta'\rightarrow \eta$, and $\xi_{S'}$ is a minimal stable log map over $S'$. Furthermore, the extension $\xi_{S'}$ is unique up to a unique isomorphism and its formation commutes with further injections of discrete valuation rings.
\end{thm}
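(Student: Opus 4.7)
The overall strategy is to separate the problem into extending the underlying map, producing some (not necessarily minimal) log extension, and then invoking the universal property of minimal log maps (Proposition \ref{prop:UnivMinLog}) to canonize the result. For existence, I would first extend $\underline{\xi}_{\eta}$ to a stable map over $S'$ using properness of Kontsevich's space $\sK_{g,n}(X,\beta)$: after a finite extension $R\hookrightarrow R'$ of DVRs, we obtain a unique underlying extension $\underline{\xi}_{S'}$. I would then equip the extended curve with its canonical log structure relative to $\fM_{g,n}$ and enlarge the base log structure on $S'$, joining the pull-back of the curve's canonical log structure with the canonical DVR log structure of $R'$ and, if necessary, adding further generators so that $\bar{f}^{\flat}(\delta)$ admits a lift to a section of $\sM_{S'}$ near each special point of the central fiber. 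Since we are not demanding minimality at this stage, there is no local obstruction; globally one glues using the fact that log structures descend.

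Having produced some stable log extension $\xi_{S'}$, applying Proposition \ref{prop:UnivMinLog} yields a minimal stable log map $\xi_{S',\min}$ together with a strict map $\xi_{S'}\to\xi_{S',\min}$. Since $\xi_{\eta'}$ is already minimal, the universal property identifies $\xi_{S',\min}|_{\eta'}$ canonically with $\xi_{\eta'}$, giving the required minimal extension. For uniqueness, suppose $\xi_{S'}^{(1)}$ and $\xi_{S'}^{(2)}$ are two such minimal extensions. Separatedness of $\sK_{g,n}(X,\beta)$ produces a unique isomorphism of underlying structures. The key next step is to show that both central fibers have the same marked graph $G$: the underlying dual graph agrees, the contact orders at nodes surviving to $\eta'$ and at marked points are locked in by Lemmas \ref{lem:NodeContactOpen} and \ref{lem:ContactOpen}, and the contact orders at nodes that get smoothed generically are determined by the degeneracy jumps of Lemma \ref{lem:DegJump} applied to the fixed images in $\CharM_{S',\eta'}$. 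Minimality then identifies both base characteristic monoids with $\CharM(G)$.

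The remaining task is to match the two charts $\beta^{(i)}:\CharM(G)\to \sM_{S'}^{(i)}$. On the generic fiber both reductions coincide with the chart coming from $\xi_{\eta'}$, so it suffices to show that lifts across the extension are forced. By Lemma \ref{lem:IrrEle}, each irreducible element of $\CharM(G)$ is (up to a positive multiple) associated to either an edge or a minimal vertex; for an edge the element lifts the node-smoothing parameter $\log x + \log y$, which is determined by the central-fiber curve, and for a degenerate vertex the lift is read off the equation $f^{\flat}(\delta) = e_{v} + \log h$ at any non-distinguished point of the component, which is fixed by the identified underlying map. Packaged together, this is the content of the comparison map $\bar{\beta}$ and Lemma \ref{lem:ChartDif} as signalled in the outline, and it produces the unique isomorphism $\xi_{S'}^{(1)}\cong \xi_{S'}^{(2)}$. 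I expect the main obstacle to be the existence step, specifically producing the initial non-minimal log extension: one must verify that the local liftings of $\bar{f}^{\flat}(\delta)$ near the new special points can be chosen consistently with both the node-smoothing equations $e_{l} = \log x_{l} + \log y_{l}$ and the degeneracy relations across edges. Once this delicate point is handled, the universal property of minimality turns the rest of the argument into a formal consequence.
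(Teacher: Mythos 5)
Your overall architecture matches the paper's: extend the underlying map by properness, produce \emph{some} stable log extension, and let Proposition \ref{prop:UnivMinLog} canonize it; for uniqueness, pin down the marked graph of the central fiber and compare charts, which is exactly the content of Proposition \ref{prop:UniGraph}, the map $\bar\beta$ of (\ref{equ:SpeDegMap}), and Lemma \ref{lem:ChartDif}. The uniqueness half of your outline is essentially the paper's argument. But the existence half has a genuine gap, and you have in fact flagged it yourself without resolving it: you assert that ``there is no local obstruction'' to lifting $\bar f^\flat(\delta)$ into an enlarged base log structure, and then concede at the end that the consistency of these lifts is ``the main obstacle.'' This is precisely where the paper does its real work, and the difficulty is not only global consistency but already local: writing $f_\eta^\flat(\delta)=\beta_\eta(e_v)+\log u_p$ near a point of the central fiber, the section $u_p$ extends only to a rational function on the total space whose valuation $n_p=\nu_\pi(u_p)$ along the special fiber can be \emph{negative}, in which case no lift of $\bar f^\flat(\delta)$ into a log structure containing $\beta_\eta(e_v)$ and $\log\pi$ exists at all. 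One must first renormalize the chart by powers of $\pi$ so that every component of the closed fiber has non-negative special degeneracy; this is Lemma \ref{lem:PositiveSpeDeg}.

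The paper's device for making this renormalization and the subsequent gluing tractable --- absent from your proposal --- is to first collapse the generic characteristic monoid: choose a lattice point $\tilde v$ strictly positive on the cone $C(\CharM_\eta)$, obtain $l_{\tilde v}:\CharM_\eta\to\N$, and push the generic log map out to a log map $\xi'_\eta$ whose base characteristic is $\N$. The extension problem is then posed over the base monoid $\N^2$ (one generator mapping to $0$, one to $\pi$), where a single global shift of the chart by $\pi^{-n}$ suffices to make all special degeneracies non-negative, and the local formulas of Lemma \ref{lem:LocalUnderlying} and Remark \ref{rem:UnderGenNode} glue because the specialization of $u_p$ is unique once the underlying extension is fixed (Proposition \ref{prop:LimitExist}). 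Only after this does the universal property of minimality reconstruct the full base monoid $\CharM(G)$. Without this reduction, your plan of ``adding further generators'' to the base log structure leaves both the negative-valuation issue and the compatibility of lifts across the degeneracy relations unaddressed, so the existence step as proposed does not go through.
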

\begin{proof}
We first assume that $\xi_{\eta}$ is a minimal log map over $\eta$, which is not necessarily stable. Possibly after base change, we fix an extension of the underlying pre-stable map $\underline{f}:C\to S$, such that its restriction to the generic fiber is given by the pull-back of $\underline{\xi}_{\eta}$. Denote by $\underline{\xi}$ the extended underlying map. Here for simplicity, we still use $S$ to denote the new base. The existence of compatible minimal log structures on $\underline{\xi}$ will be proved in Section \ref{ss:ExistExt}. This will be achieved by constructing an extension of certain simplified log maps, and using the universal property of minimal log maps. The uniqueness of the extended minimal log structure on $\underline{\xi}$ will be proved in Section \ref{ss:UniqueLimit}. 

In case of stable maps, the extended underlying map $\underline{\xi}$ is unique up to a unique isomorphism. Hence the theorem will be proved by the above argument. 
\end{proof}

\begin{rem}
By Observation \ref{obs:CompMapStack}, there is a map $\sK^{pre}_{g,n}(X^{log},\beta)\to \sK_{g,n}^{pre}(X,\beta)$, where $\sK_{g,n}^{pre}(X,\beta)$ is the stack of usual pre-stable maps, and $\sK^{pre}_{g,n}(X^{log},\beta)$ is as in Definition \ref{defn:min-pre-stab-map}. Then our proof implies that this map of stacks satisfies the weak valuative criterion.
\end{rem}

\subsection{Local analysis of the extended underlying map}
Let $\xi_{\eta}=(C_{\eta}\to \eta,\sM_{\eta},f_{\eta})$. We first consider the case $\xi_{\eta}$ is a log map (not necessarily minimal and stable). We still use $\underline{f}:C\to X$ to denote the extended underlying map over $S$. Possibly after a base change, we fix a chart $\beta_{\eta}:\CharM_{\eta}\to \sM_{\eta}$. Denote by $G_{\eta}$ the marked graph of $\xi_{\eta}$. If a node of $C$ is smoothed out over $\eta$, then we call it a \textit{special node}, otherwise we call it a \textit{generic node}. 

Consider a point $p\in C_{\bar{s}}$, and choose an \'etale neighborhood $U$ of $p$. Write $U_{\eta}:=U\times_{S}\eta$. Shrinking $U$, we have
\begin{equation}\label{equ:OverGenPoint}
f^{\flat}_{\eta}(\delta)= \beta_{\eta}(e_{v}) + \log u_{p}, \mbox{  over } U_{\eta}
\end{equation}
where $e_{v}\in \CharM_{\eta}$ is the degeneracy of some vertex $v\in G_{\eta}$, and $u_{p}\in \sO_{U_{\eta}}$. 

Assume that $p$ is not a generic node. Shrinking $U$ further, we can assume that $U$ is connected, and does not contain a generic node. We also assume that $U$ does not contain points on other components which are not containing $p$. Note that in this case $U$ is normal, and $u_{p}$ extends to a rational function on $U$. Denote by $\nu_{\pi}$ the evaluation of the divisor given by the uniformizer $\pi$. Let $n_{p}=\nu_{\pi}(u_{p})$, then we have the following result.

\begin{lem}\label{lem:LocalUnderlying}
With the above assumption, the point $p$ satisfies one of the following possibilities:
\begin{enumerate}
 \item If $p$ is a non-distinguished point, then there is a neighborhood of $p$, which contains only non-distinguished points over $\eta$, and we have $u_{p}=\pi^{n_{p}}\cdot h_{p}$, where $h_{p}\in \sO_{U}^{*}$.
 \item If $p$ is a marked point with contact order $c$ over $\eta$, then $u_{p}=\pi^{n_{p}}\cdot x^{c}\cdot h_{p}$, where $h_{p}\in \sO_{U}^{*}$, and the section containing $p$ is given by the vanishing of $x\in \sO_{U}$.
 \item If $p$ is a special node, then $u_{p}=\pi^{n_{p}}\cdot x^{c}\cdot h_{p}$, where $h_{p}\in \sO_{U}^{*}$, the section $x\in \sO_{U}$ is a local coordinate of one component at $p$, and $c$ is a non-negative integer.
\end{enumerate}
Note that if in (3) we have $c=0$, then this is compatible with the case described in (1).
\end{lem}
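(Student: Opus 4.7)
The plan is, in each case, to use the characteristic-level description of $\bar{f}^\flat$ from Section \ref{ss:LogMapChar} to pin down $u_p$ up to a unit on $U_\eta$, and then to extend it as a rational function on $U$ and read off its divisor. Since $U$ is an \'etale neighborhood of $p$ in the log pre-stable curve $C$, it is normal, so any such extension has divisor supported on the prime divisors of $U_{\bar{s}}$, and the combinatorics of $U_{\bar{s}}$ forces the announced shape of $u_p$.

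For Case (1), Lemmas \ref{lem:ContactOpen} and \ref{lem:NodeContactOpen} let us shrink $U$ so that no marked section and no node of $C$ meets $U$; then every point of $U_\eta$ is also non-distinguished, and (\ref{equ:CharMapSm}) gives $\bar{f}^\flat_\eta(\delta) = e_v$ throughout $U_\eta$. Consequently $u_p \in \Gamma(U_\eta,\sO^*)$. Because $p$ is smooth in $C_{\bar{s}}$, the neighborhood $U$ is smooth over $S$ and $U_{\bar{s}} = V(\pi)$ is an irreducible principal Cartier divisor; so the divisor of $u_p$ on $U$ equals $n_p\cdot V(\pi)$ with $n_p = \nu_\pi(u_p)$, and $h_p := u_p/\pi^{n_p}$ is a unit. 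Case (2) is handled by the same method: shrink $U$ to an \'etale neighborhood of the marked section $\Sigma = V(x)$ meeting no other distinguished locus, use (\ref{equ:MarkedPt}) to read off $u_p = x^c\cdot h$ on $U_\eta$ with $h$ a unit, and apply the Case (1) argument to $h$ to get $h = \pi^{n_p} h_p$ with $h_p \in \sO_U^*$.

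Case (3) is the main point. \'Etale locally one has $U \cong \Spec \sO_S^{\mathrm{sh}}[x,y]/(xy - \pi^m)$ for some $m \geq 1$, with the two branches of $U_{\bar{s}}$ given by $Z_1 = V(y)$ and $Z_2 = V(x)$. Since the node is smoothed over $\eta$, every point of $U_\eta$ is non-distinguished, and $u_p \in \Gamma(U_\eta,\sO^*)$. Extending as a rational function on $U$, its divisor is supported on $Z_1\cup Z_2$, so $u_p = x^b\, y^{c_0}\, h$ for integers $b, c_0$ and $h \in \sO_U^*$. After possibly swapping $x$ and $y$ we may assume $b \geq c_0$; the nodal relation $xy = \pi^m$ then lets us trade $y^{c_0}$ for $\pi^{mc_0}/x^{c_0}$, yielding $u_p = \pi^{mc_0}\, x^{b-c_0}\, h$, which has the stated form with $n_p := mc_0$ and $c := b-c_0 \geq 0$ (possibly $c = 0$, which is compatible with Case (1)). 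The main obstacle is this last bookkeeping with the nodal relation; everything else is direct from the characteristic-level description.
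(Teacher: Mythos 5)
Your overall strategy --- extend $u_p$ to a rational function on the normal scheme $U$, note that its divisor is supported on the special fibre, and factor accordingly --- is the same as the paper's, and your Cases (1) and (2) go through (modulo the small point that a non-distinguished point may itself be a marked point or a special node, so one cannot always shrink $U$ away from the sections and nodes; but those instances are covered by your Cases (2) and (3) with $c=0$). Case (3), however, contains a genuine error. On the local model $xy=\pi^m$ the two branches $Z_1=V(y,\pi)$ and $Z_2=V(x,\pi)$ of $U_{\bar s}$ are \emph{not} Cartier when $m\ge 2$: the local divisor class group of this $A_{m-1}$-singularity is $\Z/m$, and one computes $\operatorname{div}(x)=mZ_2$, $\operatorname{div}(y)=mZ_1$, $\operatorname{div}(\pi)=Z_1+Z_2$. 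So a rational function invertible on $U_\eta$ has divisor $aZ_1+bZ_2$ with $a\equiv b\ (\mathrm{mod}\ m)$, and only those with $m\mid a$ and $m\mid b$ have your claimed form $x^{b}y^{c_0}\cdot(\text{unit})$. For instance $u_p=\pi$ is not of that form when $m\ge 2$, and it does occur: $u_p$ changes by an arbitrary power of $\pi$ times a unit of $R$ when the chart $\beta_\eta$ is changed (Observation \ref{obs:SpecialDeg}; Lemma \ref{lem:PositiveSpeDeg} exploits exactly such changes). Your bookkeeping also outputs $n_p=mc_0$, i.e.\ forces $n_p$ to be a multiple of the thickness $m$ of the node, which is false in general.

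The repair is the paper's ordering of operations: set $n_p=\min\bigl(\nu_{Z_1}(u_p),\nu_{Z_2}(u_p)\bigr)$ and divide by $\pi^{n_p}$ first; the quotient $h'_p$ then has divisor $(b-a)Z$ supported on a single branch $Z$, with $b-a=cm$ for some $c\ge 0$ by the congruence above, hence $h'_p=x^{c}h_p$ with $h_p\in\sO_U^*$ and $x$ the function cutting out $Z$. Equivalently, in your factorization you must allow $\pi$ (which generates the class $Z_1+Z_2$) as a third generator alongside $x$ and $y$. With that correction your argument closes and agrees with the paper's.
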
 
\begin{proof}
Since $n_{p}=\nu_{\pi}(u_{p})$, and $u_{p}$ is well-defined over the generic fiber, we have $u_{p}=\pi^{n_{p}}\cdot h_{p}'$ for some $h_{p}'\in \sO_{U}$. Since $p$ is non-distinguished, there is a neighborhood of $p$, which contains only non-distinguished points over $\eta$. It follows that $h_{p}'\in \sO_{U_{\eta}}^{*}$. Since $\nu_{\pi}(h_{p}')=0$, then $h_{p}'\in \sO_{U}^{*}$. This proves (1).

For (2), we have $\nu_{x}(h_{p}')=c$, where $\nu_{x}$ is the evaluation map given by the divisor corresponding to the vanishing of $x$. Then we have $h_{p}'=x^{c}\cdot h_{p}$, such that the restriction $h_{p}|_{U_{\eta}}$ is invertible. The same argument as for (1) shows that $h_{p}\in \sO_{U}^{*}$.

Consider the case where $p$ is a special node. Let $x$ and $y$ be local coordinates of the two components meeting at $p$. Choosing the coordinates 
appropriately, we may assume that $x\cdot y=\pi^{n}$ for some positive integer $n$. Without loss of generality, we can assume that $\nu_{y}(h_{p}')=0$ and $\nu_{x}(h_{p}')=c$ for some non-negative integer $c$. Thus, as in (2), we have $h_{p}'=x^{c}\cdot h_{p}$ for some $h_{p}\in \sO_{U}^{*}$. This proves (3). 
\end{proof}

\begin{obs}\label{obs:SpecialDeg}
For a smooth point $p$, the integer $n_{p}$ and the rational section $u_{p}$ in (\ref{equ:OverGenPoint}) depend on the choice of the chart $\beta_{\eta}$. We call the integer $n_{p}$ \textit{the special degeneracy of $p$ with respect to the chart $\beta_{\eta}$}. Let $Z$ be the irreducible component of the fiber containing $p$. Then it is not hard to see that generic points on $Z$ also have $n_{p}$ as the special degeneracy under $\beta_{\eta}$. Thus, we call $n_{p}$ \textit{the special degeneracy of $Z$ under $\beta_{\eta}$}.
\end{obs}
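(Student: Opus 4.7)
The observation asserts that $n_p$ is in fact an invariant of the irreducible component $Z \ni p$ rather than of the chosen smooth point. My plan is to derive this from the local description in Lemma \ref{lem:LocalUnderlying} combined with a gluing argument on $Z$, using Observation \ref{obs:DegMarkedPt} to identify the degeneracy $e_v$ appearing in (\ref{equ:OverGenPoint}) as a datum of the component.

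First I would pin down the auxiliary choices implicit in (\ref{equ:OverGenPoint}). The chart $\beta_\eta$ is fixed by hypothesis, so $\beta_\eta(e_v) \in \sM_\eta$ is unambiguous once the component is fixed. To make $f^\flat_\eta(\delta)$ a well-defined section, I would fix a lifting $\tilde\delta$ of the generator $\delta \in \N$ to a section of $\sM_X$ on an \'etale neighborhood of $\underline{f}(Z)$; this yields a canonical section $f^\flat_\eta(\tilde\delta)$ of $\sM_{C,\eta}$ over some open $V \subset C$ containing $Z$. A different choice of lifting changes $u_p$ by multiplication by a unit in $\sO_U^*$, which has $\pi$-adic valuation zero, so $n_p$ is independent of this auxiliary choice and depends genuinely only on $\beta_\eta$.

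Next I would compare two smooth non-distinguished points $p, p'$ lying on the same component $Z$, with \'etale neighborhoods $U, U' \subset V$ chosen as in the paragraph preceding Lemma \ref{lem:LocalUnderlying} (each meeting $Z$, missing other components, missing generic nodes, missing distinguished points). The two decompositions
\[
f^\flat_\eta(\tilde\delta) = \beta_\eta(e_v) + \log u_p = \beta_\eta(e_{v'}) + \log u_{p'}
\]
must involve the same $e_v = e_{v'}$, because by Observation \ref{obs:DegMarkedPt} the degeneracy is an attribute of the component of the fiber over $\eta$ containing the point, and both $p$ and $p'$ lie on $Z$. Hence on any \'etale common refinement that meets $Z \cap U \cap U'$ we obtain $\log u_p = \log u_{p'}$, and therefore $u_p = u_{p'}$ as sections. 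By Lemma \ref{lem:LocalUnderlying}(1) we may write $u_p = \pi^{n_p} h_p$ and $u_{p'} = \pi^{n_{p'}} h_{p'}$ with $h_p, h_{p'}$ units; comparing $\pi$-adic valuations of the equal sections forces $n_p = n_{p'}$ locally, so $p \mapsto n_p$ is a locally constant $\Z$-valued function on the open locus of smooth non-distinguished points of $Z$.

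Finally I would invoke connectedness to conclude. The smooth non-distinguished locus is a non-empty open subset of the irreducible curve $Z$, hence connected, so the locally constant function $p \mapsto n_p$ must be globally constant on it; this is the content of the observation. The argument is essentially a bookkeeping exercise, and I do not anticipate any serious obstacle — the only substantive point is the identification of $e_v$ as a component-level datum via Observation \ref{obs:DegMarkedPt}, which is precisely why the comparison of the two local expressions collapses to a comparison of the $u_p$'s.
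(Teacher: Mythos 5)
The paper states this as an observation with no proof, and your write-up is exactly the routine verification it alludes to: the degeneracy $e_{v}$ is a datum of the component (so the two local decompositions differ only in the $u_{p}$'s, which therefore glue), and $n_{p}=\nu_{\pi}(u_{p})$ is computed at the generic point of $Z$, which is insensitive to the auxiliary lift of $\delta$ and to the choice of smooth point. Your local-constancy-plus-connectedness step is correct but slightly redundant: since $Z$ is irreducible, any two of the chosen neighborhoods already meet along a dense open of $Z$, so the comparison of valuations at the generic point of $Z$ is direct.
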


\begin{rem}\label{rem:SpecialOrder}
Consider a node $p$ joining two irreducible components $Z_{1}$ and $Z_{2}$ over $\bar{s}$. First we assume that $p$ is a special node. Let $x$ and $y$ be local coordinates on $Z_{1}$ and $Z_{2}$ at $p$ respectively, such that $x\cdot y=\pi^{n}$. By Lemma \ref{lem:LocalUnderlying}(3), we can assume that $u_{p}=\pi^{n_{p}}\cdot x^{c}\cdot h_{p}$. Thus, we can check that the special degeneracy of $Z_{1}$ is $n_{p}$, and the special degeneracy of $Z_{2}$ is $n_{p}+c\cdot n$. In this case, we write $Z_{1}\leq Z_{2}$. Note that if $c=0$, we have both $Z_{1}\leq Z_{2}$, and $Z_{2}\leq Z_{1}$.

Consider the case where $p$ is a generic node. We take the normalization of $C$ along all the generic node. Then we obtain a set of usual pre-stable curves $\{C_{v}\}_{v\in V(G_{\eta})}$ over $S$. If $Z_{1}\subset C_{v_{1}}$ and $Z_{2}\subset C_{v_{2}}$, and $v_{1}\leq v_{2}$, then we define $Z_{1}\leq Z_{2}$. We thus define an orientation on the dual graph $G$ of the curve $C_{\bar{s}}$. 
\end{rem} 

The following result, which gives a way of comparing sections in the base log structure, is crucial in the proof of the uniqueness of the extension.

\begin{lem}\label{lem:SpecialDeg}
Using the notations as above, consider another chart $\beta_{\eta}':\CharM_{\eta}\to \sM_{\eta}$, and a generic point $p\in C_{\bar{s}}$ lies in the component corresponding to $v\in V(G_{\eta})$. The two special degeneracy of $p$ given by $\beta_{\eta}$ and $\beta_{\eta}'$ are the same, if and only if $\beta_{\eta}'(e_{v})=\log u + \beta_{\eta}(e_{v})$ for some $u\in R^{*}$.
\end{lem}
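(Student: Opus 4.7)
The plan is a short computation exploiting the fact that both $\beta_{\eta}$ and $\beta'_{\eta}$ are sections of the projection $\sM_{\eta}\to\CharM_{\eta}$, so they must differ by a unit. Since $\beta_{\eta}(e_v)$ and $\beta'_{\eta}(e_v)$ both lift the same element $e_v\in\CharM_{\eta}$, there is a unique $w\in\sO_{\eta}^{*}=K^{*}$ with
\[
\beta'_{\eta}(e_v)=\beta_{\eta}(e_v)+\log w.
\]
By injectivity of $\log$ on units, the right-hand condition of the lemma --- that $\beta'_{\eta}(e_v)=\beta_{\eta}(e_v)+\log u$ for some $u\in R^{*}$ --- is tautologically equivalent to $w\in R^{*}$. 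So it suffices to show $n_p=n'_p$ if and only if $w\in R^{*}$.

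The key step is to apply (\ref{equ:OverGenPoint}) in both charts. On an \'etale neighborhood $U$ of $p$ we have
\[
f^{\flat}_{\eta}(\delta)=\beta_{\eta}(e_v)+\log u_p=\beta'_{\eta}(e_v)+\log u'_p,
\]
and subtracting gives $u_p=w\cdot u'_p$ as invertible sections on $U_{\eta}$. Each factor extends to a rational function on $U$ as in the proof of Lemma \ref{lem:LocalUnderlying}, so applying the valuation $\nu_{\pi}$ yields
\[
n_p=\nu_{\pi}(u_p)=\nu_{\pi}(w)+\nu_{\pi}(u'_p)=\nu_{\pi}(w)+n'_p.
\]
Hence $n_p=n'_p$ precisely when $\nu_{\pi}(w)=0$, i.e.\ when $w\in R^{*}$, and the lemma is proved.

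There is no substantive obstacle here: the whole proof is one observation (two charts of the same log structure differ by a unit) followed by a one-line valuation computation. The only point that deserves care is ensuring that $w$ is really a single element of $K^{*}=\sO_{\eta}^{*}$, rather than some locally varying unit on $U_{\eta}$; this is automatic because $w$ is defined globally on $\eta$ as the comparison of the two chosen charts at the base, and does not depend on the point $p$ of the generic fiber to which we restrict.
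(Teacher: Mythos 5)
Your proof is correct and follows essentially the same route as the paper: compare the two expressions $f_{\eta}^{\flat}(\delta)=\beta_{\eta}(e_{v})+\log u_{p}=\beta_{\eta}'(e_{v})+\log u_{p}'$ from (\ref{equ:OverGenPoint}), observe that the two charts differ by a global unit $w\in K^{*}$, and read off both directions from $\nu_{\pi}(u_{p})=\nu_{\pi}(w)+\nu_{\pi}(u_{p}')$. Your explicit remark that $w$ lives in $K^{*}$ (not merely in $\sO_{U_{\eta}}^{*}$) is a point the paper leaves implicit, but the argument is the same.
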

\begin{proof}
The `` if '' part is obvious. Consider the other direction. As in (\ref{equ:OverGenPoint}), locally at a non-marked point $p$ we have
\[f_{\eta}^{\flat}(\delta)=\beta_{\eta}'(e_{v})+\log u_{p}'=\beta_{\eta}(e_{v})+\log u_{p}.\]
Then the assumption implies that $u\cdot u_{p}'= u_{p}$, for some $u\in R^{*}$. Thus, we have
\[\beta_{\eta}'(e_{v})=\beta_{\eta}(e_{v})+\log u,\]
which proves the statement.
\end{proof}

\begin{lem}\label{lem:UnderTangency}
With the notations as above, the integer $c$ as in (2) and (3) of Lemma \ref{lem:LocalUnderlying} does not depend on the choice of chart $\beta_{\eta}$. Therefore the orientation on $G$ defined in Remark \ref{rem:SpecialOrder} does not depend on the choice of the chart $\beta_{\eta}$.
\end{lem}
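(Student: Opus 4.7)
The plan is to compare the two choices of chart directly on the local expression and see that $c$ is forced to agree, while $n_p$ may shift by the $\pi$-adic order of the discrepancy. The orientation invariance then follows from the invariance of $c$.

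First, I would note that any two charts $\beta_\eta,\beta_\eta':\CharM_\eta\to\sM_\eta$ are related, for each fixed $e_v\in\CharM_\eta$, by
\[
\beta_\eta'(e_v)=\beta_\eta(e_v)+\log u_v
\]
for some $u_v\in K^{*}$, because the two images have the same class in $\CharM_\eta$ and the kernel of $\sM_\eta\to\CharM_\eta$ is $\sO_\eta^{*}=K^{*}$. Write $u_v=\pi^{m}\tilde u$ with $\tilde u\in R^{*}$.

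Next, on an \'etale neighborhood $U$ of $p$ as in Lemma \ref{lem:LocalUnderlying}, the identity
\[
f_\eta^{\flat}(\delta)=\beta_\eta(e_v)+\log u_p=\beta_\eta'(e_v)+\log u_p'
\]
forces $u_p'=u_v^{-1}u_p$ as rational sections on $U$. Substituting the explicit form $u_p=\pi^{n_p}x^{c}h_p$ from cases (2) and (3) of Lemma \ref{lem:LocalUnderlying}, I obtain
\[
u_p'=\pi^{n_p-m}\,x^{c}\,(\tilde u^{-1}h_p),
\]
with $\tilde u^{-1}h_p\in \sO_U^{*}$. Since by Lemma \ref{lem:LocalUnderlying} the corresponding decomposition under $\beta_\eta'$ is unique with prescribed form $\pi^{n_p'}x^{c'}h_p'$, I conclude $c'=c$ (and $n_p'=n_p-m$, $h_p'=\tilde u^{-1}h_p$). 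This gives the first assertion.

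For the orientation claim, recall that in Remark \ref{rem:SpecialOrder} the choice $Z_1\leq Z_2$ at a special node was made so that, after writing $u_p=\pi^{n_p}x^{c}h_p$ with $x$ a local coordinate of one of the two components meeting at $p$, the exponent $c$ is \emph{non-negative}; switching to the local coordinate $y$ of the other component (with $xy=\pi^{n}$) replaces $x^{c}$ by $\pi^{nc}y^{-c}$, which has a non-positive exponent when $c>0$. Hence the distinguished component $Z_1$ is characterized by the property that its local coordinate carries the non-negative exponent in $u_p$, and this characterization depends only on the invariant $c$. In particular, since $c$ is chart-independent by the previous paragraph, the ordering $Z_1\leq Z_2$ is as well; the case of a generic node reduces to the orientation on $G_\eta$, which was defined intrinsically. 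The only potential obstacle is keeping the bookkeeping of chart changes clean across the different cases of Lemma \ref{lem:LocalUnderlying}, but the argument is uniform once one observes that the chart discrepancy $u_v$ lies in $K^{*}$ and hence only perturbs the $\pi$-part.
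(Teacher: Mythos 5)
Your argument is correct and follows essentially the same route as the paper: both proofs observe that two charts differ by $\log a$ for some $a\in K^{*}$, deduce $u_{p}=a\cdot u_{p}'$, and conclude that $c$ is unaffected because multiplying by an element of $K^{*}$ only perturbs the $\pi$-part of the factorization (the paper phrases this via the valuation $\nu_{x}$, you via uniqueness of the decomposition $\pi^{n_p}x^{c}h_p$ — the same point). Your explicit treatment of the orientation claim is a detail the paper leaves implicit, and it is handled correctly.
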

\begin{proof}
Consider another chart $\beta_{\eta}:\CharM_{\eta}\to \sM_{\eta}$, and 
\[f^{\flat}(\delta)=\beta'_{\eta}(e_{v})+\log u_{p}',\]
for some $u_{p}'\in \sO_{U_{\eta}}^{*}$. Then we have 
\[\beta_{\eta}'(e_{v})=\beta_{\eta}(e_{v}) + \log a,\]
for some element $a\in K$. Comparing with (\ref{equ:OverGenPoint}), we have 
\[u_{p}= a\cdot u_{p}'.\]
Since $c$ is given by the evaluation $\nu_{x}$, this implies the statement of the lemma. 
\end{proof}

We take the normalization of $C$ along all generic nodes. For each $v\in V(G_{\eta})$, denote by $C_{v}$ the corresponding connected component. Now we consider the case where $p$ is a generic node. Let $l\in E(G)$ be the edge corresponding to the generic node $p$, and assume that $l$ connects two vertices $v_{1}$ and $v_{2}$. Denote by $p_{\eta}$ the corresponding node over the generic point. Again we have the section $u_{p}$ over $U_{\eta}$ as in  (\ref{equ:OverGenPoint}). By shrinking $U$, we can choose two regular sections $x$ and $y$ on $U_{\eta}$, which correspond to the coordinates of the two components meeting at $p_{\eta}$ in $C_{\eta}$. Choosing the coordinates appropriately, we may assume
\begin{equation}\label{equ:GenNodeDVR}
\beta_{\eta}(e_{l})=\log x + \log y, \mbox{\ \ in\ \ }U_{\eta}.
\end{equation}
Without loss of generality, we can assume that $u_{p}=x^{c}$, where $c$ is the contact order at $p_{\eta}$. Then $u_{p}$ vanishes along the component with coordinate $y$.

By taking the normalization of $U$ along the generic node given by $l$, we obtain two sub-schemes $U_{1}$ and $U_{2}$ of $U$. By shrinking $U$, we can assume that $U_{i}\subset C_{v_{i}}$ for $i=1,2$. We still use $x$ and $y$ to denote restriction of $x$ and $y$ to $U_{1}$ and $U_{2}$ respectively, and $p_{i}$ the pre-image of $p$ in $U_{i}$ for $i=1,2$. Then $x$ and $y$ can be viewed as a rational function on $U_{1}$ and $U_{2}$ respectively. Let $\Sigma_{1}$ and $\Sigma_{2}$ be the two sections in $U_{1}$ and $U_{2}$ respectively, coming from the splitting node $l$. Let $\sigma_{i}$ be the regular functions on $U_{i}$, whose vanishing gives the section $\Sigma_{i}$ for $i=1,2$.

\begin{lem}\label{lem:UnderGenNode}
With the notations as above, locally at $p_{i}$, we have
\begin{enumerate}
 \item $x=\pi^{n_{1}}\cdot \sigma_{1}\cdot h_{1}$, where $n_{1}=\nu_{\pi}(x)$, and $h_{1}\in \sO_{U_{1}}^{*}$.
 \item $y=\pi^{n_{2}}\cdot \sigma_{2}\cdot h_{2}$, where $n_{2}=\nu_{\pi}(y)$, and $h_{2}\in \sO_{U_{2}}^{*}$.
\end{enumerate}
\end{lem}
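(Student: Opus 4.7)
The plan is to extract the decomposition of $x$ (and, symmetrically, $y$) by analyzing its divisor on the normalized surface $U_{1}$ near $p_{1}$. Since $p$ is a \emph{generic} node---a node that persists over $\eta$ rather than being smoothed out---the local equation of $U$ at $p$ does not involve $\pi$, and after shrinking $U$ to an \'etale neighborhood we may take the local model $U\cong \Spec R[x,y]/(xy)$. Normalizing along the generic node then exhibits $U_{1}$ as smooth over $R$ near $p_{1}$, so $\sO_{U_{1},p_{1}}$ is a two-dimensional regular local ring and in particular a unique factorization domain.

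On this local surface, the only prime Weil divisors passing through $p_{1}$ are the horizontal section $\Sigma_{1}=V(\sigma_{1})$ and the component $Z_{1}=V(\pi)$ of the central fiber containing $p_{1}$. Thus every nonzero element of $\sO_{U_{1},p_{1}}$ admits a unique factorization of the form $\sigma_{1}^{a}\pi^{b}\cdot u$ with $u\in \sO_{U_{1},p_{1}}^{*}$, where $a=\nu_{\Sigma_{1}}$ and $b=\nu_{\pi}$.

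Applying this to $x$: on the generic fiber $U_{1,\eta}$, the function $x$ is regular (it was chosen regular on $U_{\eta}$), and by construction it restricts to a local coordinate on the branch through $p_{\eta}$, so it vanishes to order exactly $1$ along $\Sigma_{1,\eta}$. After further shrinking $U$, we may assume $x$ has no other zeros or poles on $U_{1,\eta}$; this forces $\nu_{\Sigma_{1}}(x)=1$. By definition of $n_{1}$ we have $\nu_{\pi}(x)=n_{1}$, and the factorization above yields $x=\pi^{n_{1}}\sigma_{1}h_{1}$ with $h_{1}\in \sO_{U_{1},p_{1}}^{*}$; a final shrinking of $U_{1}$ ensures $h_{1}$ is a unit on all of $U_{1}$. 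The argument for $y$ on $U_{2}$ is entirely symmetric.

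The main point requiring care is the justification that, after normalizing along $l$, the surface $U_{1}$ is regular near $p_{1}$---equivalently, that the local model $R[x,y]/(xy)$ is correct for $U$ at $p$. This is precisely where the hypothesis that $p$ is a generic node (as distinguished in the paper from a special node, whose local equation would be $xy=\pi^{n}$) enters, and once this local model is in hand the rest of the argument is a routine computation of divisors on a regular surface.
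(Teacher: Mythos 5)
Your proof is correct and follows essentially the same route as the paper: the paper's proof of this lemma is a one-line reference to the argument for Lemma \ref{lem:LocalUnderlying}, which is exactly the factorization you carry out --- peel off $\pi^{\nu_{\pi}(\cdot)}$ and the coordinate power, then check the remaining factor is a unit by restricting to the generic fiber. Your additional justification that $U_{1}$ is regular at $p_{1}$ after normalizing the generic node (local model $xy=0$ rather than $xy=\pi^{n}$) is precisely the detail the paper leaves implicit, and it is the right one.
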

\begin{proof}
The proof of this is similar to that for Lemma \ref{lem:LocalUnderlying}. 
\end{proof}

\begin{rem}\label{rem:UnderGenNode}
Note that $\sigma_{1}$ and $\sigma_{2}$ form the coordinates at $p$. Choosing coordinates appropriately, we may assume that $h_{1} =h_{2} =1$ in Lemma \ref{lem:UnderGenNode}. Thus, we have $u_{p}=\pi^{c\cdot n_{1}}\cdot \sigma_{1}^{c}$.
\end{rem}

\subsection{Existence of the extension}\label{ss:ExistExt}
Now we consider the minimal log map $\xi_{\eta}$ and the extended underlying map $\underline{\xi}$. Denote by $C(\CharM_{\eta})$ the convex rational polyhedral cone of $\CharM_{\eta}$ in $\CharM_{\eta}^{gp}\otimes_{\Z}\Q$. Since $\CharM_{\eta}$ is sharp, the cone $C(\CharM_{\eta})$ is strongly convex. 

\begin{lem}
There is a lattice point $\tilde{v}\in \CharM_{\eta}^{gp}$ such that $(u,\tilde{v}) >0$ for any non-zero element $u\in C(\CharM_{\eta})$, where $(\cdot,\cdot)$ is the standard pairing in the Euclidean space $\CharM_{\eta}^{gp}\otimes_{\Z}\Q$. 
\end{lem}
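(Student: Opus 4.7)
The statement is a standard fact about strongly convex rational polyhedral cones, so the plan is to invoke duality. The plan is to work in the dual vector space $(\CharM_\eta^{gp}\otimes_\Z\Q)^*$ equipped with the dual lattice of $\CharM_\eta^{gp}$, and to show that any lattice point in the (relative) interior of the dual cone $C(\CharM_\eta)^\vee = \{v : (u,v) \geq 0 \text{ for all } u \in C(\CharM_\eta)\}$ serves as the desired $\tilde v$.

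First I would record that $C(\CharM_\eta)$ is a rational polyhedral cone in $\CharM_\eta^{gp}\otimes_\Z\Q$ because $\CharM_\eta$ is fine and saturated, and that it is strongly convex, i.e.\ contains no nontrivial linear subspace, because $\CharM_\eta$ is sharp (as $\sM_\eta$ is fs). Next I would use the standard duality for rational polyhedral cones (see, e.g., \cite[\S 1.2]{Toric}): the dual cone $C(\CharM_\eta)^\vee$ is again rational polyhedral, and the strong convexity of $C(\CharM_\eta)$ is equivalent to the statement that $C(\CharM_\eta)^\vee$ has nonempty interior in the full dual vector space. Moreover, the relative interior of $C(\CharM_\eta)^\vee$ consists exactly of those $v$ satisfying $(u,v)>0$ for every nonzero $u\in C(\CharM_\eta)$.

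Finally I would observe that an open subset of a rational polytope contains rational points; thus the interior of $C(\CharM_\eta)^\vee$ contains a rational $v_0$, and after clearing denominators we obtain a lattice point $\tilde v \in \CharM_\eta^{gp}$ (identified with its image in the dual of the dual lattice) still lying in the interior and hence satisfying $(u,\tilde v)>0$ for all nonzero $u\in C(\CharM_\eta)$. There is no real obstacle here; the only subtlety is to make sure one is using the right lattice, namely the dual lattice of $\CharM_\eta^{gp}$ inside $(\CharM_\eta^{gp}\otimes\Q)^*$, and to invoke the correct equivalence between sharpness of the monoid, strong convexity of $C(\CharM_\eta)$, and full-dimensionality of the dual cone.
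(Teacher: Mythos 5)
Your argument is correct and is essentially the same as the paper's, which simply cites \cite[Section 1.2(iv)]{Toric} for exactly this duality statement (sharpness of the fs monoid gives strong convexity of $C(\CharM_{\eta})$, hence the dual cone is full-dimensional and any lattice point in its interior pairs strictly positively with every nonzero element of the cone). You have merely unpacked the content of that citation, including the harmless identification of $\CharM_{\eta}^{gp}$ with its dual lattice via the standard pairing.
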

\begin{proof}
This follows from \cite[Section 1.2(iv)]{Toric}. 
\end{proof}

We fix a lattice point $\tilde{v}$ satisfies the condition in the above lemma. The set 
\[\{(u,\tilde{v})\ |\ u\in C(\CharM_{\eta})\}\subset \Q\] 
forms a monoid, whose saturation is the rank one free monoid $\N$. Thus, we have a map of saturated monoids $l_{\tilde{v}}:\CharM_{\eta}\to \N$. Consider the log structure $\sM_{\eta}'$, associated to the pre-log structure $\N\to K, \mbox{\ \ }e\mapsto 0$ over $\eta$. We fix a chart $\beta_{\eta}:\CharM_{\eta}\to \sM_{\eta}$ and $\beta_{\eta}':\CharM_{\eta}'\cong \N\to \sM_{\eta}'$. Then we have a morphism of log structures $\sM_{\eta}\to \sM_{\eta}'$ given by 
\[\beta_{\eta}(e)\mapsto \beta_{\eta}'\circ l_{\tilde{v}}(e).\]
Denote by $\xi_{\eta}'=(C\to S,\sM_{\eta}',f'_{\eta})$ the log map obtained by pulling back $\xi_{\eta}$ via the map $(\eta,\sM_{\eta})\to(\eta,\sM_{\eta}')$. By Proposition \ref{prop:UnivMinLog}, it is enough to construct a log map (not necessarily minimal) $\xi'$, such that its generic fiber is given by $\xi_{\eta}'$ as above. 

\begin{lem}\label{lem:PositiveSpeDeg}
Using the notations as above, there exists a chart $\beta_{\eta}':\CharM_{\eta}'\to\sM_{\eta}'$, such that no components of $C$ over $\bar{s}$ have negative special degeneracy under $\beta_{\eta}'$ as in Observation \ref{obs:SpecialDeg}.
\end{lem}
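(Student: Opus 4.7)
The plan is to exploit the rank-one structure of $\sM_{\eta}'$: since $\CharM_{\eta}'\cong \N$, any chart $\beta_{\eta}'$ is determined by the image $\beta_{\eta}'(1)\in\sM_{\eta}'$, and two such charts differ by $\beta_{\eta}''(1)=\beta_{\eta}'(1)+\log u$ for a unique $u\in K^{*}$. First I would rewrite the description (\ref{equ:OverGenPoint}) of the log map underlying $\xi_{\eta}'$ under such a chart. The structure map $\sM_{\eta}\to\sM_{\eta}'$ sends $\beta_{\eta}(e_{v})\mapsto d_{v}\cdot\beta_{\eta}'(1)$ with $d_{v}:=l_{\tilde{v}}(e_{v})\in\N$ and acts as the identity on units, so at a smooth non-distinguished point $p$ on an irreducible component $Z\subset C_{v,\bar{s}}$ one has $(f_{\eta}')^{\flat}(\delta)=d_{v}\cdot\beta_{\eta}'(1)+\log u_{p}^{\eta}$ with the \emph{same} unit $u_{p}^{\eta}$ as in the $\xi_{\eta}$-description. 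A subsequent chart change by $\log u$ then modifies the special degeneracy $n_{Z}$ of $Z$ via $n_{Z}\mapsto n_{Z}-d_{v}\cdot\nu_{\pi}(u)$.

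Next I would dispose of the components sitting over non-degenerate vertices of $G_{\eta}$. For such $v$, $e_{v}=0$ forces $d_{v}=0$, so the special degeneracy of any $Z\subset C_{v,\bar{s}}$ is insensitive to the chart choice. Moreover $f_{\eta}^{\flat}(\delta)=\log u_{p}^{\eta}$ at $p$ implies $u_{p}^{\eta}=\underline{f}^{*}s$ via the structure map of the log structure; since $\underline{f}$ extends over all of $C$ and the generic point of $Z_{v,\eta}$ avoids $D$, the section $\underline{f}^{*}s$ is a regular function on $U$ that is a unit on $U_{\eta}$, hence $\nu_{\pi}(u_{p}^{\eta})\geq 0$ automatically.

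For the remaining degenerate vertices $v\in V(G_{\eta})$, the strict positivity of $l_{\tilde{v}}$ on nonzero elements of $C(\CharM_{\eta})$ yields $d_{v}\geq 1$. Since the collection of components $Z\subset C_{v,\bar{s}}$ over degenerate $v$ is finite, I pick any integer $k\leq \min_{v,Z}(n_{Z}/d_{v})$ and take $u=\pi^{k}$; then the chart $\beta_{\eta}''(1):=\beta_{\eta}'(1)+\log \pi^{k}$ enforces $n_{Z}-d_{v}k\geq 0$ on every degenerate component while leaving the non-degenerate ones unchanged. The chief bookkeeping hurdle, which I expect to be the main technical point, is verifying that the unit $u_{p}^{\eta}$ in the $\xi_{\eta}'$-description really coincides with the one from $\xi_{\eta}$; this rests on the fact that $\sM_{\eta}\to\sM_{\eta}'$ acts as the identity on the unit part, after which everything reduces to the elementary chart-change computation outlined above.
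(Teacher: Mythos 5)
Your proof is correct and follows essentially the same route as the paper: both rescale the chart by a power of $\pi$ so that the most negative special degeneracy is pushed up to be non-negative. Your version is in fact a bit more careful than the paper's own argument, since you track explicitly the coefficient $d_{v}=l_{\tilde{v}}(e_{v})$ by which a chart change rescales each component's special degeneracy, and you verify separately that components over non-degenerate vertices (which the chart change cannot affect, as $d_{v}=0$) already have non-negative special degeneracy because $\underline{f}^{*}s$ is regular on all of $U$.
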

\begin{proof}
We fix an arbitrary chart $\beta_{\eta}'$ as above. Consider an irreducible component $Z$ over the closed point $\bar{s}$. Let $p\in Z$ be a smooth non-distinguished point $p\in Z$. Consider the nearby points of $p$ over $\eta$. By Lemma \ref{lem:LocalUnderlying}, we have
\begin{equation}\label{equ:ChangeChart}
(f_{\eta}')^{\flat}(\delta_{})=\beta_{\eta}'(e)+\log \pi^{n}\cdot u,
\end{equation}
where $u$ is a locally invertible section near $p$, and $e\in \CharM_{\eta}'$. If $n\geq 0$, then there is nothing to prove. Consider the case $n< 0$. Since the number of irreducible components over $\bar{s}$ is finite, we can assume that $n$ is minimal among the special degeneracy of all irreducible components of the closed fiber under $\beta_{\eta}'$. Consider the new chart given by
\begin{equation}\label{equ:DefnNewChart}
\beta_{\eta}'': \CharM_{\eta}'\to\sM_{\eta}, \mbox{\ \ }e\mapsto \beta_{\eta}'(e)-n\cdot\log\pi .
\end{equation}
It is not hard to check that (\ref{equ:ChangeChart}) becomes
\[(f'_{\eta})^{\flat}(\delta_{})=\beta_{\eta}''(e)+\log u.\]
Since $n$ is minimal, by applying (\ref{equ:ChangeChart}) and (\ref{equ:DefnNewChart}) to other components, it follows that no irreducible component of $C$ over $\bar{s}$ has negative special degeneracy under $\beta_{\eta}''$. 
\end{proof}

We fix a chart $\beta_{\eta}':\CharM_{\eta}'\to\sM_{\eta}$, which satisfies the condition in Lemma \ref{lem:PositiveSpeDeg}. Consider the log structure $\sM_{S}'$ associated to the following pre-log structure on $S$:
\[\N^{2}\to R, \mbox{\ \ }e_{\eta}\mapsto 0, \mbox{\ and\ }e_{s}\mapsto \pi,\]
where $e_{\eta}$ and $e_{s}$ form the basis of $\N^{2}$. Now we identify $\sM_{S,\eta}'$ with $\sM_{\eta}'$, and the element $e_{\eta}$ corresponds to the chart $\beta_{\eta}':\CharM_{\eta}'\to\sM_{\eta}$.

\begin{lem}
With the notations as above, there is a 
 morphism of log structures $\sM_{S}^{C/S}\to \sM_{S}'$, whose restriction to the generic point $\eta$ is identical to the morphism of log structures $\sM_{\eta}^{C_{\eta}/\eta}\to \sM_{\eta}'$ given by $\xi_{\eta}'$.
\end{lem}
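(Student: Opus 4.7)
The plan is to build the morphism by specifying it locally on an \'etale chart of $\sM_S^{C/S}$ and checking that the local data glues. \'Etale-locally around the closed point $s\in S$, the canonical log structure $\sM_S^{C/S}$ admits a chart $\N^{N_g+N_s}\to\sM_S^{C/S}$, where $N_g$ (resp.\ $N_s$) is the number of generic (resp.\ special) nodes in $C_{\bar s}$; each generator $e_q$ maps to the smoothing parameter of the corresponding node $q$ --- namely $0\in R$ for a generic node and $\pi^{n_q}\in R$ for a special node with local equation $xy=\pi^{n_q}$ (see Lemma~\ref{lem:LocalUnderlying}(3)).

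I then define the morphism on this chart by specifying the image of each generator as follows. For a generic node $l\in E(G_\eta)$, let $\tilde e_l\in\CharM_\eta$ be the characteristic class of the image of $e_l$ under the given log-curve structure $\sM_\eta^{C_\eta/\eta}\to\sM_\eta$ of $\xi_\eta$; I send $e_l\mapsto l_{\tilde v}(\tilde e_l)\cdot e_\eta\in\sM_S'$. For a special node $p$, I send $e_p\mapsto n_p\cdot e_s\in\sM_S'$. Compatibility with the structure maps to $R$ holds node by node: for the generic node, $\tilde e_l$ is a nonzero element of the sharp monoid $\CharM_\eta$, hence lies in $C(\CharM_\eta)\setminus\{0\}$, so $l_{\tilde v}(\tilde e_l)\geq 1$ by the defining property of $\tilde v$, whence $l_{\tilde v}(\tilde e_l)\cdot e_\eta$ maps to $0^{l_{\tilde v}(\tilde e_l)}=0\in R$, matching; for the special node, $n_p\cdot e_s$ maps to $\pi^{n_p}\in R$, also matching. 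Since each node carries a canonical assignment depending only on intrinsic data ($\tilde e_l$ for a generic node, $n_p$ for a special node), these local definitions are independent of the chart and glue to a global morphism $\sM_S^{C/S}\to\sM_S'$.

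Finally, I verify that the restriction of this morphism to $\eta$ coincides with the map $\sM_\eta^{C_\eta/\eta}\to\sM_\eta'$ coming from $\xi_\eta'$. Over $\eta$ the element $e_s\in\sM_S'$ pulls back to $\pi\in K^\times$ and becomes invertible, so the special-node assignments become units and drop out on the characteristic level --- consistent with special nodes being smoothed over $\eta$. The generic-node assignments $e_l\mapsto l_{\tilde v}(\tilde e_l)\cdot e_\eta$ are exactly the composition $\sM_\eta^{C_\eta/\eta}\to\sM_\eta\to\sM_\eta'$, where the first arrow is the log-curve structure of $\xi_\eta$ (sending $e_l\mapsto\tilde e_l$) and the second is the pull-back map $\beta_\eta(\tilde e_l)\mapsto\beta_\eta'(l_{\tilde v}(\tilde e_l))=l_{\tilde v}(\tilde e_l)\cdot e_\eta$ defining $\xi_\eta'$. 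The restriction therefore agrees with the given generic map, as required.

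The main obstacle is exactly the inequality $l_{\tilde v}(\tilde e_l)\geq 1$ for every generic node: its failure would make the image of the proposed section in $R$ equal to $1$ rather than $0$, violating compatibility with the structure map. This is handled by the choice of $\tilde v$ strictly separating $C(\CharM_\eta)\setminus\{0\}$ from the origin; once this is arranged, everything else is canonical local bookkeeping and globalization is automatic.
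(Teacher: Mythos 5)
Your treatment of the special nodes matches the paper's, but the generic-node case has a genuine gap. The morphism $\sM_{\eta}^{C_{\eta}/\eta}\to \sM_{\eta}'$ that you must recover over $\eta$ is the one given by $\xi_{\eta}'$, i.e.\ the composite of the structure map of the log curve of $\xi_{\eta}$ with $\sM_{\eta}\to\sM_{\eta}'$. On the chart, this composite sends the smoothing generator $e_{l}$ of a generic node not to $l_{\tilde{v}}(\tilde{e}_{l})\cdot e_{\eta}$ but to $l_{\tilde{v}}(\tilde{e}_{l})\cdot e_{\eta}+\log u_{l}$ for some unit $u_{l}\in K^{*}$: the structure map $\sM_{\eta}^{C_{\eta}/\eta}\to\sM_{\eta}$ lifts the class $\tilde{e}_{l}$ only up to an element of $\sO_{\eta}^{*}=K^{*}$, and your phrase ``the first arrow sends $e_{l}\mapsto\tilde{e}_{l}$'' conflates $\sM_{\eta}$ with its characteristic. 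Consequently the morphism you construct is a morphism $\sM_{S}^{C/S}\to\sM_{S}'$, but its restriction to $\eta$ need not be \emph{identical} to the given one, which is precisely what the lemma asserts (and what Proposition \ref{prop:LimitExist} needs afterwards).

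To repair this you must carry $\log u_{l}$ along. Writing $u_{l}=\pi^{n_{s}}\cdot h$ with $h\in R^{*}$ and $n_{s}\in\Z$, the required element $n_{s}\cdot e_{s}+\log h$ lies in $\sM_{S}'$ only when $n_{s}\geq 0$, and $n_{s}$ can a priori be negative. The paper secures $n_{s}>0$ by re-normalizing the chart $\beta_{\eta}'$ --- the shift by a multiple of $\log\pi$ in (\ref{equ:DefnNewChart}), taking $n$ there sufficiently large --- and this normalization is the missing idea in your argument. The inequality $l_{\tilde{v}}(\tilde{e}_{l})\geq 1$ that you single out as the main obstacle is correct but is the easy part (it only uses $\tilde{e}_{l}\neq 0$ in the sharp monoid $\CharM_{\eta}$); the real difficulty is the valuation of the unit $u_{l}$. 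A minor instance of the same point occurs at special nodes, where a general chart generator maps to $\pi^{n_{p}}\cdot h$ rather than $\pi^{n_{p}}$; that one is harmless since $h\in R^{*}$, but you should either normalize coordinates via Lemma \ref{lem:UnderDesCurve} or keep $\log h$ in the formula as the paper does.
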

\begin{proof}
Possibly after a base change, we can choose a global chart $\beta^{C/S}:\CharM_{S,\bar{s}}^{C/S}\to \sM_{S}^{C/S}$. Denote by $G$ the dual graph of $C_{\bar{s}}$, and $\{e_{l}\}_{l\in E(G)}$ the set of generators of $\CharM_{S}$, such that $\beta^{C/S}(e_{l})$ is an element in $\sM_{S}^{C/S}$ smoothing the node corresponding to $l$ in the closed fiber. Assume that $l$ is smoothed out over $\eta$, then $\exp\circ\beta^{C/S}(e_{l})=\pi^{n}\cdot h$, where $n$ is an positive integer, and $h$ is an invertible element in $R$. Thus, we define 
\[e_{l}\mapsto n\cdot e_{s}+ \log h.\] 
If the node corresponding to $l$ persists over $\eta$, then we have 
\[e_{l}\mapsto n_{\eta}\cdot e_{\eta}+\log \pi^{n_{s}}+\log h, \mbox{\ \ over\ } \eta \]
where $n_{\eta}$ and $n_{s}$ are two integers, and $h$ is an invertible element in $R$. Note that $n_{\eta}$ is positive, and we may assume that $n_{s}$ is also positive by choosing a sufficiently large $n$ in (\ref{equ:DefnNewChart}). Thus, we define 
\[e_{l}\mapsto n_{\eta}\cdot e_{\eta}+ n_{s}\cdot e_{s}+\log h.\]
This induces a map $\sM_{S}^{C/S}\to \sM_{S}'$, whose restriction to the generic point coincides with $\sM_{\eta}^{C_{\eta}/\eta}\to \sM_{\eta}'$. 
\end{proof}

Note that the map $\sM_{S}^{C/S}\to \sM_{S}'$ in the above lemma gives a log pre-stable curves $(C\to S,\sM_{S}')$, whose restriction to $\eta$ is given by the log-prestable curve $(C_{\eta}\to \eta,\sM_{\eta}')$ of $\xi_{\eta}'$.  

\begin{prop}\label{prop:LimitExist}
There is a 
 log map $\xi'$ over $(S,\sM_{S}')$ with the log curve $(C\to S,\sM_{S}')$ and underlying map $\underline{\xi}$, whose restriction to $\eta$ is identical to $\xi_{\eta}'$.
\end{prop}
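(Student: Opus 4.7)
The plan is to specify the morphism $(f')^\flat: \underline{\xi}^*\sM_X \to \sM_C'$ by constructing the image of the canonical generator $\delta$ locally on $C$ and then gluing. Since $\sM_X$ is rank one DF with generator $\delta$, extending $\xi_\eta'$ amounts to producing a single global section $(f')^\flat(\delta) \in \Gamma(C, \sM_C')$ which restricts to $(f_\eta')^\flat(\delta)$ on the generic fiber and whose image under the structure map $\exp$ is $\underline{f}^* s$.

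First I fix the chart $\beta_\eta'$ so that every irreducible component of $C_{\bar{s}}$ has non-negative special degeneracy, as granted by Lemma \ref{lem:PositiveSpeDeg}. For each point $p \in C_{\bar{s}}$, I choose a sufficiently small \'etale neighborhood $U$ and apply Lemma \ref{lem:LocalUnderlying} (or Lemma \ref{lem:UnderGenNode} together with Remark \ref{rem:UnderGenNode} if $p$ is a generic node) to write
\[(f_\eta')^\flat(\delta) \;=\; \beta_\eta'(m_v) + \log\!\bigl(\pi^{n_p} \cdot x^c \cdot h_p\bigr) \quad \text{on } U_\eta,\]
where $m_v = l_{\tilde{v}}(e_v) \in \CharM_\eta' = \N$, the integer $n_p \geq 0$ is the special degeneracy, $c$ is the contact order at a distinguished point (zero otherwise), $x$ is the relevant local coordinate, and $h_p \in \sO_U^*$. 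Each ingredient admits a canonical lift to $\sM_C'|_U$: $\beta_\eta'(m_v)$ lifts to $m_v \, e_\eta$ pulled back from $\sM_S'$; $\log \pi$ lifts to $e_s \in \sM_S'$; each $\log x$ at a distinguished point lifts to the canonical generator of the corresponding marked-point or nodal piece of $\sM_C'$; and $\log h_p$ lifts via $h_p \in \sO_U^*$. I therefore define
\[(f')^\flat(\delta)|_U \;:=\; m_v \, e_\eta + n_p \, e_s + c \log x + \log h_p \;\in\; \sM_C'|_U.\]

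The main obstacle is consistency: independence of the local choices and, most delicately, agreement across the two branches at a generic node. Independence of $U$ and of the coordinates is immediate, since the formula is uniquely determined by $(f_\eta')^\flat(\delta)$ via the canonical lifts listed above. At a generic node $p$ joining $C_{v_1}$ and $C_{v_2}$ via an edge $l$, the nodal piece of $\sM_C'$ satisfies $\log x + \log y = $ (image of $e_l \in \sM_S^{C/S}$ under the map $\sM_S^{C/S} \to \sM_S'$ constructed in the preceding lemma); combining this with Lemma \ref{lem:UnderGenNode} and Remark \ref{rem:UnderGenNode}, which identify $x = \pi^{n_1}\sigma_1$ and $y = \pi^{n_2}\sigma_2$ together with the shift of special degeneracies across the node, shows that the two local formulas from the two branches differ only by the nodal relation, hence define the same section of $\sM_C'$. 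This step is where the careful choice of chart and of the extension $\sM_S^{C/S} \to \sM_S'$ pay off.

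Finally, verifying that the glued section gives a morphism of log structures and restricts correctly is routine: $\exp((f')^\flat(\delta))$ vanishes on each degenerate component (where $m_v > 0$) and equals $\pi^{n_p} x^c h_p$ on each non-degenerate component (where $m_v = 0$), matching $\underline{f}^* s$ in both cases; and restricting to $\eta$ sends $e_s \mapsto \log \pi$, recovering $(f_\eta')^\flat(\delta)$. This produces the desired log map $\xi' = (C \to S, \sM_S', f')$ extending $\xi_\eta'$.
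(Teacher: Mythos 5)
Your proposal is correct and follows essentially the same route as the paper's proof: fix the chart via Lemma \ref{lem:PositiveSpeDeg} so all special degeneracies are non-negative, define $(f')^{\flat}(\delta)$ locally by specializing the decomposition of $u_{p}$ from Lemma \ref{lem:LocalUnderlying} (resp.\ Lemma \ref{lem:UnderGenNode} and Remark \ref{rem:UnderGenNode} at generic nodes) into the closed fiber, and glue by uniqueness of that specialization. Your extra verifications (consistency across the two branches at a generic node and compatibility with the structure map $\exp$) are details the paper compresses into one sentence, but they match its argument.
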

\begin{proof}
It is enough to define the morphism of log structures $f^{\flat}:\underline{f}^{*}\sM_{X}\to\sM_{C}'$, where $\sM_{C}'$ is the log structure on $C$ corresponding to the log curve $(C\to S,\sM_{S}')$. Pick a point $p\in C$ over $\bar{s}$, and an \'etale neighborhood $U$ of $p$. By shrinking $U$, we can assume that over the generic point, we have 
\[f^{\flat}_{\eta}(\delta_{})=n\cdot e_{\eta}+\log u_{p}, \mbox{\ \ in\ } U_{\eta},\]
where $u_{p}\in \sO_{U_{\eta}}$. 

We first assume that $p$ is not a generic node. By Lemma \ref{lem:LocalUnderlying}, further shrinking $U$ if necessary, the section $u_{p}$ extend to $U$ of the following form:
\[u_{p}=\pi^{n_{1}}\cdot h',\]
where $n_{1}=\nu_{\pi}(u_{p})$, and $h'\in \sO_{U}$. Note that the Lemma \ref{lem:PositiveSpeDeg} implies that the integer $n_{1}$ is non-negative. Thus, the only possible way to define $f^{\flat}$ near $p$ is given by
\[f^{\flat}(\delta_{})=n\cdot e_{\eta}+n_{1}\cdot e_{s}+\log h'.\]

Next we consider the case $p$ is a generic node. With the notations in Remark \ref{rem:UnderGenNode}, we have
\[u_{p}=\pi^{c\cdot n}\cdot \sigma_{1}^{c}.\]
Thus, we define 
\[f^{\flat}(\delta)=n\cdot e_{\eta}+c_{1}\cdot n_{1}\cdot e_{s}+c\cdot \log \sigma_{1}.\]

Note that our local construction is obtained by specializing the section $u_{p}$ to the closed fiber. Since the underlying structure is fixed, such specialization is unique. Thus, the above construction can be glued together to obtain a global map $f^{\flat}$ as we want. 
\end{proof}

\subsection{Specializing the dual graph}\label{ss:SpecGraph}
Consider the dual graph $\underline{G}$ of the underlying curve $C_{\bar{s}}$ of the fixed extension $\underline{\xi}$. For each edge $l\in E(\underline{G})$, if $l$ corresponds to a special node, then we can associate to $l$ a non-negative integer $c$ given by Lemma \ref{lem:LocalUnderlying}(3); if $l$ corresponds to a generic node, then we associate to $l$ the contact order given by $\xi_{\eta}$. Denote by $V_{n}(\underline{G})$ the set of non-degenerate components of $C_{\bar{s}}$. Note that Remark \ref{rem:SpecialOrder} gives an orientation on $\underline{G}$, which is compatible with the contact orders defined on each edge and the subset $V_{n}(\underline{G})\subset V(\underline{G})$. Thus, we obtain a marked graph. We use $G$ to denote this graph with the discrete data.

\begin{prop}\label{prop:UniGraph}
Consider any minimal log map $\xi$ over $S$ with the fixed underlying map $\underline{\xi}$, which is an extension of $\xi_{\eta}$. Then the marked graph $G_{\xi_{\bar{s}}}$ is identical to the graph $G$ with the orientation and contact orders as above.
\end{prop}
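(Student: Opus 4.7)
The plan is to check the four pieces of structure in the marked graph separately: the underlying dual graph, the subset of non-degenerate vertices, the contact orders on edges, and the compatible orientation. The first two depend only on $\underline{\xi}$ and hence automatically coincide for $G_{\xi_{\bar s}}$ and $G$: the underlying dual graph is that of $C_{\bar s}$, and the locus of degenerate components is exactly $\underline{f}^{-1}(D) \cap C_{\bar s}$. It remains to check contact orders and orientation.

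For contact orders at generic nodes (those persisting over $\eta$), I will invoke Lemma \ref{lem:NodeContactOpen} to conclude that the contact order is constant along $S$, so the value at $\bar s$ coincides with the value at $\eta$, which is the one recorded in $G$. The substantive case is a special node $p$, with local coordinates $x, y$ on the two components $Z_1, Z_2$ of $C_{\bar s}$ satisfying $xy = \pi^n$. On the $Z_1$-side, Definition \ref{defn:DistNode} and the description of $f^\flat$ on the level of characteristic force $f^\flat(\delta) = e_{Z_1} + c' \log x$, where $c'$ is the contact order of $p$ in $G_{\xi_{\bar s}}$ and $e_{Z_1}$ is the degeneracy of $Z_1$ in the extension $\xi$. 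Restricting to $\eta$ and using the chart $\beta_\eta$ to write $e_{Z_1}|_\eta = \beta_\eta(e_v) + \log h$ with $h \in R^*$, this becomes $f_\eta^\flat(\delta) = \beta_\eta(e_v) + \log(h \cdot x^{c'})$. Matching against the factorization $u_p = \pi^{n_p} x^c h_p$ from Lemma \ref{lem:LocalUnderlying}(3) and using the fact that $x$ is not a unit on the closed fiber forces $c' = c$, which is exactly the contact order assigned to $p$ in $G$.

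The orientation is handled by the same local analysis combined with Lemmas \ref{lem:DegJump} and \ref{lem:SpecialDeg}. On $G_{\xi_{\bar s}}$, the relation $Z_1 \leq Z_2$ at a node $p$ is characterized by $e_{Z_2} = e_{Z_1} + c \cdot e_p$ in $\CharM_{S,\bar s}$ with $c \geq 0$. On $G$, the orientation is defined in Remark \ref{rem:SpecialOrder} by comparing special degeneracies $n_{p,Z_i}$ of $Z_1$ and $Z_2$ under the chart $\beta_\eta$. Tracking the restriction of $e_{Z_1}, e_{Z_2}$ along $\sM_S \to \sM_\eta$ via the chart $\beta_\eta$ and unpacking the relation via $xy = \pi^n$ identifies the condition $c \geq 0$ with the inequality of special degeneracies $n_{p,Z_1} \leq n_{p,Z_2}$, as computed in Remark \ref{rem:SpecialOrder}. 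The chart-independence recorded in Lemma \ref{lem:UnderTangency} ensures this correspondence is intrinsic.

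The main obstacle is bookkeeping: one must carefully track how the elements $e_{Z_i} \in \sM_S$ of the extension restrict to $\sM_\eta$ and are expressed in the chosen chart, then use these expressions to match local formulas on the closed and generic fibers. Lemma \ref{lem:SpecialDeg} is precisely the tool that makes this comparison rigorous and chart-independent, by certifying that different charts produce the same special degeneracies up to an element of $R^*$. Once this identification is set up, the proof is a direct symbolic comparison of the local expressions for $f^\flat(\delta)$ at each node.
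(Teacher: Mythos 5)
Your proof follows essentially the same route as the paper's: both reduce the problem to checking contact orders and orientation, dispatch generic nodes with Lemma \ref{lem:NodeContactOpen}, and read off the data at special nodes from the factorization in Lemma \ref{lem:LocalUnderlying}(3) together with Lemma \ref{lem:UnderTangency} and Remark \ref{rem:SpecialOrder} --- yours merely spells out the local matching that the paper compresses into ``the log structure is compatible with the underlying structure.'' One small correction: the element $h$ relating $e_{Z_1}|_\eta$ to $\beta_\eta(e_v)$ lies in $K^*$ rather than $R^*$ in general (its $\pi$-valuation is the special degeneracy of $Z_1$), but since the comparison of the exponents of $x$ is insensitive to powers of $\pi$, this does not affect the conclusion $c'=c$.
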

\begin{proof}
First note that the underlying graph of $G_{\xi_{\bar{s}}}$ and $G$ are both given by the dual graph $\underline{G}$ of the underlying curve, and their sets of non-degenerate vertices are the same. It is enough to check that the two graphs have the same contact orders and orientations. We denote the underlying graph to be $\underline{G}$. 

Consider an edge $l\in E(\underline{G})$. If $l$ corresponds to a generic node, then by Lemma \ref{lem:NodeContactOpen}, the orientation and contact order of $l$ is unquely determined by the generic fiber $\xi_{\eta}$. Hence the two graphs $G_{\xi_{\bar{s}}}$ and $G$ have the same orientation and contact orders along $l$. 

Next, consider the case where $l$ corresponds to a special node $p$. Assume that the contact order of $\xi$ at $p$ is $c$. Note that the log structure is compatible with the underlying structure. Hence the two graphs have the same weight $c$ in Lemma \ref{lem:LocalUnderlying}(3) associated to $l$, which only depends on $f_{\eta}$. By Lemma \ref{lem:UnderTangency}, the orientation of $l$ in $G_{\xi_{\bar{s}}}$ is given by the one described in Remark \ref{rem:SpecialOrder}. This implies that the two graphs $G_{\xi_{\bar{s}}}$ and $G$ have the same orientation and contact order along $l$.

This finishes the proof of the statement. 
\end{proof}

\begin{cor}
The graph $G$ is admissible.
\end{cor}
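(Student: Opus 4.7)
The plan is to deduce admissibility of $G$ by identifying it with the marked graph of the closed fiber of a genuine minimal log map, for which admissibility is already known.

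First I would invoke Proposition \ref{prop:LimitExist} to obtain the (non-minimal) log map $\xi'$ over $(S,\sM_{S}')$ extending $\xi_{\eta}'$ and having underlying map $\underline{\xi}$. Feeding this $\xi'$ into the universal property of minimal log maps (Proposition \ref{prop:UnivMinLog}) produces a minimal log map $\xi_{min}$ over $S$, whose underlying map is again $\underline{\xi}$ and whose restriction to $\eta$ is isomorphic to $\xi_{\eta}$ (since $\xi_\eta$ is itself already minimal, so the minimal pullback of $\xi'$ over $\eta$ recovers it). In particular $\xi_{min}$ is a minimal log map over the geometric point $\bar{s}$ with underlying $\underline{\xi}_{\bar{s}}$.

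Next, by Proposition \ref{prop:UniGraph}, the marked graph of any minimal extension with fixed underlying $\underline{\xi}$ coincides with the graph $G$ defined in Section \ref{ss:SpecGraph}. Therefore
\[
G \;=\; G_{\xi_{min,\bar{s}}}.
\]
Finally, $\xi_{min,\bar{s}}$ is a minimal (in particular fs) log map over a geometric point, so Corollary \ref{cor:DualGraphAdm} applies and tells us that its marked graph $G_{\xi_{min,\bar{s}}}$ is admissible. Combining the two displayed identifications yields the admissibility of $G$.

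There is no real obstacle here beyond making sure the chain of previous results actually applies: the main thing to check is that the minimal log map produced by Proposition \ref{prop:UnivMinLog} from the extension $\xi'$ restricts on $\eta$ to the original $\xi_\eta$ (not merely to $\xi_\eta'$), so that it is a legitimate extension in the sense required by Proposition \ref{prop:UniGraph}. This follows from the uniqueness clause in Proposition \ref{prop:UnivMinLog} applied to the already minimal generic fiber $\xi_\eta$, together with the factorization $(\eta,\sM_\eta)\to(\eta,\sM_\eta')$ used to build $\xi'$.
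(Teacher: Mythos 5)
Your argument is correct and is essentially the paper's own proof, just unpacked: the paper's one-line justification (``the existence of the extension of $\xi_{\eta}$ and the above proposition'') refers precisely to the chain Proposition \ref{prop:LimitExist} $\to$ Proposition \ref{prop:UnivMinLog} $\to$ Proposition \ref{prop:UniGraph} $\to$ Corollary \ref{cor:DualGraphAdm} that you spell out. Your closing verification that the minimalization of $\xi'$ restricts over $\eta$ to $\xi_{\eta}$ itself, via the uniqueness clause of Proposition \ref{prop:UnivMinLog} applied to the already-minimal generic fiber, is a detail the paper leaves implicit and is the right way to justify it.
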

\begin{proof}
This follows from the existence of the extension of $\xi_{\eta}$ and the above proposition. 
\end{proof}

Consider a minimal log map $\xi=(C\to S,\sM_{S}, f)$, which is an extension of $\xi_{\eta}$ over $S$ with underlying map $\underline{\xi}$. Consider the natural map $q^{gen}:\CharM(G)^{gp}\cong\CharM_{S,\bar{s}}^{gp}\to \CharM_{\eta}^{gp}$. This is a surjection. Denote by $K_{sp}$ the kernel of $q^{gen}$. Then we have the following exact sequence:
\begin{equation}\label{diag:DecompChar}
0\lra K_{sp}\lra \CharM(G)^{gp}\stackrel{q^{gen}}{\longrightarrow} \CharM_{\eta}^{gp}\lra 0.
\end{equation}
Note that all groups involved in the exact sequence (\ref{diag:DecompChar}) are free abelian groups. We fix a non-canonical decomposition, which is compatible with (\ref{diag:DecompChar}):
\begin{equation}\label{equ:DecompChar}
\CharM(G)^{gp}=K_{sp}\oplus \CharM_{\eta}^{gp}.
\end{equation}
Denote by $q^{sp}: \CharM(G)^{gp}\to K_{sp}$ the natural projection. Then for any element $e\in \CharM(G)^{gp}$, we write $e=q^{gen}(e)+q^{sp}(e)$.

Possibly after a base change, we fix a global chart $\beta:\CharM(G)\cong\CharM_{S,\bar{s}}\to \sM_{S}$. Thus, we have a map of groups $\beta^{gp}:\CharM_{S,\bar{s}}^{gp}\to \sM_{S}^{gp}$. By \cite[3.5(i)]{LogStack}, the group $K_{sp}$ is generated by elements in $\CharM(G)$, whose images in $R$ is not zero. Consider the composition
\begin{equation}\label{equ:SpeDegMap}
\bar{\beta}:=\nu_{\pi}\circ\exp\circ\beta^{gp}: K_{sp}\to \Z,
\end{equation}
where $\nu_{\pi}$ is the evaluation of of the fraction field $K$. 

\begin{lem}\label{lem:SpecialEle}
The map $\bar{\beta}$ only depends on the base $S$, and the fixed underlying extension $\underline{\xi}$.
\end{lem}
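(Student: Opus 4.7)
The plan is to verify two independence properties for the map $\bar\beta := \nu_\pi\circ\exp\circ\beta^{gp}|_{K_{sp}}$: (i) independence of the chart $\beta$ for a fixed extension $\xi$, and (ii) independence of the minimal extension $\xi$ itself among those sharing the fixed underlying map $\underline\xi$. Note that independence of the decomposition (\ref{equ:DecompChar}) is automatic, since $\bar\beta$ is defined on the intrinsic subgroup $K_{sp}\subset \CharM_{S,\bar s}^{gp}$ without reference to any direct-sum splitting.

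For (i), since $S=\Spec R$ is local with $\sO_S^*=R^*$, any two lifts $\beta_1,\beta_2:\CharM(G)\to\sM_S$ of the canonical identification $\CharM(G)\cong\CharM_S$ differ by a map $u_{\bullet}:\CharM(G)\to R^*$, namely $\beta_2(e)=\beta_1(e)+\log u_e$. Applying $\exp^{gp}$ gives $\exp\beta_2^{gp}(e)=u_e\cdot\exp\beta_1^{gp}(e)$, and since $\nu_\pi(R^*)=0$, we immediately conclude $\bar\beta_1=\bar\beta_2$ on $K_{sp}$.

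For (ii), let $\xi_1,\xi_2$ be two minimal extensions of $\xi_\eta$ sharing the common underlying map $\underline\xi$. By Proposition \ref{prop:UniGraph} they have the same marked graph $G$, hence the same $K_{sp}\subset\CharM(G)^{gp}$. I would verify $\bar\beta_1=\bar\beta_2$ on a generating set of $\CharM(G)^{gp}$, namely the edge generators $e_l$ and vertex generators $e_v$. For an edge generator $e_l$, the element $\beta_i(e_l)\in\sM_S$ agrees, up to a unit in $R^*$, with the image of the canonical smoothing element under the structure map $\sM_S^{C/S}\to\sM_S$ of the log curve; hence $\nu_\pi\exp\beta_i(e_l)$ equals the $\pi$-adic thickness of the node $l$ in $C\to S$ and depends only on $\underline\xi$. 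For a vertex generator $e_v$ with $v$ degenerate, pick a smooth non-distinguished point $p$ on the component corresponding to $v$ and an \'etale neighborhood $U\to C$ of $p$; then (\ref{equ:OverGenPoint}) gives $f_i^\flat(\delta)=\beta_i(e_v)+\log h_{p,i}$ with $h_{p,i}\in\sO_{U_\eta}^*$, and Lemma \ref{lem:LocalUnderlying}(1) shows that $h_{p,i}$ in fact extends to a unit in $\sO_U^*$. Taking $\exp^{gp}$ and comparing $\pi$-adic valuations, $\bar\beta_i(e_v)=\nu_\pi\exp f_i^\flat(\delta)$, i.e.\ the order of vanishing at $p$ of the pullback of $s:L\to\sO_X$ under $\underline f$, which manifestly depends only on $\underline\xi$.

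The main obstacle lies in the local comparison at vertex generators: $\exp\beta_i(e_v)$ is an element of $R$, while $\exp f_i^\flat(\delta)$ is a local section of the line bundle $\underline f^*L$ on $U$, so the identity $\exp f_i^\flat(\delta)=h_{p,i}\cdot\exp\beta_i(e_v)$ must be read after choosing a local trivialization of $\underline f^*L$. One must check that changing the trivialization multiplies both sides by a common element of $\sO_U^*$ and so leaves $\bar\beta_i(e_v)$ unchanged, and that the absence of any positive $\pi$-adic contribution from $h_{p,i}$—supplied precisely by Lemma \ref{lem:LocalUnderlying}(1)—is what allows the valuation to see only the intrinsic data of $\underline\xi$.
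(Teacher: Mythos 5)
Your proposal is correct and follows essentially the same route as the paper: for edge elements the valuation is the $\pi$-adic thickness of the node, and for vertex elements it is read off from $\nu_{\pi}(u_{p})$ at a smooth non-distinguished point exactly as in Lemma \ref{lem:LocalUnderlying}(1). The only slip is that you should verify $\bar{\beta}_{1}=\bar{\beta}_{2}$ on generators of $K_{sp}$ --- i.e.\ on those $e_{l}$ and $e_{v}$ lying in the face $K_{sp}\cap\CharM(G)$, which suffices because $\bar\beta$ takes values in the torsion-free group $\Z$ --- rather than on ``a generating set of $\CharM(G)^{gp}$'', where $\nu_{\pi}\circ\exp\circ\beta^{gp}$ is not even defined; the paper packages this reduction via extremal rays and Lemma \ref{lem:IrrEle}, but the content is the same.
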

\begin{proof}
Note that all other irreducible elements in $\CharM(G)$ can be expressed as some non-negative rational linear combinations of the irreducible elements lying on some extremal rays of $C(\CharM(G))$. It is enough to consider an irreducible element $e\in \CharM(G)$, which lies on an extremal ray of the cone $C(\CharM(G))$ in $\CharM(G)\otimes_{\Z}\Q$, such that its image in $R$ is non-zero. Without loss of generality, we can assume that $a=\pi^{n}$. By Lemma \ref{lem:IrrEle}, there is a minimal positive integer $n'$, such that $n'\cdot e$ is the element associated to some vertex or some special node $l$ in $G$. In the first case, the the minimal vertex is specialized from a non-degenerate component over $\eta$. Hence, Lemma \ref{lem:LocalUnderlying}(1) implies that the degeneracy $n\cdot n'$ is uniquely determined by the generic fiber and the base $S$. If $n'\cdot e$ is the element associated to a special node, then this is also determined by the generic fiber and the base $S$. This proves the statement. 
\end{proof}

Consider the map $\beta_{\eta}'$ given by the composition
\[\CharM_{\eta}\lra \CharM^{gp}_{\eta}\lra \CharM_{S,\bar{s}}^{gp}\stackrel{\beta^{gp}}{\lra}\sM_{S}^{gp},\]
where the middle arrow is the natural inclusion given by (\ref{equ:DecompChar}). Note that for any $e\in \CharM_{\eta}$, the element $\beta_{\eta}'(e)$ generalize to a unique element in $\sM_{\eta}$. Thus we obtain a chart for $\sM_{\eta}$. 

\begin{defn}\label{defn:Specializable}
A chart $\beta_{\eta}':\CharM_{\eta}\to\sM_{\eta}$ is called {\em specializable}, if it comes from a global chart $\beta:\CharM_{S,\bar{s}}\to \sM_{S}$ as above.
\end{defn}

\begin{rem}
The specializable chart can be viewed as a restriction of the chart $\beta:\CharM(G)\to \sM_{S}$ to the generic point. However, it depends on the choice of the non-canonical splitting (\ref{equ:DecompChar}). 
\end{rem}

For any element $e\in \CharM_{S,\bar{s}}$, consider the decomposition given by (\ref{equ:DecompChar}):
\[e=q^{sp}(e)+q^{gen}(e)=e^{sp}+e^{gen}.\]
By the construction of $\beta_{\eta}'$, we have
\begin{equation}\label{equ:SpeSpeDeg}
\beta(e)_{\eta}=\beta^{gp}(e^{sp})_{\eta}+\beta_{\eta}'(e^{gen}), \mbox{\ \ in \ }\sM_{\eta}.
\end{equation}
Note that $\exp\circ\beta^{gp}(e^{sp})\in K$, hence is an invertible element in $\sM_{\eta}$. 

\begin{lem}\label{lem:SpeSpeDeg}
For any $v\in V(G)$, the special degeneracy of $v$ with respect to $\beta_{\eta}'$ as in Observation \ref{obs:SpecialDeg} only depends on $\bar{\beta}$ and $q^{sp}$.
\end{lem}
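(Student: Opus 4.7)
The plan is to compute the special degeneracy directly from the local description of the log map at a non-distinguished smooth point of the component corresponding to $v$, and then use the decomposition (\ref{equ:DecompChar}) together with (\ref{equ:SpeSpeDeg}) to show that only the $K_{sp}$-part of $e_v$ contributes.

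First I would pick a smooth non-distinguished point $p$ on the component $C_v$ and write the local expression of the log map $\xi$ at $p$ before restriction: since $p$ is non-distinguished and $v$ has associated element $e_v \in \CharM(G) \cong \CharM_{S,\bar{s}}$, there exists a locally invertible section $h \in \sO_{C,p}^*$ with
\[
f^\flat(\delta) = \beta(e_v) + \log h.
\]
Restricting to the generic fiber and applying the identity (\ref{equ:SpeSpeDeg}) to $e_v = q^{sp}(e_v) + q^{gen}(e_v)$, we obtain
\[
f_\eta^\flat(\delta) = \beta^{gp}(q^{sp}(e_v))_\eta + \beta_\eta'(q^{gen}(e_v)) + \log h|_\eta.
\]

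Next I would compare this with the local expression (\ref{equ:OverGenPoint}) defining the special degeneracy with respect to $\beta_\eta'$, namely $f_\eta^\flat(\delta) = \beta_\eta'(q^{gen}(e_v)) + \log u_p$. Cancelling $\beta_\eta'(q^{gen}(e_v))$ in $\sM_\eta^{gp}$, we get
\[
\log u_p = \beta^{gp}(q^{sp}(e_v))_\eta + \log h|_\eta,
\]
i.e. $u_p = \exp(\beta^{gp}(q^{sp}(e_v)))_\eta \cdot h|_\eta$ in $K^*$. Here $\exp(\beta^{gp}(q^{sp}(e_v)))$ lies in $K^*$ because $K_{sp}$ is generated (by \cite[3.5(i)]{LogStack}, as already invoked just before (\ref{equ:SpeDegMap})) by elements whose $\beta$-image has nonzero image in $R$.

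Applying $\nu_\pi$ and using $\nu_\pi(h|_\eta) = 0$ (since $h$ is a unit in the local ring at $p$, hence extends as a unit across the closed fiber), we conclude
\[
n_p = \nu_\pi(u_p) = (\nu_\pi \circ \exp \circ \beta^{gp})(q^{sp}(e_v)) = \bar{\beta}(q^{sp}(e_v)).
\]
The right-hand side depends only on the intrinsic element $e_v \in \CharM(G)$, the splitting projection $q^{sp}$, and the map $\bar{\beta}$, proving the claim. The only point requiring care is verifying that the unit $h$ really has trivial $\pi$-adic valuation after restriction to $\eta$, which is immediate once one notes $p$ is non-distinguished, so no further shrinking of the étale neighborhood is needed to ensure $h \in \sO_{U}^*$ by Lemma \ref{lem:LocalUnderlying}(1).
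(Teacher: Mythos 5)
Your proof is correct and takes essentially the same route as the paper: the paper's argument is precisely that (\ref{equ:SpeSpeDeg}) yields the formula $n_p=\bar{\beta}\circ q^{sp}(e_v)$ for the special degeneracy, and you have simply written out the local computation at a smooth non-distinguished point that justifies this identity. The verification that $\nu_\pi(h|_\eta)=0$ because $h$ is a unit on a neighborhood in $C$ is the right (and only) point needing care, and you handle it correctly.
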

\begin{proof}
It follows from (\ref{equ:SpeSpeDeg}) that the special degeneracy of $v$ with respect to $\beta_{\eta}'$ is given by $\bar{\beta}\circ q^{sp}(e_{v})$, where $e_{v}\in \CharM(G)$ is the element associated to $v$. 
\end{proof}

\subsection{Uniqueness of the extension}\label{ss:UniqueLimit}

Assume that we have two minimal extensions $\xi_{1}=(C\to S,\sM_{1},f_{1})$ and $\xi_{2}=(C\to S,\sM_{2},f_{2})$ of $\xi_{\eta}$, with the same underlying $\underline{\xi}$. After a base change, we can assume that we have two global charts
\begin{equation}\label{equ:TwoChart}
\beta_{1}: \CharM(G)\to \sM_{1} \mbox{\ \ \ and\ \ \ } \beta_{2}: \CharM(G)\to \sM_{2}.
\end{equation}
for $\xi_{1}$ and $\xi_{2}$ respectively.

\begin{lem}\label{lem:ChartDif}
For any $e\in \CharM(G)$, we have a unique element $u\in R^{*}$ such that
\[\beta_{1}(e)_{\eta}=\log u + \beta_{2}(e)_{\eta} \mbox{\ \ \ in\ }\sM_{\eta}.\]
Thus, we have a canonical isomorphism of log structures $\sM_{1}\cong \sM_{2}$.
\end{lem}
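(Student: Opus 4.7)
The plan is to verify, element by element on generators of $\CharM(G)^{gp}$, that $\beta_1(e)_\eta - \beta_2(e)_\eta \in \log R^*$. Both restrictions $\beta_{i,\eta}:\CharM(G)\to\sM_\eta$ lift the canonical specialization $\CharM(G)\to\CharM_\eta$, so for each $e\in\CharM(G)$ there is a unique $u_e\in K^*$ with $\beta_1(e)_\eta = \log u_e + \beta_2(e)_\eta$ (uniqueness by integrality of $\sM_\eta$), and $e\mapsto u_e$ is a group homomorphism $\CharM(G)^{gp}\to K^*$. Once $u_e\in R^*$ is established on a generating set, this homomorphism takes values in $R^*=\Gamma(S,\sO_S^*)$ and twists the pre-log structures $\exp\beta_1,\exp\beta_2:\CharM(G)\to R$ into one another, yielding the canonical isomorphism $\sM_1\cong\sM_2$.

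For $e\in K_{sp}$: by Lemma \ref{lem:SpecialEle} the map $\bar\beta$ of (\ref{equ:SpeDegMap}) depends only on the underlying extension $\underline\xi$, so $\bar\beta_1=\bar\beta_2$. Since $\bar\beta_i(e) = \nu_\pi(\exp\beta_i(e))$, this gives $\nu_\pi(u_e)=0$ and hence $u_e\in R^*$. Using the splitting $\CharM(G)^{gp}=K_{sp}\oplus\CharM_\eta^{gp}$ of (\ref{equ:DecompChar}), for a vertex generator $e_v = e_v^{sp}+e_v^{gen}$ it remains to control the $\CharM_\eta^{gp}$-component $e_v^{gen}$. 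I pass to the specializable charts $\beta_{i,\eta}':\CharM_\eta\to\sM_\eta$ induced by $\beta_i$ and the splitting: Lemma \ref{lem:SpeSpeDeg} identifies the special degeneracy of $v\in V(G)$ with respect to $\beta_{i,\eta}'$ as $\bar\beta_i(q^{sp}(e_v))$, which is independent of $i$ by the previous step. Applying Lemma \ref{lem:SpecialDeg} at a generic point of the component $Z_v$ of $C_{\bar s}$, lying in the component of $C_\eta$ for $v'=q^{gen}(v)\in V(G_\eta)$, then yields $\beta_{1,\eta}'(e_{v'}) - \beta_{2,\eta}'(e_{v'}) \in \log R^*$, giving the $R^*$-equality for $e_v^{gen}$ via the defining formula of $\beta_{i,\eta}'$.

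For edge generators $e_l$, $l\in E(G)$, I handle all cases uniformly via the log pre-stable curve structures $\phi_i:\sM_S^{C/S}\to\sM_i$. Writing $\phi_i(e_l^{can}) = \beta_i(e_l) + \log w_i$ with $w_i\in\Gamma(S,\sO_S^*)=R^*$ (both sides lift $e_l\in\CharM(G)$), the claim $u_{e_l} = w_2/w_1 \in R^*$ reduces to $\phi_{1,\eta}(e_l^{can}|_\eta) = \phi_{2,\eta}(e_l^{can}|_\eta)$ in $\sM_\eta$. For special $l$ the element $e_l^{can}|_\eta$ is invertible in $\sM_S^{C/S}|_\eta$, and any morphism of log structures agrees with itself on units; for generic $l$ the element $e_l^{can}|_\eta$ lies in the sub-log-structure $\sM_\eta^{C_\eta/\eta}$ of $\sM_S^{C/S}|_\eta$, on which both $\phi_{i,\eta}$ restrict to the fixed log pre-stable curve structure of $\xi_\eta$ since $\xi_1|_\eta = \xi_2|_\eta = \xi_\eta$.

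I expect the main obstacle to be the case of non-distinguished generic edges, where $e_l$ is a free generator unconstrained by vertex relations; the log pre-stable curve argument above is precisely what handles this subcase on equal footing with the others. Assembling the generator-wise $R^*$-equalities produces the homomorphism $u:\CharM(G)^{gp}\to R^*$ of the first paragraph, from which the canonical isomorphism $\sM_1\cong\sM_2$ is then extracted.
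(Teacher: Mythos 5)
Your argument is correct, and its substantive core coincides with the paper's: the edge elements are handled by comparing $\beta_{1},\beta_{2}$ against the canonical smoothing sections coming from $\sM_{S}^{C/S}$ and restricting to $\eta$ (where the two log curves agree by hypothesis), while the vertex elements are handled exactly as in the paper via the splitting (\ref{equ:DecompChar}), the invariance of $\bar{\beta}$ from Lemma \ref{lem:SpecialEle}, the formula of Lemma \ref{lem:SpeSpeDeg} for special degeneracies, and the chart-comparison Lemma \ref{lem:SpecialDeg}. Where you genuinely diverge is in the reduction: the paper first passes to irreducible elements lying on extremal rays of $C(\CharM(G))$, invokes Lemma \ref{lem:IrrEle} to identify a multiple $n\cdot e$ with an edge or minimal-vertex element, and then recovers the general case by extracting $n$-th roots of units in the DVR $R$; you instead observe that $e\mapsto u_{e}$ is a group homomorphism $\CharM(G)^{gp}\to K^{*}$ and verify $u_{e}\in R^{*}$ directly on the monoid generators $e_{v}$ ($v\in V(G)$) and $e_{l}$ ($l\in E(G)$), applying Lemma \ref{lem:SpeSpeDeg}/\ref{lem:SpecialDeg} to arbitrary (not only minimal) vertices, which those lemmas indeed permit. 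Your route dispenses with Lemma \ref{lem:IrrEle} and the $n$-th-root step entirely and is slightly cleaner for it; the paper's extremal-ray reduction buys nothing extra here beyond consistency with how those irreducible elements are used elsewhere. One small point worth making explicit in your write-up is that the units $u_{e}$, a priori obtained over $\eta$, automatically satisfy $\exp\beta_{1}(e)=u_{e}\cdot\exp\beta_{2}(e)$ in $R$ itself (not just in $K$) by injectivity of $R\hookrightarrow K$, which is what lets the twisted chart define the isomorphism $\sM_{1}\cong\sM_{2}$ over all of $S$.
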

\begin{proof}
We only need to consider the irreducible elements of $\CharM(G)$. Let $e$ be an irreducible element of $\CharM(G)$. By Proposition \ref{prop:UniGraph}, we have 
\[\overline{\beta_{1}(e)_{\eta}}=\overline{\beta_{2}(e)_{\eta}} \mbox{\ \ \ in\ }\CharM_{\eta}.\]
Hence we have a unique element $u\in K$ such that 
\[\beta_{1}(e)_{\eta}=u\cdot \beta_{2}(e)_{\eta} \mbox{\ \ \ in\ }\sM_{\eta},\]
where $K$ is the fraction field of $R$. It remains to prove that $u$ is an invertible element in $R$.

First assume that $e$ lies on an extremal ray of $C(\CharM(G))$. By Lemma \ref{lem:IrrEle}, we have a minimal positive integer $n$ such that $n\cdot e\in N(G)$ is either the element associated to some edge, or the element associated to some minimal vertex.

Consider the case where $n\cdot e$ is the element associated to some edge $l$. We identify the element $e_{l}\in \sM_{S}^{C/S}$ smoothing $l$ with its image in $\sM_{1}$ or $\sM_{2}$. Then we have 
\[n\cdot \beta_{1}(e) +\log u_{1} = e_{l} \mbox{\ \ in \ }\sM_{1}, \mbox{\ \ and\ \ } n\cdot \beta_{2}(e) + \log u_{2} = e_{l} \mbox{\ \ in \ }\sM_{2},\]
where $u_{1},u_{2}\in R^{*}$. By restricting to the generic point $\eta$, we have
\[e_{l,\eta}= n\cdot \beta_{1}(e)_{\eta} +\log u_{1} = n\cdot \beta_{2}(e)_{\eta} +\log u_{2} \mbox{\ \ in \ }\sM_{\eta}.\]
This implies that $u^{n}=u_{2}/u_{1}$, hence $u\in R^{*}$.

Next, we consider the case where $n\cdot e$ is the element associated to some minimal vertex $v'\in V(G)$, and assume that $v'$ is specialized from $v\in V(G_{\eta})$. Denote by $e^{sp}=q^{sp}(e)$ and $e^{gen}=q^{gen}(e)$. Let $\beta_{i,\eta}$ be the specializable chart as in Definition \ref{defn:Specializable} induced by $\beta_{i}$ for $i=1,2$. 
By (\ref{equ:SpeSpeDeg}), we may assume that 
\[n\cdot \beta_{1}(e)_{\eta} =n_{1}\cdot\log \pi + n\cdot\beta_{1,\eta}'(e^{gen})\]
and
\[n\cdot \beta_{2}(e)_{\eta} =n_{2}\cdot\log \pi + n\cdot\beta_{2,\eta}'(e^{gen}).\]
Note that the special degeneracy of $v'$ with respect to $\beta_{i,\eta}'$ is given by $n_{i}$ for $i=1,2$. By Lemma \ref{lem:SpecialEle} and \ref{lem:SpeSpeDeg}, the special degeneracy of $v'$ does not depend on the choice of $\beta_{i}$. Thus we have $n_{1}=n_{2}$. By Lemma \ref{lem:SpecialDeg}, we obtain a unique element $u\in R^{*}$, such that 
\[\beta_{2,\eta}'(e^{gen})= \log u + \beta_{1,\eta}'(e^{gen}).\] 

Finally assume that $e$ does not lie on any extremal ray. Then for some sufficiently divisible positive integer $n$, we have
\[n\cdot e = \sum_{i} n_{i}e_{i},\]
where $n_{i}$ is a positive integer, and $e_{i}$ is an irreducible element lying on some extremal ray for each $i$. Then the above argument imples that there exists a unique $u_{i}\in R^{*}$ such that $\beta_{1}(e_{i})_{\eta}=\beta_{2}(e_{i})_{\eta} + \log u_{i}$ for each $i$. Thus, we have
\[n\cdot \beta_{1}(e_{})_{\eta}=n\cdot \beta_{2}(e_{})_{\eta} + \log h,\]
where $h=\prod_{i}u_{i}^{n\cdot n_{i}}\in R^{*}$. This implies that $u^{n}=h$, hence $u\in R^{*}$. 
\end{proof}

\begin{prop}\label{prop:LimitUnique}
Possibly after a base change, the isomorphism $\xi_{1,\eta}\cong \xi_{2,\eta}$ can be extended uniquely to an isomorphism of $\xi_{1}$ and $\xi_{2}$.
\end{prop}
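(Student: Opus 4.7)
The plan is to upgrade the canonical isomorphism of base log structures produced by Lemma \ref{lem:ChartDif} into a full isomorphism of minimal stable log maps, and then to extract uniqueness from Lemma \ref{lem:auto-inject}. After a suitable base change we fix the global charts $\beta_i:\CharM(G)\to\sM_i$ as in \eqref{equ:TwoChart}. Lemma \ref{lem:ChartDif} then produces a canonical isomorphism $\theta^{\flat}:\sM_{2}\to\sM_{1}$ over $\mathrm{id}_{\underline{S}}$, sending $\beta_{2}(e)$ to $\beta_{1}(e)-\log u_{e}$ for each irreducible $e\in\CharM(G)$, where $u_{e}\in R^{*}$ is the unit supplied by that lemma. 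Pulling back via the induced $\theta:(S,\sM_{1})\to(S,\sM_{2})$ yields the arrow $\gamma$ on $X_{S}$ required in Definition \ref{defn:LogMapIso}(3); one still needs to construct the source arrow $\rho$ and to verify the compatibilities (1) and (2) of that definition.

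For the source: the curve structure maps $\sM_{S}^{C/S}\to\sM_{i}$ both send each nodal generator $e_{l}\in\sM_{S}^{C/S}$ to $\beta_{i}$ of the edge element of $\CharM(G)$ corresponding to $l$, so $\theta^{\flat}$ intertwines them and, by Definition \ref{DefLogC2}, induces an arrow $\rho:(C,\sM_{C,1})\to(C,\sM_{C,2})$ over $\mathrm{id}_{C}$. For the log map: by Proposition \ref{prop:UniGraph} the characteristic maps $\bar{f}_{1}^{\flat}$ and $\bar{f}_{2}^{\flat}$ coincide, so the two compositions $\rho^{\flat}\circ f_{2}^{\flat}$ and $f_{1}^{\flat}\circ\gamma^{\flat}$ differ by a section of $\sO_{C}^{*}$. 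By the defining property of the $u_{e}$ together with the hypothesis that the isomorphism extends $\xi_{1,\eta}\cong\xi_{2,\eta}$, this unit section restricts to $1$ on $C_{\eta}$; since $C$ is $R$-flat and $R\hookrightarrow K$, the map $\sO_{C}\hookrightarrow\sO_{C}\otimes_{R}K$ is injective, so the unit equals $1$ globally. Combined with the fact that $\theta^{\flat}|_{\eta}$ agrees by construction with the given generic isomorphism, this produces the required isomorphism $(\rho,\theta,\gamma):\xi_{1}\to\xi_{2}$ extending $\xi_{1,\eta}\cong\xi_{2,\eta}$.

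Uniqueness is then formal. Any two such extensions differ by an element of $\sA ut_{S}(\xi_{1})(S)$ whose restriction to $\eta$ is trivial. By Lemma \ref{lem:auto-inject} this automorphism is completely determined by its underlying automorphism of $\underline{\xi}$; since $\underline{\xi}$ is a stable map, $\sI som_{S}(\underline{\xi},\underline{\xi})$ is finite, hence separated, over $S$, and two $S$-sections of it that agree on the dense open $\eta\subset S$ must coincide. Hence the automorphism is the identity, and the extension is unique. The main technical subtlety lies in the unit-level compatibility of the log map in the middle paragraph: minimality, mediated by Proposition \ref{prop:UniGraph}, is what forces the characteristic-level agreement, which then reduces the problem to injectivity of $\sO_{C}\to\sO_{C_{\eta}}$, which is settled by flatness.
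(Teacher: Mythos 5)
Your proposal matches the paper's argument: identify $\sM_{1}$ and $\sM_{2}$ via Lemma \ref{lem:ChartDif}, then observe that the remaining discrepancies --- between the two curve structure maps $\sM_{S}^{C/S}\to\sM_{i}$ and between $f_{1}^{\flat}$ and $f_{2}^{\flat}$ (which agree on characteristics by minimality and Proposition \ref{prop:UniGraph}) --- are units that restrict to $1$ on the dense generic fibre and hence equal $1$. The only spot where you are slightly quicker than the paper is the source-curve step, where $\psi_{i}(e_{l})$ agrees with $\beta_{i}$ of the edge element only up to a unit in $R^{*}$, but this is disposed of by the same restriction-to-$\eta$ argument you already use for $f^{\flat}$; your uniqueness argument via Lemma \ref{lem:auto-inject} is a harmless variant of the paper's observation that every choice in the construction is forced.
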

\begin{proof}
For simplicity, we assume that $\xi_{1,\eta}= \xi_{2,\eta}=\xi_{\eta}$. We fix two global chart $\beta_{1}$ and $\beta_{2}$ as in (\ref{equ:TwoChart}). Denote by $\beta_{i,\eta}:\CharM_{\eta}\to \sM_{\eta}$ the specializable chart induced by $\beta_{i}$ for $i=1,2$. By Lemma \ref{lem:ChartDif}, we can identify $\sM_{1}$ and $\sM_{2}$. Thus the two chart $\beta_{1,\eta}$ and $\beta_{2,\eta}$ are identical. 

We first show that the following diagram of log structures commutes:
\[
\xymatrix{
 &\sM_{S}^{C/S}  \ar[ld]_{\psi_{1}} \ar[rd]^{\psi_{2}}&\\
\sM_{1} \ar@{=}[rr] && \sM_{2},
}
\]
where $\psi_{i}$ is the structure map defining the corresponding log curve of $\xi_{i}$. Since we put the standard log structure along non-distinguished nodes, we only need to consider a special distinguished node $p$ over the closed point. Let $e_{p}\in \sM_{S}$ be a section smoothing $p$. Then we have 
\[\psi_{1}(e_{p})=\psi_{2}(e_{p}) + \log u.\]
where $u$ is a unit in $R$. Since $\xi_{1,\eta}=\xi_{2,\eta}=\xi_{\eta}$, by restricting the above equation to the generic point $\eta$, we obtian $u=1$. This proves the commutativity. Thus, we can identify the two log curves of $\xi_{1}$ and $\xi_{2}$.

It remains to show that the two morphisms of log structures $f^{\flat}_{\xi_{i}}$ for $i=1,2$ are identical. Pick a point $p\in C$ over $\bar{s}$, then we need to prove that locally at $p$ we have
\begin{equation}\label{equ:CompLogMap}
f^{\flat}_{\xi_{1}}(\delta)=f^{\flat}_{\xi_{2}}(\delta).
\end{equation}
Since the two log maps $\xi_{1}$ and $\xi_{2}$ are minimal, locally at $p$ we have 
\[\bar{f}^{\flat}_{\xi_{1}}(\delta)=\bar{f}^{\flat}_{\xi_{2}}(\delta) \mbox{\ \ in\ }\CharM_{S}.\]
Thus, locally at $p$, there exists an invertible function $u$ such that
\[f^{\flat}_{\xi_{1}}(\delta)=f^{\flat}_{\xi_{2}}(\delta)+\log u.\]
Since $\xi_{1,\eta}=\xi_{2,\eta}$, by restricting to the generic fiber, we obtain $u=1$. This proves (\ref{equ:CompLogMap}) at $p$. Therefore, the statement of the proposition holds. 
\end{proof}

\subsection{Proof of Theorem 1.2.1 and finiteness}
Now we can give the proof of the main Theorem \ref{thm:Main}:

\begin{proof}
The boundedness is proved in Section \ref{sec:Boundedness}, and the weak valuative criterion is proved in Section \ref{sec:Valuative}. Since the stack has finite diagonal, it was shown in \cite[Theorem 2.7]{EHKV} that $\sK_{\Gamma}(X^{log})$ admits a finite surjective morphism from a scheme. With this property and the weak valuative criterion, by \cite[Proposition 7.12]{LMB} the stack is proper. The Deligne-Mumford property follows from Proposition \ref{prop:FiniteAuto}. 

The representability and finiteness of the map $\sK_{\Gamma}(X^{log})\to \sK_{g,n}(X,\beta)$ follow from Proposition \ref{prop:remove-log-rep}, and \ref{prop:quasi-finite}.
\end{proof}

Denote by $K_{\Gamma}(X^{log})$ and $K_{g,n}(X,\beta)$ the coarse moduli spaces of $\sK_{\Gamma}(X^{log})$ and $\sK_{g,n}(X,\beta)$ respectively. It follows from \cite[1.3 Corollary]{Keel-Mori} that $K_{\Gamma}(X^{log})$ exists and is proper. By the universal property of coarse moduli spaces, we have a natural map 
\begin{equation}\label{equ:coarse-map}
K_{\Gamma}(X^{log}) \to K_{g,n}(X,\beta)
\end{equation}
Again, since this arrow is quasi-finite, we have

\begin{cor}
The natural map (\ref{equ:coarse-map}) is finite.
\end{cor}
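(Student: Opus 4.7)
The strategy is to combine the already-established representability and finiteness of the stack-level map $\sK_{\Gamma}(X^{log}) \to \sK_{g,n}(X,\beta)$ (from the proof of Theorem \ref{thm:Main} just given, via Propositions \ref{prop:remove-log-rep} and \ref{prop:quasi-finite}) with properness of both coarse spaces, and then invoke the standard fact that a proper quasi-finite morphism of algebraic spaces is finite.

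First, I would set up the diagram: by the universal property of coarse moduli, the finite representable morphism $\sK_{\Gamma}(X^{log}) \to \sK_{g,n}(X,\beta)$ induces the map (\ref{equ:coarse-map}) on coarse spaces. Both source and target are proper algebraic spaces over $\C$: $K_{g,n}(X,\beta)$ is classical (Kontsevich), and $K_{\Gamma}(X^{log})$ is proper because $\sK_{\Gamma}(X^{log})$ is a proper Deligne--Mumford stack by Theorem \ref{thm:Main}, so Keel--Mori \cite{Keel-Mori} furnishes a proper coarse space. In particular, (\ref{equ:coarse-map}) is a morphism of proper algebraic spaces over $\C$, hence automatically proper.

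Next, I would show quasi-finiteness of (\ref{equ:coarse-map}). Pick a geometric point $x \in K_{g,n}(X,\beta)$; its preimage in $\sK_{g,n}(X,\beta)$ consists of a single isomorphism class $\underline{\xi}$. Because $\sK_{\Gamma}(X^{log}) \to \sK_{g,n}(X,\beta)$ is finite (hence has finite fibers on geometric points), only finitely many isomorphism classes of minimal stable log maps lie over $\underline{\xi}$; these project to finitely many geometric points of $K_{\Gamma}(X^{log})$. Thus (\ref{equ:coarse-map}) has finite fibers, and since it is of finite type (coming from a morphism of finite type stacks), it is quasi-finite.

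Finally I would conclude: a proper, quasi-finite morphism of algebraic spaces is finite (standard, e.g.\ \cite[Corollary 18.12.4]{EGAIV} in the scheme case, extended to algebraic spaces). No step is expected to be a real obstacle here; the only care needed is checking that quasi-finiteness on coarse spaces genuinely follows from finiteness of the stack map, which holds because coarse moduli commute with taking geometric-point fibers up to identifying automorphism orbits, and finite fibers at the stack level therefore collapse to finite fibers at the coarse level.
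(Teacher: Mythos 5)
Your proposal is correct and follows essentially the same route as the paper, which simply observes that the coarse map is quasi-finite (inherited from the finiteness of the stack-level map) and, both coarse spaces being proper, concludes finiteness from the standard fact that proper plus quasi-finite implies finite. You merely spell out the details the paper leaves implicit.
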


\appendix

\section{Prerequisites on logarithmic geometry}

\subsection{Basic definitions and properties}\label{ss:BacisLog}
Following \cite{KKato} and \cite{Ogus}, we recall some basic terminology from logarithmic geometry.

\subsubsection*{Monoids}

A \textit{monoid} is a commutative semi-group with a unit. We usually use $``+"$ and $``0"$ to denote the binary operation and the unit of a monoid. A \textit{morphism between two monoids} is required to preserve the unit. 

Let $P$ be a monoid, we can associate a group
\[P^{gp}:=\{(a,b) |(a,b)\sim(c,d) \ \mbox{if} \ \exists s \in P \ \mbox{such that} \ s+a+d=s+b+c\}.\]
We have the following terminologies:
\begin{enumerate}
 \item $P$ is called \textit{integral} if the natural map $P\to P^{gp}$ is injective. 
 \item $P$ is called \textit{saturated} if it is integral and satisfies that for any $p\in P^{gp}$, if $n\cdot p\in P$ for some positive integer $n$ then $p\in P$. 
 \item $P$ is \textit{coherent} if it is finitely generated.
 \item $P$ is \textit{fine} if it is integral and coherent.
 \item $P$ is \textit{fs} if it is fine and saturated.
 \item $P$ is \textit{sharp} if there are no other units except $0$. A nonzero element $p$ in a sharp monoid $P$ is called \textit{irreducible} if $p=a+b$ implies either $a=0$ or $b=0$. Denote by $Irr(P)$ the set of irreducible elements in a sharp monoid $P$. 
 \item A fine monoid $P$ is called \textit{free} if $P\cong \N^{n}$ for some positive integer $n$. 
 \item A monoid $P$ is called torsion free if the associated group $P^{gp}$ is torsion free. 
 \item The monoid $P$ is called toric if $P$ is fine, saturated, and sharp. Note that in this case $P$ is automatically torsion free.
 \item A morphism $h: Q\to P$ of two integral monoids is called \textit{integral} if for any $a_{1}, a_{2}\in Q$, and $b_{1},b_{2}\in P$ which satisfy $h(a_{1})b_{1}=h(a_{2})b_{2}$, there exist $a_{2}, a_{4}\in Q$ and $b\in P$ such that $b_{1}=h(a_{3})b$ and $a_{1}a_{3}=a_{2}a_{4}$. 
\end{enumerate}

Denote by $Mon^{int}$ and $Mon^{sat}$ the categories of integral and saturated monoids respectively. Then there is a natural inclusion 
\[\iota: Mon^{sat}\to Mon^{int}.\]
On the other hand, given an integral monoid $M$, the elements $a\in M^{gp}$, such that $m\cdot a\in M$ for some positive integer $m$ form a saturated submonoid $M^{sat}\subset M^{gp}$. This induces another functor
\[\sS at: Mon^{int}\to Mon^{sat}.\]

\begin{prop}\cite[Chapter I, 1.2.3(3)]{Ogus}\label{prop:AdjSat}
The functor $\sS at$ is left adjoint to the functor $\iota$.
\end{prop}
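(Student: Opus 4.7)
The plan is to exhibit, for each integral monoid $M$ and each saturated monoid $N$, a natural bijection
\[
\operatorname{Hom}_{Mon^{sat}}(\sS at(M), N) \;\cong\; \operatorname{Hom}_{Mon^{int}}(M, \iota(N))
\]
by constructing the unit of the adjunction and verifying that every morphism factors uniquely through it.

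First I would construct the unit. For an integral monoid $M$, the construction of $\sS at(M)$ gives a chain $M \hookrightarrow M^{sat} \hookrightarrow M^{gp}$ whose first inclusion is a map of monoids; I call the composition $\eta_M : M \to \iota(\sS at(M))$. Functoriality of $(-)^{gp}$ together with the observation that a map of integral monoids carries $M^{sat}$ into $N^{sat}$ (again using the condition $n\cdot a \in M \Rightarrow n\cdot f^{gp}(a)\in N$) shows both that $\sS at$ is indeed a functor and that $\eta$ is natural.

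Next, given $f : M \to \iota(N)$ in $Mon^{int}$, I would produce a unique $\tilde f : \sS at(M) \to N$ with $\iota(\tilde f)\circ\eta_M = f$. Since $N$ is integral, $N \hookrightarrow N^{gp}$, and $f$ extends uniquely to a group homomorphism $f^{gp}: M^{gp}\to N^{gp}$. For any $a\in M^{sat}$ choose a positive integer $n$ with $n\cdot a\in M$; then $n\cdot f^{gp}(a) = f(n\cdot a) \in N$, and the saturatedness of $N$ forces $f^{gp}(a)\in N$. Hence $f^{gp}$ restricts to a monoid map $\tilde f: \sS at(M) \to N$. Uniqueness is automatic: any $\tilde f$ extending $f$ induces on groups the map $f^{gp}$, and $\tilde f$ is determined by its restriction to $M^{sat} \subset M^{gp}$. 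Naturality of the resulting bijection in both $M$ and $N$ then follows formally from the naturality of $(-)^{gp}$ and of $\eta$.

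The only nontrivial point — and the place where the hypothesis on $N$ is essential — is the verification that $f^{gp}(M^{sat})\subseteq N$. This uses saturatedness of $N$ in precisely the form stated in Section A.1: if a positive multiple of an element of $N^{gp}$ lies in $N$, then the element itself lies in $N$. Everything else in the argument is bookkeeping in the category of monoids.
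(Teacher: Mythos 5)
Your proof is correct: the unit is the inclusion $M\hookrightarrow M^{sat}$, and the key step — that $f^{gp}$ carries $M^{sat}$ into $N$ because $N$ is saturated — together with the identification $(M^{sat})^{gp}=M^{gp}$ for the uniqueness claim, is exactly the standard verification. The paper itself gives no proof and simply cites Ogus, so there is nothing to compare against; your argument fills in precisely what the reference contains.
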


\subsubsection*{Logarithmic structures}

Let $X$ be a scheme. A \textit{pre-log structure} on $X$ is a pair $(\sM,\exp)$, which consists of a sheaf of monoids $\sM$ on the \'etale site $X_{\acute{e}t}$ of $X$, and a morphism of sheaves of monoids $\exp: \sM\to \sO_{X}$, called the \textit{structure morphism of $\sM$}. Here we view $\sO_{X}$ as a sheaf of monoid under multiplication. 

A pre-log structure $\sM$ on $X$ is called a \textit{log structure} if $\exp^{-1}(\sO_{X}^{*})\cong \sO^{*}_{X}$ via $\exp$. We sometimes omit the morphism $\exp$, and only use $\sM$ to denote the log structure if no confusion could arise. We call the pair $(X,\sM)$ a \textit{log scheme}. 

Given two log structures $\sM$ and $\sN$ on $X$, a \textit{morphism of the log structures} $h:\sM\to\sN$ is a morphism of sheaves of monoids which is compatible with the structure morphisms of $\sM$ and $\sN$.

Given a pre-log strucutre $\sM$ on $X$, we can associate a log structure $\sM^{a}$ given by 
\[\sM^{a}:=\sM\oplus_{\exp^{-1}(\sO^{*}_{X})}\sO_{X}^{*}.\]
Consider a morphism of schemes $f:X\to Y$, and a log structure $\sM_{Y}$ on $Y$. We can define the \textit{pull-back log structure} $f^{*}(\sM_{Y})$ to be the log structure associated to the pre-log structure 
\[f^{-1}(\sM_{Y})\to f^{-1}(\sO_{Y})\to \sO_{X}.\]

Consider two log schemes $(X,\sM_{X})$ and $(Y,\sM_{Y})$. A morphism of log schemes $(X,\sM_{X})\to (Y,\sM_{Y})$ is a pair $(f,f^{\flat})$, where $f:X\to Y$ is a morphism of the underlying schemes, and $f^{\flat}:f^{*}(\sM_{Y})\to \sM_{X}$ is a morphism of log structures on $X$. The morphism $(f,f^{\flat})$ is called \textit{strict} if $f^{\flat}$ is an isomorphism of log structures. It is called \textit{vertical} if $\sM_{X}/f^{*}(\sM_{Y})$ is a sheaf of groups under the induced monoidal operation. A standard example of log structures is the following:

\begin{eg}\label{eg:StandardLog}
Consider a normal crossing divisor $D$ on a smooth scheme $X$. Then the following 
\[\sM_{X}=\{\ g\in \sO_{X} \ |\ \mbox{$g$ is invertible outside $D$}\}\]
with the natural injection $\sM_{X}\to \sO_{X}$ forms a log structure on $X$.
\end{eg}

\subsubsection{Charts of log structures}

Let $(X,\sM)$ be a log scheme, and $P$ a monoid. Denote by $P_{X}$ the constant sheaf of monoid $P$ on X. A chart of $\sM$ is a morphism $P_{X}\to \sM$ such that the associated log structure of the composition $P_{X}\to \sM\to\sO_{X}$ is $\sM$. The log structure $\sM$ is called a \textit{fine log structure} on $X$ if $P$ is fine. If the monoid $P$ is fs, then $\sM$ is called a \textit{fs log structure}. We denote by $\LogSch$ the category of fine log schemes, and $\LogSch^{fs}$ the category of fs log schemes.

Let $\CharM=\sM/\sO_{X}^{*}$ be the quotient sheaf. We call it the \textit{characteristic of the log structure $\sM$}. It is useful to notice that $\overline{f^{*}(\sM)}=f^{-1}(\CharM)$ for any morphism of schemes $f: Y\to X$. A fine log structure $\sM$ is called locally free if for any $\bar{x}\in X$, we have $\CharM_{\bar{x}}\cong \N^{n}$ for some positive integer $n$. Let $\CharM_{\bar{x}}^{gp,tor}$ be the torsion part of $\CharM_{\bar{x}}^{gp}$. The following result is very useful for creating charts.

\begin{prop}\label{prop:ChartLogStr}\cite[2.1]{LogStack}
Using the notation as above, there exist an fppf neighborhood $f:X'\to X$ of $x$, and a chart $\beta:P\to f^{*}(\sM)$ such that for some geometric point $\bar{x}'\to X'$ lying over $x$, the natural map $P\to f^{-1}\CharM_{\bar{x}'}$ is bijective. If $\CharM_{\bar{x}}^{gp,tor}\otimes k(x)=0$, then such a chart exists in an \'etale neighborhood of $x$.
\end{prop}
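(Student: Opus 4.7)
The plan is to construct the chart by producing a monoid section of the projection $\sM\to \CharM$ at the stalk over $\bar x$, and then spreading it out to a neighborhood. Take $P:=\CharM_{\bar x}$, which is fine since $\sM$ is fine. After replacing $X$ by a strict henselization and then an étale neighborhood on which finitely many lifts have been chosen, the problem becomes: construct a monoid homomorphism $\beta\colon P\to \sM_{\bar x}$ whose composition with $\sM\to\CharM$ is the canonical identification $P=\CharM_{\bar x}$. Once $\beta$ is found, the composition $P\to\sM\to\sO_X$ realizes $\sM$ as its associated log structure in a neighborhood of $\bar x$, and bijectivity of $P\to\CharM_{\bar x'}$ is automatic by construction.

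To build $\beta$, I would groupify and analyze the short exact sequence
\[
1\to\sO^{*}_{X,\bar x}\to \sM^{gp}_{\bar x}\to \CharM^{gp}_{\bar x}\to 1.
\]
The group $\CharM^{gp}_{\bar x}$ is a finitely generated abelian group, so decomposes as $\Z^{r}\oplus T$ with $T$ a finite torsion group. The free part $\Z^r$ contributes no obstruction: any lift of a basis to $\sM^{gp}_{\bar x}$ works. For each cyclic summand $\Z/m\subset T$, choose any lift $\tilde t$ of a generator; then $m\tilde t$ is some unit $u_m\in \sO^{*}_{X,\bar x}$ rather than $1$, and the obstruction to correcting $\tilde t$ by a unit to kill $u_m$ is precisely the class $[u_m]\in \sO^{*}/(\sO^{*})^{m}$. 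Passing to the fppf cover $X':=\mathrm{Spec}\,\sO_{X}[t_m]/(t_m^{m}-u_m)$, taken simultaneously for each cyclic summand, kills all of these obstructions and yields a splitting of the pulled-back extension; restricting this splitting to the submonoid $P\subset P^{gp}$ (using that $\sM$ is the preimage of $P$ in $\sM^{gp}$, a consequence of $\sM$ being a log structure) provides the desired monoid map $\beta$. Under the hypothesis $\CharM^{gp,tor}_{\bar x}\otimes k(x)=0$ each exponent $m$ is invertible in $k(x)$, so the cover $X'\to X$ is étale, giving the refined second statement.

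The main obstacle I foresee is the passage from a group-theoretic splitting to an honest monoid homomorphism out of $P$: one must verify that the splitting $P^{gp}\to\sM^{gp}$ sends $P$ into $\sM\subset\sM^{gp}$, which amounts to the identity $\sM=\sM^{gp}\times_{\CharM^{gp}}\CharM$ valid for any log structure with $\CharM$ sharp-valued. A secondary technical point is bookkeeping the specialization $\bar x'\to\bar x$ so that the identification $\CharM_{\bar x'}\cong\CharM_{\bar x}=P$ used to assert bijectivity is indeed the one induced by the cover; this reduces to the fact that $f\colon X'\to X$ is strict, so $\CharM$ pulls back stalkwise. Modulo these verifications the construction is explicit, and the dichotomy between fppf and étale is controlled entirely by the torsion of $\CharM^{gp}_{\bar x}$ relative to the residue characteristic.
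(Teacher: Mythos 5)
This proposition is not proved in the paper; it is quoted from \cite[2.1]{LogStack}, and your reconstruction is essentially Olsson's own argument: reduce to splitting the extension $1\to\sO^{*}_{X,\bar x}\to\sM^{gp}_{\bar x}\to\CharM^{gp}_{\bar x}\to 1$, observe that only the torsion of $\CharM^{gp}_{\bar x}$ obstructs a splitting, with obstruction classes in $\sO^{*}/(\sO^{*})^{m}$ that are killed by extracting $m$-th roots (fppf in general, \'etale when $m$ is invertible in $k(x)$, since then the strictly henselian local ring already contains the root), and recover the monoid chart from the group splitting using that $\sM$ is the preimage of $\CharM$ in $\sM^{gp}$ --- a consequence of the integrality of $\sM$, not merely of $\CharM$ being sharp. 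The one step you compress is the final spreading-out: after extending $\beta$ from the stalk to an \'etale neighborhood, one still needs that a morphism of fine log structures inducing an isomorphism on characteristics at $\bar x'$ is an isomorphism in a neighborhood of $\bar x'$; this is a separate standard openness lemma, but granting it your proof is complete and agrees with the cited source.
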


\begin{rem}
If $\sM$ is a fs log structure on $X$, then the above proposition implies that there exists a section $\CharM_{\bar{x}}\to\sM_{\bar{x}}$, which can be lifted to a chart \'etale locally near $x$.
\end{rem}

Consider a morphism $f:(X,\sM_{X})\to(Y,\sM_{Y})$ of fine log schemes. A \textit{chart of $f$} is a triple $(P_{X}\to\sM_{X},Q_{Y}\to\sM_{Y},Q\to P)$ where $P_{X}\to \sM_{X}$ and $Q_{Y}\to \sM_{Y}$ are charts of $\sM_{X}$ and $\sM_{Y}$ respectively, and $Q\to P$ is a morphism of monoids such that the following diagram is commutative:
\[\xymatrix{
Q_{X} \ar[r] \ar[d] & P_{X} \ar[d] \\
f^{*}(\sM_{Y}) \ar[r] & \sM_{X}.
}
\]
Note that the charts of morphism of fine log schemes exist \'etale locally.

Consider a morphism of log schemes $f:(X,\sM_{X})\to(Y,\sM_{Y})$. With the help of charts, we can describe the log smoothness properties of $f$ due to K. Kato \cite[Theorem 3.5]{KKato}. The log map $f$ is called \textit{log smooth} if \'etale locally, there is a chart $(P_{X}\to\sM_{X},Q_{Y}\to\sM_{Y},Q\to P)$ of $f$ such that:
\begin{enumerate}
 \item Ker$({Q^{gp}\to P^{gp}})$ and the torsion part of $Coker(Q^{gp}\to P^{gp})$ are finite groups;
 \item the induced map $X\to Y\times_{\Spec(\Z[Q])}\Spec\Z[P]$ is smooth in the usual sense.
\end{enumerate}

The map $f$ is called \textit{integral} if for every $p\in X$, the induced map $\CharM_{f(\bar{p})}\to\CharM_{\bar{p}}$ is integral. In general, the underlying structure map of a log smooth morphism need not be flat. However, it is shown in \cite[4.5]{KKato} that the underlying map of a log smooth and integral morphism is flat. 

\subsection{Deligne-Faltings log structures}\label{ss:DFLog}
\begin{defn}\label{defn:DF}
Consider a scheme $X$. A locally free log structure $\sM_{X}$ on $X$ is called a \textit{Deligne-Faltings (DF) log structures}, if there is a morphism of locally constant sheaves of monoids $\beta:\N^{k}\to \CharM_{X}$, which locally lifts to a chart. We call the map $\beta$ a \textit{global presentation of $\sM_{X}$}.
\end{defn}

\begin{rem}\label{rem:LineBundleDF}
Consider a DF log structure $\sM_{X}$ with a global presentation $\N^{k}\to \CharM_{X}$. Denote by $\{\delta_{i}\}_{i=1}^{k}$ the standard generators of $\N^{k}$. Then locally we have a lifting $\tilde{\beta}:\N^{k}\to\sM_{X}$. Note that the section $\beta(\delta_{i})$ with its inverse image under the canonical map $\pi:\sM_{X}\to\CharM_{X}$ is a $\sO^{*}_{X}$-torsor, which corresponds to a line bundle $L_{i}$. The composition 
\[\pi^{-1}\big(\beta(\delta_{i})\big)\subset \sM_{X} \to \sO_{X}\]
gives a morphism of line bundles $s_{i}:L_{i}\to\sO_{X}$. In fact, it was shown in \cite[Complement 1]{KKato} that a locally free DF log structure as above is equivalent to the data consisting of a $k$-tuple of line bundles $(L_{i})_{i=1}^{k}$ and morphisms of line bundles $s_{i}: L_{i}\to \sO_{X}$ for each $i$.

Note that $s_{i}\in H^{0}(L_{i}^{\vee})$. Denote by $D_{i}\subset X$ the vanishing locus of $s_{i}$. Note that $D_{i}$ consists of the points where the image of $\delta_{i}$ in $\CharM_{X}$ is non-trivial. If $s_{i}$ is not a zero section, then $D_{i}$ is a Cartier divisor in $X$. If $s_{i}$ is a zero section, then $D_{i}=X$. Consider the sub-log structure $\sM_{X}^{g} \subset \sM_{X}$, which is given by the set of zero sections and the corresponding line bundles. We call $\sM_{X}^{g}$ the \textit{generic part of $\sM_{X}$}. Note that if $D_{i}=\emptyset$, then the sub-log structure generated by $\delta_{i}$ is trivial.
\end{rem}

\begin{eg}\label{eg:SmoothDiv}
Consider a smooth Cartier divisor $D\subset X$, and the log structure $\sM_{X}$ associated to $D$ defined in example \ref{eg:StandardLog}. Then $\sM_{X}$ forms a DF log structure on $X$, which corresponds to the line bundle $\sO_{X}(-D)$ and the natural inclusion $\sO_{X}(-D)\hookrightarrow \sO_{X}$.
\end{eg}

\subsection{Olsson's Log Stacks}\label{s:LogStack}
We follow \cite{LogStack} to introduce the algebraic stack parameterizing log structures. Let us fix a base scheme $S$, and consider an algebraic stack $\sX$ in the sense of \cite{Artin}, which means that 
\begin{enumerate}
 \item the diagonal $\sX\to\sX\times_{S}\sX$ is representable and of finite type; 
 \item there exists a surjective smooth morphism $X\to \sX$ from a scheme. 
\end{enumerate}
Now we can define a fine log structure $\sM_{\sX}$ on $\sX$ by repeating the definition of log structure on schemes in \ref{ss:BacisLog}, but using lisse-\'etale site instead of the \'etale site. We refer to \cite[Section 5]{LogStack} for details of log structure on Artin stacks.

For any $S$-scheme $T$, and an arrow $g:T\to \sX$, we obtain a fine log structure $g^{*}(\sM_{\sX})$ on the lisse-\'etale site $T_{lis\mbox{-}\acute{e}t}$ of $T$. It is shown in \cite[5.3]{LogStack} that such $g^{*}(\sM_{\sX})$ is isomorphic to a unique fine log structure on the \'etale site $T_{\acute{e}t}$ of $T$. By abuse of notations, we denote by $g^{*}(\sM_{\sX})$ the corresponding log structure on $T$. Thus, we define a functor from $\sX$ to $\LogSch_{S}$, by pulling back the log structure $\sM_{\sX}$. The stack $\sX$ associated with this functor is called a {\em log stack} in \cite{FKato}. A fine log scheme $(X,\sM_{X})$ can be naturally viewed as a log algebraic stack.

Consider the fibered category $\sL og_{(\sX,\sM_{\sX})}$ over $\sX$. Its objects are pairs 
\[(g:X\to\sX,g^{*}(\sM_{\sX})\to \sM_{X}),\] 
where $g$ is a map from scheme $X$ to $\sX$, and $g^{*}(\sM_{\sX})\to \sM_{X}$ is a morphism of fine log structures on $X$. An arrow 
\[\big(g:X\to\sX,g^{*}(\sM_{\sX})\to \sM_{X}\big)\longrightarrow\big(h:Y\to\sX,h^{*}(\sM_{\sX})\to \sM_{Y}\big)\] 
is a strict morphism of log schemes $(X,\sM_{X})\to (Y,\sM_{Y})$, such that 
\begin{enumerate}
 \item the underlying map $X\to Y$ is a morphism over $\sX$;
 \item the following diagram of log schemes commutes:
\[
\xymatrix{
(X,\sM_{X}) \ar[r] \ar[d] & (Y,\sM_{Y}) \ar[d] \\
\big(X,g^{*}(\sM_{\sX})\big) \ar[r] & \big(Y,h^{*}(\sM_{\sX})\big).
}
\]
\end{enumerate}

\begin{rem}
In fact, the stack $\sL og_{(\sX,\sM_{\sX})}$ parametrizes log structures over $(\sX,\sM_{\sX})$. An object $\big(g:X\to\sX,g^{*}(\sM_{\sX})\to \sM_{X}\big)$ as above can be viewed as a morphism of log stacks $(X,\sM_{X})\to(\sX,\sM_{\sX})$.
\end{rem}

\begin{thm}\cite[5.9]{LogStack}\label{thm:LogStack}
The fibered category $\sL og_{(\sX,\sM_{\sX})}$ is an algebraic stack locally of finite presentation over $\sX$.
\end{thm}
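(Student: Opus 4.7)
The plan is to verify the axioms of an algebraic stack locally of finite presentation over $\sX$, following the strategy of producing explicit smooth charts as quotients by diagonalizable groups. Since being algebraic is local on $\sX$ in the fppf topology, and since $\sM_\sX$ admits fppf-local charts by Proposition \ref{prop:ChartLogStr}, I would first reduce to the case where $\sX = \Spec R$ is affine and there is a global chart $\beta : P_\sX \to \sM_\sX$ by a fine monoid $P$.

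Next, for each homomorphism $\alpha : P \to Q$ of fine monoids, introduce the full subcategory $\sL og_\alpha \subset \sL og_{(\sX,\sM_\sX)}$ whose objects over $T \to \sX$ consist of a morphism $g^*\sM_\sX \to \sM_T$ together with a chart $Q_T \to \sM_T$ fitting into a commutative square with $\beta$ and $\alpha$. Giving such data is equivalent to giving a morphism $T \to \Spec R[Q]$ over $\Spec R$, and two such morphisms yield isomorphic objects of $\sL og_\alpha$ precisely when they differ by the natural action of the diagonalizable group scheme $D := \Spec \Z[Q^{gp}/P^{gp}]$, which rescales the chart by units without altering the induced log structure. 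Hence one obtains an identification
\[
\sL og_\alpha \;\simeq\; \bigl[\, \Spec R[Q] \,\big/\, D \,\bigr],
\]
which is an algebraic stack of finite presentation over $\sX$ with a smooth affine cover.

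Then I would show that, as $\alpha$ varies over all morphisms of fine monoids out of $P$, the natural maps $\sL og_\alpha \to \sL og_{(\sX,\sM_\sX)}$ are representable open immersions whose images jointly cover the whole stack. The covering property uses that any fine log structure $\sM_T$ with a morphism from $g^*\sM_\sX$ admits fppf-local charts refining $\beta$, produced via Proposition \ref{prop:ChartLogStr} applied to $\CharM_T$. Openness rests on \cite[3.5]{LogStack}: the locus on which two chart morphisms agree on the level of characteristic monoids is open, which lets one compare the chart from $\alpha$ with any other chart on a neighborhood. Combining these substacks produces the required smooth presentation of $\sL og_{(\sX,\sM_\sX)}$.

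The last step is representability of the diagonal. Given two objects $\sM_1,\sM_2$ on a scheme $T$ with maps from $g^*\sM_\sX$, after passing to charts the functor $\sI som_T(\sM_1,\sM_2)$ becomes a locally closed subfunctor of a $D$-torsor cut out by the commutativity of a finite diagram of monoidal equations, hence is representable by an algebraic space of finite type. The main technical obstacle is the openness of the substacks $\sL og_\alpha$: this requires a careful analysis of how local chart data varies in families, and is the point at which one must fully exploit the comparison of charts on characteristic monoids from \cite[3.5]{LogStack}. Once this is in hand, the smooth presentation and diagonal representability assemble to give the theorem.
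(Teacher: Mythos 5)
First, a point of comparison: the paper does not prove this statement at all --- it is imported verbatim from Olsson \cite[5.9]{LogStack}, so there is no in-paper argument to measure your proposal against. What you have written is a sketch of Olsson's own proof strategy (reduction to a global chart $\beta:P\to\sM_{\sX}$, chart-stacks indexed by morphisms $\alpha:P\to Q$ of fine monoids realized as quotients by diagonalizable groups, and separate treatment of the diagonal), and at that level the outline is faithful. One small inaccuracy: the scheme carrying the universal extension of the chart along $\alpha$ is $\Spec\bigl(R\otimes_{\Z[P]}\Z[Q]\bigr)$, not $\Spec R[Q]$; the fiber product over $\Z[P]$ is what enforces compatibility with the structure map of $\sM_{\sX}$, which your ``commutative square'' requires but your identification then drops.

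The genuine gap is the claim that the maps $\sL og_{\alpha}\to \sL og_{(\sX,\sM_{\sX})}$ are representable \emph{open immersions}. They are not monomorphisms: a single log structure can carry essentially different charts modeled on the same monoid (for instance, over a point the standard rank-one log structure is hit by the $Q=\N^{2}$ chart-stack via both $(s_{1},s_{2})=(0,1)$ and $(1,0)$, which are non-isomorphic objects of $[\A^{2}/\G_{m}^{2}]$), and the automorphism groups do not match (at the origin of $[\A^{2}/\G_{m}^{2}]$ the stabilizer is $\G_{m}^{2}$, while the associated log structure has automorphism group $\G_{m}^{2}\rtimes S_{2}$ because $\Aut(\N^{2})$ acts). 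So these maps are at best \'etale, and proving \'etaleness is the technical heart of Olsson's argument: it requires the deformation theory of morphisms of fine log structures (infinitesimal lifting of charts along square-zero extensions) together with the limit-preservation results, not merely the openness statement of \cite[3.5]{LogStack} about agreement of maps on characteristic monoids. Your appeal to that openness result shows only that a chart extends on the level of $\CharM$ to a neighborhood; it does not show that the chart-stacks map smoothly, let alone as open immersions, onto $\sL og_{(\sX,\sM_{\sX})}$. As written, the proposal therefore asserts rather than proves the existence of the smooth presentation.
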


\section{Logarithmic curves and their stacks}
In this section, we introduce the notion of log pre-stable curves. We will prove that the stack $\fM_{g,n}^{pre}$ parameterizing log pre-stable curves of genus $g$ and $n$ marked points is an open substack of some Olsson's log stack as above, hence is algebraic in the sense of \cite[5.1]{Artin}. We refer to \cite{FKato}, \cite{Mochizuki}, and \cite{LogCurve} for more details of log structures on curves.

\subsection{The canonical log structure on pre-stable curves}\label{ss:CanCurveLog}
Consider the stack $\fM_{g,n}$ parameterizing genus $g$ pre-stable curves with $n$ marked points, and let $\fC_{g,n}$ be the universal family over $\fM_{g,n}$. Denote by $\{\Sigma_{i}:\fM_{g,n}\rightarrow \fC_{g,n}\}_{i=1}^{n}$ the $n$ sections. The boundary $\fM^{sing}_{g,n}\subset\fM_{g,n}$ which parametrizes singular curves is a divisor with normal crossings on $\fM_{g,n}$. Hence the boundary divisor induces a canonical log structure $\sM_{\fM_{g,n}}$ on $\fM_{g,n}$, which is defined using the smooth topology as in \cite{LogStack}. 

Note that each section $\Sigma_{i}$ corresponds to a smooth divisor on $\fC_{g,n}$. By Example \ref{eg:StandardLog}, we have a log structure $\sM^{\Sigma_{i}}$ associated to this section $\Sigma_{i}$. The pre-image of $\fM_{g,n}^{sing}$ also gives a normal crossing divisor in $\fC_{g,n}$, hence a log structure $\sM_{\fC_{g,n}}^{\sharp}$ on $\fC_{g,n}$. Consider the log structure
\[\sM_{\fC_{g,n}}:=\sM_{\fC_{g,n}}^{\sharp}\oplus_{\sO_{\fC_{g,n}}^{*}}\sum_{i}\sM^{\Sigma_{i}}.\]
We call it the canonical log structure on $\fC_{g,n}$. There is a natural log smooth, integral, vertical map 
\begin{equation}\label{equ:CanSemiMap}
(\fC_{g,n},\sM_{\fC_{g,n}}^{\sharp})\rightarrow (\fM_{g,n},\sM_{\fM_{g,n}}),
\end{equation}
whose underlying map is given by the family $\fC_{g,n}\rightarrow \fM_{g,n}$. By adding the log structure from the marked points, we have an induced log smooth, integral map 
\begin{equation}\label{equ:CanPreMap}
(\fC_{g,n},\sM_{\fC_{g,n}})\rightarrow (\fM_{g,n},\sM_{\fM_{g,n}}).
\end{equation}

Given any family $C\rightarrow S$ of usual pre-stable curves of genus $g$, with $n$ marked points, we have the following cartesian diagram:
\[
\xymatrix{
C \ar[r] \ar[d]_{\pi} & \fC_{g,n} \ar[d] \\
S \ar[r] & \fM_{g,n}.
}
\]
Denote by $\sM_{C}^{\sharp C/S}$, $\sM_{C}^{C/S}$ and $\sM_{C}^{\Sigma_{i}}$ the log structures on $C$, obtained by pulling back $\sM_{\fC_{g,n}}^{\sharp}$, $\sM_{\fC_{g,n}}$ and $\sM^{\Sigma_{i}}$ respectively. Let $\sM_{S}^{C/S}$ be the log structure on $S$ obtained by pulling back $\sM_{\fM_{g,n}}$. Note that $\sM_{C}^{\Sigma_{i}}$ is the log structure given by the section $\Sigma_{i}$. Now we have two canonical log maps obtained by pulling back (\ref{equ:CanSemiMap}) and (\ref{equ:CanPreMap}) respectively:
\begin{equation}\label{equ:CanSemiMapLocal}
(C,\sM_{C}^{\sharp C/S})\rightarrow (S,\sM_{S}^{C/S}),
\end{equation}
and
\begin{equation}\label{equ:CanPreMapLocal}
(C,\sM_{C}^{C/S})\rightarrow (S,\sM_{S}^{C/S}).
\end{equation}

\begin{lem}\label{lem:UnivCanLog}
For any pair of fine log structures $(\sM_{C}',\sM_{S})$ over the family of prestable curves $C\rightarrow S$, such that the log map $(C,\sM_{C}')\rightarrow (S,\sM_{S})$ is log smooth, proper, integral and vertical, we have a unique pair of maps $\sM_{C}^{\sharp C/S}\rightarrow \sM_{C}'$ and $\sM_{S}^{C/S}\rightarrow \sM_{S}$ fitting in the following cartesian diagram of fine log schemes:
\[
\xymatrix{
(C,\sM_{C}') \ar[d] \ar[r] & (C,\sM_{C}^{\sharp C/S}) \ar[d]\\
(S,\sM_{S}) \ar[r] & (S,\sM_{S}^{C/S}),
}
\]
\end{lem}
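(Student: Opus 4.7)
The approach is to reduce to an étale-local analysis near a point $\bar s \in S$ and invoke F.~Kato's local structure theorem for log smooth, integral, vertical morphisms of relative dimension one. Since the conclusion is local on $S$, I would work in an étale neighborhood of $\bar s$ and treat separately the smooth and nodal loci of the fiber $C_{\bar s}$.

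On the smooth locus the situation is rigid: at a point $p \in C_{\bar s}$ where the fiber is smooth, log smoothness together with verticality and integrality forces $\sM_C'$ to be strict over $\sM_S$ in an étale neighborhood of $p$, and the same holds for $\sM_C^{\sharp C/S}$ over $\sM_S^{C/S}$ by construction. So on the smooth locus any map $\sM_S^{C/S} \to \sM_S$ automatically induces a compatible map of log structures on $C$, and the real content of the lemma is concentrated at the nodes.

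At each node $p \in C_{\bar s}$, Kato's classification provides an étale chart in which the morphism is modelled on $\Spec\bigl(A[x,y]/(xy-\alpha)\bigr) \to \Spec A$, with $\sM_C'$ locally generated by $x,y$ and $\sM_S$ pulled back from $\overline{\sM}_{S,\bar s}$; the element $\alpha \in \overline{\sM}_{S,\bar s}$, the \emph{smoothing parameter} of the node, is uniquely determined at the level of characteristics by the equation $xy = \alpha$. By contrast $\sM_S^{C/S}$ has at $\bar s$ a distinguished free generator $e_p$ for each node $p$. The prescription $e_p \mapsto \alpha$ therefore defines the required morphism on characteristics, and once fine charts are chosen it promotes uniquely to a morphism $\sM_S^{C/S} \to \sM_S$ of fine log structures. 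The induced morphism $\sM_C^{\sharp C/S} \to \sM_C'$ is then forced by comparing the standard nodal charts on either side, and the square becomes cartesian essentially by construction.

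The remaining step is to verify that these local maps glue to a global morphism on $S$, and this is where the main subtlety lies. Uniqueness of $\alpha$ as an element of the sharp fs monoid $\overline{\sM}_{S,\bar s}$ implies that the assignment $e_p \mapsto \alpha$ is independent of the étale chart chosen, so on characteristics the local maps automatically patch; promoting to log structures is then canonical because changes of chart differ only by units, which are absorbed into $\sO_S^*$. The main obstacle I expect is precisely this bookkeeping — ensuring that a unique morphism on characteristics lifts to a genuinely unique morphism of fine log structures globally — and it is handled by the fact that $\sM_S^{C/S}$ is pulled back from the canonical log structure on $\fM_{g,n}$ associated to the boundary divisor and so admits a universal description that is insensitive to chart choices.
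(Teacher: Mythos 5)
The paper does not actually prove this lemma: its ``proof'' is a citation to \cite{LogCurve} and \cite[2.7]{LogSS}, so there is no in-text argument to compare against. What you have written is, in outline, the proof from those references (F.~Kato's local structure theory for log smooth, integral, vertical curves plus Olsson's descent), and the skeleton is sound: strictness away from the nodes by verticality, the local model $xy=\alpha$ at a node, the assignment $e_p\mapsto\alpha$ on characteristics, and gluing. The one step you should not wave through is the promotion from characteristics to an actual morphism of log structures and its \emph{uniqueness}. Compatibility with the structure maps to $\sO_S$ does not pin down the lift: two lifts of $e_p$ differ by $\log u$ with $u\in\sO_S^*$ satisfying $u\cdot\exp(\alpha)=\exp(\alpha)$, and at a node that is \emph{not} smoothed over $S$ one has $\exp(\alpha)=0$, so this condition is vacuous and ``absorbed into $\sO_S^*$'' proves nothing by itself. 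What actually rigidifies the situation is the unit-control statement recorded in the paper as Lemma \ref{lem:UnderDesCurve} (Kato's Lemma 2.1): any two coordinate pairs $(x,y)$, $(x',y')$ presenting the node satisfy $x'=u_xx$, $y'=u_yy$ with $u_xu_y\in\sO_S^*$, so the element $\log x+\log y\in\sM_C'$ determines a well-defined $\sO_S^*$-torsor, and $e_p$ in the canonical log structure $\sM_S^{C/S}$ is by construction the class of the \emph{same} torsor for the universal deformation; the map $e_p\mapsto\log x+\log y$ is therefore canonical and unique, not merely well defined up to units. Your closing appeal to the universal description of $\sM_{\fM_{g,n}}$ is the right instinct, but the argument should be routed explicitly through this torsor comparison rather than through the structure maps.
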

\begin{proof}
See \cite{LogCurve}, and \cite[2.7]{LogSS} for a proof.
\end{proof}
 
\subsection{Log curves}
With the description above, we are able to introduce the log structure on curves that we are interested in.

\begin{defn}\label{DefLogC1}
A map of fine log schemes $(C,\sM_{C})\rightarrow (S,\sM_{S})$ with sections $\{\Sigma_{i}\}_{i=1}^{n}$ is called a genus $g$ log curve with $n$-markings if 
\begin{enumerate}
 \item the family $C\rightarrow S$ with $\{\Sigma_{i}\}$ is the usual prestable curve of genus $g$ and $n$-markings;
 \item the log structure $\sM_{C}$ is of the form $\sM_{C}=\sM_{C}'\oplus_{\sO_{C}^{*}}(\sum_{j}\sM^{\Sigma_{j}}_{C})$;
 \item the log map $(C,\sM_{C})\rightarrow (S,\sM_{S})$ comes from a log smooth, integral vertical map $(C,\sM_{C}')\rightarrow (S,\sM_{S})$ plus the log structure $\sM^{\Sigma_{i}}$ given by the markings.
\end{enumerate}
\end{defn}

By Lemma \ref{lem:UnivCanLog}, we have an equivalent definition of log curves using the canonical log structure.
\begin{defn}\label{DefLogC2}
A genus $g$, log curve with $n$-marked points over a scheme $S$ is given by the following data $(C\rightarrow S,\{\Sigma\}_{i=1}^{n},\sM_{S}^{C/S}\rightarrow \sM_{S})$, where
 \begin{enumerate}
  \item $(C\rightarrow S,\{\Sigma\}_{i=1}^{n})$ is a usual family of pre-stable curves of genus $g$ with $n$-markings;
  \item $\sM_{S}^{C/S}\rightarrow \sM_{S}$ is a morphism of fine log structures.
 \end{enumerate}
\end{defn}
If no confusion could arise, we will use $(C\rightarrow S,\sM_{S})$ for the log curves in the definition for short, and denote by $\sM_{C}$ for the log structure on the curves in the above Definition \ref{DefLogC1}.

\begin{defn}\label{LogPreCurve}
A log curve $(C\rightarrow S,\sM_{S})$ is called log pre-stable if the log structure $\sM_{S}$ is fine and saturated.
\end{defn}

\begin{rem}\label{CurveOpen}
By \cite[5.26]{LogStack}, the condition that the base log structure $\sM_{S}$ is fine and saturated is an open condition on $S$. 
\end{rem}

\subsection{The stack of log curves}\label{ss:LogCurveStack}

\begin{defn}\label{IsoLogCurve}
Given two log curves $(C\rightarrow S, \sM_{S})$ and $(C'\rightarrow S,\sM'_{S})$ over $S$. Denote by $\sM_{C}$ and $\sM_{C'}$ the log structure on $C$ and $C'$ associated to the two log curves respectively. An isomorphism between the above two log curves is a pair $(\rho,\theta)$ such that
\begin{enumerate}
 \item $\theta: (S,\sM_{S}) \rightarrow (S,\sM_{S}')$ and $\rho: (C, \sM_{C}) \rightarrow (C',\sM_{C'})$ are isomorphisms of log schemes; 
 \item the underlying map $\underline{\theta}: S \rightarrow S$ is the identity, and $\underline{\rho}: C \rightarrow C'$ is an isomorphism of usual prestable curves over $S$; 
 \item the pair $(\rho,\theta)$ fit in the following commutative diagram:
\[
\xymatrix{
(C,\sM_{C}) \ar[r]^{\rho} \ar[d] & (C',\sM_{C'}) \ar[d]\\
(S,\sM_{S}) \ar[r]^{\theta}      & (S,\sM_{S}').
}
\]
\end{enumerate}
\end{defn}

Denote by $\fM^{log}_{g,n}$ the fibered category over $\C$ parametrizing log curves with the arrow defined above. In fact, we have 
\[\fM_{g,n}^{log}\cong \sL og_{(\fM_{g,n},\sM_{\fM_{g,n}})}.\] 
Thus, the fibered category $\fM^{log}_{g,n}$ forms an algebraic stack in the sense of \cite{Artin}. Denote by $\fM^{pre}_{g,n}$ the substack of $\fM^{log}_{g,n}$ parameterizing log prestable curves. Then by Remark \ref{CurveOpen}, we have the following:
\begin{cor}\label{LogCurveStack}
The fibered category $\fM^{pre}_{g,n}$ is an open substack in $\fM^{log}_{g,n}$, hence is algebraic.
\end{cor}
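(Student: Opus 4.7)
\textbf{Proof proposal for Corollary \ref{LogCurveStack}.} The plan is to deduce algebraicity of $\fM^{pre}_{g,n}$ as an open substack of $\fM^{log}_{g,n}$, whose algebraicity has already been arranged via Olsson's Theorem \ref{thm:LogStack}. The first step is to make precise the identification $\fM^{log}_{g,n}\cong\sL og_{(\fM_{g,n},\sM_{\fM_{g,n}})}$ alluded to just before the corollary. Given an $S$-point of $\sL og_{(\fM_{g,n},\sM_{\fM_{g,n}})}$, i.e.\ a classifying map $S\to\fM_{g,n}$ together with a morphism of fine log structures $\sM_{S}^{C/S}\to\sM_{S}$ on $S$, Definition \ref{DefLogC2} of a log curve is exactly this datum. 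Conversely, given a log curve $(C\to S,\sM_{S})$, the universal property in Lemma \ref{lem:UnivCanLog} produces a unique morphism $\sM_{S}^{C/S}\to\sM_{S}$, and one checks that this assignment is functorial and compatible with pullbacks and with the isomorphism convention of Definition \ref{IsoLogCurve}. Hence the two fibered categories are equivalent, and by Theorem \ref{thm:LogStack} applied to the algebraic stack $(\fM_{g,n},\sM_{\fM_{g,n}})$, we conclude that $\fM^{log}_{g,n}$ is an algebraic stack locally of finite presentation over $\fM_{g,n}$.

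The second step is to show that $\fM^{pre}_{g,n}\hookrightarrow\fM^{log}_{g,n}$ is an open immersion. The only extra requirement imposed on a log pre-stable curve beyond that of a log curve is that the base log structure $\sM_{S}$ be fine and saturated (Definition \ref{LogPreCurve}). By Remark \ref{CurveOpen}, which quotes \cite[5.26]{LogStack}, for a scheme $T$ equipped with a fine log structure $\sM_{T}$ the locus $T^{\sS at}\subset T$ where $\sM_{T}$ is saturated is open. Applied to an arbitrary smooth chart $T\to\fM^{log}_{g,n}$ from a scheme equipped with the pulled-back base log structure, this says that the preimage of $\fM^{pre}_{g,n}$ in $T$ is open; since this holds for every smooth chart, $\fM^{pre}_{g,n}$ is an open substack of $\fM^{log}_{g,n}$. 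The algebraicity of $\fM^{pre}_{g,n}$ follows immediately, since open substacks of algebraic stacks are algebraic.

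The main point to verify carefully is the identification in the first step, in particular the compatibility of isomorphisms: an arrow in $\sL og_{(\fM_{g,n},\sM_{\fM_{g,n}})}$ is a strict morphism of log schemes over $\fM_{g,n}$, whereas an arrow in $\fM^{log}_{g,n}$ is a pair $(\rho,\theta)$ as in Definition \ref{IsoLogCurve}; the underlying identification $\underline{\theta}=\mathrm{id}_{S}$ there guarantees strictness, and the naturality of $\sM_{S}^{C/S}\to\sM_{S}$ with respect to morphisms of usual prestable curves provides the compatibility with the structure morphism $S\to\fM_{g,n}$. This is essentially routine once the functoriality of the canonical log structures on $\fM_{g,n}$ and $\fC_{g,n}$ is in hand, so the real content of the corollary is Olsson's openness of saturation, which is taken from \cite{LogStack} without reproduction here.
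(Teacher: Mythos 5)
Your proposal is correct and follows the same route the paper takes: identify $\fM^{log}_{g,n}$ with Olsson's stack $\sL og_{(\fM_{g,n},\sM_{\fM_{g,n}})}$, which is algebraic by Theorem \ref{thm:LogStack}, and then invoke Remark \ref{CurveOpen} (Olsson's openness of the fine-and-saturated condition) to see that $\fM^{pre}_{g,n}$ is open. The paper leaves the equivalence of fibered categories and the chart-by-chart openness argument implicit, so your additional verification of the isomorphism compatibilities is just a more explicit write-up of the same proof.
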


\subsection{The canonical log structure at nodes}\label{CanLog}
Note that the log structure $\sM_{\fM_{g,n}}$ is locally free. Consider a usual prestable curve $C\to S$. Then the canonical log structure $\sM_{S}^{C/S}$ is also locally free. For any closed point $s \in S$, we have 
\[\overline{\sM}_{S,\bar{s}}^{C/S}\cong \N^{m},\] 
where $m$ is a non-negative integer. 

Shrinking $S$ if necessary, by Proposition \ref{prop:ChartLogStr} we can choose a global chart $\overline{\sM}_{S,\bar{s}}^{C/S}\cong\N^{m}\rightarrow \sM_{S}^{C/S}$. Denote by $\{e_{i}\}_{i=1}^{m}$ the standard generators of $\N^{m}$. 

Consider a node point $p \in C_{\bar{s}}$ in the fiber. Then there is an \'etale neighborhood $U$ of $p$, which contains no other nodes and marked points. We have a special element $e_{j} \in \{e_{i}\}_{i=1}^{m}$, with the following chart:
\[
\xymatrix{
\N^{m-1}\oplus\N^{2} \ar[r]  & \sM_{C}^{C/S}|_{U}  \\
\N^{m-1}\oplus\N     \ar[r] \ar[u]^{(id,\Delta)}& \pi^{*}(\sM_{S}^{C/S})|_{U} \ar[u]_{\pi^{\flat}}.
}
\]
Here on the bottom, the monoids $\N^{m-1}$ and $\N$ are generated by $\{e_{i}\}_{i\neq j}$ and $e_{j}$ respectively, and on the top we assume that $a$ and $b$ are the standard generators of the monoid $\N^{2}$. The map $(id,\Delta)$ is given by the identity on $\N^{n-1}$ and the diagnonal map $\Delta: e_{j} \mapsto a+b$. 

\begin{con}
Consider a log curve $(C\to S,\sM_{S})$. For convenience, we identify $e_{j}$ with its image in $\CharM_{S}$, and call it {\em the element in $\CharM_{S}$ smoothing the node $p$}, or simply {\em the element smoothing $p$}. 
\end{con}

For each node $p_{i}$ over $s$, we fix an element $e_{i}\in \CharM_{S,\bar{s}}^{C/S}$ smoothing it. Let $Irr(\overline{\sM}_{S,\bar{s}}^{C/S})$ be the set of irreducible elements in the monoid $\overline{\sM}_{S,\bar{s}}^{C/S}$. In fact we have $\{e_{i}\}_{i=1}^{m}= Irr(\overline{\sM}_{S,\bar{s}}^{C/S})$, and a natural map:
\[s_{C_{\bar{s}}}:\{\mbox{nodes in }C_{\bar{s}}\}\rightarrow Irr(\overline{\sM}_{S,\bar{s}}^{C/S})\] 
given by $p_{i}\mapsto$ (the element $e_{i}$ smoothing $p_{i}$). It was shown in \cite{FKato} that this map is a one-to-one correspondence. This means that all nodes in the fiber are smoothed independently.

\begin{rem}\label{rem:Special}
The bijection $s_{C_{\bar{s}}}$ implies that the canonical log structures $(\sM_{S}^{C/S},\sM_{C}^{C/S})$ is special in the sense of \cite[2.6]{LogSS}.
\end{rem}

We give a local description of the relation between canonical log structure and the underlying structure at the nodes as in \cite[Section 3]{FKato}. Let $A$ be a local neotherian henselian ring, and $s$ an element in the maximal ideal $m_{A}$ of $A$. Let $R$ be the henselization of $A[x,y]/(xy-s)$ at the ideal generated by $x,y$ and $m_{A}$. We still use $x,y$ to denote the corresponding elements in $R$.

\begin{lem}\label{lem:UnderDesCurve}\cite[2.1]{FKato}
Given $x',y'\in R$ such that $x'y'\in A$ and $(x',y',m_{A})=(x,y,m_{A})$ (equality of ideals in $R$). Then there exist units $u_{x},u_{y}\in R^{*}$ with $u_{x}u_{u}\in A$ such that $x'=u_{x}x$ and $y'=u_{y}y$ (or $y'=u_{x}x$ and $x'=u_{y}y$).
\end{lem}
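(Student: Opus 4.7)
The plan is to reduce modulo $m_A$, exploit the two-branch structure of the nodal special fiber, and then lift the resulting relations back to $R$.

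First, since $x', y' \in (x,y,m_A) \subset \mathfrak{m}_R$, the product $s' := x'y' \in A \cap \mathfrak{m}_R = m_A$. Pass to $\bar R := R/m_A R$, which is the henselization of $k[x,y]/(xy)$ at the origin (with $k = A/m_A$); its normalization is $k\{x\} \times k\{y\}$, and it has exactly two minimal primes $\mathfrak{p}_1 = (\bar y)$ and $\mathfrak{p}_2 = (\bar x)$, corresponding to the two branches of the node.

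Next, the vanishing $\bar x' \bar y' = 0$ in $\bar R$ forces, in each of the integral domains $\bar R/\mathfrak{p}_i$, at least one of $\bar x', \bar y'$ to vanish. Combined with the hypothesis $(x',y',m_A) = (x,y,m_A)$, which makes $\bar x', \bar y'$ span $\mathfrak{m}_{\bar R}/\mathfrak{m}_{\bar R}^2 \cong k\bar x \oplus k\bar y$, all degenerate configurations (e.g. one of them being zero, or both mapping into the same branch) are ruled out. Hence, after possibly swapping $x \leftrightarrow y$ (the hypotheses are symmetric in $x,y$ since $xy = yx$), we have $\bar x' \in (\bar x)$ and $\bar y' \in (\bar y)$. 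Writing $\bar x' = \bar u_x \bar x$ and $\bar y' = \bar u_y \bar y$, the dimension count on $\mathfrak{m}/\mathfrak{m}^2$ forces $\bar u_x, \bar u_y \in \bar R^*$.

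The main obstacle is lifting the mod-$m_A$ equalities to exact equalities in $R$ (and arranging $u_x u_y \in A$). Choose arbitrary unit lifts $u_x^{(0)}, u_y^{(0)} \in R^*$ of $\bar u_x, \bar u_y$, and set $\varepsilon := x' - u_x^{(0)} x$, $\eta := y' - u_y^{(0)} y$ in $m_A R$. Using $x'y' = s'$ together with $xy = s$ yields the identity
\[
s' - u_x^{(0)} u_y^{(0)} s \;=\; u_x^{(0)} x\, \eta \;+\; u_y^{(0)} y\, \varepsilon \;+\; \varepsilon \eta,
\]
which constrains the corrections. To finish, I would pass to the completion $\hat R \cong \hat A[[x,y]]/(xy - s)$, where using the relation $xy = s$ every element admits a unique canonical expansion $\sum_{i \geq 0} a_i x^i + \sum_{j \geq 1} b_j y^j$ with $a_i, b_j \in \hat A$. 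Writing $x', y'$ in this form and imposing $x' y' \in A$ forces the mixed $x^i$-vs-$y^j$ cross-terms to vanish, producing unique units $u_x, u_y \in \hat R^*$ with $x' = u_x x$, $y' = u_y y$, and $u_x u_y \in \hat A$ by direct coefficient comparison. Faithful flatness of $R \to \hat R$ and the henselian property of $R$ then let us descend these units back to $R^*$ and $A \subset R$. The delicate point, and where the argument is most technical, is that the coefficient extraction must respect the relation $xy = s$, and the condition $u_x u_y \in A$ expresses precisely the compatibility between the two nodal presentations $xy = s$ and $x'y' = s'$.
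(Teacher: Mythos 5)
The paper itself gives no proof of this lemma --- it is quoted from \cite[2.1]{FKato} --- so there is no in-paper argument to compare yours against; I can only assess the proposal on its own. The first half is sound: reducing modulo $m_{A}$, using that $\bar{R}$ is reduced with minimal primes $(\bar{x})$ and $(\bar{y})$, and using that $\bar{x}',\bar{y}'$ generate $\mathfrak{m}_{\bar{R}}$ does give, after a possible swap, $\bar{x}'=\bar{u}_{x}\bar{x}$ and $\bar{y}'=\bar{u}_{y}\bar{y}$ with $\bar{u}_{x},\bar{u}_{y}$ units.

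The gap is in the lifting step, and it sits exactly where the lemma has real content. Your claim that coefficient comparison in $\hat{R}=\hat{A}[[x,y]]/(xy-s)$ produces \emph{unique} units with $u_{x}u_{y}\in\hat{A}$ fails whenever $s$ is a zero divisor in $A$ --- in particular when $s=0$, the case of a node that is not smoothed, which is the case the lemma is used for. Concretely, take $A=k$, $s=0$, $x'=(1+x)x$, $y'=y$: the hypotheses hold, your canonical expansions yield $u_{x}=1+x$ and $u_{y}=1$, and $u_{x}u_{y}=1+x\notin A$. The lemma is still true here, but only because $y'=u_{y}y$ determines $u_{y}$ merely modulo $\operatorname{Ann}(y)$, and one must exploit that freedom to replace $u_{y}$ by $(1+x)^{-1}$ (note $(1+x)^{-1}y=y$ since $xy=0$), giving $u_{x}u_{y}=1$. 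So the condition $u_{x}u_{y}\in A$ is not read off from the expansion; one needs a separate argument that the ambiguities $u_{x}\mapsto u_{x}+\operatorname{Ann}(x)$, $u_{y}\mapsto u_{y}+\operatorname{Ann}(y)$ can always be used to push the product into $A$. The same issue infects the final descent: $x'\in x\hat{R}\cap R=xR$ does give a factorization over $R$ by faithful flatness, but the cofactors must again be adjusted within their annihilator cosets to land $u_{x}u_{y}$ in $A$ rather than $\hat{A}$, and you do not say how. When $s$ is a nonzerodivisor your outline does close up (then $su_{x}u_{y}\in\hat{A}$ forces $u_{x}u_{y}\in\hat{A}$ directly), so what is missing is precisely the degenerate case.
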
 

Consider the local family $\Spec R\to \Spec A$, the canonical log structure $(\sM_{R},\sM_{A})$ is given by the following commutative diagram of pre-log structures.
\[
\xymatrix{
\N^{2} \ar[rr]^{(e_{1},e_{2})\mapsto (x,y)} && R \\
\N \ar[rr]^{e\mapsto s} \ar[u]^{\Delta} && A \ar[u],
}
\]
where $e_{1},e_{2}$ (resp. $e$) are the standard generators of $\N^{2}$ (resp. $\N$), and $\Delta: e\mapsto e_{1}+e_{2}$ is the diagonal map. For convenience, we sometimes use $\log x,\log y$ and $\log s$ denote the image of $e_{1},e_{2}$ and $e$ in the corresponding log structures.



\end{document}